\setlist[enumerate,1]{label={(\arabic*)}}
\newtheorem{thm}{Theorem}[section]
\newtheorem*{thm*}{Theorem}
\newtheorem{cor}[thm]{Corollary}
\newtheorem{lem}[thm]{Lemma}
\newtheorem{prop}[thm]{Proposition}
\newtheorem*{prop*}{Proposition}
\theoremstyle{remark}
\newtheorem{rem}[thm]{Remark}
\newtheorem{ques}[thm]{Question}
\newtheorem{conv}[thm]{Convention}
\theoremstyle{definition}
\newtheorem{defn}[thm]{Definition}
\newtheorem*{defn*}{Definition}
\newtheorem{exmp}[thm]{Example}
\title[QI classification of RAAGs that split over cyclic subgroups]{Quasi-isometry classification of RAAGs that split over cyclic subgroups}
\author[A. Margolis]{Alexander  Margolis}
\thanks{The author was supported by EPSRC Grant 1499630.}
\address{Alexander J. Margolis, Mathematics Department, Technion - Israel Institute of Technology, Haifa, 32000, Israel}
\email{amargolis@campus.technion.ac.uk}
\date{23rd August 2019}
\begin{document}

\begin{abstract}
For a one-ended right-angled Artin group, we give an explicit description of its JSJ tree of cylinders over infinite cyclic subgroups in terms of its defining graph. This is then used to classify certain right-angled Artin groups up to quasi-isometry. 
In particular, we show that if two right-angled Artin groups are quasi-isometric, then their JSJ trees of cylinders are weakly equivalent. Although the converse to this is not generally true, we define quasi-isometry invariants known as stretch factors that can distinguish quasi-isometry classes of RAAGs with weakly equivalence JSJ trees of cylinders.  We then show that for many right-angled Artin groups, being weakly equivalent and having matching stretch factors is a complete quasi-isometry invariant.

\end{abstract}
\maketitle
\setcounter{tocdepth}{1}
\section{Introduction}\label{sec:intro}
Given a finite simplicial graph $\Gamma$, the \emph{right-angled Artin group}  (RAAG) $A(\Gamma)$ is a group with  the following  presentation: generators correspond to vertices of $\Gamma$, and  relations are commutators $[v,w]$ for vertices $v$ and $w$ that are joined by an edge. RAAGs have become interesting objects of study in recent years due to their rich subgroup structure, e.g. \cite{bestvina1997morse}, \cite{haglundwise08}. The work of Agol and Wise uses RAAGs to understand  3-manifold groups \cite{wise2009announcement}, \cite{agol2013virthaken}. See the survey article  \cite{charney07intro} for more information about RAAGs.

One of the fundamental questions of geometric group theory, posed by Gromov \cite{gromov1993asymptotic}, is the following: given a class  $\mathcal{G}$ of finitely generated groups, when are two groups in $\mathcal{G}$ quasi-isometric to one another? 
We are interested in answering this question when $\mathcal{G}$ is the class of right-angled Artin groups. Much progress has been made on this problem and generalisations of it in recent years: see \cite{behrstock2008qigraphman}, \cite{bestvina08raags}, \cite{behrstock2010highdimRAAG}, \cite{huang2014quasi}, \cite{huang2015quasi}, \cite{huang2018groups} and  \cite{huang2018commensurability}.

The techniques used in \cite{bestvina08raags}, \cite{huang2014quasi} and \cite{huang2015quasi} completely break down if a RAAG contains a non-adjacent transvection; other approaches are needed to determine when RAAGs containing non-adjacent transvections are quasi-isometric. In this paper, we classify the quasi-isometry types of a large class of RAAGs via their JSJ decompositions over two-ended groups. This determines the quasi-isometry type of many RAAGs that cannot be determined by any other known method. This is a far-reaching generalisation of the classification of tree RAAGs in \cite{behrstock2008qigraphman}.

 The JSJ tree of cylinders of a finitely presented group $G$ is a canonical tree $T$ that encodes  information about all splittings of $G$ over two-ended groups. Each vertex of $T$ is one of three \emph{types}: cylindrical, hanging or rigid. Edge stabilizers of $T$ are not necessarily two-ended. 
A \emph{relative quasi-isometry} between vertex stabilisers of the JSJ tree of cylinders is a quasi-isometry that coarsely preserves the \emph{peripheral structure} --- the set of conjugates of incident edge stabilizers.

\begin{defn*}
Let $T$ and $T'$ be two JSJ trees of cylinders of two finitely presented groups.
We say that $T$ and $T'$ are \emph{weakly equivalent} if there exists a tree isomorphism  $\chi:T\rightarrow T'$ such that the following hold:
\begin{enumerate}
\item $\chi$ preserves vertex type (cylinder, hanging or rigid);
\item for all  $v\in VT$, there is a relative quasi-isometry between vertex stabilizers of $v$ and $\chi(v)$, such that the induced map on the peripheral structure agrees with $\chi|_{\mathrm{lk}(v)}$.
\end{enumerate}
We say such a map $\chi$ is a \emph{weak equivalence}.
\end{defn*}

 Proposition \ref{prop:equivbijec} characterises when two JSJ trees of cylinders are weakly equivalent in terms of their quotient graphs of groups. It is a special case of a more general statement of Cashen-Martin \cite{cashenmartin2017quasi}.    However, it is rather technical to state, so we defer doing so till Section \ref{sec:prelims}.
This characterisation is  significantly easier to use in practice than directly constructing a map $\chi$ as  above. We can thus algorithmically determine whether two RAAGs are weakly equivalent, provided we have a sufficiently detailed understanding of the quasi-isometry type of the rigid vertices in the JSJ decomposition. See Proposition \ref{prop:solvqiprob} for a precise statement of this.  

The following is a reformulation of a result of Papasoglu.
\begin{thm}[\cite{papasoglu2005quasi}]\label{thm:qiimpqwe}
If two one-ended finitely presented groups are quasi-isometric, then their JSJ trees of cylinders  are weakly equivalent.
\end{thm}
We use this to help us decide  whether two RAAGs are quasi-isometric. Firstly, a RAAG is one-ended if and only if its defining graph is connected and not a single vertex. Using the work of Papasoglu and Whyte \cite{papasoglu2002quasi}, we note that if two RAAGs are not one-ended, they are quasi-isometric if and only if they have quasi-isometric one-ended factors in their Gru\v{s}ko decomposition. We thus reduce to RAAGs that are one-ended.

Given a one-ended RAAG, we examine its splittings over infinite cyclic groups. The following proposition uses a result of Clay \cite{clay14raagsplit} that describes the JSJ decomposition of a RAAG over infinite cyclic groups. A more general JSJ decomposition of a RAAG over abelian subgroups was given by Groves and Hull \cite{groves2017abelian}. See also \cite{mihaliktschnatz2009visualcox} for a similar result concerning Coxeter groups.
\begin{prop*}
One can visually determine the JSJ tree of cylinders of a RAAG $A(\Gamma)$, i.e. given the graph $\Gamma$, one can write down a graph of groups $C(\Gamma)$  whose Bass-Serre tree is the JSJ tree of cylinders of $A(\Gamma)$. Every vertex and edge group of $C(\Gamma)$ is itself a RAAG.
\end{prop*}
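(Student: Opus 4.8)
The plan is to realise $C(\Gamma)$ as the tree of cylinders of Clay's combinatorial JSJ splitting, verifying at each stage that the groups produced are special subgroups $A(\Lambda)$ carried by induced subgraphs $\Lambda\subseteq\Gamma$. First I would take from \cite{clay14raagsplit} the JSJ decomposition $D$ of $A(\Gamma)$ over infinite cyclic subgroups and let $T$ be its Bass--Serre tree. Clay's description is already visual: the relevant cyclic splittings come from separating configurations of vertices (roughly, vertices $v$ whose link disconnects $\Gamma$), the edge groups are infinite cyclic subgroups $\langle v\rangle$ generated by vertices of $\Gamma$, and the vertex groups are special subgroups $A(\Lambda)$ carried by the induced subgraphs $\Lambda$ obtained by cutting $\Gamma$ along these configurations. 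Since $A(\Gamma)$ is torsion-free, its JSJ over $\mathbb Z$ and over two-ended subgroups coincide, so $T$ is admissible for the commensurability relation.

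Next I would apply the tree of cylinders construction of Guirardel and Levitt to $T$, using commensurability of two-ended subgroups as the admissible equivalence relation; the resulting tree $T_c$ is, by their work, canonical and is itself a JSJ tree of cylinders, so it suffices to identify it combinatorially. A cylinder is a commensurability class of edge stabilisers of $T$. Because each edge stabiliser is a maximal cyclic $\langle v\rangle$ and, in a RAAG, two maximal cyclic subgroups are commensurable only if they are equal, the cylinders are indexed by the $A(\Gamma)$-orbits of the separating vertices $v$. The stabiliser of such a cylinder is the commensurator $\mathrm{Comm}_{A(\Gamma)}(\langle v\rangle)$, and the key computation is that this commensurator equals the centraliser $A(\mathrm{st}(v))=\langle v\rangle\times A(\mathrm{lk}(v))$, which is again a special subgroup. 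This produces the cylinder-type vertices of $C(\Gamma)$ together with their vertex groups.

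It then remains to describe the non-cylinder vertices and the edges. The non-cylinder vertices of $T_c$ are exactly the vertices of $T$ lying in at least two cylinders, and they retain their $T$-stabilisers, namely the special vertex groups $A(\Lambda)$ of $D$, with type (hanging or rigid) inherited from the JSJ. Each edge of $T_c$ joins a cylinder vertex to a non-cylinder vertex, so its stabiliser is an intersection $A(\mathrm{st}(v))\cap A(\Lambda)$. Here I would invoke the parabolic intersection property of RAAGs, $A(\Lambda_1)\cap A(\Lambda_2)=A(\Lambda_1\cap\Lambda_2)$ for induced subgraphs, to conclude that every edge group is the special subgroup $A(\mathrm{st}(v)\cap\Lambda)$. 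These need not be two-ended, consistent with the remark that edge stabilisers of the JSJ tree of cylinders need not be two-ended. Passing to the quotient graph of groups then yields $C(\Gamma)$, all of whose data is read directly off $\Gamma$.

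The main obstacle is the commensurator computation together with the bookkeeping that ties the abstractly defined tree of cylinders to this explicit picture. Concretely, I expect the delicate points to be: (i) establishing $\mathrm{Comm}_{A(\Gamma)}(\langle v\rangle)=A(\mathrm{st}(v))$, which rests on maximal cyclic subgroups of a RAAG being equal whenever commensurable, together with the computation of centralisers; (ii) checking that the intersections defining the edge groups are genuinely the claimed special subgroups and that the incidences in $T_c$ correspond to the inclusions $\mathrm{st}(v)\cap\Lambda\subseteq\mathrm{st}(v)$ and $\mathrm{st}(v)\cap\Lambda\subseteq\Lambda$; and (iii) confirming that the cylinder, hanging and rigid labelling survives the construction, so that $C(\Gamma)$ is a graph of groups presentation of the JSJ tree of cylinders rather than merely of some cyclic splitting. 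Once these are in place, that every vertex and edge group is a RAAG follows at once, since every group appearing is a special subgroup on an induced subgraph.
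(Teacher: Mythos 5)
Your overall route --- Clay's JSJ, then the Guirardel--Levitt tree of cylinders, cylinder stabilisers identified as $A(\mathrm{star}(v))$, edge groups as intersections of special subgroups --- is the same skeleton as the paper's (Definition \ref{def:gogjsj} through Proposition \ref{prop:jsjtoc}). The one genuinely different ingredient is your key lemma: you compute $\mathrm{Comm}_{A(\Gamma)}(\langle v\rangle)=A(\mathrm{star}(v))$ algebraically (commensurable conjugates of a standard cyclic subgroup are equal, plus the centraliser theorem), whereas the paper's Lemma \ref{lem:cylinder} reaches the same conclusion coarsely, identifying commensurability of edge stabilisers with parallelism of standard geodesics and quoting Huang's description of parallel sets. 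Both are valid; your algebraic version is more self-contained, while the paper's geometric formulation feeds directly into the coarse machinery it reuses later (e.g.\ Remark \ref{rem:peripheralstrucutreraags}).

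There is, however, a concrete gap between what your outline delivers and what the proposition asserts. You stop at the abstract statement that the non-cylinder vertices of $T_c$ are exactly the vertices of $T$ lying in at least two cylinders. The proposition requires this condition to be translated into a criterion visible in $\Gamma$, and that translation is where the combinatorial content of the result lives: a JSJ vertex $gA(\Gamma')$, for $\Gamma'$ a maximal biconnected subgraph, lies in at least two cylinders if and only if $\Gamma'$ contains at least two cut vertices, or contains exactly one cut vertex $v$ but is not contained in $\mathrm{star}(v)$ --- this is the set $C_1$ of Definition \ref{def:gogtoc}, and deriving it requires applying your commensurator computation to pairs of edges incident to a common vertex, which is precisely the argument in the proof of Proposition \ref{prop:jsjtoc}. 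Without this step, the quotient graph of groups one would naively write down --- one rigid vertex for \emph{every} maximal biconnected subgraph --- is wrong: in Figure \ref{fig:gogexamples}, the triangle at $w$ and the leaf edge at $v$ are each contained in the star of their unique cut vertex, so they are swallowed by cylinders and do not appear as rigid vertices of $C(\Gamma)$. A lesser bookkeeping point in the same spirit: Clay's actual JSJ is $\mathcal{G}'(\Gamma)$, which differs from the naive bipartite decomposition $\mathcal{G}(\Gamma)$ by HNN refinements at hanging $\mathbb{Z}^2$ vertices and collapses of valence-two cylindrical vertices; canonicity of the tree of cylinders (your appeal to Guirardel--Levitt, the paper's Lemma \ref{lem:collapsecyl}) lets either tree be used, but the ``lies in two cylinders'' analysis must be carried out on whichever tree you actually fixed, where those extra HNN edges sit inside cylinders.
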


\begin{figure}
\begin{tikzpicture} 
\draw (0,0) -- (1,0) -- (2,0) -- (3,0) -- (4,0);
\draw (3.5,-0.5) -- (3.5,0.5) -- (4,0) -- (3.5,-0.5) -- (3,0) -- (3.5,0.5);
\filldraw[fill=black] (0,0) circle [radius=0.04];
\filldraw[fill=black] (1,0) circle [radius=0.04];
\filldraw[fill=black] (2,0) circle [radius=0.04];
\filldraw[fill=black] (3,0) circle [radius=0.04];
\filldraw[fill=black] (4,0) circle [radius=0.04];
\filldraw[fill=black] (3.5,0.5) circle [radius=0.04];
\filldraw[fill=black] (3.5,-0.5) circle [radius=0.04];
\draw (4.5,-0.7) -- (4.5,0.7);

\draw (5,0) -- (6,0) -- (7,0) -- (8,0) -- (9,0);
\draw (8.5,-0.5) -- (8.5,0.5) -- (9,0) -- (8.5,-0.5) -- (8,0) -- (8.5,0.5);
\draw (5,0) -- (5.5,0.5) -- (6,0);
\filldraw[fill=black] (5,0) circle [radius=0.04];
\filldraw[fill=black] (6,0) circle [radius=0.04];
\filldraw[fill=black] (7,0) circle [radius=0.04];
\filldraw[fill=black] (8,0) circle [radius=0.04];
\filldraw[fill=black] (9,0) circle [radius=0.04];
\filldraw[fill=black] (8.5,0.5) circle [radius=0.04];
\filldraw[fill=black] (8.5,-0.5) circle [radius=0.04];
\filldraw[fill=black] (5.5,0.5) circle [radius=0.04];
\draw (2,-0.5) node {$\Gamma_1$};
\draw (7,-0.5) node {$\Gamma_2$};
\end{tikzpicture}
\caption{The defining graph of two RAAGs that don't have weakly equivalent JSJ tree of cylinders, hence are not quasi-isometric. This example is also considered in \cite{behrstock2017quasiflats}.}\label{fig:bhsgraphs}
\end{figure}
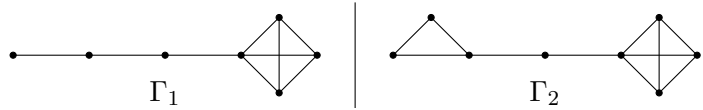

Proposition \ref{prop:relqicyl} gives a criterion to determine whether cylindrical vertex stabilizers in the JSJ tree of cylinders of a RAAG  are relatively quasi-isometric. Along with Proposition \ref{prop:equivbijec}, this gives a powerful method that is frequently able to demonstrate that two RAAGs have JSJ trees of cylinders that are not weakly equivalent, and hence the RAAGs are not quasi-isometric. 

We are not aware of such a statement being used in the literature concerning quasi-isometries of RAAGs. For example, the RAAGs considered in the introduction to \cite{behrstock2017quasiflats}, as well as similar RAAGs containing cut vertices, can be shown  not to be quasi-isometric using this method. 

Indeed, consider the RAAGs with defining graphs shown in Figure \ref{fig:bhsgraphs}, which were considered in a preliminary version of \cite{behrstock2017quasiflats}. The JSJ tree of cylinders decomposition can be described using the graphs of groups $C(\Gamma_1)$ and $C(\Gamma_2)$, as in Definition \ref{def:gogtoc}. We note that $C(\Gamma_2)$ has a cylinder stabilizer isomorphic to $\mathbb{Z}\times (\mathbb{Z}^2*\mathbb{Z})$. This can easily be seen not be quasi-isometric to any cylinder stabilizer of $C(\Gamma_1)$ --- the details of this  follow from Section \ref{sec:cyl}. Thus the RAAGs $A(\Gamma_1)$ and $A(\Gamma_2)$ cannot have weakly equivalent JSJ trees of cylinders, hence are not quasi-isometric.

Given a class $\mathcal{C}$ of RAAGs, we let $\mathcal{J}(\mathcal{C})$ denote the class of one-ended RAAGs whose JSJ trees of cylinders have rigid vertex stabilizers in the class $\mathcal{C}$.
 \begin{figure}
    \begin{tikzpicture}[semithick,scale=1]

      \draw (0, 0) -- (0.866025404, 0.5) -- (0.866025404, -0.5) -- (0,0);
      \draw (0, 0) -- (-0.866025404, 0.5) -- (-0.866025404, -0.5) -- (0,0);
      \filldraw[fill=black] (0,0) circle [radius=0.04];
      \filldraw[fill=black] (0.866025404, 0.5) circle [radius=0.04];
      \filldraw[fill=black] (-0.866025404, 0.5) circle [radius=0.04];
      \filldraw[fill=black] (0.866025404, -0.5) circle [radius=0.04];
      \filldraw[fill=black] (-0.866025404, -0.5) circle [radius=0.04];
      \draw (-0.866025404, 0.5) -- (-0.866025404-0.984807753, 0.5+0.173648178);
      \draw (-0.866025404, 0.5) -- (-0.866025404+0.342020143, 0.5+0.939692621);
      \draw (-0.866025404, -0.5) -- (-0.866025404-0.5, -0.5-0.866025404);
      \filldraw[fill=black] (-0.866025404-0.984807753, 0.5+0.173648178) circle [radius=0.04];
      \filldraw[fill=black] (-0.866025404+0.342020143, 0.5+0.939692621) circle [radius=0.04];
      \filldraw[fill=black] (-0.866025404-0.5, -0.5-0.866025404) circle [radius=0.04];
      \filldraw[fill=black] (0.866025404, 0.5) circle [radius=0.04];
      \draw (0.866025404, 0.5) -- (0.866025404+0.866025404, 0.5+0.5) -- (0.866025404, 0.5+1) -- (0.866025404, 0.5);
      \draw (0.866025404, -0.5) -- (0.866025404+0.866025404, -0.5-0.5) -- (0.866025404, -0.5-1) -- (0.866025404, -0.5);
      \draw (0,0) -- (0,1);
      \filldraw[fill=black] (0,1) circle [radius=0.04];
      \filldraw[fill=black] (0.866025404+0.866025404, 0.5+0.5) circle [radius=0.04];
      \filldraw[fill=black] (0.866025404, 0.5+1) circle [radius=0.04];
      \filldraw[fill=black] (0.866025404+0.866025404, -0.5-0.5) circle [radius=0.04];
      \filldraw[fill=black] (0.866025404, -0.5-1) circle [radius=0.04];
      \draw (0.866025404+0.866025404, -0.5-0.5) -- (0.866025404+0.866025404+1, -0.5-0.5);
      \draw (0.866025404+0.866025404, 0.5+0.5) -- (0.866025404+0.866025404+1, 0.5+0.5);  
      \draw (0.866025404, 0.5+1) -- (0.866025404-0.5, 0.5+1+0.866025404);
      \draw (0.866025404, -0.5-1) -- (0.866025404-0.5, -0.5-1-0.866025404);
      \filldraw[fill=black] (0.866025404+0.866025404+1, -0.5-0.5) circle [radius=0.04];
      \filldraw[fill=black] (0.866025404+0.866025404+1, 0.5+0.5) circle [radius=0.04];
      \filldraw[fill=black] (0.866025404-0.5, 0.5+1+0.866025404) circle [radius=0.04];
      \filldraw[fill=black] (0.866025404-0.5, -0.5-1-0.866025404) circle [radius=0.04];           
   \end{tikzpicture}
    \caption{A 3-clique tree-graded graph}\label{fig:tritree}
    \end{figure}
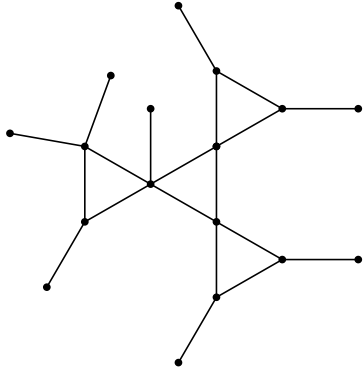

\begin{thm}\label{thm:abelweakequiv}
Let $\mathcal{C}_{\mathrm{ab}}$ denote the class of finitely generated free abelian groups. Then any RAAG quasi-isometric to a RAAG in  $\mathcal{J}(\mathcal{C}_{\mathrm{ab}})$ is also in $\mathcal{J}(\mathcal{C}_{\mathrm{ab}})$. Moreover, two RAAGs in $\mathcal{J}(\mathcal{C}_{\mathrm{ab}})$ are quasi-isometric if and only if they have weakly equivalent JSJ trees of cylinders.
\end{thm}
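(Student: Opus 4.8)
The plan is to treat the two assertions separately. The forward implication of the classification---that quasi-isometric groups have weakly equivalent JSJ trees of cylinders---is immediate from Theorem~\ref{thm:qiimpqwe}, and the same theorem drives the closure statement. The real work is in the converse, building an honest quasi-isometry out of a weak equivalence; here the free abelian hypothesis on rigid vertices is exactly what removes the obstruction that makes the converse fail in general.

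\emph{Closure.} Let $A(\Lambda)$ be a RAAG quasi-isometric to some $A(\Gamma)\in\mathcal{J}(\mathcal{C}_{\mathrm{ab}})$. Since one-endedness is a quasi-isometry invariant, $A(\Lambda)$ is one-ended, so its JSJ tree of cylinders is defined and, by the visual description, all of its vertex groups are themselves RAAGs. By Theorem~\ref{thm:qiimpqwe} there is a weak equivalence $\chi$ between the two trees of cylinders; as $\chi$ preserves vertex type and gives a relative quasi-isometry at each vertex, every rigid vertex stabilizer of $A(\Lambda)$ is quasi-isometric to a rigid vertex stabilizer of $A(\Gamma)$, i.e.\ to some $\mathbb{Z}^n$. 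A RAAG quasi-isometric to $\mathbb{Z}^n$ has polynomial growth of degree $n$; since a RAAG has polynomial growth only when its defining graph is a clique, such a RAAG is free abelian, and comparing growth degrees gives $\mathbb{Z}^n$. Thus the rigid vertex stabilizers of $A(\Lambda)$ are finitely generated free abelian, so $A(\Lambda)\in\mathcal{J}(\mathcal{C}_{\mathrm{ab}})$.

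\emph{From weak equivalence to a quasi-isometry.} Given a weak equivalence $\chi\colon T\to T'$ between two groups in $\mathcal{J}(\mathcal{C}_{\mathrm{ab}})$, I would model each group as a tree of spaces over its tree of cylinders, with vertex spaces the vertex stabilizers and edge spaces the edge stabilizers attached along the peripheral cosets, and then assemble the vertex-level relative quasi-isometries supplied by $\chi$ into a single quasi-isometry of the total spaces. The only obstruction to such an assembly is that relative quasi-isometries on adjacent vertices need only agree coarsely \emph{as sets of peripheral cosets}; to glue them one must arrange that, along each common edge space, the two maps agree up to bounded additive error, which in particular forces their multiplicative stretch along that edge to coincide. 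Encoding these gluings is precisely the content of Proposition~\ref{prop:equivbijec}, which presents the weak equivalence as an isomorphism of quotient graphs of groups, so it suffices to fix compatible maps on a fundamental domain and extend equivariantly.

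\emph{Where the abelian hypothesis does the work, and the main obstacle.} For a cylindrical vertex, Proposition~\ref{prop:relqicyl} together with the explicit product structure of cylinder stabilizers lets me choose the relative quasi-isometry to split as a product respecting the cylinder direction and the edge directions with prescribed stretches; any hanging vertices carry analogous intrinsic flexibility. The delicate point is the rigid vertices. There the stabilizer is some $\mathbb{Z}^n$, and the incident edge groups cut out a finite family of peripheral directions; a relative quasi-isometry has a well-defined asymptotic stretch in each such direction, and it is exactly the mismatch of these stretches that obstructs gluing and, in the non-abelian case, is recorded by the stretch-factor invariants. The crucial feature of $\mathbb{Z}^n$ is that it admits self-quasi-isometries close to diagonal linear maps realizing arbitrary independent positive rescalings of these directions; hence whatever stretches the adjacent pieces demand can be matched at the abelian vertex, and the stretch-factor obstruction becomes vacuous. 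I expect the main obstacle to be making all of these local rescalings \emph{simultaneously} consistent around the whole tree, so that the per-edge matchings compose coherently. I would carry this out by a careful choice of orbit representatives via Proposition~\ref{prop:equivbijec}, and then invoke the standard principle that uniformly controlled, edge-compatible vertex quasi-isometries of a tree of spaces assemble into a quasi-isometry of the total space, yielding the desired quasi-isometry between the two RAAGs.
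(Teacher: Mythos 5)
Your closure argument and forward implication coincide with the paper's: Theorem \ref{thm:jsjtocqi} (the precise form of Theorem \ref{thm:qiimpqwe}) plus quasi-isometric rigidity of $\mathbb{Z}^n$, with your growth argument a legitimate substitute for the paper's citation of Gromov's theorem. Your plan for the converse --- glue vertex-level relative quasi-isometries over the tree of cylinders via Proposition \ref{prop:qitree}, with the flexibility of $\mathbb{Z}^n$ (independent diagonal rescalings) killing the stretch obstruction at rigid vertices --- is also the paper's strategy, but the paper packages it differently: free abelian groups are dovetail (Proposition \ref{prop:cliquesystem}), every standard geodesic in $\mathbb{Z}^n$ is an $\mathcal{F}$-geodesic so there are no $\mathcal{R}$-edges and the embellished decoration equals the na{\"\i}ve one, and then Theorem \ref{thm:mainthm} applies verbatim. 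In effect you are re-deriving the proof of Theorem \ref{thm:mainthm} rather than invoking it.

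In that re-derivation there is a genuine gap, and it sits exactly where you claim things are easy. You assert that Proposition \ref{prop:relqicyl} together with the product structure of a cylinder ``lets me choose the relative quasi-isometry to split as a product respecting the cylinder direction and the edge directions with prescribed stretches.'' But what must be arranged at a cylinder $v$ is far more than matching stretches: for each of the \emph{infinitely many} incident edges $e$, the map on $X_v=\mathbb{E}^1\times\Sigma_v$ must restrict, up to uniformly bounded error, to the full map already chosen on the rigid side, which by Lemma \ref{lem:almostprod} has the form $(t,y)\mapsto(B t+\gamma_e(y),\psi_e(y))$ on $\mathbb{E}^1\times A_e$. Proposition \ref{prop:relqicyl} is only an existence statement; it gives no control over these restrictions. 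Encoding infinitely many prescribed edge-space maps into a single quasi-isometry of $\mathbb{E}^1\times\Sigma_v$, with $\Sigma_v$ an infinite-ended free product of graphs, is the paper's Lemma \ref{lem:constructcylqi} --- flagged there as the most involved step --- and it needs the moves machinery of Section \ref{sec:freeproduct} (Lemma \ref{lem:qilink}, type I/II/III moves, uniform local finiteness) together with Lemma \ref{lem:constructqicyl}. By contrast, the global consistency ``around the whole tree'' that you single out as the main obstacle is comparatively routine once that lemma is available: one works outward from a base cylinder as in the proof of Theorem \ref{thm:mainthm}, and in the abelian case there is no consistency constraint at all (the paper simply sets all cylinder stretches $\lambda_{v,w}=1$). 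A further small inaccuracy: a relative quasi-isometry of $\mathbb{Z}^n$ does \emph{not} have a well-defined asymptotic stretch in each peripheral direction (a coordinate factor can be an arbitrary quasi-isometry of $\mathbb{Z}$, e.g.\ with different slopes on its two ends); this is precisely what ``every standard geodesic in $\mathbb{Z}^n$ is an $\mathcal{F}$-geodesic'' means, and it is why the rigid-vertex maps must be \emph{chosen} affine-like (Corollary \ref{cor:admissible}) rather than deduced to be so.
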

We note that $\mathcal{J}(\mathcal{C}_{\mathrm{ab}})$ includes the RAAGs whose defining graphs are shown in Figure \ref{fig:bhsgraphs}, as well as tree RAAGs considered in \cite{behrstock2008qigraphman}.
As a sample application of Theorem \ref{thm:abelweakequiv}, we give a complete quasi-isometry classification of the following class of RAAGs.

\begin{defn*} Let  $\Gamma$ be a finite, connected, simplicial graph,  and let $C_0$  denote the set of cut vertices of $\Gamma$. We say a subgraph of $\Gamma$ is \emph{biconnected} if it is connected and has no cut vertex. For $n\geq 2$, we say that $\Gamma$ is  an \emph{$n$-clique tree-graded graph} if:
\begin{itemize}
\item every vertex of $\Gamma$ is either  in $C_0$ or has  valence one;
\item $\Gamma$ has diameter at least three;
\item each  maximal biconnected subgraph is either an $n$-clique, all of whose vertices are in $C_0$, or a 2-clique containing exactly one vertex in $C_0$.
\end{itemize}
\end{defn*}

A simplicial graph is a  2-clique tree-graded graph if and only if it is a tree of diameter at least three.  An example of a 3-clique tree-graded graph is shown in Figure \ref{fig:tritree}. 
We say that $A(\Gamma)$ is an  \emph{$n$-clique tree-graded RAAG} whenever $\Gamma$ is an  $n$-clique tree-graded graph.
The following theorem generalises a result of Behrstock-Neumann \cite{behrstock2008qigraphman}, who prove the $n=2$ case.

\begin{restatable*}{thm}{ncliquetreeraags}\label{thm:ncliquetreeraags}
Fix some  $n\geq 2$. All $n$-clique tree-graded RAAGs are quasi-isometric to one another, and every RAAG quasi-isometric to an $n$-clique tree-graded RAAG is itself an $n$-clique tree-graded RAAG.
\end{restatable*}

Unfortunately, there exist RAAGs that are not quasi-isometric but have JSJ trees of cylinders that are weakly equivalent. The defining graphs of two such RAAGs are shown in Figure \ref{fig:raagsnotqistretchintro}. 
Both maximal biconnected subgraphs of $\Gamma$ are pentagons $P$, whereas one maximal biconnected subgraph of $\Lambda$ is a pentagon and the other is $P^*$, the double of a pentagon along the star of the vertex $v$. 

A \emph{standard geodesic} in a RAAG $A(\Gamma)$ is a geodesic in the Cayley graph of $A(\Gamma)$ (with respect to the standard generating set) whose vertex set is some coset $g\langle v \rangle $ for some $v\in V\Gamma$.
It can be seen that $A(P^*)$ is an index two subgroup of $A(P)$. Moreover, any relative quasi-isometry from $A(P)$ to $A(P^*)$  shrinks distances a factor of two along the standard geodesic corresponding to $\langle v\rangle$. In contrast, no quasi-isometry $A(P)\rightarrow A(P)$ can shrink distances by a factor of two along a standard geodesic.
We thus deduce that $A(\Gamma)$ and $A(\Lambda)$ cannot be quasi-isometric. 

\begin{figure}[ht!]
    \begin{tikzpicture}[semithick,scale=1.5]

\draw (0+0, 0) -- (0+-0.588, 0.809) -- (0+-1.54,  0.500) -- (0+-1.54, -0.500) -- (0+-0.588, -0.809) -- (0+0,0);
      \filldraw[fill=black] (0+0,0) circle [radius=0.04];
      \filldraw[fill=black] (0+-0.588, 0.809) circle [radius=0.04];
      \filldraw[fill=black] (0+-1.54, 0.500) circle [radius=0.04];
      \filldraw[fill=black] (0+-1.54, -0.500) circle [radius=0.04];
      \filldraw[fill=black] (0+-0.588, -0.809) circle [radius=0.04];
      \draw (0+0, 0) -- (0+0.588, 0.809) -- (0+1.54,  0.500) -- (0+1.54, -0.500) -- (0+0.588, -0.809) -- (0+0,0);
      \filldraw[fill=black] (0+0.588, 0.809) circle [radius=0.04];
      \filldraw[fill=black] (0+1.54, 0.500) circle [radius=0.04];
      \filldraw[fill=black] (0+1.54, -0.500) circle [radius=0.04];
      \filldraw[fill=black] (0+0.588, -0.809) circle [radius=0.04];     
	\draw (-0.2,0) node {$v$};
      
     \draw (4+0, 0) -- (4+-0.588, 0.809) -- (4+-1.54,  0.500) -- (4+-1.54, -0.500) -- (4+-0.588, -0.809) -- (4+0,0);
      \filldraw[fill=black] (4+0,0) circle [radius=0.04];
      \filldraw[fill=black] (4+-0.588, 0.809) circle [radius=0.04];
      \filldraw[fill=black] (4+-1.54, 0.500) circle [radius=0.04];
      \filldraw[fill=black] (4+-1.54, -0.500) circle [radius=0.04];
      \filldraw[fill=black] (4+-0.588, -0.809) circle [radius=0.04];
      \draw (4+0, 0) -- (4+0.588, 0.809) -- (4+1.54,  0.500) -- (4+1.54, -0.500) -- (4+0.588, -0.809) -- (4+0,0);
      \filldraw[fill=black] (4+0.588, 0.809) circle [radius=0.04];
      \filldraw[fill=black] (4+1.54, 0.500) circle [radius=0.04];
      \filldraw[fill=black] (4+1.54, -0.500) circle [radius=0.04];
      \filldraw[fill=black] (4+0.588, -0.809) circle [radius=0.04];     
      \draw (4+0.588, 0.809) -- (4+1.54+0.5,  0.600) -- (4+1.54+0.5, -0.600) -- (4+0.588, -0.809);
      \filldraw[fill=black] (4+1.54+0.5,  0.600) circle [radius=0.04];
      \filldraw[fill=black] (4+1.54+0.5, -0.600) circle [radius=0.04];   
\draw (2,-1.5) -- (2,1);      	
	\draw (4-0.2,0) node {$v$};
      \draw (0,-1.25) node {$\Gamma$};
      \draw (4,-1.25) node {$\Lambda$};      
   \end{tikzpicture}
    \caption{The defining graphs of RAAGs that are not quasi-isometric}\label{fig:raagsnotqistretchintro}
    \end{figure}
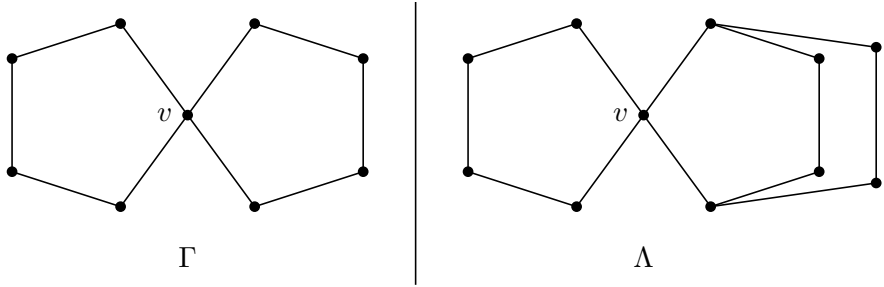

In Section \ref{sec:stretch}, we associate \emph{stretch factors} to certain standard geodesics.   These are similar to stretch factors of \cite{cashenmartin2017quasi}, and are well-defined on the subclass of standard geodesics, called \emph{rigid geodesics}, which exhibit the sort of rigidity phenonmenon considered above. If a RAAG has finite outer automorphism group, then work of Huang \cite{huang2014quasi} demonstrates that every standard geodesic is rigid. This actually holds for a much larger class of RAAGs, namely RAAGs of type II with trivial centre \cite{huang2015quasi}.

We say that two  JSJ trees of cylinders of one-ended RAAGs,  $T$ and $T'$, are \emph{equivalent} if there is a weak equivalence $\chi:T\rightarrow T'$ that preserves stretch factors of rigid geodesics.
\begin{thm}[See Corollary \ref{cor:embeldecgeom}]\label{thm:qiimpequivintro}
If two one-ended RAAGS are quasi-isometric, then their JSJ trees of cylinders are equivalent.
\end{thm}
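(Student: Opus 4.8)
The plan is to promote the weak equivalence furnished by Theorem~\ref{thm:qiimpqwe} to one that additionally matches stretch factors, by tracking the original quasi-isometry rather than merely invoking the existence of relative quasi-isometries on vertex stabilizers. Let $f\colon A(\Gamma)\to A(\Lambda)$ be a quasi-isometry. Theorem~\ref{thm:qiimpqwe} produces a weak equivalence $\chi\colon T\to T'$ between the JSJ trees of cylinders; I would show that the particular $\chi$ arising from $f$ in fact preserves the stretch factor of every rigid geodesic, which is exactly what upgrades a weak equivalence to an equivalence.

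First I would record, using the coarse Bass--Serre theory underlying Theorem~\ref{thm:qiimpqwe}, that $f$ coarsely respects the tree-of-spaces structure: it realizes $\chi$ on vertices, carries cylinder spaces to cylinder spaces, and restricts, up to bounded error, to relative quasi-isometries $f_v\colon G_v\to G'_{\chi(v)}$ between corresponding vertex stabilizers. The central observation is that every rigid geodesic $\gamma$ in a rigid vertex group $G_v$ is the cyclic direction of an incident cylinder: by construction of the tree of cylinders, the edge group joining $G_v$ to its neighbouring cylinder vertex is commensurable with $\langle\gamma\rangle$. Consequently $f$ scales $\gamma$ and the canonical generator of that cylinder by the same asymptotic factor, since the two lie at finite Hausdorff distance after passing to a common finite-index subgroup.

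Next I would compare this with the intrinsic definition of the stretch factor from Section~\ref{sec:stretch}. The stretch factor $s_\Gamma(\gamma)$ is defined via the commensurability ratio between $\langle\gamma\rangle$ and the primitive generator of its cylinder, computed inside $A(\Gamma)$, and similarly $s_\Lambda(\chi(\gamma))$ inside $A(\Lambda)$. Writing $t$ for the asymptotic factor by which $f$ scales $\gamma$ and $c$ for the factor by which it scales the cylinder generator, one obtains $s_\Lambda(\chi(\gamma)) = (t/c)\,s_\Gamma(\gamma)$. The commensurability of $\gamma$ with its cylinder forces $t=c$, whence the two stretch factors agree.

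The main obstacle is making the asymptotic factors $t$ and $c$ into well-defined genuine numbers, rather than quantities pinned down only up to the multiplicative quasi-isometry constant; only then is the identity $t=c$ meaningful. This is precisely where the rigidity of the vertex groups enters: for the relevant RAAGs---those of type~II with trivial centre---the work of Huang \cite{huang2014quasi}, \cite{huang2015quasi} shows that no self-quasi-isometry of a rigid vertex group can rescale a standard geodesic, which is what guarantees that the stretch along a rigid geodesic is a well-defined invariant. The failure of this for general standard geodesics is exactly why one restricts to rigid ones. Once well-definedness is secured, the equality of stretch factors across each edge assembles, via the consistency of the single map $f$ over the tree of cylinders, into the assertion that $\chi$ is an equivalence; this is the content of Corollary~\ref{cor:embeldecgeom}, and the mechanism is illustrated by the obstruction to a quasi-isometry $A(\Gamma)\to A(\Lambda)$ in Figure~\ref{fig:raagsnotqistretchintro}, where the mismatched stretch factors on the two rigid pieces force $t\neq c$ and hence rule out any such $f$.
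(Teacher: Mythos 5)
Your overall strategy---run the weak-equivalence theorem on the given quasi-isometry $f$ and then show the induced tree isomorphism respects stretch factors, using the fact that a rigid geodesic attached to a cylinder is parallel to that cylinder's cyclic direction---is the right mechanism, and it is essentially the mechanism of the paper. But two of your key assertions are wrong, and the second one is fatal. First, your ``intrinsic'' definition of the stretch factor is not the paper's and cannot serve: there is no nontrivial ``commensurability ratio between $\langle\gamma\rangle$ and the primitive generator of its cylinder computed inside $A(\Gamma)$,'' because the rigid geodesic associated to an edge \emph{is} parallel to the $\mathbb{E}^1$-factor of the cylindrical decomposition $X_v=\mathbb{E}^1\times Y_v$ (Definition \ref{defn:cyldecomp}, Remark \ref{rem:peripheralstrucutreraags}), so that ratio is identically $1$. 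In the paper (Definition \ref{defn:stretch}) the stretch factor is \emph{extrinsic}: one fixes a model space $(X(\Gamma_Q),l_Q)$ for each based quasi-isometry class and sets $\mathrm{str}(\Gamma',l):=\mathrm{str}(h,l)$ for a based quasi-isometry $h$ to the model; it is well defined only because $l$ is an $\mathcal{R}$-geodesic, and it depends on the choice of model. (Relatedly, your justification that the edge group is ``commensurable with $\langle\gamma\rangle$'' is false: edge stabilizers of the RAAG tree of cylinders are one-ended, of the form $\mathbb{Z}\times A(\mathrm{lk}^{\Gamma'}(v))$, a point the paper emphasizes repeatedly.)

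Second, and fatally, your conclusion that \emph{absolute} stretch factors agree, $s_\Lambda(\chi(\gamma))=s_\Gamma(\gamma)$, is false. Let $P$ be a pentagon, $P^*$ its double along $\mathrm{star}(v)$, let $\Gamma$ be two copies of $P$ glued at $v$, and let $\Lambda''$ be two copies of $P^*$ glued at $v$. Then $A(\Lambda'')$ is the kernel of the map $A(\Gamma)\to\mathbb{Z}_2$ sending $v\mapsto 1$ and all other generators to $0$, hence has index two in $A(\Gamma)$, so the two groups are quasi-isometric; yet, taking $(X(P),l_v)$ as the model, every $\mathcal{R}$-edge of $C(\Gamma)$ has stretch factor $1$ while every $\mathcal{R}$-edge of $C(\Lambda'')$ has stretch factor $2$ (cf.\ Example \ref{exmp:stretch}). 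What your parallelism observation ($t=c$) actually proves is $\mathrm{str}(f,l)=\mathrm{str}(f,k)$ for the geodesics $l,k$ of two $\mathcal{R}$-edges $e,e'$ incident to the same cylinder; combined with $\mathrm{str}(f,l)=\mathrm{str}(e)/\mathrm{str}(f_*e)$, this yields only $\mathrm{str}(e)/\mathrm{str}(e')=\mathrm{str}(f_*e)/\mathrm{str}(f_*e')$, i.e.\ preservation of the \emph{ratios} at each cylinder, not of the values themselves. That is precisely why the equivalence of the theorem is formalized through the embellished decoration (Definition \ref{defn:initialdecoration}), which records \emph{normalized relative} stretch factors, and it is exactly the content of Lemma \ref{lem:relstrqi}, from which Corollary \ref{cor:embeldecgeom} follows. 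Your argument becomes correct once ``equal stretch factors'' is everywhere replaced by ``equal relative stretch factors at each cylinder''; as written, it asserts an invariance that quasi-isometries do not enjoy.
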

This can be used to distinguish quasi-isometry classes of RAAGs that have weakly equivalent JSJ trees of cylinders, such  as the RAAGs $A(\Gamma)$ and $A(\Lambda)$ considered above. 
 
Our main result is a converse to Theorem \ref{thm:qiimpequivintro}  for a very large class of RAAGs. To motivate this class, we observe one can define a quasi-isometry $\mathbb{Z}^n\rightarrow \mathbb{Z}^n$ as the product of $n$ arbitrary  homotheties. Thus one can have arbitrary stretching on  standard geodesics in $\mathbb{Z}^n$.  We call such standard geodesics \emph{flexible}.  A \emph{dovetail  RAAG} is one in which every standard geodesic is either very rigid or very flexible in a particularly strong sense. This dichotomy between rigidity and flexibility is fairly typical. 

 We show that  the class  of dovetail RAAGs includes finitely generated free and free abelian groups, tree RAAGs, RAAGs with finite outer automorphism group and more generally, RAAGs of type II. It is closed under taking free products, direct products and amalgamating  along the infinite cyclic subgroup defined by a standard generator. In particular,
we do not have any examples of RAAGs that are not dovetail, which motivates the following question:
\begin{ques}Is every RAAG dovetail?\label{ques:univsysintro}
\end{ques} 

To prove a partial converse to Theorem \ref{thm:qiimpequivintro}, we want to promote an equivalence of JSJ trees of cylinders to a quasi-isometry of RAAGs by gluing together quasi-isometries of vertex spaces that agree on common edge spaces. This can always be done if vertex stabilizers are in $\mathcal{D}$, the class of dovetail RAAGs.  Indeed, this motivates the name \emph{dovetail} since these are RAAGs that can be made to fit together nicely.

\begin{thm}[See Theorem \ref{thm:mainthm}]\label{thm:mainthmintro}
Two RAAGs in $\mathcal{J}(\mathcal{D})$  are quasi-isometric if and only if they have equivalent JSJ trees of cylinders.
\end{thm}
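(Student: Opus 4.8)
The forward implication is immediate from Theorem~\ref{thm:qiimpequivintro}: any quasi-isometry between one-ended RAAGs induces, via Papasoglu's Theorem~\ref{thm:qiimpqwe} together with the invariance of stretch factors, an equivalence of their JSJ trees of cylinders. So the content of the statement lies in the converse, and I would devote essentially all of the proof to it.

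For the converse, suppose $A(\Gamma), A(\Lambda) \in \mathcal{J}(\mathcal{D})$ have equivalent JSJ trees of cylinders, witnessed by a weak equivalence $\chi \colon T \to T'$ that preserves the stretch factors of rigid geodesics. The plan is to realise each RAAG as a tree of spaces over its JSJ tree of cylinders --- with vertex spaces cocompact models for the vertex stabilizers and edge spaces cocompact models for the edge stabilizers, glued according to the incidence in $T$ (respectively $T'$) --- and then to build a global quasi-isometry by assembling quasi-isometries of the vertex spaces that are mutually compatible along the edge spaces. The weak equivalence already supplies, for every vertex $v\in VT$, a relative quasi-isometry between the stabilizer of $v$ and the stabilizer of $\chi(v)$ whose induced map on the peripheral structure is prescribed by $\chi|_{\mathrm{lk}(v)}$; the cylindrical vertices are handled by Proposition~\ref{prop:relqicyl}, the rigid vertices by the hypothesis that they lie in $\mathcal{D}$, and the hanging vertices by their flexibility.

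The heart of the argument is to promote these vertex-by-vertex relative quasi-isometries to maps that genuinely agree, up to bounded error, on the shared edge spaces, so that they dovetail into a single map on the whole tree of spaces. Each edge space carries a distinguished $\mathbb{Z}$-direction --- the standard geodesic along which the adjacent vertex quasi-isometries may stretch --- and the obstruction to gluing is precisely a mismatch of stretching across an edge. I would resolve this using the dichotomy built into the definition of $\mathcal{D}$: along a rigid geodesic the stretch factor is a quasi-isometry invariant, and since $\chi$ preserves stretch factors the two sides already stretch by the same amount; along a flexible geodesic the dovetail property grants the freedom to precompose with an arbitrary homothety in that direction, so the map can be rescaled to match whatever the neighbouring vertex demands. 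Carrying this out simultaneously for every edge --- making consistent choices of rescaling across the entire infinite tree while respecting the finitely many orbits of the two group actions --- is the main obstacle: adjusting a vertex map to fit one incident edge must not spoil its agreement on the others. The dovetail hypothesis is exactly what guarantees that a coherent system of such adjustments exists, and it is from this fitting-together that the class $\mathcal{D}$ derives its name.

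Finally, once the vertex quasi-isometries have been arranged to agree coarsely on all edge spaces, I would invoke a combination theorem for trees of spaces --- in the spirit of Cashen--Martin~\cite{cashenmartin2017quasi} and of the characterisation in Proposition~\ref{prop:equivbijec} --- to glue them into a quasi-isometry of the two total spaces. Since the total spaces are quasi-isometric to $A(\Gamma)$ and $A(\Lambda)$ respectively, this yields the sought quasi-isometry of RAAGs. The verification that the glued map is coarsely Lipschitz, coarsely surjective, and admits a coarse inverse is then a routine, if bookkeeping-heavy, consequence of the bounded agreement on edge spaces, controlled by the structure of the JSJ decomposition over two-ended subgroups.
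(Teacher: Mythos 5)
Your skeleton coincides with the paper's: realise both groups as trees of spaces over their JSJ trees of cylinders, choose vertex quasi-isometries that agree coarsely on shared edge spaces, and glue via Proposition \ref{prop:qitree}; the forward direction via Corollary \ref{cor:embeldecgeom} is also the same. However, there is a genuine gap at the step you yourself identify as the main obstacle, and your proposed resolution would fail. You assert that ``the dovetail hypothesis is exactly what guarantees that a coherent system of such adjustments exists.'' But membership in $\mathcal{J}(\mathcal{D})$ only makes the \emph{rigid} vertex stabilizers dovetail; cylindrical stabilizers, which have the form $\mathbb{Z}\times(A(A_1)*\dots*A(B_m))$, are not assumed to be dovetail, and it is precisely at the cylinders that compatibility is hardest: a single cylindrical vertex space must carry one quasi-isometry restricting, up to uniform error, to the prescribed maps on \emph{infinitely many} incident edge spaces simultaneously. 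The dovetail property gives you nothing there, and even where it does apply it only prescribes stretch factors on parallelism classes of $\mathcal{F}$-geodesics --- it does not produce agreement with full prescribed edge-space maps. The paper resolves this with Lemma \ref{lem:constructcylqi}, an explicit construction using the tree-of-graphs machinery of Section \ref{sec:freeproduct} (moves of type I, II, III and Lemma \ref{lem:qilink}): the cylinder space is written as $\mathbb{E}^1\times\Sigma_v$ with $\Sigma_v$ a free product of graphs, and the already-chosen maps on incident edge spaces are encoded, special subgraph by special subgraph, into maps $\psi\colon\Sigma_v\to\Sigma_w$ and $\gamma\colon\Sigma_v\to\mathbb{R}$. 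This is the most involved part of the paper's argument, and nothing in your proposal supplies a substitute for it.

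A second, related oversimplification: you treat the obstruction to gluing as ``precisely a mismatch of stretching'' in the distinguished $\mathbb{Z}$-direction. But each edge space has a cylindrical decomposition $\mathbb{E}^1\times Y_e$ (Definition \ref{defn:cyldecomp}), and the maps must also agree coarsely in the $Y_e$-directions. The paper needs Lemma \ref{lem:almostprod} to show that an admissible rigid-vertex map restricts on an edge space to a map coarsely of the product form $(t,y)\mapsto(B_e t+\gamma_e(y),\psi_e(y))$, Corollary \ref{cor:admissible} together with the flips and translations of Remark \ref{rem:flip} to normalise signs and basepoints, and then Lemma \ref{lem:constructcylqi} to build these $\psi_e$ and $\gamma_e$ into the cylinder map. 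Matching stretch factors --- which your appeal to rigidity of $\mathcal{R}$-geodesics and to the dovetail property at rigid vertices does correctly secure --- is necessary but far from sufficient. Finally, the consistent propagation across the infinite tree is achieved in the paper not by an abstract coherence principle but by an explicit outward induction from a base cylinder, where the final clause of Lemma \ref{lem:constructcylqi} lets each new cylinder map be built to agree with the one rigid-vertex map already fixed, and a uniformity argument (finitely many isometry types of vertex and edge spaces) bounds the constants so that Proposition \ref{prop:qitree} applies. (A minor slip: one-ended RAAGs have no hanging vertices in their JSJ trees of cylinders, so that case in your second paragraph is vacuous.)
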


We remark that the notions of weak equivalence and equivalence, as defined in this introduction, are special cases of a more general notion of equivalence; see Definition \ref{defn:equiv}.
We emphasise that even if RAAGs are not known to be contained in $\mathcal{J}(\mathcal{D})$, we can still apply Theorem \ref{thm:qiimpequivintro}. Thus the machinery  of this article can be frequently used to determine that RAAGs are \emph{not} quasi-isometric. The assumption that a RAAG is contained in $\mathcal{J}(\mathcal{D})$ is only necessary to obtain a complete quasi-isometry invariant.

If a RAAG splits over $\mathbb{Z}$ but is not contained in $\mathcal{J}(\mathcal{D})$, we expect our methods  can still be used to obtain a complete quasi-isometry invariant. One needs a complete understanding of just how rigid and flexible standard geodesics are; see Remark \ref{rem:ifunivfrfails}. However, since we have no examples of RAAGs that are not dovetail, we will not pursue this  matter further. 

Using Theorem \ref{thm:mainthmintro}, we are able to algorithmically decide if certain RAAGs are quasi-isometric once we  have a sufficiently detailed understanding of the relative quasi-isometries of the vertex spaces in their JSJ decomposition.
A sample theorem of this sort is the following:
\begin{restatable*}{thm}{algor}\label{thm:algor}
Let $\mathcal{C}$ be the class of RAAGs with JSJ decompositions whose rigid vertex groups are either free abelian or have finite outer automorphism group. There is an algorithm that, when given as input finite simplicial graphs $\Gamma$ and $\Lambda$ such that 
$A(\Gamma)\in \mathcal{C}$, determines whether $A(\Gamma)$ is quasi-isometric to $A(\Lambda)$.
\end{restatable*}

In contrast to the quasi-isometrically rigid classes of RAAGs considered in \cite{bestvina08raags}, \cite{huang2014quasi} and \cite{huang2015quasi}, there are several RAAGs to which Theorem \ref{thm:mainthmintro} can be applied that are quasi-isometric but not commensurable. In \cite{casals2019commensurability}, it is shown that there are infinitely many commensurability classes within the class of  tree RAAGs, all of which lie in a single quasi-isometry class.

The quasi-isometry classification of certain right-angled Coxeter groups by Dani--Thomas \cite{danithomas2017jsj} contains analogous results to this article, classifying certain right-anged Coxeter groups using Bowdtich's JSJ tree, which is just the JSJ tree of cylinders of a hyperbolic group.  We remark that although every right-angled Artin group is a finite index subgroup of a right-angled Coxeter group \cite{davis2000raagvsracg}, the groups considered in \cite{danithomas2017jsj} are one-ended and hyperbolic, so are not commensurable to any RAAG. Thus the main results of \cite{danithomas2017jsj} have no overlap with the results of this paper.

The methods developed in this article are of interest in their own right, and might be used in settings other than RAAGs. Studying the JSJ decomposition of finitely presented groups is a natural approach to their quasi-isometry classification, and the techniques developed here may be used when the conditions of \cite{cashenmartin2017quasi} are not known to hold. In particular, we do not assume that rigid  vertex groups (in the JSJ sense) are quasi-isometrically rigid, and we do not assume that cylinder stabilizers are two-ended.

We explain briefly why the quasi-isometry classification of the RAAGs that we consider present technical difficulties that do not occur in  \cite{bestvina08raags}, \cite{huang2014quasi} and \cite{huang2015quasi}. A typical strategy for classifying a class $\mathcal{G}$ of finitely generated groups up to quasi-isometry involves showing that they satisfy a form of (relative) quasi-isometic rigidity: two groups in $\mathcal{G}$ are quasi-isometric if and only if they are commensurable. When such a statement holds --- as is the case for the RAAGs considered in \cite{bestvina08raags}, \cite{huang2014quasi} and \cite{huang2015quasi} --- one does not need to explicitly construct quasi-isometries in order to demonstrate that two groups are quasi-isometric, since commensurable groups are necessarily quasi-isometric.
This strong form of relative quasi-isometric rigidity  doesn't hold (in general) for the RAAGs considered in this paper, as shown in \cite{casals2019commensurability}. We thus need to explicitly build quasi-isometries between RAAGs to demonstrate that they are quasi-isometric. This is also done for the RAAGs considered in \cite{behrstock2008qigraphman}  and \cite{behrstock2010highdimRAAG}.

\subsection*{Outline of the proof of Theorem \ref{thm:mainthmintro}} We briefly explain the main ingredients that we use to prove Theorem \ref{thm:mainthmintro}. In Definition \ref{def:gogtoc} we describe a graph of groups decomposition $C(\Gamma)$ of the RAAG $A(\Gamma)$, which can be easily read off from the defining graph $\Gamma$. Proposition \ref{prop:jsjtoc} shows  that $C(\Gamma)$ is the JSJ tree of cylinders decomposition of $A(\Gamma)$. Theorem \ref{thm:jsjtocqi} then states that two RAAGs that are quasi-isometric necessarily have weakly equivalent JSJ trees of cylinders. This motivates the following two questions, which we partially answer in the remainder of this article:
\begin{enumerate}
\item When do two RAAGs have weakly equivalent JSJ trees of cylinders?
\item Can we add extra data to the JSJ tree of cylinders to obtain a complete quasi-isometry invariant?
\end{enumerate}

By work of Cashen--Martin,  the first question is reduced to understanding  the relative quasi-isometry classification of vertex and edge groups in the JSJ tree of cylinders; see Section \ref{subsec:decoratedtrees} and \cite{cashenmartin2017quasi}. In Proposition \ref{prop:relqicyl} we give a necessary and sufficient condition for two cylindrical vertex groups in the JSJ tree of cylinders to be relatively quasi-isometric. The proof of Proposition \ref{prop:relqicyl} makes heavy use of the methods developed by Papasoglu--Whyte \cite{papasoglu2002quasi}, which we explain in Section \ref{sec:freeproduct}. Thus Proposition \ref{prop:relqicyl}  reduces the first question to understanding the relative quasi-isometry classification of  rigid vertex stabilizers in the JSJ tree of cylinders; see Proposition \ref{prop:solvqiprob}.

We now investigate the second question in Sections \ref{sec:stretch} and \ref{sec:constructqi}. As remarked earlier, the RAAGs whose defining graphs are shown in Figure \ref{fig:raagsnotqistretchintro} have weakly equivalent JSJ trees of cylinders but are not quasi-isometric. However, we can label certain edges in their JSJ trees of cylinders with extra data, namely the relative stretch factors, to obtain the \emph{embellished decoration}; see Definition \ref{defn:initialdecoration}. We show in Corollary \ref{cor:embeldecgeom}  that a quasi-isometry between two RAAGs preserves this embellished decoration. In particular, the two RAAGs whose defining graphs are shown in Figure \ref{fig:raagsnotqistretchintro} do \emph{not} have equivalent embellished  JSJ trees of cylinders.

We define the class of \emph{dovetail} RAAGs in Section \ref{sec:dovetail} and give many examples of RAAGs that are known to be dovetail. We use  the definition of  dovetail RAAG to choose quasi-isometries of vertex spaces satisfying the hypotheses of Proposition \ref{prop:qitree}. We then  apply Proposition \ref{prop:qitree} to  glue these quasi-isometries of vertex spaces together and construct a quasi-isometry between the ambient groups. The argument is quite delicate, and we make heavy use of the technology developed in Section \ref{sec:freeproduct}. The most involved part is Lemma \ref{lem:constructcylqi}, which   encodes quasi-isometries between incident edge spaces into a quasi-isometry between cylindrical vertex spaces.

\subsection*{Organization of the paper} Section \ref{sec:prelims} consists of preliminary results  on coarse geometry, JSJ trees of cylinders and RAAGs. We give an account of the parts of \cite{cashenmartin2017quasi} that we use.  
In Section \ref{sec:jsjtoc}  we give an explicit construction of the  JSJ tree of cylinders decomposition of a RAAG.
Section \ref{sec:freeproduct} contains technical results that allow one to construct quasi-isometries between infinite ended spaces, developing the methods of \cite{papasoglu2002quasi}. The technology in Section \ref{sec:freeproduct} is needed to prove Proposition \ref{prop:relqicyl} and Lemma \ref{lem:constructcylqi}.
In Section \ref{sec:cyl} we investigate the coarse geometry of cylindrical vertex stabilizers and give a necessary and sufficient for  cylindrical vertex groups to be relatively quasi-isometric. 

In Section \ref{sec:stretch} we elaborate on the above example of RAAGs that are  not quasi-isometric, yet have JSJ trees of cylinder that are weakly equivalent. We then introduce relative stretch factors, explaining  why they are well-defined and their geometric significance. We explain how to define the embellished decoration on the JSJ tree of cylinders, which  allows one to distinguish between certain RAAGs whose JSJ tree of cylinders are weakly equivalent, yet are not quasi-isometric.
In Section \ref{sec:constructqi}, we construct a quasi-isometry between RAAGs in $\mathcal{J}(\mathcal{D})$ that have equivalent JSJ trees of cylinders. This completes the proof of  Theorem \ref{thm:mainthmintro}.

In Section \ref{sec:algor} we investigate the algorithmic consequences of Theorem \ref{thm:mainthmintro}, and prove Theorem \ref{thm:algor}. In Section \ref{sec:ncliquetreeraags}, we prove Theorem \ref{thm:ncliquetreeraags}.
\subsection*{Acknowledgements}
I would like to thank Jingyin Huang for several helpful discussions, and my supervisor Panos Papasoglu for his advice and comments. I would also like to thank Chris Cashen for reading a preliminary version of this paper and offering suggestions.
\section{Preliminaries}\label{sec:prelims}
\subsection{Coarse Geometry}
Let $(X,d)$ be a metric space. For $x\in X$ and $\emptyset\neq A\subseteq X$, we let $d(x,A):=\inf \{d(a,x)\mid a\in A\}$.  
We define $N_r(A):=\{x\in X\mid d(x,A)\leq r\}$.
If  $A,B\subseteq X$ are non-empty, we define the \emph{Hausdorff distance} to be $$d_\mathrm{Haus}(A,B):=\inf\{r\geq 0\mid A\subseteq N_r(B) \textrm{ and } B\subseteq N_r(A)\}.$$ We are often only interested in whether $d_\mathrm{Haus}(A,B)$ is finite. In such a case, we simply say that $A$ and $B$ are \emph{coarsely equivalent}. Coarse equivalence is an equivalence relation among subsets of a given metric space. We let $[A]$ denote the coarse equivalence class containing $A$.
We say $N\subseteq X$ is an \emph{$(\epsilon,\delta)$-net} if $d(n,n')\geq \epsilon$ for all distinct $n,n'\in N$, and if for every $x\in X$, there exists some $n\in N$ such that $d(n,x)\leq \delta$. Such a net of $X$ is coarsely equivalent to $X$.

Suppose $(X,d_X)$ and $(Y,d_Y)$ are metric spaces and that $K\geq 1$ and $A\geq 0$. We say that $f:X\rightarrow Y$ is  \emph{$(K,A)$-coarse Lipschitz} if  for every $x,x'\in X$, $$d_Y(f(x),f(x'))\leq K d_X(x,x')+A.$$ We say that $f:X\rightarrow Y$ is a \emph{$(K,A)$-quasi-isometric embedding} if  for every $x,x'\in X$, $$\frac{1}{K}d_X(x,x')-A\leq d_Y(f(x),f(x'))\leq K d_X(x,x')+A.$$
If we also know that  for every $y\in Y$, there exists an $x\in X$ such that $d_Y(f(x),y)\leq A$, then we say that $f$ is a \emph{$(K,A)$-quasi-isometry}.  
Two metric spaces $X$ and $Y$  are \emph{quasi-isometric} if and only if there is a $(K,A)$-quasi-isometry $f:X\rightarrow Y$ for some $K\geq 1$ and $A\geq 0$. 
 
For $V\subseteq X$, we say that $f,g:V\rightarrow Y$   are  \emph{$A$-close} if $\sup_{v\in V}\{d_Y(g(v),f(v))\}\leq A$. We say that $f$ and $g$ are \emph{close} if they are $A$-close for some $A\geq 0$.

Every quasi-isometry $f:X\rightarrow Y$ has a \emph{coarse inverse} $g:Y\rightarrow X$ such that $g\circ f$ is close to $\mathrm{id}_X$ and $f\circ g$ is close to $\mathrm{id}_Y$. Such a $g$ is necessarily a quasi-isometry. Among the class of metric spaces, being quasi-isometric is an equivalence relation. We let $[[X]]$ denote the equivalence class consisting of all metric spaces quasi-isometric to $X$.
 
We remark that if $V,V'\subseteq X$ are coarsely equivalent and $f:X\rightarrow Y$ is a quasi-isometry, then $f(V)$ and $f(V')$ are coarsely equivalent.  Thus a quasi-isometry sends coarse equivalence classes to coarse equivalence classes.

We say that $X$ and $Y$ are \emph{bi-Lipschitz equivalent} if there exists a bijective $(K,0)$-quasi-isometric embedding from $X$ to $Y$ for some $K\geq 1$.  Any quasi-isometry $f:X\rightarrow Y$ can be factored through a bi-Lipschitz equivalence by passing to and from suitable nets. More precisely, for any quasi-isometry $f:X\rightarrow Y$, $f|_N$ is close to a bi-Lipschitz equivalence  $g:N\rightarrow M$ for suitably  chosen nets $N\subseteq X$ and $M\subseteq Y$.

We may think of a finitely generated group, equipped with the word metric with respect to some finite generating set, as a metric space in its own right. This is well defined up to quasi-isometry. See \cite{gromov1983infinite} for more details. The following elementary proposition illustrates this philosophy by  formulating an algebraic property of pairs of subgroups in terms of their geometry. We recall that two subgroups $K,H\leq G$ are said to be \emph{commensurable} if $K\cap H$ has finite index in both $K$ and $H$. 
\begin{prop}[{\cite[Corollary 2.14]{mosher2011quasiactions}}]\label{prop:sbgpcommen}
Let $G$ be a finitely generated group and  $H,K\leq G$ be finitely generated subgroups. Then $K$ and $H$ are coarsely equivalent  if and only if  $H$ and $K$  are commensurable.
\end{prop}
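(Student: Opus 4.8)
The plan is to prove both directions separately, with the algebraic-to-geometric direction being essentially a direct distance estimate and the geometric-to-algebraic direction requiring a counting/pigeonhole argument.

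\smallskip

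First I would prove the easy direction: if $H$ and $K$ are commensurable, then they are coarsely equivalent. Set $L := H \cap K$, which by assumption has finite index in both. Fix a finite generating set $S$ of $G$ and equip $G$ with the word metric $d$. Since $L$ has finite index in $H$, there is a finite set of coset representatives $\{h_1,\dots,h_m\}$ with $H = \bigcup_i h_i L$, so every element of $H$ lies within bounded distance (namely $\max_i d(e,h_i)$) of $L$; symmetrically $L \subseteq H$ trivially, so $d_{\mathrm{Haus}}(H,L)$ is finite. The same argument with $K$ gives that $d_{\mathrm{Haus}}(K,L)$ is finite, and the triangle inequality for Hausdorff distance yields $d_{\mathrm{Haus}}(H,K) < \infty$, i.e. $H$ and $K$ are coarsely equivalent.

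\smallskip

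The harder direction is the converse: suppose $d_{\mathrm{Haus}}(H,K) \leq r$ for some finite $r$; I must show $H \cap K$ has finite index in each. The key observation is that every element $h \in H$ lies within distance $r$ of $K$, so there exists $k \in K$ with $d(h,k) \leq r$, meaning $k^{-1}h$ has word length at most $r$. Thus $h \in k \cdot B(e,r)$ where $B(e,r)$ is the (finite) ball of radius $r$ in $G$. This says every element of $H$ is a product of an element of $K$ and one of finitely many short elements; in other words $H \subseteq K \cdot F$ where $F := H \cap B(e,r)$ is finite (I intersect with $H$ since I want representatives usable inside $H$, and I can arrange this since $h$ and the candidate can be adjusted). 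The main obstacle — and the step requiring care — is converting this coarse covering into a genuine finite-index statement: I would argue that the set of cosets $\{(H\cap K)x : x \in H\}$ is finite by showing that if $x,y \in H$ both lie in the same coset $k\,B(e,r)$ pattern, then $xy^{-1}$ lands in a bounded set intersected with $H$, which (being a subset of the finite ball) forces only finitely many double-coset or coset possibilities. Concretely, I expect to show $[H : H\cap K] < \infty$ by a pigeonhole argument on the finitely many elements of $B(e,2r) \cap H$, and by symmetry the same holds for $[K : H \cap K]$, giving commensurability.

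\smallskip

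The technical heart is making the pigeonhole rigorous: the naive ``$H \subseteq K\cdot(\text{finite})$'' does not immediately bound $[H:H\cap K]$ because the finite set need not lie in $H$. I would handle this by the standard trick of choosing, for each element $f$ of the finite ball $B(e,2r)$ that arises as $h^{-1}h'$ with $h,h' \in H$ within distance $2r$, a representative, and showing the map sending $h \in H$ to its nearest $K$-point partitions $H$ into finitely many classes each of which is a single $(H\cap K)$-coset. Alternatively — and this is likely the cleanest route — I would invoke that coarse equivalence of subgroups makes $H$ act on $K$ (up to bounded error) and extract finite index from the fact that a subgroup contained in a bounded neighborhood of another subgroup, within a finitely generated group, must intersect it in finite index; this is precisely the content being asserted, so I expect the proof in \cite{mosher2011quasiactions} to formalize the pigeonhole via the finiteness of balls and the group structure. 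Once finite index is established on both sides, commensurability of $H$ and $K$ follows by definition.
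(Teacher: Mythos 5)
First, a point of comparison: the paper offers no proof of this proposition at all --- it is imported verbatim from Mosher--Sageev--Whyte \cite[Corollary 2.14]{mosher2011quasiactions} --- so your proposal must stand as a free-standing argument. Your overall plan (coset representatives for the algebraic-to-geometric direction, pigeonhole on a finite ball for the converse) is the standard one, and the easy direction is fine modulo one slip: with the left-invariant word metric $d(x,y)=|x^{-1}y|$, the bound you claim for left cosets is false, since for $h=h_i\ell$ one has $d(h,\ell)=|\ell^{-1}h_i^{-1}\ell|$, a conjugate of $h_i^{-1}$ whose length is not controlled by $d(e,h_i)$. The repair is to decompose into right cosets $H=\bigcup_i L h_i$: then $h=\ell h_i$ gives $d(h,\ell)=|h_i|$, and your Hausdorff-distance conclusion follows.

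The genuine gap is in the converse, at exactly the step you flag as the technical heart, because none of the devices you propose works as stated. The map sending $h\in H$ to its nearest $K$-point has one class for each point of $K$ in its image --- infinitely many classes when $H$ is infinite, each of size at most $|B(e,r)|$ --- which is the opposite of ``finitely many classes''; and even within one class, if $h=kf$ and $h'=kf'$, then $h(h')^{-1}=kf(f')^{-1}k^{-1}$, which lies in $K$ only if $f(f')^{-1}$ does, so classes need not sit inside single $(H\cap K)$-cosets. Likewise $F:=H\cap B(e,r)$ cannot play the role of the finite set, since the offset $k^{-1}h$ lies in $B(e,r)$ but generally not in $H$ (you concede this a paragraph later); pigeonholing on $B(e,2r)\cap H$ goes nowhere because distinct coset representatives need not lie within $2r$ of one another; and the ``cleanest alternative'' you mention is circular, as you yourself admit. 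The correct device --- very close to what you wrote, but never actually written down --- is to pigeonhole on the offsets rather than the nearest points: for each $h\in H$ fix $k_h\in K$ with $|k_h^{-1}h|\le r$ and consider the map $H\to B(e,r)$, $h\mapsto k_h^{-1}h$. If two elements have the same image, $k_h^{-1}h=k_{h'}^{-1}h'$, then $h'h^{-1}=k_{h'}k_h^{-1}\in K$, and it lies in $H$ as well, so $h$ and $h'$ belong to the same right coset of $H\cap K$; hence $[H:H\cap K]\le |B(e,r)|<\infty$, and symmetrically for $[K:H\cap K]$. With this substitution and the right-coset fix above, your outline becomes a complete and correct proof.
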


Given $A,B\subseteq X$, we say that $A$ is \emph{coarsely contained} in $B$ if for some $r\geq 0$, $A\subseteq N_r(B)$. Given two coarse equivalence classes $[A]$ and $[B]$ in $X$, the \emph{coarse intersection} $[A]\cap [B]$ is defined to be $[N_r(A)\cap N_r(B)]$ for all sufficiently large $r$. This is not always a well defined coarse equivalence class, but it will be if $X$ is a group and $A$ and $B$ are coarsely equivalent to subgroups \cite[Lemma 2.2]{mosher2011quasiactions}.

\subsection{Trees of Spaces and Quasi-Isometries}
We first fix notation and terminology for graphs.  A graph $\Gamma$ consists of the set $V\Gamma$ of  \emph{vertices}, the set  $E\Gamma$ of \emph{oriented edges} and maps $\iota,\tau:E\Gamma\rightarrow V\Gamma$ which map an oriented edge to its initial and terminal vertices respectively. For every $e\in E\Gamma$, we let $\bar e$ denote the edge with opposite orientation to $e$. We call the pair $\{e,\bar e\}$ an \emph{unoriented edge}. A \emph{graph isomorphism} $\chi:\Gamma\rightarrow \Gamma'$ is map which bijectively sends vertices to vertices, edges to edges, and commutes with the functions $\iota$ and $\tau$. A graph is  \emph{simplicial} if it has no one edge loops and at most one unoriented edge  between any pair  of vertices.

A subgraph $\Gamma'\subseteq \Gamma$  is \emph{induced} if whenever vertices $v,w\in V \Gamma'$ are joined by an edge in $\Gamma$, they are joined by an edge of $\Gamma'$. An induced subgraph is uniquely defined by its vertex set. The \emph{link} $\mathrm{lk}(v)$ of a vertex $v$ is defined to be the induced subgraph whose vertex set consists of vertices adjacent to $v$. The \emph{star}  $\mathrm{star}(v)$ of a vertex $v$ is the induced subgraph whose vertex set consists of $v$ and all adjacent vertices. 
When ambiguous, we write $\mathrm{lk}^\Gamma(v)$ and $\mathrm{star}^\Gamma(v)$ to denote the link and star of some $v\in V\Gamma$.
We remark that these definitions of link and star are not standard.

We use the following notation for trees of spaces as in \cite{cashenmartin2017quasi}.
\begin{defn}
A tree of spaces $X:=X(T,\{X_v\}_{v\in VT},\{X_e\}_{e\in ET},\{\alpha_e\}_{e\in ET})$ consists of:
\begin{enumerate}
\item a simplicial tree $T$ known as the \emph{base tree};
\item a metric space $X_v$ for each vertex $v\in VT$ known as a  \emph{vertex space};
\item a subspace $X_e\subseteq X_{\iota e}$ for each oriented edge $e$ of $T$ known as an \emph{edge space};
\item maps $\alpha_e:X_e\rightarrow X_{\overline{e}}$ for each edge $e\in ET$, such that $\alpha_{\bar e}\circ\alpha_e=\mathrm{id}_{X_e}$ and $\alpha_{e}\circ\alpha_{\bar e}=\mathrm{id}_{X_{\bar e}}$.
\end{enumerate}
We consider $X$ as a metric space as follows: we take the disjoint union of all  the $X_v$ and then, for all unoriented edges $\{e,\overline{e}\}$ and every $x\in X_e$, we attach a unit interval between  $x\in X_e$ and $\alpha_e(x)\in X_{\overline{e}}$.
\end{defn}
As noted in \cite{cashenmartin2017quasi}, it is easy to verify that $X$ is actually a metric space, i.e. that the quotient pseudometric on $X$ is actually a metric.
In \cite{cashenmartin2017quasi}, a more general definition is used in which  each $\alpha_{\bar e}\circ\alpha_e$ is only assumed to be close to the identity. That  will not be needed here, since all trees of spaces will arise `algebraically' using  Proposition  \ref{prop:qitreebs}.

We assume familiarity with Bass-Serre theory. See \cite{serre1977arbres} for details. Suppose $\mathcal{G}$ is a finite graph of finitely generated groups and $G:=\pi_1(\mathcal{G})$. Let $T$ be the Bass-Serre tree of $\mathcal{G}$. Every vertex or edge of $T$ is identified with a coset $gG_v$ or $gG_e$ of a vertex or edge group of  $\mathcal{G}$.

The following proposition, whose argument is standard, relates the algebra of the Bass-Serre tree to the geometry of the corresponding tree of spaces obtained by `blowing up' the Bass-Serre tree.

\begin{prop}[Section 2.5 of  \cite{cashenmartin2017quasi}] \label{prop:qitreebs}
Suppose $\mathcal{G}$, $G$ and $T$ are as above. Then there exists  a tree of spaces $X$ with base tree $T$ and a quasi-isometry $f:G\rightarrow X$. Moreover, there is a constant $A\geq 0$, such that $d_\mathrm{Haus}(f(g G_v),X_{\tilde{v}})\leq A$ and $d_\mathrm{Haus}(f(g G_e),X_{\tilde{e}})\leq A$ for all $\tilde v=gG_v \in VT$ and $\tilde e=gG_e\in ET$.
\end{prop}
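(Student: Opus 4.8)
The plan is to realise $X$ as the tree of spaces obtained by ``blowing up'' $T$ equivariantly, and then to read off $f$ from the resulting geometric action of $G$ via the Milnor--\v{S}varc lemma. Since $\mathcal{G}$ is a finite graph of finitely generated groups, I first fix finite generating sets $S_v$ for each vertex group $G_v$. For a vertex $\tilde v=gG_v\in VT$ I let the vertex space $X_{\tilde v}$ be a copy of the Cayley graph $\mathrm{Cay}(G_v,S_v)$; for an edge $\tilde e=gG_e\in ET$ with $\iota\tilde e=\tilde v$ I take $X_{\tilde e}\subseteq X_{\tilde v}$ to be the subset corresponding to the relevant coset of the image of the boundary monomorphism $G_e\hookrightarrow G_v$, and I let $\alpha_{\tilde e}$ be induced by the two boundary monomorphisms of $G_e$, so that $\alpha_{\bar e}\circ\alpha_e=\mathrm{id}$ holds automatically. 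Making these choices over a lift of the quotient graph $\mathcal{G}$ and extending by the $G$-action on $T$, the group $G$ acts on the resulting $X$ by isometries, permuting vertex and edge spaces exactly as it permutes vertices and edges of $T$, with the stabiliser $gG_vg^{-1}$ of $\tilde v$ acting on $X_{\tilde v}$ through the identification $gG_vg^{-1}\cong G_v$.

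Next I would verify that $X$ is a proper geodesic metric space. The only subtle point is local finiteness, since $T$ need not be locally finite when edge groups have infinite index in vertex groups. However, at any vertex $k$ of a vertex space $X_{\tilde v}$ only finitely many attaching intervals emanate: for each of the finitely many edge-orbits incident to $v$, the point $k$ lies in exactly one coset of the corresponding edge group, and hence meets exactly one incident edge space of that type. Combined with the local finiteness of each Cayley graph and the finiteness of $\mathcal{G}$, this shows $X$ is locally finite, hence proper, and it is a geodesic space. The $G$-action is cocompact (finitely many orbits of vertex and edge spaces, each vertex space acted on cocompactly by its stabiliser) and metrically proper (each $G_v$ acts freely and properly, by left multiplication, on its own Cayley graph). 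The Milnor--\v{S}varc lemma then yields that any orbit map $f\colon G\to X$, $g\mapsto g\cdot x_0$, is a quasi-isometry.

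Finally I would establish the Hausdorff estimates. Choosing $x_0$ in the vertex space $X_{\tilde v_0}$ over a base vertex $\tilde v_0=G_v$, one computes $f(gG_v)=g\cdot(G_v x_0)$. Since every $h\in G_v$ fixes $\tilde v_0$, the set $G_v x_0$ lies in $X_{\tilde v_0}$ and is coarsely dense there because $G_v$ acts cocompactly on $X_{\tilde v_0}$; applying the isometry $g$ and using $g\cdot X_{\tilde v_0}=X_{gG_v}=X_{\tilde v}$ shows $f(gG_v)$ is coarsely dense in $X_{\tilde v}$ with the same constant. The analogous computation with the edge group and edge space handles $f(gG_e)$. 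Uniformity of $A$ over all (infinitely many) vertices and edges is then immediate from $G$-equivariance together with the finiteness of the number of $G$-orbits of vertex and edge spaces, with a bounded enlargement of $A$ absorbing the finitely many basepoint choices needed for the different vertex and edge types.

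The step most in need of care is the equivariant construction in the first paragraph: one must set up $X$ so that $G$ acts geometrically \emph{and} so that $X$ is a genuine proper geodesic metric space despite the possibly infinite valence of $T$. The local-finiteness observation above is what makes this work; once it is in place, both the quasi-isometry and the Hausdorff bounds follow formally from the Milnor--\v{S}varc lemma and equivariance.
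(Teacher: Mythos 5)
Your proposal is correct and follows essentially the same route as the paper: blow up the Bass--Serre tree into a tree of spaces, check that $X$ is proper geodesic and that $G$ acts properly discontinuously and cocompactly, invoke Milnor--\v{S}varc for the orbit map, and deduce the uniform Hausdorff bounds from equivariance together with the finiteness of $G$-orbits of vertex and edge spaces. The only cosmetic differences are that the paper realises vertex spaces as cosets $gG_v$ inside $G$ with the word metric rather than as abstract copies of $\mathrm{Cay}(G_v,S_v)$, and that you verify local finiteness of $X$ directly where the paper cites Lemma 2.13 of Cashen--Martin.
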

\begin{proof}
We choose a generating set of $G$ that is a union of generating sets of each of the vertex groups of $\mathcal{G}$, as well as generators of the fundamental group of the underlying graph of $\mathcal{G}$. We define the vertex space of each $\tilde v=gG_v\in VT$ to be the coset $gG_v$, thought of as a subspace of $G$ equipped with the word metric. 
Suppose $\tilde e=gG_e$ is an edge of $T$. There are monomorphisms $\beta_0:G_e\rightarrow G_{\iota(e)}$ and $\beta_1:G_e\rightarrow G_{\tau(e)}$ associated with the edge $e$ of $\mathcal{G}$.
We define $X_{\tilde e}:=g\cdot\mathrm{im}(\beta_0)\subseteq X_{\iota (\tilde e)}$ and $\alpha_{\tilde e}$ by  $gx\mapsto g(\beta_1\circ \beta_0^{-1})(x)\subseteq X_{\tau (\tilde e)}$.

It is easy to verify that $X$ is a proper geodesic metric space (see Lemma 2.13 of \cite{cashenmartin2017quasi}) and that $G$ acts properly discontinuously and cocompactly on $X$. Hence for any $x\in X$, the orbit map $g\mapsto g\cdot x$ is a quasi-isometry. We let $\Omega\subseteq VT\cup ET$ be the finite set of edges and vertices corresponding to cosets of the form $G_v$ or $G_e$. We then define $A:=\max_{\omega\in \Omega}\{d_X(x,X_\omega)\}$, and it is straightforward to verify the final claim.
\end{proof}
Proposition \ref{prop:qitreebs} will be implicitly used throughout this paper, allowing us to switch between vertex and edge spaces in a tree of spaces, and the cosets of vertex and edge groups they correspond to in the Bass-Serre tree.

The following proposition  explains how to build a quasi-isometry between trees of spaces by patching together quasi-isometries of vertex spaces. This can be done if quasi-isometries on adjacent vertex spaces agree up to uniformly bounded error on their common edge space.

\begin{prop}[{\cite[Corollary 2.16]{cashenmartin2017quasi}}]\label{prop:qitree}
Let $K\geq 1$ and $A\geq 0$.
Suppose  that $X:=X(T,\{X_v\},\{X_e\},\{\alpha_e\})$ and $X':=X'(T',\{X'_v\},\{X'_e\},\{\alpha'_e\})$ are trees of spaces, and that there is a tree isomorphism $\chi:T\rightarrow T'$. Suppose for every $v\in VT$ and $e\in ET$ there is a $(K,A)$-quasi-isometry $\phi_v:X_v\rightarrow X'_{\chi(v)}$ and $\phi_e:X_e\rightarrow X'_{\chi(e)}$. 
Suppose also that for every  $e\in ET$,  the following diagram commutes up to uniformly bounded error $A$. Then there is a quasi-isometry $\phi: X\rightarrow X'$ such that $\phi|_{X_v}=\phi_v$ for every $v\in VT$.
\begin{figure}[h!]
\begin{tikzcd}  
X_{\iota e} \arrow{r}{\phi_{\iota e}} &   X'_{\chi(\iota e)}\\
X_{e} \arrow{r}{\phi_{ e}} \arrow[hookrightarrow]{u} \arrow{d}{\alpha_e}&   X'_{\chi( e)}\arrow[hookrightarrow]{u}\arrow{d}{\alpha'_{\chi(e)}}\\
X_{\tau e} \arrow{r}{\phi_{\tau e}} & X'_{\chi (\tau e)}
\end{tikzcd}
\end{figure}

\end{prop}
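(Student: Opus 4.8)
The plan is to define $\phi$ vertex-space by vertex-space and to show that the compatibility hypotheses force this assignment to respect the tree-of-spaces metric up to bounded error. Since the underlying set of $X$ is the disjoint union $\bigsqcup_{v\in VT} X_v$ together with the connecting intervals, and every point of a connecting interval lies within distance $1$ of $\bigcup_v X_v$, the subset $\bigcup_v X_v$ is coarsely dense in $X$; it therefore suffices to define $\phi$ on this subset, check it is a quasi-isometric embedding onto a coarsely dense subset of $X'$, and extend boundedly over the intervals. I set $\phi(x):=\phi_v(x)$ whenever $x\in X_v$.

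The crucial preliminary is a description of the metric on a tree of spaces, which I would record as a lemma (it is the content of the metric estimates in \cite{cashenmartin2017quasi}). Given $x\in X_v$ and $y\in X_w$, let $v=v_0,v_1,\dots,v_n=w$ be the geodesic in $T$ and let $e_i$ be the edge from $v_{i-1}$ to $v_i$, so that $X_{e_i}\subseteq X_{v_{i-1}}$ and $X_{\bar e_i}=\alpha_{e_i}(X_{e_i})\subseteq X_{v_i}$. Any path from $x$ to $y$ must successively cross the connecting intervals attached to $e_1,\dots,e_n$; choosing crossing points $a_i\in X_{e_i}$, one obtains that $d_X(x,y)$ equals, up to an additive error depending only on the local geometry, the infimum over all such choices of
\[
 d_{v_0}(x,a_1)+\sum_{i=1}^{n-1} d_{v_i}\bigl(\alpha_{e_i}(a_i),a_{i+1}\bigr)+d_{v_n}\bigl(\alpha_{e_n}(a_n),y\bigr)+n,
\]
where $d_{v_i}$ is the intrinsic metric on $X_{v_i}$ and each $+1$ accounts for a single crossing.

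The heart of the argument is that $\phi$ carries a crossing in $X$ to a crossing in $X'$ up to bounded error. Fix an edge $e=e_i$ and a point $a\in X_e\subseteq X_{\iota e}$, and consider the crossing from $a$ to $\alpha_e(a)\in X_{\bar e}\subseteq X_{\tau e}$. Commutativity of the top square gives $d(\phi_{\iota e}(a),\phi_e(a))\le A$, while commutativity of the bottom square gives $d\bigl(\phi_{\tau e}(\alpha_e(a)),\alpha'_{\chi(e)}(\phi_e(a))\bigr)\le A$. Since $\phi_e(a)\in X'_{\chi(e)}$ and $\alpha'_{\chi(e)}(\phi_e(a))\in X'_{\overline{\chi(e)}}$ are precisely the two endpoints of a connecting interval of $X'$, they are at distance $1$. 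Hence $\phi(a)=\phi_{\iota e}(a)$ and $\phi(\alpha_e(a))=\phi_{\tau e}(\alpha_e(a))$ lie within $2A+1$ of the endpoints of a single crossing in $X'$. Applying this to each crossing $a_i$ along a near-optimal admissible path for $d_X(x,y)$, and using that each $\phi_{v_i}$ is a $(K,A)$-quasi-isometry on the within-vertex-space segments, produces an admissible path in $X'$ from $\phi(x)$ to $\phi(y)$ of length at most $K\,d_X(x,y)+A'$ for a constant $A'$ depending only on $K$ and $A$; this gives the upper bound and shows $\phi$ is coarse Lipschitz.

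For the lower bound and coarse surjectivity I would pass to coarse inverses. Let $\psi_v$ and $\psi_e$ be coarse inverses of $\phi_v$ and $\phi_e$. Because $\chi^{-1}:T'\to T$ is again a tree isomorphism and coarse inverses of quasi-isometries are quasi-isometries, the families $\{\psi_v\}$ and $\{\psi_e\}$ satisfy the same compatibility hypotheses with respect to $\chi^{-1}$, the two squares for $\psi$ now commuting up to an error controlled by $K$, $A$ and the closeness constants of the $\psi_v\circ\phi_v$ to the identity. The construction above then yields a coarse Lipschitz map $\psi:X'\to X$ with $\psi|_{X'_{v'}}=\psi_{v'}$, and on each vertex space $\psi\circ\phi$ restricts to $\psi_{\chi(v)}\circ\phi_v$, which is uniformly close to $\mathrm{id}_{X_v}$; symmetrically $\phi\circ\psi$ is uniformly close to $\mathrm{id}_{X'}$. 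Thus $\phi$ is a quasi-isometry with $\phi|_{X_v}=\phi_v$. I expect the main obstacle to be the metric decomposition lemma together with the bookkeeping of constants: one must verify that $d_X(x,y)$ is genuinely realized, up to bounded error, by admissible paths crossing the edge spaces (so that no shorter route bypasses them), and then confirm that the per-crossing errors of size $2A+1$ accumulate only additively in the number $n$ of crossings, which is itself controlled by $d_X(x,y)$. Once this crossings-to-crossings estimate is in hand, the remainder is a routine, if careful, propagation of quasi-isometry constants.
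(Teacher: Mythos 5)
Your overall architecture is the natural one to attempt --- and since the paper supplies no proof of this proposition (it is imported wholesale from Cashen--Martin), it can only be judged on its own merits: glue the $\phi_v$, describe the tree-of-spaces metric by chains, convert crossings to crossings using the two squares, and obtain the lower bound by running the construction on coarse inverses. Your crossing-to-crossing estimate is correct. But there is a genuine gap, located in the two places where the argument needs metric control on the attaching maps $\alpha_e$ that the hypotheses do not supply: in this paper's definition of a tree of spaces the $\alpha_e$ are \emph{arbitrary} bijections, and the commuting squares only control $\alpha'_{\chi(e)}$ on the image of $\phi_e$, which is a coarsely dense but proper subset of $X'_{\chi(e)}$. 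First, your metric decomposition lemma (which you flag as the main obstacle but do not prove) is false in this generality: with uncontrolled attaching maps, intrinsic distance inside a vertex space can be shortcut arbitrarily through a neighbouring vertex space, so the monotone-chain formula with the intrinsic metrics $d_{v_i}$ does not compute $d_X$ even coarsely. Second, your assertion that the coarse inverses $\{\psi_v\},\{\psi_e\}$ ``satisfy the same compatibility hypotheses'' is not formal: to transfer the bottom square one must compare $\alpha'_{\chi(e)}(\phi_e(\psi_e(b)))$ with $\alpha'_{\chi(e)}(b)$ knowing only that $\phi_e(\psi_e(b))$ is close to $b$, and a bare bijection need not send nearby points to nearby points.

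This is not a technicality: as literally stated, with the paper's definition, the proposition is false. Take $T=T'$ to be a single edge with vertices $u,w$, let every vertex and edge space be $\mathbb{R}$, let $\alpha_e=\mathrm{id}$, and let $\alpha'_{\chi(e)}$ fix $\mathbb{R}\setminus(2\mathbb{Z}+1)$ pointwise while sending $2j+1\mapsto -2j+1$. Setting $\phi_u=\phi_w=\phi_e:x\mapsto 2\lfloor x\rfloor$ (and $\phi_{\bar e}=\phi_w$), all maps are $(2,2)$-quasi-isometries and both squares commute \emph{exactly} for both orientations of the edge, because the image $2\mathbb{Z}$ of $\phi_e$ is fixed pointwise by $\alpha'_{\chi(e)}$; yet $X$ is quasi-isometric to a line, while in $X'$ the odd rungs coarsely identify $t$ with $2-t$ and fold the line onto a ray, so no quasi-isometry $X\to X'$ exists at all. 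The statement becomes true --- and your outline completes --- once one adds the hypothesis, present in Cashen--Martin's definition of a tree of spaces and satisfied by every tree of spaces used in this paper (they arise from Proposition \ref{prop:qitreebs}, and in the RAAG setting the $\alpha_e$ are cubical isometries), that the $\alpha_e$ are uniform quasi-isometries (here: isometries) between edge spaces equipped with their subspace metrics. With that control, gluing along isometrically identified subspaces creates no shortcuts, so your decomposition lemma holds; and the inverse-square transfer goes through by first applying the hypothesis at $a=\psi_e(b)$ and then using the Lipschitz control on $\alpha'_{\chi(e)}$ to replace $\phi_e(\psi_e(b))$ by $b$. You should state this hypothesis explicitly and prove (or cite) the no-shortcut lemma; the rest of your outline is sound.
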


\subsection{JSJ trees of cylinders and relative quasi-isometries}

We suppose $T$ is the Bass-Serre tree associated to a finite graph of finitely generated groups with two-ended edge groups, and that $X$ is the associated tree of spaces as in Proposition \ref{prop:qitreebs}.  

 Guirardel and Levitt \cite{guirardel2011cylinders} associate the \emph{tree of cylinders} $T_\mathrm{cyl}$ to $T$. This is defined as follows: two edges of $T$  are \emph{equivalent} if their edge stabilizers are commensurable. The union of all edges in an equivalence class, which is shown to be a subtree of $T$, is called a \emph{cylinder}. 
The tree of cylinders is then defined to be a bipartite tree with vertex set $V_0\sqcup V_1$, where $V_0$ is the set of cylinders of $T$ and $V_1$ consists of vertices of $T$ that lie in at least two cylinders. There is an edge between $v\in V_0$ and $w\in V_1$ precisely when the vertex $w\in VT$ is contained in the cylinder $v$. We remark that even if edge stabilizers of $T$ are two-ended, edge stabilizers of $T_\mathrm{cyl}$ may not be.

 In light of Proposition \ref{prop:sbgpcommen}, there is an alternative characterisation of cylinders: two edges lie in the same cylinder if and only if the associated edge spaces of $X$ are coarsely equivalent. This is because every subgroup of the form $gHg^{-1}$ is coarsely equivalent to the coset $gH$. Cylinders thus correspond to coarse equivalence classes of edge spaces in the tree of spaces. Henceforth, we use this formulation of cylinders, which is more natural in the coarse geometric context.

A JSJ decomposition of a finitely presented group $G$ is a finite graph of groups that  encodes all possible splittings of $G$ over two-ended subgroups. (In general, they can be  defined over  other classes of subgoups, but we restrict to JSJ decompositions over two-ended subgroups here.)  They always exist for one-ended finitely presented groups.  Although JSJ decompositions are not generally unique, they all lie in the same deformation space. This ensures that their corresponding Bass-Serre trees all have the same tree of cylinders. We call this canonical tree the \emph{JSJ tree of cylinders}. See  \cite{forrester2002deforation} and \cite{guirardel2017jsj} for details and proofs of the above statements.

Every JSJ tree of cylinders has three \emph{types} of vertex: rigid, (quadratically) hanging or cylindrical. The $V_0$-vertices are  cylindrical and $V_1$-vertices are either rigid or hanging. We do not define what hanging and rigid vertices are here. One-ended right angled Artin groups have no quadratically hanging vertices, so all non-cylindrical vertices are  rigid.

In many interesting cases, e.g. hyperbolic groups, the JSJ tree of cylinders is itself a JSJ tree. In particular, the JSJ tree described by Bowditch \cite{bowditch1998cut} is a JSJ tree of cylinders.  For one-ended RAAGs this will never be the case --- edge stabilizers of the JSJ tree of cylinders are always one-ended, thus the JSJ tree of cylinders is never a JSJ tree.

In \cite{cashenmartin2017quasi}, relative quasi-isometries are defined. To do this, a \emph{peripheral structure} ---  a collection of coarse equivalence classes of subspaces --- is assigned to a space. A relative quasi-isometry is a quasi-isometry  that preserves the peripheral structure. This works well when the JSJ trees of cylinders has two-ended cylinder stabilizers, but needs to be modified when this is not the case. This is illustrated by the following example.

\begin{exmp}\label{exmp:peripheral}
Suppose a graph of groups  decomposition has  a vertex group $\mathbb{Z}^2=\langle a,b\mid [a,b]\rangle$, incident to exactly two edges whose edge groups are $\langle a \rangle$ and $\langle b \rangle$ respectively. In the Bass-Serre tree $T$, a lift $v$ of this vertex is contained in exactly two cylinders, one containing edges corresponding to cosets of the form $b^i\langle a \rangle$, and the other containing cosets of the form $a^i\langle b \rangle$. 
In the tree of cylinders $T_\mathrm{cyl}$, the $V_1$-vertex $v$ is incident to exactly two edges. Both of the  corresponding edge spaces are  coarsely equivalent to the vertex space $X_v$; this is because $\cup_{i\in \mathbb{Z}}(b^i\langle a \rangle)=\cup_{i\in \mathbb{Z}}(a^i\langle b \rangle)=\mathbb{Z}^2$. If we simply give $X_v$ a peripheral structure  consisting of coarse equivalence classes of adjacent edge spaces, then both elements in the peripheral structure are equal, even though they lie in different cylinders. 

The right thing to do is to think  of an element of the peripheral structure  as a family of parallel lines. A relative quasi-isometry ought to preserve coarse equivalence classes of lines themselves, not just the coarse equivalence class of their union. Doing this allows us to distinguish between the two incident edge spaces.
\end{exmp}
This example motivates the following definition. We caution the reader that as far as we are aware, this definition is not particularly intuitive or useful outside the specific context in which we use it: namely vertex spaces of the JSJ tree of cylinders. 

\begin{defn}\label{defn:relqi}

A \emph{peripheral structure} $\mathcal{P}_X$ on a metric space $X$  is a set containing elements of the form $\{A_i\}_{i\in I}$, where each $A_i\subseteq X$ and $[A_i]=[A_j]$ for all $i,j\in I$. A  \emph{$(K,A)$-relative quasi-isometry} $(f,f_*):(X,\mathcal{P}_X)\rightarrow (Y,\mathcal{P}_Y)$  consists of a  $(K,A)$-quasi-isometry $f:X\rightarrow Y$ and a bijection $f_*:\mathcal{P}_X\rightarrow\mathcal{P}_Y$ such that if $f_*(\{A_i\}_{i\in I})=\{B_j\}_{j\in J}$, then:
\begin{enumerate}
\item for every $i\in I$, there is some $j\in J$ such that $d_\mathrm{Haus}(f(A_i),B_j)\leq A$;
\item for every $j\in J$, there is some $i\in I$ such that $d_\mathrm{Haus}(f(A_i),B_j)\leq A$.
\end{enumerate}
We say that $(f,f_*)$ is a \emph{relative quasi-isometry} if it is a $(K,A)$-relative quasi-isometry for some $K\geq 1$ and $A\geq 0$.  To simplify notation, we often suppress the map between peripheral structures,  denoting $(f,f_*)$ by $f$.
We recover the standard notion of relative quasi-isometry (as used in \cite{cashenmartin2017quasi} for instance) if for every $\{A_i\}_{i\in I}\in \mathcal{P}_X$, $\lvert I \rvert=1$.
\end{defn}

\begin{exmp}\label{exmp:peripheralz2qi}
Let $G=\mathbb{Z}^2=\langle a,b\mid [a,b]\rangle$. Let $\mathcal{P}_1:=\{\{b^i\langle a\rangle\mid i\in\mathbb{Z}\}_{}\}$, $\mathcal{P}_2:=\{\{a^i\langle b\rangle\mid i\in\mathbb{Z}\}\}$ and $\mathcal{P}_3:=\mathcal{P}_1\cup \mathcal{P}_2$. The map $\mathrm{id}_G$ is not a relative quasi-isometry $(G,\mathcal{P}_1)\rightarrow (G,\mathcal{P}_2)$, however  the isomorphism $f:G\rightarrow G$ given by $a^ib^j\mapsto a^jb^i$ is a relative quasi-isometry $(G,\mathcal{P}_1)\rightarrow (G,\mathcal{P}_2)$. We note that  $(G,\mathcal{P}_3)$ is not relatively quasi-isometric to either $(G,\mathcal{P}_1)$ or $(G,\mathcal{P}_2)$.
Although $\mathcal{P}_4:=\cup_{i\in \mathbb{Z}}\{\{b^i\langle a\rangle\}\}$ is a  well-defined peripheral structure on $G$, it should not be confused with $\mathcal{P}_1$, since $\mathcal{P}_4$ contains infinitely many elements, whereas $\mathcal{P}_1$ contains a single element.
\end{exmp}

We now explain how to give vertex spaces of the JSJ trees of cylinders a peripheral structure. Let $T_\mathrm{JSJ}$ be a JSJ tree and let $T_\mathrm{cyl}$ be its tree of cylinders. Each edge $e\in ET_\mathrm{cyl}$ is incident to some $V_0$-vertex $v$ and  $V_1$-vertex $w$. We thus define
 $$\mathcal{S}_e:=\{X_f\mid f\in ET_\mathrm{JSJ},  f \textrm{ is incident to } w\textrm{ and } f \textrm{ is contained in the cylinder }v \}$$ and for each vertex $v\in T_\mathrm{cyl}$, we give $X_v$ the peripheral structure 
$$\mathcal{P}_v:=\{\mathcal{S}_e\mid \textrm{ for all } e\in ET_\mathrm{cyl} \textrm{ such that } e \text{ is adjacent to }v\}.$$
\begin{rem}\label{rem:defnofperiph}
It would be more accurate, though notationally inconvenient, to refer to elements of $\mathcal{P}_v$ as pairs $(\mathcal{S}_e,e)$. This is because it may be the case that $S_e=S_{e'}$ for some $e\neq e'$, in which case we have two distinct elements $S_e$ and $S_{e'}$ of $\mathcal{P}_v$.  This only occurs in the exceptional case where some edge $f=(w,w')\in ET_\mathrm{JSJ}$ is the only edge contained in its cylinder $v$. Then $e=(v,w)$ and $e'=(v,w')$ are distinct edges of $ET_\mathrm{cyl}$ with $\mathcal{S}_e=\mathcal{S}_{e'}=\{X_f\}$. This cannot occur for the JSJ tree of cylinders of a one-ended RAAG over two ended subgroups, as shown in Section \ref{sec:jsjtoc}.
\end{rem}
 In Remark \ref{rem:peripheralstrucutreraags}, we give a more refined description of the peripheral structure of vertex spaces in the JSJ tree of cylinders of a RAAG.

Being relatively quasi-isometric  is an equivalence relation among pairs $(X,\mathcal{P}_X)$. We define the \emph{relative quasi-isometry class} $[[(X,\mathcal{P}_X)]]$ to be the equivalence class consisting of all pairs $(Y,\mathcal{P}_Y)$ such that there exists a relative quasi-isometry $(X,\mathcal{P}_X)\rightarrow (Y,\mathcal{P}_Y)$. We let $\mathrm{QI}(X,\mathcal{P}_X)$ denote the set of all relative quasi-isometries $(X,\mathcal{P}_X)\rightarrow (X,\mathcal{P}_X)$.

The JSJ tree of cylinders over two-ended groups is a quasi-isometry invariant. More precisely, we have the following: 
\begin{thm}[\cite{papasoglu2005quasi},\cite{vavrichek2013commensurizer}]\label{thm:jsjtocqi}
Let $G$ and $G'$ be finitely presented groups and $T$ and $T'$ be their JSJ trees of cylinders. Let $X$ and $X'$ be the associated trees of spaces of $T$ and $T'$ respectively.  
Then for any quasi-isometry $f:G\rightarrow G'$, there is a constant $C$ and a unique tree isomorphism $f_*:T\rightarrow T'$ such that the following holds. 

For every vertex $v$ of $T$, $f(X_v)$ has Hausdorff distance at most $C$ from $X'_{f_*(v)}$, and $v$ and  $f_*(v)$ are of  the same vertex type (cylinder, hanging or rigid).
Moreover, $f$ restricted to each vertex space $X_v$  is close to a relative quasi-isometry $(X_v,\mathcal{P}_v)\rightarrow (X_{f_*(v)},\mathcal{P}_{f_*(v)})$.
\end{thm}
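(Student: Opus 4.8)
The plan is to obtain the tree isomorphism and the vertex-space correspondence from the quasi-isometry invariance of two-ended JSJ decompositions due to Papasoglu \cite{papasoglu2005quasi} and Vavrichek \cite{vavrichek2013commensurizer}, and then to upgrade the resulting map to a relative quasi-isometry in the refined sense of Definition \ref{defn:relqi}. Throughout I use the geometric description of $T_\mathrm{cyl}$ recorded before Definition \ref{defn:relqi}: since all edge groups of $T_\mathrm{JSJ}$ are two-ended, every edge space of $X$ is a coarse line, and a cylinder is precisely a coarse equivalence class of such lines. Thus the $V_0$-vertices of $T=T_\mathrm{cyl}$ are the cylinders (whose vertex spaces are commensurizer cosets) and the $V_1$-vertices are the complementary vertex spaces of $T_\mathrm{JSJ}$.

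First I would invoke the cited invariance results. A quasi-isometry coarsely preserves the pattern of essential two-ended splittings, equivalently the pattern of separating coarse lines, so it carries cylinders to cylinders and complementary pieces to complementary pieces. Since the tree of cylinders is the canonical object assembling these pieces \cite{guirardel2011cylinders}, $f$ induces a tree isomorphism $f_* : T \to T'$ with $d_\mathrm{Haus}(f(X_v), X'_{f_*(v)}) \le C$ for a uniform $C$; uniqueness of $f_*$ is part of this invariance, the coarse class of $f(X_v)$ determining $f_*(v)$. Vertex type is preserved because the bipartition $V_0 \sqcup V_1$ is detected coarsely (a $V_1$-vertex lies in at least two cylinders, so has incident edge spaces in distinct coarse classes, whereas a $V_0$-vertex has all incident edge spaces in a single coarse class), while the distinction between rigid and hanging $V_1$-vertices is detected by the relative quasi-isometry type of the vertex group. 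At this stage one also obtains, in the sense of \cite{cashenmartin2017quasi}, that $f|_{X_v}$ is close to a quasi-isometry onto $X'_{f_*(v)}$ respecting the \emph{coarse classes} of the incident edge spaces.

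It then remains to promote this to a relative quasi-isometry for the peripheral structure $\mathcal{P}_v$ of Definition \ref{defn:relqi}, which records each incident family $\mathcal{S}_e$ as a family of individual coarse lines rather than merely its coarse class; as Example \ref{exmp:peripheral} shows, this is a genuine strengthening. The map $f_*$ on links sends $\mathcal{S}_e$ to $\mathcal{S}_{f_*(e)}$, and I would check conditions (1) and (2) of Definition \ref{defn:relqi} as follows. Each individual edge space $A_i \in \mathcal{S}_e$ is a coset of a two-ended subgroup, hence a coarse line that coarsely separates $G$; its image $f(A_i)$ is again a coarse line, coarsely contained in the image cylinder and coarsely separating $G'$ in the corresponding way. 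By the line-by-line invariance of the separating pattern, $f(A_i)$ lies at uniformly bounded Hausdorff distance from an edge space of $T'_\mathrm{JSJ}$ lying in the cylinder $f_*(e)$, i.e. from some $B_j \in \mathcal{S}_{f_*(e)}$, which gives condition (1). Applying the same argument to a coarse inverse of $f$, whose induced tree isomorphism is $f_*^{-1}$, gives condition (2). Hence $f|_{X_v}$ is close to a relative quasi-isometry $(X_v, \mathcal{P}_v) \to (X_{f_*(v)}, \mathcal{P}_{f_*(v)})$.

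The main obstacle is precisely this last step: showing that $f$ respects the \emph{individual} parallel lines inside a single cylinder, and not only the coarse class of their union. This is exactly the phenomenon isolated in Example \ref{exmp:peripheral}, and it is invisible to the standard Cashen--Martin formalism; resolving it requires the line-by-line invariance of the coarse-separation pattern, which is where the structural input of \cite{papasoglu2005quasi} and \cite{vavrichek2013commensurizer}, via the commensurizers of the two-ended edge groups, is essential. A secondary technical point is to extract the uniform constant $C$ and the Hausdorff constant $A$ of Definition \ref{defn:relqi} from the quasi-isometry constants of $f$.
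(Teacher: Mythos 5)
Your overall route is the same as the paper's: invoke Papasoglu and Vavrichek to get a uniform coarse correspondence between vertex spaces (hence the tree isomorphism $f_*$), then verify the two conditions of Definition \ref{defn:relqi} edge space by edge space, applying the same argument to a coarse inverse of $f$ for the symmetric condition. The first half of your proposal is essentially the paper's argument, modulo the paper being more careful about which citation does what (Vavrichek's Theorem 4.1 handles cylinders whose edge groups have at least four coends, Papasoglu the rest) and about uniqueness of the image vertex (via the fact that the coarse intersection of distinct vertex spaces is bounded or two-ended).

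However, there is a genuine gap in your final step, and it sits exactly at the point you yourself identify as the main obstacle. You write that $f(A_i)$ lies at uniformly bounded Hausdorff distance ``from an edge space of $T'_\mathrm{JSJ}$ lying in the cylinder $f_*(e)$, i.e.\ from some $B_j \in \mathcal{S}_{f_*(e)}$.'' These are not the same thing: $\mathcal{S}_{f_*(e)}$ consists only of those edge spaces that lie in the cylinder \emph{and} are adjacent to the $V_1$-vertex $w' = f_*(w)$. What Papasoglu's theorem actually provides (with a uniform constant $C_3$) is an edge $t'$ of $T'_\mathrm{JSJ}$ with $d_\mathrm{Haus}(f(X_t),X_{t'})\leq C_3$, and this $t'$ need not be adjacent to $w'$, so it need not belong to $\mathcal{S}_{f_*(e)}$. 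Note also that the uniformity you dismiss as ``a secondary technical point'' is in fact the entire content of conditions (1) and (2) of Definition \ref{defn:relqi}: any two edge spaces in a single cylinder are at finite Hausdorff distance by the very definition of a cylinder, so the existence of \emph{some} $B_j$ at finite distance is vacuous; only the uniform bound carries information.

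Closing this gap requires two further ingredients, which the paper supplies. First, since $X_t \subseteq N_1(X_w)$, the image satisfies $X_{t'} \subseteq N_D(X_{w'})$ for a uniform $D$, and the structure of the JSJ tree (the paper's observation that an edge space coarsely contained in a vertex space must be coarsely equivalent to, and coarsely contained in, the edge space of the adjacent edge on the geodesic towards it) yields an edge $t''$ \emph{adjacent to} $w'$ with $X_{t'} \subseteq N_D(X_{t''})$; this $X_{t''}$ does lie in $\mathcal{S}_{f_*(e)}$. Second, one needs a properness statement (Lemma \ref{lem:treeedgespacesym}) converting the one-sided coarse containment $X_{t'} \subseteq N_D(X_{t''})$ of coarsely equivalent edge spaces into a Hausdorff bound $\phi(D)$ depending only on $D$, giving $d_\mathrm{Haus}(f(X_t),X_{t''})\leq C_3+\phi(D)$ uniformly. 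Without both of these steps your verification of condition (1) does not go through; with them, your outline becomes the paper's proof.
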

The result is essentially the same as Theorem 2.8 of \cite{cashenmartin2017quasi}, which can be deduced using work of  Papasoglu \cite{papasoglu2005quasi}  and Vavrichek \cite{vavrichek2013commensurizer}. However,  since we use a slightly different  definition of relative quasi-isometry to that found in \cite{cashenmartin2017quasi}, we give a careful proof of Theorem \ref{thm:jsjtocqi} below.
We first prove the following Lemma.

\begin{lem}\label{lem:treeedgespacesym}
Let $\mathcal{G}$ be a finite graph of groups with finitely generated edge and vertex groups. Let $T$ be the associated Bass-Serre tree and let $X$ be the tree of spaces. There is a proper non-decreasing function $\phi:\mathbb{R}_{\geq 0}\rightarrow \mathbb{R}_{\geq 0}$ such that if $X_e$ and $X_{e'}$ are edge spaces such that $[X_e]=[X_{e'}]$ and $X_e'\subseteq N_r(X_{e})$ for some $r\geq 0$, then $\mathrm{d}_\mathrm{Haus}(X_e',X_e)\leq \phi(r)$.
\end{lem}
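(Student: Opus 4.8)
The plan is to exploit the $G$-equivariance of the tree of spaces to reduce to finitely many \emph{base} edge spaces, and then to convert coarse equivalence into a uniform two-sided bound by a counting argument resting on properness.

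First I would reduce to the case where $X_e$ is one of the finitely many base edge spaces $X_{e_0}$ corresponding to the edges of $\mathcal{G}$. By the construction in Proposition~\ref{prop:qitreebs}, every edge space is a $G$-translate $g\cdot X_{e_0}$ of a base edge space, and $G$ acts on $X$ by isometries permuting the edge spaces. Since both the hypothesis $X_{e'}\subseteq N_r(X_e)$ and the desired conclusion involve only metric neighbourhoods, applying $g^{-1}$ lets me assume $X_e=X_{e_0}$. It then suffices to produce, for each of the finitely many $e_0$, a proper non-decreasing function $\phi_{e_0}$, and take $\phi$ to be a running maximum of these.

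Fix $e_0$ and let $H:=G_{e_0}$ be its stabilizer, which acts cocompactly on $X_{e_0}$; choose a bounded set $D\subseteq X_{e_0}$ with $H\cdot D=X_{e_0}$. The key step is a finiteness claim: for each fixed $r$, there are only finitely many $H$-orbits of edge spaces $X_{e'}$ with $[X_{e'}]=[X_{e_0}]$ and $X_{e'}\subseteq N_r(X_{e_0})$. Indeed, given such an $X_{e'}$, pick $p\in X_{e'}$; the containment gives $p\in N_r(H\cdot D)$, so some $h\in H$ moves $X_{e'}$ to $h^{-1}X_{e'}$, which then meets the bounded set $N_r(D)$. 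Because $X$ is proper and $G$ acts properly discontinuously and cocompactly (Proposition~\ref{prop:qitreebs}), only finitely many edge spaces can meet a fixed bounded set --- a ball of finite radius meets only finitely many cosets of the finitely many edge groups --- so only finitely many $H$-orbits arise. I expect this finiteness, and the uniformity it supplies, to be the main obstacle.

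Finally I would promote this to a uniform reverse bound. Each of the finitely many orbit representatives $X_{e'_1},\dots,X_{e'_k}$ is coarsely equivalent to $X_{e_0}$ (using Proposition~\ref{prop:sbgpcommen} to pass between cosets and commensurable stabilizers if convenient), so there is a finite $M_i$ with $X_{e_0}\subseteq N_{M_i}(X_{e'_i})$; set $M(r):=\max_i M_i$. For an arbitrary $X_{e'}$ in the orbit of $X_{e'_i}$, write $X_{e'}=h X_{e'_i}$ with $h\in H$; since $h$ fixes $X_{e_0}$ and acts isometrically,
$$X_{e_0}=hX_{e_0}\subseteq N_{M_i}(hX_{e'_i})=N_{M_i}(X_{e'}).$$
Combined with the given $X_{e'}\subseteq N_r(X_{e_0})$, this yields $d_\mathrm{Haus}(X_{e'},X_{e_0})\leq\max(r,M(r))$. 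Taking $\phi_{e_0}(r):=\sup_{0\leq s\leq r}\max(s,M(s))$ makes it non-decreasing, and it is automatically proper since $\phi_{e_0}(r)\geq r$. Setting $\phi$ to be the maximum of the finitely many $\phi_{e_0}$ completes the argument.
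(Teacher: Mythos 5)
Your proof is correct, and its skeleton matches the paper's: reduce to finitely many $G$-orbits of edge spaces by equivariance, prove a finiteness statement for each fixed $r$, and take maxima. The difference lies in how the finiteness step is carried out. The paper asserts the stronger claim that the set of qualifying edge spaces is itself \emph{finite}, justified by the observation that only finitely many cosets of edge groups are contained in $N_r(gG_e)$; as stated, that observation really depends on the hypothesis $[X_{e'}]=[X_e]$ --- via Proposition \ref{prop:sbgpcommen}, coarse equivalence makes $G_e$ commensurable with the relevant conjugates $kG_fk^{-1}$, and it is this that bounds the number of cosets $hkG_f$ with $h\in G_e$; without it, a $\mathbb{Z}^2$ edge group can contain infinitely many parallel cosets of a $\mathbb{Z}$ edge group. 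You prove only the weaker statement that there are finitely many $H$-orbits of qualifying edge spaces, which needs nothing beyond properness and cocompactness, and you restore uniformity by noting that $H$ preserves $X_{e_0}$ isometrically, so that $d_\mathrm{Haus}(\,\cdot\,,X_{e_0})$ is constant along each $H$-orbit. This is fully sufficient and sidesteps the coset-counting/commensurability subtlety altogether. One small tightening: define $M(s)$ canonically, as the supremum of $d_\mathrm{Haus}(X_{e'},X_{e_0})$ over the edge spaces qualifying at parameter $s$ (finite by your orbit argument, and automatically non-decreasing in $s$ because the qualifying sets are nested as $s$ grows); with per-$s$ arbitrary choices of representatives and bounds $M_i$, the finiteness of $\sup_{0\le s\le r}M(s)$ is not literally guaranteed. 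With that definition, $\phi_{e_0}(r)=\max\bigl(r,M(r)\bigr)$ already does the job.
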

\begin{proof}
For some fixed $r\geq 0$ and edge space $X_e$, there are only finitely many edge spaces such that $[X_e]=[X_{e'}]$ and $X_e'\subseteq N_r(X_{e})$. This follows from Proposition \ref{prop:qitreebs} and the observation that there are only finitely many cosets of edge groups of $\mathcal{G}$ that are contained in some $N_r(gG_e)$. We pick $R_e\geq 0$ such that $\mathrm{d}_\mathrm{Haus}(X_e',X_e)\leq R_e$ for all such edge spaces $X_{e'}$. The result holds as there are only finitely many $\pi_1(\mathcal{G})$-orbits of edge spaces in $T$.
\end{proof}

\begin{proof}[Proof of Theorem \ref{thm:jsjtocqi}]
Since every vertex space is stabilized by a subgroup, the coarse intersection of vertex spaces is always well-defined. We are thus able to deduce the following elementary facts about coarse intersection and containment of edge and vertex spaces in a JSJ tree (not the JSJ tree of cylinders).
\begin{enumerate}
\item \label{inthm:jsjtocqi2}If  $X_v$ and $X_w$ are distinct vertex spaces, then their coarse intersection is coarsely contained in the edge space of any edge in the segment $[v,w]$. In particular, the coarse intersection must thus either be bounded or two-ended; in the later case it is coarsely equivalent to an edge space $X_e$ for any $e\in[v,w]$.
\item \label{inthm:jsjtocqi3} If some edge space $X_e$ is coarsely contained in a vertex space $X_v$, 
 let $e'$ be the edge adjacent to $v$ contained in a geodesic joining $v$ to $e$. Then $[X_e]=[X_e']$. In particular, we deduce that $X_e$ is coarsely contained in $X_v$ if and only if $v$ is contained in the cylinder containing $e$. Moreover, if $X_e\subseteq N_r(X_v)$ for some $r$, then $X_e\subseteq N_r(X_{e'})$.
\end{enumerate}
We let $T_\mathrm{JSJ}$ and $T'_\mathrm{JSJ}$ be  JSJ trees of $G$ and $G'$ respectively, and let $T_\mathrm{cyl}$ and $T'_\mathrm{cyl}$ be their trees of cylinders.

By \ref{inthm:jsjtocqi3}, a vertex $v$ of  $T_\mathrm{JSJ}$ is a  $V_1$-vertex of $T_\mathrm{cyl}$ if and only if it coarsely contains  two edge spaces that  are not coarsely equivalent. In particular, this means $V_1$-vertex spaces cannot be bounded or two-ended. Thus  Theorem 7.1 of \cite{papasoglu2005quasi} tells us that  there exists a  constant $C_1$ such that for every $V_1$-vertex $v\in T_\mathrm{cyl}$, there is a $V_1$-vertex $v'\in T_\mathrm{JSJ}$ with $d_\mathrm{Haus}(f(X_v),X_{v'})\leq C_1$. 

We note that $v'$ is unique. Indeed, suppose $[X_{w}]=[X_{v'}]$, where $w\in VT'_\mathrm{cyl}$ is a $V_1$-vertex. If $w\neq v'$, then $[X_w]=[X_{v'}]=[X_w]\cap[X_{v'}]$. By \ref{inthm:jsjtocqi2}, $X_{v'}$ must therefore either be bounded or two-ended,  contradicting the fact that $v'$ is a $V_1$-vertex.

Now suppose $v\in VT_\mathrm{cyl}$ is a $V_0$-vertex corresponding to a cylinder containing some $e\in ET_\mathrm{JSJ}$. 
 By Theorem 7.1 of \cite{papasoglu2005quasi}, we know there is an  $e'\in ET'_\mathrm{JSJ}$ such that $[f(G_e)]=[G_{e'}]$. Let $v'\in VT'_\mathrm{cyl}$ be the cylinder containing $e'$. We claim that $[f(X_v)]=[X_{v'}]$. Note that  $v'$ is uniquely determined, independent of the choice of $e'$.

This claim follows from the discussion in Section 6 of \cite{vavrichek2013commensurizer} (see also \cite{guirardel2011cylinders}). There are two cases depending of the number of coends of $G_e$ in $G$: if $G_e$ has at least four coends, then the claim follows from Theorem 4.1 of \cite{vavrichek2013commensurizer}. Otherwise, the claim follows from \cite{papasoglu2005quasi}. This can be done uniformly, i.e.  there is a constant $C_2\geq 0$ such that for every $V_0$-vertex $v\in T$, there is a $V_0$-vertex $v'\in T$ such that $d_\mathrm{Haus}(f(X_v),X_{v'})\leq C_2$.

The above discussion tells us that there exists a unique map $f_*:VT_\mathrm{cyl}\rightarrow VT_\mathrm{cyl}'$.  By applying the above argument to the coarse inverse $g:G'\rightarrow G$ of $f$, we deduce that $f_*$ is in fact a bijection. We claim that a $V_0$-vertex $v\in VT_\mathrm{cyl}$ and a $V_1$-vertex $w\in VT_\mathrm{cyl}$ are joined by an edge if and only if $f_*(v)$ and $f_*(w)$ are also joined by an edge. This follows from \ref{inthm:jsjtocqi3} and the fact that for an edge $e\in T_\mathrm{JSJ}$, $X_e$ is coarsely contained in a $V_1$-vertex space $X_w$ if and only if $X_{f_*(e)}$ is coarsely contained in $X_{f_*(w)}$.

We now claim that for every vertex $v\in VT_\mathrm{cyl}$, the map $f|_{X_v}$ is close to a relative quasi-isometry $(X_v,\mathcal{P}_v)\rightarrow (X_{v'},\mathcal{P}_{v'})$. By Theorem 7.1 of \cite{papasoglu2005quasi} applied to $f$ and its coarse inverse, there is a $C_3\geq 0$ such that: \begin{enumerate}
\item \label{inthm:jsjtocqii} for all  $t\in ET_\mathrm{JSJ}$, there is a $t'\in ET'_\mathrm{JSJ}$ such that $\mathrm{d}_\mathrm{Haus}(f(X_t),X_{t'})\leq C_3$;
\item \label{inthm:jsjtocqiii}for all $t'\in ET'_\mathrm{JSJ}$, there is a $t\in ET_\mathrm{JSJ}$ such that $\mathrm{d}_\mathrm{Haus}(f(X_t),X_{t'})\leq C_3$.
\end{enumerate}
Pick some edge $e=(v,w)\in ET_\mathrm{cyl}$ and let $\mathcal{S}_e$ be as defined above. Let $e':=(v',w')=(f_*(v),f_*(w))$.  Without loss of generality, we may assume $v$ is a cylinder and $w$ is a $V_1$-vertex. Let $X_t\in \mathcal{S}_e$, where $t\in ET_\mathrm{JSJ}$ is an edge adjacent to $w$ and contained in the cylinder $v$. 

Condition \ref{inthm:jsjtocqii} tells us there is a $t'\in ET'_\mathrm{JSJ}$ such that $\mathrm{d}_\mathrm{Haus}(f(X_t),X_{t'})\leq C_3$. This is enough if $t'$ happens to be adjacent to $w'$, since then $X_{t'}\in \mathcal{S}_{e'}$; however, this is not necessarily the case. Since $X_t\subseteq N_1(X_w)$, we  deduce that $X_{t'}\subseteq N_D(X_{w'})$, where $D$ depends only on $C_1$, $C_3$ and the quasi-isometry constants of $f$.  By \ref{inthm:jsjtocqi3}, there is an edge $t''\in ET_\mathrm{JSJ}$, adjacent to $w'$, such that $X_{t'}\subseteq N_D(X_{t''})$. We note that $X_{t''}\in \mathcal{S}_{e'}$. By Lemma \ref{lem:treeedgespacesym}, we deduce that $d_\mathrm{Haus}(f(X_{t}),X_{t''})\leq C_3+\phi(D)$, which demonstrates the first condition in Definition \ref{defn:relqi}. The other condition  is proved similarly, using \ref{inthm:jsjtocqiii} and Lemma \ref{lem:treeedgespacesym}.
\end{proof}

It light of Theorem \ref{thm:jsjtocqi}, it is natural to ask whether one can determine the quasi-isometry type of a group from its JSJ decomposition. This question has been studied by \cite{cashenmartin2017quasi}, who have obtained positive results in many interesting cases where cylinder stabilisers are two-ended. We investigate this question for RAAGs, where cylinder stabilisers are one-ended.

\subsection{Decorated Trees}\label{subsec:decoratedtrees}
The material in this subsection is adapted from \cite{cashenmartin2017quasi}.
\begin{defn}
Let $G$ be a finitely presented group and let $T$ be its JSJ tree of cylinders. Let $\mathcal{O}$ be a set and  $\delta: VT\sqcup ET\rightarrow \mathcal{O}$ be a map such that $\delta(e)=\delta(\bar e)$ for every edge $e\in ET$. We call the map $\delta$ a \emph{decoration} and call $\mathcal{O}$ the set of \emph{ornaments}. We call the pair $(T,\delta)$ a \emph{decorated tree}.
\end{defn}
We may think of a decorated tree as a tree in which each vertex and unoriented edge is labelled with an element of $\mathcal{O}$. For instance, if $\mathcal{O}$ is a set of colours, then a decorated tree is just a coloured tree. We assume throughout that no edge has the same ornament as a vertex, thus $\mathcal{O}$ can be thought of as a disjoint union of vertex and edge ornaments.

\begin{defn}\label{defn:equiv}
Let $(T,\delta)$ and $(T,\delta')$ be two decorated trees with the same set of ornaments. Then we say that $(T,\delta)$ and $(T,\delta')$ are \emph{equivalent} if there exists a decoration preserving tree isomorphism $\chi:T\rightarrow T'$.
\end{defn}
Both weak equivalence and equivalence  as described in the introduction, are special cases of Definition \ref{defn:equiv} when $T$ is endowed with a specific decoration, namely the na{\"\i}ve and  embellished decorations, which we define below and in Section \ref{sec:embellished}  respectively.

Rather than using Definition \ref{defn:equiv} directly, we use Proposition \ref{prop:equivbijec}  to determine whether two decorated JSJ trees of cylinders are equivalent.
Since we are interested in determining the quasi-isometry type of a group, we will work with decorations that are invariant under quasi-isometry in the following sense:
\begin{defn}
Let $G$ be a finitely presented group and let $T$ be its JSJ tree of cylinders. A decoration $\delta:T\rightarrow \mathcal{O}$ is said to be \emph{geometric} if for any quasi-isometry $\phi:G\rightarrow G$ with associated tree isomorphism $\phi_*:T\rightarrow T$ as in Theorem \ref{thm:jsjtocqi}, we have $\delta\circ \phi_*=\delta$. 
\end{defn}

To any JSJ tree of cylinders, we can associate the following geometric decoration. We decorate each vertex $v$ with its type (cylinder, rigid or hanging) and its relative quasi-isometry class $[[X_v,\mathcal{P}_v]]$. We call this  the \emph{na{\"\i}ve decoration}. 
Theorem \ref{thm:jsjtocqi}  ensures that this is indeed a geometric decoration.

The vertices and edges of a decorated tree are naturally endowed with an equivalence relation as follows: two vertices or edges are equivalent if and only if they have the same ornament. We say a decoration $\delta':T\rightarrow \mathcal{O}'$ is a \emph{refinement} of $\delta:T\rightarrow \mathcal{O}$ if the partition of vertices and edges into $\mathcal{O}'$-classes is finer than the partition into $\mathcal{O}$-classes.

We now explain two procedures described in \cite{cashenmartin2017quasi} that, given any geometric decoration $\delta:T\rightarrow \mathcal{O}$, output a new geometric decoration $\delta'$ that refines $\delta$. The underlying principle is that there may be some vertices or edges $x,y$ such that $\delta(x)=\delta(y)$, but there is some obvious obstruction to the existence of a  quasi-isometry $\phi:G\rightarrow G$ such that $\phi_*(x)=y$. In such a case, we refine $\delta$ to eliminate this  obstruction.

The first of these procedures is called \emph{neighbour refinement}. The idea is as follows: 
we suppose $\mathcal{O}$ contains two elements that we call $\mathtt{red}$ and $\mathtt{blue}$, and that $T$ contains two \texttt{red} vertices $v$ and $w$. Suppose also that no \texttt{blue}  edge  is incident to $v$, but some \texttt{blue} edge is incident to $w$. Since the decoration is geometric, there can be no quasi-isometry $\phi:G\rightarrow G$ such that $\phi_*(v)=w$, since $\phi_*$ must send \texttt{blue} edges to \texttt{blue} edges. 

We refine $\delta$ by assigning different ornaments to \texttt{red} vertices incident to no \texttt{blue} edge, \texttt{red} vertices incident to countably infinitely many \texttt{blue} edges, \texttt{red} vertices incident to one \texttt{blue} edge, \texttt{red} vertices incident to two \texttt{blue} edges etc. 
Neighbour refinement applies the above procedure to all distinct pairs $\{\texttt{red},\texttt{blue}\}\subseteq\mathcal{O}$ simultaneously.   We refer the reader to \cite[Section 3.2]{cashenmartin2017quasi} for a formal exposition of this.

The second procedure  is called \emph{vertex refinement}. We first observe that a decorated tree of cylinders $(T,\delta)$ induces a decoration $\delta_v:\mathcal{P}_v\rightarrow \mathcal{O}$ on the peripheral structure  via the map $\mathcal{S}_e\mapsto \delta(e)$. This is always well-defined by Remark \ref{rem:defnofperiph}.  Suppose $(\phi_v,(\phi_v)_*):(X_v,\mathcal{P}_v)\rightarrow (X_w,\mathcal{P}_w)$ is a relative quasi-isometry. For ease of notation, we therefore identify $(\phi_v)_*$ with the corresponding map that sends an edge adjacent to $v$ to an edge adjacent to $w$.

Suppose that $v$ and $w$ are  vertices such that $\delta(v)=\delta(w)$.   Suppose also that there is no relative quasi-isometry $(X_v,\mathcal{P}_v)\rightarrow (X_w,\mathcal{P}_w)$ that preserves the decoration on the peripheral structure. Then there cannot be a quasi-isometry $\phi:G\rightarrow G$ such that $\phi_*(v)=w$. We thus refine $\delta$ by assigning different ornaments to $v$ and $w$.

Now suppose there are edges $e=(v,w)$ and $f=(v',w')$ such that $\delta(e)=\delta(f)$. Suppose also that either there is no relative  quasi-isometry $\phi_v:X_v\rightarrow X_{v'}$  such that $(\phi_v)_*(e)=f$, or there is no relative quasi-isometry $\phi_w:X_w\rightarrow X_{w'}$ such that $(\phi_w)_*(\overline{e})=\overline{f}$. In either case, there cannot be a quasi-isometry $\phi:G\rightarrow G$ such that $\phi_*(e)=f$. We thus refine $\delta$ by assigning different ornaments to $e$ and $f$.

Vertex refinement was originally defined in \cite[Section 5.3]{cashenmartin2017quasi}. Our approach needs minor modifications, so we explain the details here. We define  $\mathcal{Q}:=\{[[X_v,\mathcal{P}_v]]\mid v\in VT\}$. For each $Q\in \mathcal{Q}$, we pick some metric space $Z_Q$ with peripheral structure $\mathcal{P}_Q$ such that $[[Z_Q,\mathcal{P}_Q]]=Q$.

We recall every $\phi\in \mathrm{QI}(Z_Q,\mathcal{P}_Q)$ induces a bijection $\phi_*:\mathcal{P}_Q\rightarrow \mathcal{P}_Q$.  We define \begin{align*}
\mathcal{O}'_V&:=\mathcal{O}\times \bigg(\bigsqcup_{Q\in \mathcal{Q}} \mathrm{QI}(Z_Q,\mathcal{P}_Q)\backslash\mathcal{O}^{\mathcal{P}_Q}\bigg),
\end{align*}
where some $\phi\in\mathrm{QI}(Z_Q,\mathcal{P}_Q)$ acts on  $\rho\in\mathcal{O}^{\mathcal{P}_Q}$  via $\phi\cdot\rho:= \rho\circ \phi^{-1}_*$. Similarly, we define 
\begin{align*}
\mathcal{O}'_E&:=\mathcal{O}\times \bigg(\bigsqcup_{Q\in \mathcal{Q}}\mathrm{QI}(Z_Q,\mathcal{P}_Q)\backslash(\mathcal{O}^{\mathcal{P}_Q},\mathcal{P}_Q) \bigg)^2,
\end{align*}
where some $\phi\in\mathrm{QI}(Z_Q,\mathcal{P}_Q)$ acts on  $(\rho,\mathcal{S}_e)\in(\mathcal{O}^{\mathcal{P}_Q},\mathcal{P}_Q)$ via $$\phi\cdot (\rho,\mathcal{S}_e):= \big(\rho\circ \phi^{-1}_*,\phi_*(\mathcal{S}_{e})\big).$$

The vertex refinement $\delta':VT\sqcup ET\rightarrow \mathcal{O}'_V\sqcup \mathcal{O}'_E $ of $\delta$ is defined as follows. For every vertex $v\in VT$, we choose a relative quasi-isometry $(\mu_v,(\mu_v)_*):(X_v,\mathcal{P}_v)\rightarrow (Z_Q,\mathcal{P}_Q)$ for $Q=[[(X_v,\mathcal{P}_v)]]$. We recall $\delta_v$ is the decoration on $\mathcal{P}_v$ induced by $\delta$. We thus define $$\delta'(v):=\bigg(\delta(v),\mathrm{QI}(Z_Q,\mathcal{P}_Q)\cdot( \delta_v \circ (\mu_v)_*^{-1})\bigg)\in \mathcal{O}'_V.$$

We now define $\delta'(e)$ for each edge $e=(v,w)$ where $v$ is a cylinder.  This defines $\delta'$ on all edges, since we know that  $\delta'(e)$ and  $\delta'(\bar e)$ must be equal. 
We define \begin{align*}\delta'(e):=\bigg(\delta(e),\mathrm{QI}(Z_Q,\mathcal{P}_Q)\cdot\Big( \delta_v \circ (\mu_v)_*^{-1},(\mu_v)_*(\mathcal{S}_e)\Big),\\
\mathrm{QI}(Z_Q,\mathcal{P}_Q)\cdot\Big( \delta_w \circ (\mu_w)_*^{-1},(\mu_w)_*(\mathcal{S}_{\overline{e}})\Big)\bigg)\in \mathcal{O}'_E.\end{align*}
The decoration $\delta'$ is independent of the choice of maps $\{\mu_v\}_{v\in VT}$.

Unlike vertex refinement in \cite{cashenmartin2017quasi},  we don't keep track of partial orientations, which make no sense if edge spaces aren't two-ended. Although edge spaces do have canonical two-ended factors, the procedure described in Remark \ref{rem:flip} allows one to reverse the orientation of this  two-ended factor when necessary. In the above definition of $\delta'(e)$, we also needed to ensure that there are decoration preserving relative quasi-isometries at both the  initial and terminal vertices of $e$. This is not needed in \cite{cashenmartin2017quasi} because cylinders are assumed to be two-ended, thus every edge is incident to exactly one non-elementary  vertex space.

Starting with the na{\"\i}ve decoration, we iteratively perform neighbour and vertex refinement  till this process stabilizes, i.e. the partitions induced by successive refinements do not get strictly finer. This will always happen since the partition of vertices and edges induced by any geometric decoration will necessarily be coarser than the partition  into  $G$-orbits. 

We suppose $\delta:T\rightarrow \mathcal{O}$ is a decoration that is stable under vertex refinement.
Suppose there are edges $e,f\in ET$ such that $\delta(e)=\delta(f)$, and that $v=\iota e$ and $w=\iota f$ are both cylinders. Let $Q=[[X_v,\mathcal{P}_v]]$. Since the decoration is stable under vertex refinement, there is some relative quasi-isometry $\psi\in QI(Z_Q,\mathcal{P}_Q)$ such that
 $$\Big(\delta_w\circ (\mu_w)_*^{-1},(\mu_w)_*(f)\Big)=\Big(\delta_v\circ (\mu_v)_*^{-1}\circ\psi_*^{-1},(\psi_*\circ(\mu_v)_*)(e)\Big).$$ Let $\phi_v:=\mu_w^{-1}\circ\psi\circ\mu_v:(X_v,\mathcal{P}_v)\rightarrow (X_w,\mathcal{P}_w)$, where $\mu_w^{-1}$ is a coarse inverse to $\mu_w$. Then $f=(\phi_v)_*(e)$ and $\delta_w\circ (\phi_v)_*=\delta_v$. This observation illustrates part of the proof of the following proposition.

\begin{prop}[See Propositions 3.5, 5.9 and 5.12 of \cite{cashenmartin2017quasi}]\label{prop:selfqi}
Let $T$ be the JSJ tree of cylinders of a group $G$. Suppose $\delta:T\rightarrow \mathcal{O}$ is a geometric decoration that is stable under neighbour and vertex refinement. Then  the following hold:
\begin{enumerate}
\item Whenever $\delta(v)=\delta(w)$ for vertices of $T$, there is a decoration preserving tree automorphism $\chi$ such that $\chi(v)=w$. Moreover, there is a relative quasi-isometry $\phi_v:(X_v,\mathcal{P}_v)\rightarrow (X_w,\mathcal{P}_w)$ such that $(\phi_v)_*=\chi|_{lk(v)}$.
\item Suppose also that $\delta(e)=\delta(f)$ for edges $e,f$ of $T$, with $\iota e=v$ and $\iota f=w$. Then we  may choose $\chi$ as above such  that $\chi(e)=f$ and $(\phi_v)_*(e)=f$.
\end{enumerate}
\end{prop}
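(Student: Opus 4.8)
The plan is to deduce Proposition \ref{prop:selfqi} directly from the structural description of the stabilized decorations, combining neighbour-refinement stability with vertex-refinement stability. The key conceptual point is that stability under these two refinements exactly encodes the absence of the two obstructions to building a self-quasi-isometry: neighbour refinement guarantees that equally-decorated vertices see the same local edge-colouring pattern, while vertex refinement guarantees the existence of decoration-preserving relative quasi-isometries on vertex spaces. Since $\delta$ is geometric and stable, the partition it induces is a fixed point of the refinement operators, and I want to promote the equality of ornaments into an actual tree automorphism realized by geometric maps.

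First I would prove part (1). Given vertices $v,w$ with $\delta(v)=\delta(w)$, I use the observation immediately preceding the proposition: vertex-refinement stability means that the vertex-ornaments $\delta'(v)=\delta'(w)$ agree, and unpacking the definition of $\mathcal{O}'_V$ yields a $\psi\in\mathrm{QI}(Z_Q,\mathcal{P}_Q)$ (for $Q=[[X_v,\mathcal{P}_v]]=[[X_w,\mathcal{P}_w]]$) intertwining the induced peripheral decorations, i.e. $\delta_w\circ(\mu_w)_*^{-1}=\delta_v\circ(\mu_v)_*^{-1}\circ\psi_*^{-1}$. Setting $\phi_v:=\mu_w^{-1}\circ\psi\circ\mu_v:(X_v,\mathcal{P}_v)\to(X_w,\mathcal{P}_w)$ gives a relative quasi-isometry with $\delta_w\circ(\phi_v)_*=\delta_v$; that is, $(\phi_v)_*$ sends each incident edge of $v$ to an incident edge of $w$ preserving $\delta$. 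The remaining task is to extend this local bijection $(\phi_v)_*:\mathrm{lk}(v)\to\mathrm{lk}(w)$ to a global decoration-preserving tree automorphism $\chi$ with $\chi(v)=w$. I would do this by induction on distance from $v$ (equivalently, building $\chi$ out from the orbit of $v$): having matched an edge $e=(v,u)$ to $f=(w,u')$ with $\delta(e)=\delta(f)$, I apply the analogous vertex-refinement conclusion at the far endpoints $u,u'$ to extend $\chi$ over $\mathrm{lk}(u)$ compatibly, using that $\delta(u)=\delta(u')$ is forced by edge-ornament equality. Neighbour refinement is what ensures the matching can be carried out bijectively at every stage: equally-decorated vertices have exactly the same number of incident edges of each ornament, so no counting obstruction arises.

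For part (2), I start from the given pair $e=(v,w)$, $f=(v',w')$ with $\delta(e)=\delta(f)$ and both $\iota$'s cylinders, and I apply vertex-refinement stability at the edge level: equality of the edge-ornaments in $\mathcal{O}'_E$ now carries the extra data $(\mu_v)_*(\mathcal{S}_e)$ and $(\mu_w)_*(\mathcal{S}_{\bar e})$, so the intertwining $\psi$ can be chosen to additionally satisfy $(\psi_*\circ(\mu_v)_*)(e)=(\mu_{v'})_*(f)$. This produces $\phi_v:(X_v,\mathcal{P}_v)\to(X_{v'},\mathcal{P}_{v'})$ with $(\phi_v)_*(e)=f$ on top of $\delta_{v'}\circ(\phi_v)_*=\delta_v$, so I may seed the inductive construction of $\chi$ from part (1) with this choice of base map, guaranteeing $\chi(e)=f$; the $\mathcal{O}'_E$ ornament also records the terminal data $\mathcal{S}_{\bar e}$, which is exactly what lets the extension proceed across $f$ at the $w'$ end as well.

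The main obstacle I anticipate is the globalisation step: turning the collection of locally compatible peripheral-bijections $(\phi_v)_*$ into a single well-defined tree automorphism $\chi$ on all of $T$. This requires checking that the inductive extension is consistent (the same vertex is never assigned two incompatible images) and that it terminates in a genuine automorphism rather than merely a partial map. The consistency hinges on the fact that $T$ is a tree, so there is a unique geodesic between any two vertices and no cycles to create conflicting constraints; nonetheless I would need to argue carefully, using that at each extension the chosen relative quasi-isometry on the new vertex space respects both the incoming edge ornament and the full induced peripheral decoration, that the resulting $\chi$ is globally decoration preserving. A secondary subtlety, flagged in the text via Remark \ref{rem:flip}, is orientation of the canonical two-ended factors of edge spaces: since we discard partial orientations, I must ensure the relative quasi-isometries at the two ends of each edge can be chosen with compatible (possibly flipped) orientations, which is precisely what the flip procedure supplies.
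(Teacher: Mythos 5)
Your proposal is correct and follows essentially the same route as the paper: your first step (unpacking vertex-refinement stability to obtain $\psi\in\mathrm{QI}(Z_Q,\mathcal{P}_Q)$ and setting $\phi_v:=\mu_w^{-1}\circ\psi\circ\mu_v$, giving $\delta_w\circ(\phi_v)_*=\delta_v$ and, in case (2), $(\phi_v)_*(e)=f$) is exactly the observation the paper records immediately before the proposition, and your outward inductive globalisation over the tree — using that edge ornaments in $\mathcal{O}'_E$ carry data at \emph{both} endpoints, so the extension can be continued across each new edge consistently — is precisely the Cashen--Martin argument (their Propositions 3.5, 5.9, 5.12) that the paper cites in lieu of a written proof. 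The consistency worry you flag is resolved exactly as you say, by the uniqueness of geodesics in a tree, so there is no gap.
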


We wish to compare two JSJ trees of cylinders decorated with  different sets of ornaments. To do this, Cashen and Martin  define a matrix known as the \emph{structure invariant} which encodes the adjacency data of different ornaments \cite[Section 3.3]{cashenmartin2017quasi}. 

\begin{defn}
Given a decoration $\delta:T\rightarrow \mathcal{O}$ that is stable under neighbour refinement, the structure invariant $S(T,\delta,\mathcal{O})$ is an $|\mathrm{im}(\delta)| \times |\mathrm{im}(\delta)|$ matrix with entries in $\mathbb{N}\cup \{\infty\}$. Choosing an ordering of $\mathrm{im}(\delta)=\{o_1,\dots, o_{|\mathrm{im}(\delta)|}\}$, the  $(i,j)$-entry of $S(T,\delta,\mathcal{O})$ counts the number of copies of vertices or edges with ornament $o_j$ that are adjacent to a vertex or edge with ornament $o_i$. 
\end{defn}

This is well-defined because $\delta$ is stable under neighbour refinement. This is a special case of a more general definition  in \cite{cashenmartin2017quasi}, defined for a decoration that is not necessarily stable under neighbour refinement.

\begin{prop}[{\cite[Theorem 5.13]{cashenmartin2017quasi}}]\label{prop:equivbijec}
Suppose we have two JSJ trees of cylinders $T$ and $T'$ that are endowed with geometric decorations $\delta_0:T\rightarrow \mathcal{O}_0$ and $\delta'_0:T' \rightarrow \mathcal{O}_0$. We perform neighbour and vertex refinement iteratively until they stabilize, obtaining decorations $\delta:T\rightarrow \mathcal{O}$ and $\delta':T' \rightarrow \mathcal{O}'$. Then $(T,\delta_0)$ and $(T',\delta'_0)$  are equivalent if and only if:
\begin{enumerate}
\item there is a bijection $\beta:\mathrm{im}(\delta)\rightarrow \mathrm{im}(\delta')$ such that $$\delta_0\circ \delta^{-1}=\delta'_0\circ (\delta')^{-1}\circ \beta;$$
\item after ordering $\mathrm{im}(\delta)$ and $\mathrm{im}(\delta')$ so that $\beta$ is order preserving, the structure invariants $S(T,\delta,\mathcal{O})$ and $S(T',\delta',\mathcal{O}')$ are equal;
\item for every ornament $o\in \mathcal{O}$ with a vertex $v\in \delta^{-1}(o)$, there exists a vertex $v'\in (\delta')^{-1}(\beta (o))$ and a relative quasi-isometry $\phi_v:(X_v,\mathcal{P}_v)\rightarrow (X_{v'},\mathcal{P}_{v'})$ such that the decorations $\delta'_v\circ (\phi_v)_*$ and $\beta\circ \delta_v$ on the peripheral structure $\mathcal{P}_{v}$ are equal.
\end{enumerate}
\end{prop}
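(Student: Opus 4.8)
The plan is to prove both implications, interpreting equivalence of $(T,\delta_0)$ and $(T',\delta_0')$ as a tree isomorphism $\chi$ with $\delta_0'\circ\chi=\delta_0$, and using throughout that neighbour and vertex refinement are canonical operations, each stage determined by the tree, the current decoration, and (for vertex refinement) the relative quasi-isometry orbits of the decorated peripheral structures. The engine of both directions is Proposition \ref{prop:selfqi}, which translates equality of fully refined ornaments into decoration-preserving relative quasi-isometries.

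For the forward direction I would show that a combinatorial $\chi$ with $\delta_0'\circ\chi=\delta_0$ necessarily respects every refinement step, producing a bijection $\beta\colon\mathrm{im}(\delta)\to\mathrm{im}(\delta')$ with $\delta'\circ\chi=\beta\circ\delta$. Neighbour refinement is preserved because $\chi$ preserves adjacency and $\delta_0$; for vertex refinement one uses that $\chi$ preserves the na{\"\i}ve class $[[X_v,\mathcal{P}_v]]$ and all incident edge ornaments, so that $v$ and $\chi(v)$ carry the same quasi-isometry orbit of decorated peripheral structure --- it is here that Proposition \ref{prop:selfqi} supplies the decoration-preserving relative quasi-isometry underlying that equality. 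Condition (1) then follows by composing $\delta'\circ\chi=\beta\circ\delta$ with $\delta_0'\circ\chi=\delta_0$; condition (2) holds since a tree isomorphism preserves adjacency counts, forcing $S(T,\delta,\mathcal{O})=S(T',\delta',\mathcal{O}')$ under the $\beta$-compatible ordering; and condition (3) is exactly the relative quasi-isometry recorded at $v$ and $v'=\chi(v)$.

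For the converse I would construct $\chi$ by a rooted traversal of $T$. Fix a base vertex $v_0$ of ornament $o$; condition (3) provides a $v_0'$ with $\delta'(v_0')=\beta(o)$ and a relative quasi-isometry $\phi_{v_0}$ with $\delta'_{v_0'}\circ(\phi_{v_0})_*=\beta\circ\delta_{v_0}$. Set $\chi(v_0)=v_0'$ and use $(\phi_{v_0})_*$ to biject the edges at $v_0$ with those at $v_0'$, respecting ornaments; condition (2) ensures this local matching completes, since the two vertices have equally many incident edges of each ornament type, and Proposition \ref{prop:selfqi} lets one both realise the match and carry it to the opposite endpoint of each edge. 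Iterating across matched edges defines $\chi$ on all of $T$, with no consistency to verify because $T$ has no cycles, and condition (1) promotes $\delta'\circ\chi=\beta\circ\delta$ to $\delta_0'\circ\chi=\delta_0$, giving the desired equivalence.

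The main obstacle is the edge-propagation step of the converse, precisely where this setting departs from \cite{cashenmartin2017quasi}: cylinder stabilizers are no longer two-ended, so an edge of the JSJ tree of cylinders may be incident to two non-elementary vertex spaces. Matching such an edge $e$ requires compatible decoration-preserving relative quasi-isometries at \emph{both} endpoints rather than one, and reconciling these two local choices --- while correctly orienting the canonical two-ended factor via the flip of Remark \ref{rem:flip} --- is what the two-factor edge ornament of vertex refinement and the edge clause of Proposition \ref{prop:selfqi} are designed to manage. Checking that these local matchings assemble into a single global isomorphism is where the argument demands the most care.
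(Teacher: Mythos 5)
First, a remark on the comparison itself: the paper never proves this statement. It is imported wholesale from Cashen--Martin (\cite[Theorem 5.13]{cashenmartin2017quasi}), and the surrounding text only explains how the definitions (relative quasi-isometry, the two-factor edge ornament in vertex refinement) are adapted to cylinders that are not two-ended. So your proposal has to be judged against the intended Cashen--Martin argument. Your converse direction is essentially that argument and is fine in outline: conditions (1) and (2) alone drive the back-and-forth construction of a decoration-preserving isomorphism of the refined trees (this is the purely combinatorial structure-invariant theorem), condition (1) then descends it to $\delta_0,\delta_0'$, and condition (3) together with Proposition \ref{prop:selfqi} is what upgrades the combinatorial isomorphism to the geometric statement recorded in Corollary \ref{cor:dectreeisom}.

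The genuine gap is in your forward direction, at the step ``$\chi$ preserves the na{\"\i}ve class $[[X_v,\mathcal{P}_v]]$ and all incident edge ornaments, so that $v$ and $\chi(v)$ carry the same quasi-isometry orbit of decorated peripheral structure.'' This is a non sequitur: equality of the vertex-refinement ornaments requires a relative quasi-isometry $(X_v,\mathcal{P}_v)\rightarrow (X_{\chi(v)},\mathcal{P}_{\chi(v)})$ whose induced map on peripheral structures intertwines the decorations, and no amount of combinatorial information carried by $\chi$ can produce such a map. Proposition \ref{prop:selfqi} cannot fill the hole: it concerns two vertices of the \emph{same} tree whose decoration is \emph{already} stable under vertex refinement; the cross-tree version is exactly Corollary \ref{cor:dectreeisom}, which rests on the proposition you are proving, so your appeal to it is circular. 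Worse, under the literal combinatorial reading of ``equivalent'' (Definition \ref{defn:equiv}) that you adopt at the outset, the forward implication is simply false. Take a rigid graph $R$ with finite outer automorphism group containing vertices $x,y$ with isomorphic links but with no automorphism of $R$ carrying $x$ to $y$; form $G$ by gluing graphs $S_1$ at $x$ and $S_2$ at $y$, and $G'$ by gluing $S_1$ at $y$ and $S_2$ at $x$, where $A(\mathrm{lk}^{S_1}(s_1))\not\sim_{QI}A(\mathrm{lk}^{S_2}(s_2))$. By Proposition \ref{prop:relqicyl} the corresponding cylinders of $G$ and $G'$ are relatively quasi-isometric, all vertex multiplicities are $\aleph_0$, and a back-and-forth argument gives a na{\"\i}ve-decoration-preserving tree isomorphism $T\rightarrow T'$; yet Huang's rigidity forbids any relative quasi-isometry of $(X(R),\mathcal{P}_{\{x,y\}})$ from swapping the $x$-classes with the $y$-classes, so conditions (1) and (3) cannot be satisfied simultaneously. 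The implication is true only under the geometric reading of equivalence (weak equivalence as in the introduction, where $\chi$ comes equipped with relative quasi-isometries $\phi_v$ satisfying $(\phi_v)_*=\chi|_{\mathrm{lk}(v)}$); with that hypothesis the refinements are preserved by a straightforward induction on refinement stages, precisely because $\phi_v$ supplies the decoration-intertwining map your argument is missing.
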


Using the notation of  Proposition \ref{prop:equivbijec}, we identify $\mathrm{im}(\delta)$ with $\mathrm{im}(\delta')$ via the bijection $\beta$. By combining Propositions \ref{prop:selfqi} and \ref{prop:equivbijec}, we thus deduce the following:

\begin{cor}\label{cor:dectreeisom}
Let $(T,\delta_0)$ and $(T',\delta'_0)$ be decorated JSJ trees of cylinders as above. Then there exist decorations $\delta:T\rightarrow \mathcal{O}$ and $\delta':T'\rightarrow \mathcal{O}$ such that $\mathrm{im}(\delta)=\mathrm{im}(\delta')$ and the following hold:
\begin{enumerate}
\item If $\delta(v)=\delta'(w)$ for vertices $v\in VT$ and $w\in VT'$, then there exists a decoration preserving tree isomorphism $\chi:T\rightarrow T'$ such that $\chi(v)=w$. Moreover, there is a relative quasi-isometry $\phi_v:(X_v,\mathcal{P}_v)\rightarrow (X'_w,\mathcal{P}_w)$ such that $(\phi_v)_*=\chi|_{lk(v)}$.
\item Suppose also that $\delta(e)=\delta(f)$ for edges $e\in ET$ and $f\in ET'$, with $\iota e=v$ and $\iota f=w$. Then we may choose $\chi$ as above such that $\chi(e)=f$ and  $(\phi_v)_*(e)=f$.
\end{enumerate}
\end{cor}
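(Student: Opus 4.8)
The plan is to reduce this two-tree statement to the single-tree homogeneity furnished by Proposition \ref{prop:selfqi}, using Proposition \ref{prop:equivbijec} only to produce one decoration-preserving isomorphism together with a decoration-compatible relative quasi-isometry at a single representative of each ornament class. I work in the relevant case where $(T,\delta_0)$ and $(T',\delta'_0)$ are equivalent. First I would run iterated neighbour and vertex refinement on $\delta_0$ and $\delta'_0$ until both stabilize, obtaining geometric decorations that are stable under both refinements. Proposition \ref{prop:equivbijec} then supplies the bijection $\beta$ identifying their images, with equal structure invariants and the per-vertex compatibility of condition (3). Relabelling $\delta'$ along $\beta$ yields the common ornament set $\mathcal{O}$ with $\mathrm{im}(\delta)=\mathrm{im}(\delta')$ promised in the statement, and by Definition \ref{defn:equiv} there is at least one decoration-preserving tree isomorphism $T\to T'$.

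For part (1), fix $v\in VT$ and $w\in VT'$ with $\delta(v)=\delta'(w)$. Condition (3) of Proposition \ref{prop:equivbijec} gives a vertex $v^*\in VT'$ with $\delta'(v^*)=\delta(v)$ and a relative quasi-isometry $\eta\colon(X_v,\mathcal{P}_v)\to(X'_{v^*},\mathcal{P}_{v^*})$ whose induced map on peripheral structures respects the decoration. Since $v^*$ and $w$ carry the same (refinement-stable) ornament, Proposition \ref{prop:selfqi}(1), applied to the single decorated tree $(T',\delta')$, produces a decoration-preserving automorphism of $T'$ taking $v^*$ to $w$ that is realized on the link by a relative quasi-isometry $\theta\colon(X'_{v^*},\mathcal{P}_{v^*})\to(X'_w,\mathcal{P}_w)$. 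Setting $\phi_v:=\theta\circ\eta$ gives a relative quasi-isometry $(X_v,\mathcal{P}_v)\to(X'_w,\mathcal{P}_w)$ whose induced link map $(\phi_v)_*\colon\mathrm{lk}(v)\to\mathrm{lk}(w)$ is a decoration-preserving bijection. I would then \emph{define} $\chi$ on $\mathrm{lk}(v)$ to be $(\phi_v)_*$ and extend it to a decoration-preserving tree isomorphism $\chi\colon T\to T'$ with $\chi(v)=w$ by the recursive, sphere-by-sphere construction underlying Proposition \ref{prop:equivbijec}: at each newly reached vertex the equality of structure invariants together with Proposition \ref{prop:selfqi} lets one match the remaining incident edges without altering any ornament. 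By construction $(\phi_v)_*=\chi|_{\mathrm{lk}(v)}$, which is part (1).

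The step I expect to be the main obstacle is exactly this last upgrade. Proposition \ref{prop:equivbijec}(3) asserts only that \emph{some} decoration-compatible relative quasi-isometry exists, at \emph{some} representative vertex, whereas the corollary demands a relative quasi-isometry landing at the prescribed $w$ whose induced link map coincides on the nose with the restriction of a global tree isomorphism. The device that makes this work is stability under vertex refinement: as in the displayed computation preceding Proposition \ref{prop:selfqi}, it guarantees that the relative self-quasi-isometries of each vertex space act flexibly enough on equally decorated incident edges to realize the required matchings of links, so the recursive extension never gets stuck having to send one ornament to another. Finally, for part (2), given edges $e,f$ with $\iota e=v$, $\iota f=w$ and $\delta(e)=\delta(f)$, I would route $e$ to $f$ at the outset: the edge ornaments of $\mathcal{O}'_E$ record, beyond the endpoint data, the $\mathrm{QI}$-orbit of the pair $(\delta_v\circ(\mu_v)_*^{-1},(\mu_v)_*(\mathcal{S}_e))$, so Proposition \ref{prop:selfqi}(2) lets me choose $\theta$, hence $\phi_v$, with $(\phi_v)_*(e)=f$; since $\chi|_{\mathrm{lk}(v)}=(\phi_v)_*$ this forces $\chi(e)=f$, using Remark \ref{rem:flip} to reverse the canonical two-ended factor of the edge space if its orientation must be flipped.
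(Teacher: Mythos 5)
Your proposal is correct and takes essentially the same route as the paper: the paper deduces this corollary with no further detail than ``by combining Propositions \ref{prop:selfqi} and \ref{prop:equivbijec}'', and that is exactly what you do --- cross-tree data from condition (3) of Proposition \ref{prop:equivbijec}, within-tree homogeneity from Proposition \ref{prop:selfqi}, composition of the two relative quasi-isometries, and recursive extension of the resulting link bijection to a global decoration-preserving isomorphism using the equality of structure invariants and stability under neighbour refinement. Your handling of part (2) via Proposition \ref{prop:selfqi}(2) is likewise the intended argument (the appeal to Remark \ref{rem:flip} is unnecessary in this abstract tree setting, but harmless).
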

\subsection{RAAGs}
\label{sec:raags}
The Salvetti complex $S(\Gamma)$  of a RAAG  $A(\Gamma)$ is defined as follows: there is a single vertex $w$ and a one-edge loop $e_v$ is attached to $w$ for every $v\in V\Gamma$. For every maximal $n$-clique $\{v_1,\dots, v_n\}\subseteq V\Gamma$, we attach an $n$-torus to the edges $e_{v_1},\dots, e_{v_n}$.  The fundamental group of $S(\Gamma)$ is $A(\Gamma)$. In fact, $S(\Gamma)$ is a classifying space for $A(\Gamma)$.

\begin{conv}\label{conv:inclusionstandard} 
If $\Gamma'\subseteq \Gamma$ is an induced subgraph, we  identify  $A(\Gamma')$ with the subgroup of  $A(\Gamma)$ generated by vertices of $\Gamma'$, and identify $S(\Gamma')$ with the corresponding subcomplex of $S(\Gamma)$.
\end{conv}

Let $X(\Gamma)$ be the universal cover of $S(\Gamma)$ with covering map $p:X(\Gamma)\rightarrow S(\Gamma)$. Note that $X(\Gamma)$ is  a $CAT(0)$ cube complex.  
A \emph{standard $k$-flat} in $X(\Gamma)$ is a  $k$-flat that covers a standard $k$-torus in $S(\Gamma)$, i.e. a $k$-torus corresponding to a $k$-clique of $\Gamma$.  A \emph{standard geodesic} in $X(\Gamma)$ is a standard $1$-flat. A \emph{standard subcomplex} of $X(\Gamma)$ is a connected component of $p^{-1}(S({\Gamma'}))$ for some induced subgraph $\Gamma'\subseteq \Gamma$.

We may identify the 1-skeleton of $X(\Gamma)$ with the Cayley graph of $A(\Gamma)$ with respect to the generating set $V\Gamma$. Then edges of $X(\Gamma)$ are labelled by vertices of $\Gamma$. Let $V_e\in V\Gamma$ be the label associated to some edge $e\in X(\Gamma)$.  We define the \emph{support} $V_K$ of a subcomplex $K\subseteq X_\Gamma$  to be $\{V_e\mid e \textrm{ is an edge in } K\}$. If $K$ is a standard subcomplex associated to an induced subgraph $\Gamma'\subseteq \Gamma$, then $V_K=V\Gamma'$. Given $v\in V\Gamma$ and a standard geodesic $l\subseteq X(\Gamma)$, we say that $l$ is a $v$-geodesic if $V_l=\{v\}$. Given a subset $B\subseteq V\Gamma$, we say that a standard geodesic $l\subseteq X(\Gamma)$ is a $B$-geodesic if it is a $v$-geodesic for some $v\in B$.

It is shown in the proof of Proposition 3.1 of \cite{gandini16homstabraags} that every finite simplicial graph $\Gamma$ has maximal join decomposition $\Gamma_1\circ \dots \circ\Gamma_k$, where $\Gamma_1$ is a maximal clique factor and no $\Gamma_i$ splits as a join for $k\geq 2$. This gives direct product decompositions $X(\Gamma)=\mathbb{E}^n\times X(\Gamma_2)\times\dots \times X(\Gamma_k)$ and $A(\Gamma)=\mathbb{Z}^n\times A(\Gamma_2)\dots \times A(\Gamma_n)$. We call this the \emph{de Rahm decomposition} of a RAAG. As noted by Huang \cite{huang2014quasi}, work of  \cite{behrstockcharney12divergence},\cite{abrams13fillings} and \cite{kapovich1998derahm} imply the de Rahm decomposition is stable under quasi-isometry in the following sense:

\begin{thm}[{\cite[Theorem 2.9]{huang2014quasi}}]\label{thm:derahm} Let $\Gamma$ and $\Lambda$ be finite simplicial graphs, and let $X(\Gamma)=\mathbb{E}^n\times X(\Gamma_2)\times \dots \times X(\Gamma_k)$ and $X(\Lambda)=\mathbb{E}^m\times X(\Lambda_2)\times\dots \times X(\Lambda_l)$ be the de Rahm decompositions. Suppose there is a quasi-isometry $f:X(\Gamma)\rightarrow X(\Lambda)$. 
Then $n=m$, $k=l$, and after reordering $\Lambda_2,\dots, \Lambda_l$, there are quasi-isometries $f_i:X(\Gamma_i)\rightarrow X(\Lambda_i)$ such that the following diagram, 
whose vertical maps are projections, commutes up to some uniform error $D$.
\begin{figure}[H]
\begin{tikzcd}  
X(\Gamma)\arrow{r}{f}\arrow[d]& X(\Lambda)\arrow[d]\\
X(\Gamma_i)\arrow{r}{f_i}& X(\Lambda_i)
\end{tikzcd}
\end{figure}
Moreover, if $n=m=0$, then $f$ is uniformly close to the product of quasi-isometries $f_2\times \dots \times f_k$.

\end{thm}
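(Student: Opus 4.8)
The plan is to prove the result by passing to asymptotic cones and combining the canonical de Rham decomposition of CAT(0) spaces with coarse invariants (divergence and higher filling functions) that pin down the individual factors. Recall first that $X(\Gamma)$ and $X(\Lambda)$ are CAT(0) cube complexes, and that the splittings $X(\Gamma)=\mathbb{E}^n\times X(\Gamma_2)\times\dots\times X(\Gamma_k)$ and $X(\Lambda)=\mathbb{E}^m\times X(\Lambda_2)\times\dots\times X(\Lambda_l)$ are the canonical de Rham decompositions of the underlying CAT(0) spaces into a maximal Euclidean factor and irreducible non-Euclidean factors; here each $X(\Gamma_i)$ with $i\geq 2$ is irreducible precisely because $\Gamma_i$ is not a join. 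A quasi-isometry $f\colon X(\Gamma)\to X(\Lambda)$ induces, for any non-principal ultrafilter, a bi-Lipschitz homeomorphism $F$ between asymptotic cones, and the cone of a product is the product of the cones, so $\mathrm{Cone}(X(\Gamma))=\mathbb{R}^n\times\prod_{i\geq 2}\mathrm{Cone}(X(\Gamma_i))$ and similarly for $\Lambda$.

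The first substantive step is to isolate the Euclidean factor. By the divergence computations of Behrstock--Charney, the directions lying in the $\mathbb{E}^n$ factor are exactly those along which divergence is linear, whereas each irreducible non-Euclidean factor $X(\Gamma_i)$ has strictly superlinear (at least quadratic) divergence; consequently the cone factors $\mathrm{Cone}(X(\Gamma_i))$ carry no Euclidean de Rham factor. Since divergence is a quasi-isometry invariant, $F$ must carry the maximal Euclidean de Rham factor $\mathbb{R}^n$ onto $\mathbb{R}^m$, forcing $n=m$, and $f$ coarsely respects the splitting off of the $\mathbb{E}^n$-factor. This reduces matters to the case $n=m=0$, in which both graphs have no clique join factor and every de Rham factor is irreducible and non-Euclidean.

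The main work, and the step I expect to be the principal obstacle, is product rigidity for the irreducible part: one must show that $f$ coarsely permutes the factor subspaces and restricts to quasi-isometries $f_i\colon X(\Gamma_i)\to X(\Lambda_i)$ between matched factors. The difficulty is that a bi-Lipschitz homeomorphism of a product of CAT(0) spaces need \emph{not} preserve the isometric de Rham splitting, so uniqueness of the de Rham decomposition of the cone is not by itself enough; one needs a genuinely coarse characterization of the factor cosets. Following the Kapovich--Leeb strategy, the plan is to recognize the factor subspaces through the configuration of top-dimensional quasi-flats and their coarse intersection patterns, using the higher-dimensional divergence and filling invariants of Abrams--Brady--Dani et al.\ to guarantee that factors carrying different geometry cannot be interchanged. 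Showing that $F$ preserves the canonical product structure of the cone, and hence that $f$ coarsely preserves the factor subspaces, is the crux.

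Granting this, the bijection between factors gives $k=l$, and after reordering the $\Lambda_i$ one obtains the induced quasi-isometries $f_i$; the fact that $f$ coarsely preserves the factor subspaces makes the projection square commute up to a uniform error $D$ depending only on the quasi-isometry constants of $f$. Finally, in the case $n=m=0$, there is no Euclidean factor left over to absorb the flexibility (such as shears within $\mathbb{E}^n$) that would otherwise prevent $f$ from being close to a product, so the coarse preservation of all factor subspaces upgrades to the statement that $f$ is uniformly close to $f_2\times\dots\times f_k$. The restriction to $n=m=0$ in this last clause is exactly what accommodates the extra quasi-isometric flexibility of a nontrivial Euclidean factor.
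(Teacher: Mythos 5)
This theorem is not proved in the paper at all: it is quoted from \cite[Theorem 2.9]{huang2014quasi}, where (as the paper notes immediately before the statement) it is deduced from exactly the three ingredients you invoke --- the asymptotic-cone/de~Rham rigidity of \cite{kapovich1998derahm}, the divergence computations of \cite{behrstockcharney12divergence}, and the filling invariants of \cite{abrams13fillings} --- so your sketch follows essentially the same route as the cited proof, including correctly isolating bi-Lipschitz product rigidity for the irreducible part as the crux and correctly explaining why the splitting of $f$ itself as a product can only be asserted when $n=m=0$. One small correction: your gloss that the $\mathbb{E}^n$-directions are ``exactly those along which divergence is linear'' is not right, since in any nontrivial product of unbounded spaces every pair of points has linear divergence; the statement that actually does the work --- and which you also give --- is that each non-join factor $X(\Gamma_i)$, as a space in its own right, has quadratic divergence by Behrstock--Charney, whence its asymptotic cones have cut points and therefore admit no Euclidean de~Rham factor.
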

We now explain how to naturally endow  a RAAG with a peripheral structure. All peripheral structures of RAAGs considered in this paper will arise in this way. 

\begin{defn}\label{defn:pstrucraag}
Let $\Gamma$ be a finite simplicial graph. For each standard geodesic $l$ in $X(\Gamma)$, we  define  $$\mathcal{S}_l=\{l'\subseteq X(\Gamma)\mid \textrm{$l'$ is a standard geodesic parallel to $l$}\}.$$ Let $B\subseteq V\Gamma$ be an arbitrary set of vertices. We define a peripheral structure $\mathcal{P}_{B}$ of $X(\Gamma)$ to be  $$\mathcal{P}_{B}:=\{\mathcal{S}_l\mid \textrm{ $l$ is a $B$-geodesic}\}.$$
In other words, $\mathcal{P}_{B}$ consists of the set of parallel classes of standard geodesics labelled by elements of $B$. We also let $\mathcal{P}_{B}$ denote the corresponding peripheral structure on $A(\Gamma)$ obtained by restricting to vertex sets of standard geodesics as above.

\end{defn}

\begin{exmp}
Let $\Gamma$ be the graph consisting of two vertices $a$ and $b$ joined by a single edge. Then $A(\Gamma)=\langle a,b\mid [a,b]\rangle\cong \mathbb{Z}^2$. Let $\mathcal{P}_1$, $\mathcal{P}_2$ and $\mathcal{P}_3$ be the peripheral structures  defined in Example \ref{exmp:peripheralz2qi}. Then $\mathcal{P}_1=\mathcal{P}_{\{a\}}$, $\mathcal{P}_2=\mathcal{P}_{\{b\}}$ and $\mathcal{P}_3=\mathcal{P}_{\{a,b\}}$
\end{exmp}

By Theorem \ref{thm:derahm}, the quasi-isometry classification of RAAGs that split as a direct product reduces to the quasi-isometry classification of these direct product factors. The following proposition shows that the same is true for relative quasi-isometries.

\begin{prop}\label{prop:prodrelqi}
Suppose $\Gamma$ and $\Lambda$ are finite simplicial graphs and that $(f,f_*):(X(\Gamma),\mathcal{P}_{B_\Gamma})\rightarrow (X(\Lambda),\mathcal{P}_{B_\Lambda})$ is a relative quasi-isometry. Then there exists a relative quasi-isometry $(g,g_*):(X(\Gamma),\mathcal{P}_{B_\Gamma})\rightarrow (X(\Lambda),\mathcal{P}_{B_\Lambda})$ such that $f_*=g_*$ and $g$  splits as a product of relative quasi-isometries of factors in the de Rahm decomposition.
\end{prop}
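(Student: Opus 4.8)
The plan is to use Theorem \ref{thm:derahm} to break $f$ into factor quasi-isometries, and then to observe that both the peripheral structures and the induced map $f_*$ decompose along the de Rham factors, so that the product of the factor maps already reproduces $f_*$. Write the de Rham decompositions as $X(\Gamma)=X(\Gamma_1)\times\dots\times X(\Gamma_k)$ and $X(\Lambda)=X(\Lambda_1)\times\dots\times X(\Lambda_l)$, with $X(\Gamma_1)=\mathbb{E}^n$ and $X(\Lambda_1)=\mathbb{E}^m$ the maximal clique factors. By Theorem \ref{thm:derahm} we have $n=m$ and $k=l$, and after reordering there are quasi-isometries $f_i:X(\Gamma_i)\rightarrow X(\Lambda_i)$ such that $\pi^\Lambda_i\circ f$ is uniformly close to $f_i\circ \pi^\Gamma_i$ for each $i$, where $\pi_i$ denotes projection onto the $i$-th factor. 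I would set $g:=f_1\times\dots\times f_k$; the content of the proof is then to verify that each $f_i$ is a relative quasi-isometry for the appropriate factor peripheral structure and that $g_*=f_*$.

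The key structural point is that the peripheral structure factors through the de Rham decomposition. Each vertex of $B_\Gamma$ lies in a unique factor; writing $B_\Gamma^{(i)}:=B_\Gamma\cap V\Gamma_i$, I would first show that any $B_\Gamma^{(i)}$-geodesic $l$ has the form $\{p_j\}_{j\neq i}\times \pi_i(l)$, being a single point in every factor other than the $i$-th. Since a point contributes only a bounded amount to Hausdorff distance, two such geodesics are parallel in $X(\Gamma)$ if and only if their projections to $X(\Gamma_i)$ are parallel. Hence $\mathcal{S}_l\mapsto \mathcal{S}_{\pi_i(l)}$ defines a canonical bijection $\mathcal{P}_{B_\Gamma}\cong \bigsqcup_{i=1}^{k}\mathcal{P}_{B_\Gamma^{(i)}}$, and similarly for $\Lambda$. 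A product of relative quasi-isometries of the factors is then automatically a relative quasi-isometry of $(X(\Gamma),\mathcal{P}_{B_\Gamma})$ whose induced map on $\mathcal{P}_{B_\Gamma}$ is the disjoint union of the factor maps.

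Next I would check that $f_*$ respects this decomposition and that $f_i$ induces the restriction of $f_*$ to the $i$-th factor. Given a $B_\Gamma^{(i)}$-geodesic $l$, the commuting diagrams force $\pi^\Lambda_j(f(l))$ to be bounded for $j\neq i$ (because $\pi^\Gamma_j(l)$ is a single point), while $\pi^\Lambda_i(f(l))$ is coarsely $f_i(\pi^\Gamma_i(l))$. Thus the $B_\Lambda$-geodesic $l'$ with $d_\mathrm{Haus}(f(l),l')\leq A$ must itself lie in the $i$-th factor $X(\Lambda_i)$, and $f_i(\pi_i(l))$ is coarsely $\pi_i(l')$. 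This shows simultaneously that $f_*$ sends $\mathcal{P}_{B_\Gamma^{(i)}}$ into $\mathcal{P}_{B_\Lambda^{(i)}}$, that $f_i$ carries $B_\Gamma^{(i)}$-geodesics coarsely to $B_\Lambda^{(i)}$-geodesics, and that $(f_i)_*$ agrees with $f_*$ under the identifications above. Applying the same argument to a coarse inverse of $f$ produces a coarse inverse of each $f_i$ inverting $(f_i)_*$, so $f_i$ is genuinely a relative quasi-isometry $(X(\Gamma_i),\mathcal{P}_{B_\Gamma^{(i)}})\rightarrow (X(\Lambda_i),\mathcal{P}_{B_\Lambda^{(i)}})$.

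With these pieces in place, $g=f_1\times\dots\times f_k$ is a product of relative quasi-isometries of de Rham factors, and by the factorisation of the peripheral structure its induced map is $g_*=\bigsqcup_i (f_i)_*=\bigsqcup_i f_*|_{\mathcal{P}_{B_\Gamma^{(i)}}}=f_*$, as required. The step I expect to demand the most care is the Euclidean factor $X(\Gamma_1)=\mathbb{E}^n$: Theorem \ref{thm:derahm} only asserts that $f$ is close to a product when $n=0$, so for $n>0$ one must extract the commuting projection data for the clique factor and confirm that $f_1$ matches the coordinate-direction parallel classes correctly — here, unlike in the other factors, these classes are permuted and rescaled rather than rigid. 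The remaining delicacy is purely the bookkeeping of parallel classes across the product, namely ensuring that being a single point in the other factors really does make the Hausdorff distance insensitive to position outside the $i$-th factor.
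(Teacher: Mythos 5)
Your proposal is correct and follows essentially the same route as the paper: invoke Theorem \ref{thm:derahm} to obtain the factor quasi-isometries $f_i$, observe that every standard geodesic is parallel to a standard geodesic lying in a single de Rham factor so that $\mathcal{P}_{B_\Gamma}$ decomposes as $\bigsqcup_i \mathcal{P}_{\Gamma_i\cap B_\Gamma}$, and verify that $f_1\times\dots\times f_k$ induces $f_*$. The only differences are minor: where you pin down the target factor by bounding the projections of $f(l)$ to the other factors, the paper argues that if the image class lay in a different factor $j\neq i$ then $p'_i(q)$ would be a point and $f_i$ could not be a quasi-isometry; and your caveat about the Euclidean factor does not create extra work here, since the paper's statement of Theorem \ref{thm:derahm} already supplies $f_1$ together with its commuting square.
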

\begin{proof}
Let $X(\Gamma)=X(\Gamma_1)\times \dots \times X(\Gamma_k)$ and $X(\Lambda)=X(\Lambda_1)\times \dots \times X(\Lambda_l)$ be de Rahm decompositions. By Theorem \ref{thm:derahm}, we see that $l=k$ and (after reordering) we have quasi-isometries $f_i:X(\Gamma_i)\rightarrow X(\Lambda_i)$, as defined in Theorem \ref{thm:derahm}.
We will show that $f_1\times \dots \times f_k$ is the required relative quasi-isometry. 

We first observe that any standard geodesic $l$ in $X(\Gamma)$ is parallel to a standard geodesic in some $X(\Gamma_i)$. Indeed, we see that the vertex set of $l$ is $g_1\dots g_n\langle v\rangle$, where $v\in \Gamma_i$ for some $i$ and each $g_j\in A(\Gamma_j)$. As $g_1\dots g_n\langle v\rangle=g_i\langle v\rangle g_1\dots g_{i-1}g_{i+1}\dots g_n$, we see that $l$ is parallel to the standard geodesic with vertex set $g_i\langle v\rangle\subseteq X(\Gamma_i)$. Thus there is a natural correspondence between elements of $\mathcal{P}_{B_\Gamma}$ and of $\mathcal{P}_{\Gamma_1\cap B_\Gamma}\sqcup\dots \sqcup \mathcal{P}_{\Gamma_k\cap B_\Gamma }$. A similar argument holds for standard geodesics in $X(\Lambda)$.

Let $l\subseteq X(\Gamma)$ be a $B_\Gamma$-geodesic, and let $q$ be a $B_\Lambda$-geodesic in $X(\Lambda)$ such that $[q]=[f(l)]$. Say $l'\subseteq X(\Gamma_i)$ and $q'\subseteq X(\Lambda_j)$ are standard geodesics parallel to $l$ and $q$ respectively. Let $p_i:X(\Gamma)\rightarrow X(\Gamma_i)$ and $p'_i:X(\Lambda)\rightarrow X(\Lambda_i)$ be  CAT(0) projections. By the definition of $f_i$, we observe that  $[f_i(l')]=[(f_i\circ p_i)(l)]=[(p'_i\circ f)(l)]=[p'_i(q)]$. If $i\neq j$, then $p'_i(q)$ is a point and hence $f_i$ cannot be a quasi-isometry. Thus $i=j$ and so $[f_i(l')]=[q']$. This demonstrates that $f_1\times \dots \times f_k:(X(\Gamma),\mathcal{P}_{B_\Gamma})\rightarrow (X(\Lambda),\mathcal{P}_{B_\Lambda})$ is indeed a relative quasi-isometry  and $f_*=(f_1\times \dots \times f_k)_*$.
\end{proof}

In \cite{huang2014quasi}, Huang  showed that two  RAAGs with finite outer automorphism group are quasi-isometric if and only if their defining graphs are isomorphic. These methods were generalised in \cite{huang2015quasi}, where Huang defined the following class of RAAGs.

\begin{defn}
A RAAG $A(\Gamma)$ is of \emph{type II} if:
\begin{enumerate}
\item $\Gamma$ is connected;
\item  there do not exist distinct vertices $v,w\in V\Gamma$ such that $\mathrm{lk}(v)\cap \mathrm{lk}(w)$ separates $\Gamma$.
\end{enumerate}
\end{defn}
In \cite{huang2015quasi}, Huang shows that two RAAGs of type II are quasi-isometric if and only if they are commensurable. 
  We work with the extension complex of a RAAG, initially defined by Kim-Koberda \cite{kim2013embedability}. The following description is due to Huang \cite{huang2014quasi}. 

\begin{defn}
Given a finite simplicial graph $\Gamma$, the \emph{extension complex} $\mathcal{R}(\Gamma)$ is a simplicial complex defined as follows:
\begin{enumerate}
\item the vertex set consists of parallel classes of standard geodesics in $X(\Gamma)$;
\item vertices $v_1$ and $v_2$ are joined by an edge if and only if there are standard geodesics $l_i$ in the parallel classes $v_i$ for $i=1,2$, such that $l_1$ and $l_2$ span a standard $2$-flat;
\item $\mathcal{R}(\Gamma)$ is the flag complex defined by its 1-skeleton.
\end{enumerate}
If $F$ is a standard $k$-flat, let $\Delta(F)$ denote the corresponding $(k-1)$-simplex of $\mathcal{R}(\Gamma)$. More generally, if $K\subseteq X(\Gamma)$ is any standard subcomplex and $\{F_i\}_{i\in I}$ is the collection of standard flats contained in $K$, then  $\Delta(K)$ is the union $\cup_{i\in I}\Delta(F_i)$.
\end{defn}

In Theorem 2.25 and Lemma 3.21 of \cite{huang2015quasi}, it is shown that a quasi-isometry $q$ between RAAGs of type II induces a simplicial isomorphism $q_*$ between their extension complexes. One drawback of the way this is formulated in \cite{huang2015quasi} is that a `nice' quasi-isometry may not induce a `nice' simplicial isomorphism. For example, if $q:X(\Gamma)\rightarrow X(\Gamma)$ is the identity map, the induced map $q_*$ is not necessarily the identity map. This can remedied by the following proposition.

\begin{prop}\label{prop:inducedmapextcmplx}
Let $A(\Gamma)$ and $A(\Lambda)$ be RAAGs of type II and suppose $q:X(\Gamma)\rightarrow X(\Lambda)$ is a quasi-isometry. Then there exists a simplicial isomorphism $q_*:\mathcal{R}(\Gamma)\rightarrow \mathcal{R}(\Lambda)$ such that for all standard geodesics $l\subseteq X(\Gamma)$ and $l'\subseteq X(\Lambda)$ such that $[q(l)]=[l']$, we have $q_*(\Delta(l))=\Delta(l')$.
\end{prop}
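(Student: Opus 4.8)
The plan is to define $q_*$ \emph{directly} from the coarse-geometric action of $q$ on standard geodesics, rather than appealing to the construction in \cite{huang2015quasi}, thereby making the compatibility condition $q_*(\Delta(l))=\Delta(l')$ hold by fiat. The essential geometric input I would extract from Theorem 2.25 and Lemma 3.21 of \cite{huang2015quasi} is that a quasi-isometry between type II RAAGs coarsely permutes standard geodesics: for every standard geodesic $l\subseteq X(\Gamma)$, the image $q(l)$ lies at finite Hausdorff distance from some standard geodesic $l'\subseteq X(\Lambda)$, i.e. $[q(l)]=[l']$. To turn this into a well-defined map on vertices of $\mathcal{R}(\Gamma)$, I would also record the purely combinatorial fact that two standard geodesics in a RAAG are at finite Hausdorff distance precisely when they are parallel; this guarantees that the parallel class $\Delta(l')$ is uniquely determined by the coarse class $[q(l)]$. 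I would then set $q_*(\Delta(l)):=\Delta(l')$.

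Next I would check that this assignment descends to a well-defined map on vertices and is a bijection. Well-definedness amounts to showing that if $l_1$ and $l_2$ are parallel then so are their images. Since parallelism of standard geodesics coincides with coarse equivalence, and since a quasi-isometry carries coarse equivalence classes to coarse equivalence classes (as recalled in Section \ref{sec:prelims}), the equality $[l_1]=[l_2]$ forces $[q(l_1)]=[q(l_2)]$, hence $\Delta(l_1')=\Delta(l_2')$. For bijectivity I would apply the identical construction to a coarse inverse $\bar q\colon X(\Lambda)\to X(\Gamma)$ of $q$, obtaining $\bar q_*$; because $\bar q\circ q$ is close to $\mathrm{id}_{X(\Gamma)}$, one has $[\bar q(q(l))]=[l]$ for every standard geodesic $l$, whence $\bar q_*\circ q_*=\mathrm{id}$, and symmetrically $q_*\circ\bar q_*=\mathrm{id}$. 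This simultaneously shows that every vertex of $\mathcal{R}(\Lambda)$ is hit, so $q_*$ is a vertex bijection, and it makes the stated compatibility $q_*(\Delta(l))=\Delta(l')$ manifest.

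It remains to verify that $q_*$ is simplicial, for which it suffices, as $\mathcal{R}(\Gamma)$ and $\mathcal{R}(\Lambda)$ are flag complexes, to show that $q_*$ and $\bar q_*$ preserve the edge relation. By definition two vertices of $\mathcal{R}(\Gamma)$ are adjacent exactly when they have representatives spanning a standard $2$-flat. Here I would use the second geometric input from \cite{huang2015quasi}, namely that $q$ also coarsely permutes standard $2$-flats: if $l_1,l_2$ span a standard $2$-flat $F$, then $q(F)$ lies at finite Hausdorff distance from a standard $2$-flat $F'$, and the coarse geodesic classes $[q(l_1)]=[l_1']$ and $[q(l_2)]=[l_2']$ are then parallel to geodesics contained in $F'$, so $l_1'$ and $l_2'$ span a standard $2$-flat and $q_*(\Delta(l_1))$, $q_*(\Delta(l_2))$ are adjacent. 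Applying the same argument to $\bar q$ shows the edge relation is also reflected, whence $q_*$ is a simplicial isomorphism.

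I expect the main obstacle to be isolating the two coarse-preservation statements above from \cite{huang2015quasi} in exactly the \emph{finite Hausdorff distance} form required, together with the uniqueness input that coarsely equivalent standard geodesics are necessarily parallel. These are precisely the points where the type II hypothesis and Huang's rigidity machinery are indispensable; once they are in hand, the construction of $q_*$ and the verification of its properties are routine coarse-geometry bookkeeping. A secondary subtlety, and the reason a fresh construction is needed at all, is that the isomorphism produced in \cite{huang2015quasi} is not canonically tied to $q$ via Hausdorff distance (for instance it may fail to be the identity when $q=\mathrm{id}$); defining $q_*$ through coarse images as above removes this defect.
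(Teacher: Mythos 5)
Your construction rests on a claim that is not available and is in fact false at the stated level of generality: namely, that for a quasi-isometry $q\colon X(\Gamma)\rightarrow X(\Lambda)$ between type II RAAGs, \emph{every} standard geodesic $l$ satisfies $[q(l)]=[l']$ for some standard geodesic $l'$. This is not what Theorem 2.25 and Lemma 3.21 of \cite{huang2015quasi} assert, and it already fails for $A(K_2)=\mathbb{Z}^2$, which is of type II under the definition used in this paper: the shear $(m,n)\mapsto (m,m+n)$ is a quasi-isometry (indeed a group automorphism) sending the standard geodesic $\langle v\rangle$ to a diagonal line, which is at infinite Hausdorff distance from every standard geodesic. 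The same phenomenon occurs in any type II RAAG admitting adjacent transvections or a clique factor in its de Rham decomposition. This failure is precisely why the proposition is phrased conditionally (``for all $l$, $l'$ \emph{such that} $[q(l)]=[l']$'') rather than asserting that such an $l'$ always exists, and it is why Huang's $q_*$ is not, and cannot be, defined through coarse images of standard geodesics: his construction proceeds via coarse rigidity of \emph{maximal} standard flats, and at those vertices of $\mathcal{R}(\Gamma)$ that cannot be pinned down by intersection patterns of maximal simplices it makes genuinely arbitrary choices. Your proposal therefore produces a vertex map only on the subset of vertices $\Delta(l)$ for which a Hausdorff-close standard geodesic image happens to exist, and offers no mechanism for extending this partial map to a simplicial isomorphism of the whole complex. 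That extension is exactly the content the paper imports from \cite{huang2015quasi}: it runs Huang's construction and makes his arbitrary choices compatibly with $q$ wherever the compatibility constraint is non-vacuous, so that Lemma 3.21 of \cite{huang2015quasi} still certifies the result is a simplicial isomorphism.

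A second, related gap: your simpliciality argument invokes the statement that $q$ coarsely permutes standard $2$-flats. This is likewise unavailable — quasiflat rigidity gives coarse preservation only of top-dimensional standard flats, not of standard flats of intermediate dimension. For instance, in $A(K_3)=\mathbb{Z}^3$ (again type II) the shear $(a,b,c)\mapsto (a,b,a+c)$ carries the standard $2$-flat spanned by the first two coordinates to a tilted plane at infinite Hausdorff distance from every standard $2$-flat. The portions of your argument that are correct — uniqueness of $l'$ up to parallelism, well-definedness on parallel classes, and the coarse-inverse trick for injectivity/surjectivity on the locus where your map is defined — are indeed routine, but they cannot be assembled into a proof of the proposition without Huang's machinery, which is what the paper's proof (correctly, if tersely) leans on. The ``main obstacle'' you flag at the end is not a matter of extracting the right finite-Hausdorff-distance formulations from \cite{huang2015quasi}; those formulations do not exist because the statements are false, and the type II hypothesis is far too weak to force them.
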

\begin{proof}
We refer to the proof of Theorem 2.20 of \cite{huang2015quasi}, which we will not reproduce here. The key point is that whilst proving Theorem 2.20 of \cite{huang2015quasi}, some arbitrary choices were made when defining  $q_*$ over certain vertices. Suppose that such a choice is to be made for some vertex $v=\Delta(l)$, and  there exists a standard geodesic $l'\subseteq X(\Lambda)$ such that $[q(l)]=[l']$. Then $q_*(v)$ can be chosen to be equal to $\Delta(l')$. Lemma 3.21 of \cite{huang2015quasi} shows that this map is indeed a simplicial isomorphism.
\end{proof}

\section{The JSJ tree of cylinders of a RAAG}\label{sec:jsjtoc}
The purpose of this section is to explicitly describe the JSJ tree of a cylinders of a RAAG in terms of its defining graph. Combined with Theorem \ref{thm:jsjtocqi} and the results of Section \ref{subsec:decoratedtrees}, this provides us with several quasi-isometry invariants of a RAAG that can be seen in its defining graph.

We assume that $\Gamma$ is a finite, connected, simplicial graph that contains at least three vertices.
It is shown in \cite{clay14raagsplit}  that  $A(\Gamma)$ splits over a 2-ended subgroup precisely when $\Gamma$ has a cut vertex. Moreover, \cite{clay14raagsplit} constructs a \emph{visual} JSJ decomposition of  $A(\Gamma)$, i.e. a graph of groups decomposition expressed in terms of the defining graph $\Gamma$. This is  generalised to splittings over any abelian subgroup in \cite{groves2017abelian}.
We now describe three  graph of group decompositions of $A(\Gamma)$: $\mathcal{G}(\Gamma)$, $\mathcal{G}'(\Gamma)$ and $C(\Gamma)$, the latter two of which are a JSJ decomposition  and the JSJ tree of cylinders  decomposition respectively. Figure \ref{fig:gogexamples} gives examples of these graphs of groups. 

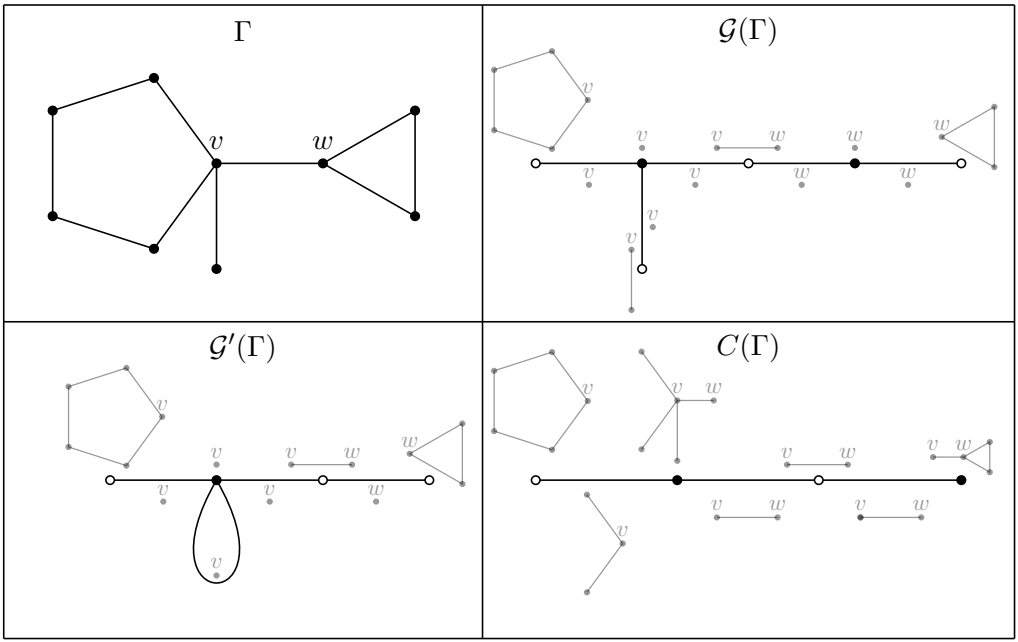
\begin{figure}[b]
    \begin{tikzpicture}[semithick,scale=1.4]

      \draw (0, 0) -- (-0.588, 0.809) -- (-1.54,  0.500) -- (-1.54, -0.500) -- (-0.588, -0.809) -- (0,0);
      \filldraw[fill=black] (0,0) circle [radius=0.04];
      \filldraw[fill=black] (-0.588, 0.809) circle [radius=0.04];
      \filldraw[fill=black] (-1.54, 0.500) circle [radius=0.04];
      \filldraw[fill=black] (-1.54, -0.500) circle [radius=0.04];
      \filldraw[fill=black] (-0.588, -0.809) circle [radius=0.04];
      \draw (0,0) -- (0,-1);
	\draw (0,0) -- (1,0);
      	\filldraw[fill=black] (1,0) circle [radius=0.04]; 
    \draw (1,0) -- (1.866,0.5) -- (1.866,-0.5) -- (1,0);
	\filldraw[fill=black] (1.866,0.5) circle [radius=0.04];
	\filldraw[fill=black] (1.866,-0.5) circle [radius=0.04];
      \filldraw[fill=black] (0,-1) circle [radius=0.04];
      \draw (-0,0.2) node {$v$};
      \draw (1,0.2) node {$w$};
      
      \draw (3,0) -- (4,0) -- (5,0) -- (6,0) -- (7,0);
      \draw (4,0) -- (4,-1);
      \filldraw[fill=white] (3,0) circle [radius=0.04];
      \filldraw[fill=black] (4,0) circle [radius=0.04];
      \filldraw[fill=white] (5,0) circle [radius=0.04];
      \filldraw[fill=black] (6,0) circle [radius=0.04];
      \filldraw[fill=white] (7,0) circle [radius=0.04];
      \filldraw[fill=white] (4,-1) circle [radius=0.04];

      \draw (3.1,0.6) node[scale=0.8] {\begin{tikzpicture}[semithick,scale=1,fill opacity=0.4,draw opacity=0.4] 
      \draw (0, 0) -- (-0.588, 0.809) -- (-1.54,  0.500) -- (-1.54, -0.500) -- (-0.588, -0.809) -- (0,0);
      \filldraw[fill=black] (0,0) circle [radius=0.04];
      \filldraw[fill=black] (-0.588, 0.809) circle [radius=0.04];
      \filldraw[fill=black] (-1.54, 0.500) circle [radius=0.04];
      \filldraw[fill=black] (-1.54, -0.500) circle [radius=0.04];
      \filldraw[fill=black] (-0.588, -0.809) circle [radius=0.04]; 
      \draw (-0,0.2) node[scale=1] {$v$};      
      \end{tikzpicture}};
      
      \draw (4,0.25) node[scale=0.8] {\begin{tikzpicture}[semithick,scale=1,fill opacity=0.4,draw opacity=0.4] 
      \filldraw[fill=black] (0,0) circle [radius=0.04]; 
      \draw (-0,0.2) node[scale=1] {$v$};      
      \end{tikzpicture}};
      \draw (3.5,-0.1) node[scale=0.8] {\begin{tikzpicture}[semithick,scale=1,fill opacity=0.4,draw opacity=0.4] 
      \filldraw[fill=black] (0,0) circle [radius=0.04]; 
      \draw (-0,0.2) node[scale=1] {$v$};      
      \end{tikzpicture}};
      \draw (4.5,-0.1) node[scale=0.8] {\begin{tikzpicture}[semithick,scale=1,fill opacity=0.4,draw opacity=0.4] 
      \filldraw[fill=black] (0,0) circle [radius=0.04]; 
      \draw (-0,0.2) node[scale=1] {$v$};      
      \end{tikzpicture}};
      \draw (4.1,-0.5) node[scale=0.8] {\begin{tikzpicture}[semithick,scale=1,fill opacity=0.4,draw opacity=0.4] 
      \filldraw[fill=black] (0,0) circle [radius=0.04]; 
      \draw (-0,0.2) node[scale=1] {$v$};      
      \end{tikzpicture}};
      \draw (3.9,-1) node[scale=0.8] {\begin{tikzpicture}[semithick,scale=1,fill opacity=0.4,draw opacity=0.4] 
      \filldraw[fill=black] (0,0) circle [radius=0.04]; 
      \draw (-0,0.2) node[scale=1] {$v$};
      \draw (0,0) -- (0,-1);
      \filldraw[fill=black] (0,-1) circle [radius=0.04];
      \end{tikzpicture}};
      \draw (5,0.25) node[scale=0.8] {\begin{tikzpicture}[semithick,scale=1,fill opacity=0.4,draw opacity=0.4] 
      \filldraw[fill=black] (0,0) circle [radius=0.04];
      \filldraw[fill=black] (1,0) circle [radius=0.04]; 
      \draw (-0,0.2) node[scale=1] {$v$};
      \draw (0,0) -- (1,0);
      \draw (1,0.2) node {$w$};      
      \end{tikzpicture}};
      \draw (5.5,-0.1) node[scale=0.8] {\begin{tikzpicture}[semithick,scale=1,fill opacity=0.4,draw opacity=0.4] 
      \filldraw[fill=black] (0,0) circle [radius=0.04]; 
      \draw (-0,0.2) node[scale=1] {$w$};    
      \end{tikzpicture}};
      \draw (6,0.25) node[scale=0.8] {\begin{tikzpicture}[semithick,scale=1,fill opacity=0.4,draw opacity=0.4] 
      \filldraw[fill=black] (0,0) circle [radius=0.04]; 
      \draw (-0,0.2) node[scale=1] {$w$};      
      \end{tikzpicture}};
      \draw (6.5,-0.1) node[scale=0.8] {\begin{tikzpicture}[semithick,scale=1,fill opacity=0.4,draw opacity=0.4] 
      \filldraw[fill=black] (0,0) circle [radius=0.04]; 
      \draw (-0,0.2) node[scale=1] {$w$};      
      \end{tikzpicture}};
      \draw (7,0.25) node[scale=0.8] {\begin{tikzpicture}[semithick,scale=1,fill opacity=0.4,draw opacity=0.4] 
      \filldraw[fill=black] (0,0) circle [radius=0.04];
      \draw (0,0) -- (0.866,0.5) -- (0.866,-0.5) -- (0,0);
	  \filldraw[fill=black] (0.866,0.5) circle [radius=0.04];
	  \filldraw[fill=black] (0.866,-0.5) circle [radius=0.04];
      \draw (-0,0.2) node[scale=1] {$w$};    
      \end{tikzpicture}};

      \draw (2.5,-4.5) -- (2.5,1.5);
      \draw (-2,1.5) -- (7.5,1.5);
      \draw (-2,1.5) -- (-2,-4.5);
      \draw (-2,-1.5) -- (7.5,-1.5);
      \draw (7.5,-4.5) -- (7.5,1.5);
      \draw (-2,-4.5) -- (7.5,-4.5);     	
      \draw (0.25,1.25) node {$\Gamma$};
      \draw (0.25,-1.75) node {$\mathcal{G}'(\Gamma)$};
      \draw (5,1.25) node {$\mathcal{G}(\Gamma)$};
      \draw (5,-1.75) node {$C(\Gamma)$};

      \draw (-1,-3)--(0,-3)--(1,-3)--(2,-3);
      \filldraw[fill=black] (0, -3) circle [radius=0.04];
      \filldraw[fill=white] (-1, -3) circle [radius=0.04];
      \filldraw[fill=white] (1, -3) circle [radius=0.04]; 
      \filldraw[fill=white] (2, -3) circle [radius=0.04];
      \draw (0,-3) to[out=-120,in=-60,distance=1.5cm] (0,-3);  
      
      \draw (-0.9,-2.4) node[scale=0.8] {\begin{tikzpicture}[semithick,scale=1,fill opacity=0.4,draw opacity=0.4] 
      \draw (0, 0) -- (-0.588, 0.809) -- (-1.54,  0.500) -- (-1.54, -0.500) -- (-0.588, -0.809) -- (0,0);
      \filldraw[fill=black] (0,0) circle [radius=0.04];
      \filldraw[fill=black] (-0.588, 0.809) circle [radius=0.04];
      \filldraw[fill=black] (-1.54, 0.500) circle [radius=0.04];
      \filldraw[fill=black] (-1.54, -0.500) circle [radius=0.04];
      \filldraw[fill=black] (-0.588, -0.809) circle [radius=0.04]; 
      \draw (-0,0.2) node[scale=1] {$v$};      
      \end{tikzpicture}};
      
      \draw (1,-2.75) node[scale=0.8] 
      {\begin{tikzpicture}[semithick,scale=1,fill opacity=0.4,draw opacity=0.4] 
      \filldraw[fill=black] (0,0) circle [radius=0.04];
      \filldraw[fill=black] (1,0) circle [radius=0.04]; 
      \draw (-0,0.2) node[scale=1] {$v$};
      \draw (0,0) -- (1,0);
      \draw (1,0.2) node {$w$};           
      \end{tikzpicture}};
      
      \draw (0,-2.75) node[scale=0.8] 
      {\begin{tikzpicture}[semithick,scale=1,fill opacity=0.4,draw opacity=0.4] 
      \filldraw[fill=black] (0,0) circle [radius=0.04]; 
      \draw (-0,0.2) node[scale=1] {$v$};      
      \end{tikzpicture}};
      
      \draw (2,-2.75) node[scale=0.8] 
      {\begin{tikzpicture}[semithick,scale=1,fill opacity=0.4,draw opacity=0.4] 
      \filldraw[fill=black] (0,0) circle [radius=0.04];
      \draw (0,0) -- (0.866,0.5) -- (0.866,-0.5) -- (0,0);
	  \filldraw[fill=black] (0.866,0.5) circle [radius=0.04];
	  \filldraw[fill=black] (0.866,-0.5) circle [radius=0.04];
      \draw (-0,0.2) node[scale=1] {$w$};       
      \end{tikzpicture}};
      
      \draw (1.5,-3.1) node[scale=0.8] 
      {\begin{tikzpicture}[semithick,scale=1,fill opacity=0.4,draw opacity=0.4] 
      \filldraw[fill=black] (0,0) circle [radius=0.04]; 
      \draw (-0,0.2) node[scale=1] {$w$};      
      \end{tikzpicture}};
      
      \draw (0.5,-3.1) node[scale=0.8] 
      {\begin{tikzpicture}[semithick,scale=1,fill opacity=0.4,draw opacity=0.4] 
      \filldraw[fill=black] (0,0) circle [radius=0.04]; 
      \draw (-0,0.2) node[scale=1] {$v$};      
      \end{tikzpicture}};
      
      \draw (-0.5,-3.1) node[scale=0.8] 
      {\begin{tikzpicture}[semithick,scale=1,fill opacity=0.4,draw opacity=0.4] 
      \filldraw[fill=black] (0,0) circle [radius=0.04]; 
      \draw (-0,0.2) node[scale=1] {$v$};      
      \end{tikzpicture}};
      
      \draw (0,-3.8) node[scale=0.8] 
      {\begin{tikzpicture}[semithick,scale=1,fill opacity=0.4,draw opacity=0.4] 
      \filldraw[fill=black] (0,0) circle [radius=0.04]; 
      \draw (-0,0.2) node[scale=1] {$v$};      
      \end{tikzpicture}};

      \draw (3,-3) -- (4.33,-3) -- (5.66,-3) -- (7,-3);
      \filldraw[fill=white] (3,-3) circle [radius=0.04];
      \filldraw[fill=black] (4.33,-3) circle [radius=0.04];
      \filldraw[fill=white] (5.66,-3) circle [radius=0.04];
      \filldraw[fill=black] (7,-3) circle [radius=0.04];

      \draw (3.1,-2.25) node[scale=0.8] {
      \begin{tikzpicture}[semithick,scale=1,fill opacity=0.4,draw opacity=0.4] 
      \draw (0, 0) -- (-0.588, 0.809) -- (-1.54,  0.500) -- (-1.54, -0.500) -- (-0.588, -0.809) -- (0,0);
      \filldraw[fill=black] (0,0) circle [radius=0.04];
      \filldraw[fill=black] (-0.588, 0.809) circle [radius=0.04];
      \filldraw[fill=black] (-1.54, 0.500) circle [radius=0.04];
      \filldraw[fill=black] (-1.54, -0.500) circle [radius=0.04];
      \filldraw[fill=black] (-0.588, -0.809) circle [radius=0.04]; 
      \draw (-0,0.2) node[scale=1] {$v$};      
      \end{tikzpicture}};
      
      \draw (4.397,-2.3) node[scale=0.8] {\begin{tikzpicture}[semithick,scale=1,fill opacity=0.4,draw opacity=0.4] 
       \draw (0, 0) -- (-0.588, 0.809);
       \draw (-0.588, -0.809) -- (0,0);
      \filldraw[fill=black] (0,0) circle [radius=0.04];
      \filldraw[fill=black] (-0.588, 0.809) circle [radius=0.04];
      \filldraw[fill=black] (-0.588, -0.809) circle [radius=0.04];

	\draw (0,0) -- (0.6,0);
	\draw (0,0) -- (0,-1);
	\filldraw[fill=black] (0.6,0) circle [radius=0.04];
      \filldraw[fill=black] (0,-1) circle [radius=0.04];
      \draw (-0,0.2) node {$v$};
      \draw (0.6,0.2) node {$w$};
      \end{tikzpicture}};
      
      \draw (5.66,-2.75) node[scale=0.8] {
      \begin{tikzpicture}[semithick,scale=1,fill opacity=0.4,draw opacity=0.4] 
	  \draw (0,0) -- (1,0);
	  \filldraw[fill=black] (1,0) circle [radius=0.04];
      \filldraw[fill=black] (0,0) circle [radius=0.04];
      \draw (-0,0.2) node {$v$};
      \draw (1,0.2) node {$w$};
      \end{tikzpicture}};
      
      \draw (6.95,-2.75) node[scale=0.8] {
      \begin{tikzpicture}[semithick,scale=1,fill opacity=0.4,draw opacity=0.4] 
	  \draw (0,0) -- (0.5,0);
	  \filldraw[fill=black] (0.5,0) circle [radius=0.04];
      \filldraw[fill=black] (0,0) circle [radius=0.04];
      \draw (-0,0.2) node {$v$};
      \draw (0.5,0.2) node {$w$};
      \draw (0.5,0) -- (0.5*1.866,0.25) -- (0.5*1.866,-0.25) -- (0.5,0);
	  \filldraw[fill=black] (0.5*1.866,0.25) circle [radius=0.04];
	  \filldraw[fill=black] (0.5*1.866,-0.25) circle [radius=0.04];
      \end{tikzpicture}};
      
      \draw (3.7,-3.6) node[scale=0.8] {\begin{tikzpicture}[semithick,scale=1,fill opacity=0.4,draw opacity=0.4] 
       \draw (0, 0) -- (-0.588, 0.809);
       \draw (-0.588, -0.809) -- (0,0);
      \filldraw[fill=black] (0,0) circle [radius=0.04];
      \filldraw[fill=black] (-0.588, 0.809) circle [radius=0.04];
      \filldraw[fill=black] (-0.588, -0.809) circle [radius=0.04];
      \draw (-0,0.2) node {$v$};
      \end{tikzpicture}};
      
      \draw (5.0,-3.25) node[scale=0.8] {\begin{tikzpicture}[semithick,scale=1,fill opacity=0.4,draw opacity=0.4] 
      \filldraw[fill=black] (0,0) circle [radius=0.04];
	\draw (0,0) -- (1,0);
	\filldraw[fill=black] (1,0) circle [radius=0.04];
      \draw (-0,0.2) node {$v$};
      \draw (1,0.2) node {$w$};
      \end{tikzpicture}};
      
      \draw (6.35,-3.25) node[scale=0.8] {
      \begin{tikzpicture}[semithick,scale=1,fill opacity=0.4,draw opacity=0.4] 
      \filldraw[fill=black] (0,0) circle [radius=0.04];
	  \draw (0,0) -- (1,0);
	  \filldraw[fill=black] (1,0) circle [radius=0.04];
      \draw (-0,0.2) node {$v$};
      \draw (1,0.2) node {$w$};
      \filldraw[fill=black] (0,0) circle [radius=0.04]; 
      \end{tikzpicture}};

   \end{tikzpicture}
    \caption{A graph $\Gamma$ and the corresponding visual graph of groups decompositions $\mathcal{G}(\Gamma)$, $\mathcal{G}'(\Gamma)$ and $C(\Gamma)$. We label each vertex and edge with the induced subgraph of $\Gamma$, shown in grey, that generates the corresponding vertex or edge group. }\label{fig:gogexamples}
    \end{figure}

\begin{defn}[\cite{clay14raagsplit}]\label{def:gogjsj}
Let $C_0$  denote the set of cut vertices of $\Gamma$. We say a subgraph of $\Gamma$ is \emph{biconnected} if it is connected and has no cut vertex. (Of course, a biconnected subgraph $\Gamma'$ may contain a cut vertex of $\Gamma$  provided it is not also a cut vertex of $\Gamma'$.) Let $B_1$ denote the set of maximal biconnected subgraphs of $\Gamma$.
 We define a bipartite graph of groups $\mathcal{G}(\Gamma)$ on the vertex set $C_0\sqcup B_1$ as follows:
\begin{itemize}
\item black vertices are vertices $v\in C_0$  with vertex  group $A({\{v\}})\cong \mathbb{Z}$;
\item white vertices are subgraphs $\Gamma'\in B_1$ with vertex group $A({\Gamma'})$;
\item $v\in C_0$ and $\Gamma'\in B_1$ are joined by an edge if and only if $v\in \Gamma'$. The edge group of this edge is $A({\{v\}})$.
\end{itemize}
The inclusions of edge groups into adjacent vertex groups are as described in Convention \ref{conv:inclusionstandard}.
\end{defn}

It is easy to verify that $\pi_1(\mathcal{G}(\Gamma))=A(\Gamma)$. 
We now modify $\mathcal{G}(\Gamma)$  to obtain a JSJ decomposition $\mathcal{G}'(\Gamma)$ of $A(\Gamma)$.  As Lemma \ref{lem:collapsecyl} shows, these modifications don't alter the associated tree of cylinders.

Firstly, $\mathcal{G}(\Gamma)$ may contain what is called in \cite{clay14raagsplit} a  (non-quadratically) \emph{hanging vertex} --- a valence one vertex whose  vertex group is $\mathbb{Z}^2$. Every such vertex group splits as an  HNN extension relative to its incident edge group. We refine $\mathcal{G}(\Gamma)$ at every hanging vertex by attaching a one-edge loop. This resulting graph of groups  may not be reduced in the sense of \cite{bestvinafeighn91accessibility};  we therefore collapse an edge adjacent to each valence two black vertex. 
After doing this, the resulting graph of groups  thus obtained is denoted  $\mathcal{G}'(\Gamma)$.

\begin{thm}[\cite{clay14raagsplit}]
If $\Gamma$ is a finite, connected, simplicial graph with at least three vertices, the graph of groups $\mathcal{G}'(\Gamma)$ is a JSJ decomposition of $A(\Gamma)$.
\end{thm}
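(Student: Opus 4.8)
The plan is to deduce the statement from Clay's construction in \cite{clay14raagsplit} rather than establish JSJ existence from scratch. Clay already produces a visual JSJ decomposition of $A(\Gamma)$ over two-ended subgroups, so the task reduces to verifying that $\mathcal{G}'(\Gamma)$ lies in the JSJ deformation space. I would first recall Clay's decomposition explicitly and observe that it agrees, up to elementary deformation, with the bipartite decomposition $\mathcal{G}(\Gamma)$ assembled from the cut vertices $C_0$ and the maximal biconnected subgraphs $B_1$; the point of the passage $\mathcal{G}(\Gamma)\rightsquigarrow\mathcal{G}'(\Gamma)$ is then simply to put this decomposition into reduced JSJ form.

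Before anything else I would confirm that $\mathcal{G}'(\Gamma)$ is a graph-of-groups decomposition of $A(\Gamma)$ with two-ended edge groups. The identity $\pi_1(\mathcal{G}(\Gamma))=A(\Gamma)$ is the routine verification already noted in the text, since the biconnected pieces and cut vertices reassemble $A(\Gamma)$ by iterated amalgamation along the standard subgroups $\langle v\rangle$. Both modifications preserve the fundamental group: refining a hanging vertex with group $\mathbb{Z}^2=\langle v,w\rangle$ and incident edge group $\langle v\rangle$ via the HNN presentation $\mathbb{Z}^2=\langle v\rangle\ast_{\langle v\rangle}$ (identity associated maps, stable letter $w$) keeps $\langle v\rangle$ elliptic and is an expansion move, while collapsing an edge whose edge group coincides with an adjacent vertex group is a collapse move; both are elementary Bass--Serre deformations and leave $\pi_1$ unchanged. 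Every resulting edge group is a conjugate of some $\langle v\rangle\cong\mathbb{Z}$, hence two-ended.

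The core of the argument is that $\mathcal{G}'(\Gamma)$ lies in the JSJ deformation space. Since all JSJ decompositions over two-ended subgroups lie in a single deformation space, and since expansions and collapses preserve membership in a deformation space, it suffices to exhibit the two modifications as exactly the moves reconciling $\mathcal{G}(\Gamma)$ with the reduced JSJ. The HNN refinement is forced: an abelian vertex group of rank two splits over $\mathbb{Z}$ relative to its incident edge group, so it cannot persist as a rigid vertex of a JSJ and must be expanded. The edge collapse at each valence-two black vertex is the reduction step, deleting redundant vertices whose $\mathbb{Z}$ vertex group equals an incident edge group, as demanded for a reduced decomposition in the sense of \cite{bestvinafeighn91accessibility}. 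I would invoke the forthcoming Lemma~\ref{lem:collapsecyl} to confirm that neither modification alters the associated tree of cylinders, which certifies that we remain in the JSJ deformation space.

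The main obstacle is the genuinely geometric input, which I would cite from Clay rather than reprove: one must know that the white (rigid) vertex groups $A(\Gamma')$, $\Gamma'\in B_1$, admit no further two-ended splitting relative to their incident $\langle v\rangle$ edge groups, and that every two-ended splitting of $A(\Gamma)$ is already visible in this decomposition. This is precisely the content of Clay's characterisation of two-ended splittings of $A(\Gamma)$ in terms of cut vertices of $\Gamma$, together with the rigidity of biconnected $A(\Gamma')$. Granting this, universal ellipticity of $\mathcal{G}'(\Gamma)$ and its maximality among universally elliptic two-ended decompositions follow at once, identifying $\mathcal{G}'(\Gamma)$ with the JSJ and completing the proof.
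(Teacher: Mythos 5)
Your overall strategy --- deferring the geometric content to Clay and only verifying that $\mathcal{G}'(\Gamma)$ is a legitimate repackaging of his decomposition --- is consistent with the paper, which in fact offers no proof at all and quotes the theorem directly from \cite{clay14raagsplit}. The problem is the bridging argument you supply. You assert that the blow-up of a hanging vertex (replacing the $\mathbb{Z}^2$ vertex group by the HNN extension of $\langle v\rangle$ over $\langle v\rangle$ with stable letter $w$) is an expansion move, that expansions and collapses ``preserve membership in a deformation space'', and hence that exhibiting the passage $\mathcal{G}(\Gamma)\rightsquigarrow\mathcal{G}'(\Gamma)$ as such moves suffices. This step fails. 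A deformation space is determined by the set of elliptic subgroups, and the blow-up changes that set: the hanging vertex group $\mathbb{Z}^2=\langle v,w\rangle$ is elliptic in the Bass--Serre tree of $\mathcal{G}(\Gamma)$, but in the refined tree the stable letter $w$ acts hyperbolically, so $\mathbb{Z}^2$ is no longer elliptic. (Forester's elementary expansion inserts an edge with distinct endpoints whose edge group equals one of the endpoint groups; a one-edge loop is not of this form and cannot be undone by an elementary collapse.) So the blow-up is a refinement that genuinely changes the deformation space --- which is exactly the point, and which your own next sentence concedes when you say the $\mathbb{Z}^2$ vertex ``cannot persist as a rigid vertex of a JSJ.'' If both modifications preserved the deformation space, then $\mathcal{G}(\Gamma)$ itself would already be a JSJ decomposition, which is false whenever $\Gamma$ has a hanging vertex. (The collapse half of your argument is fine: collapsing an edge whose group equals an endpoint's vertex group is an elementary collapse.)

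Your proposed certification is also invalid on two counts: you invoke Lemma \ref{lem:collapsecyl} to conclude that ``we remain in the JSJ deformation space,'' but equality of trees of cylinders is only a necessary consequence of lying in a common deformation space (the tree of cylinders is a deformation-space invariant), never a sufficient criterion --- and indeed $T(\Gamma)$ and $T'(\Gamma)$ share a tree of cylinders while lying in different deformation spaces. Moreover, the ``in particular'' clause of that lemma, that $T(\Gamma)_\mathrm{cyl}$ is the JSJ tree of cylinders, is itself deduced from the very theorem you are trying to prove, so the appeal is circular. A correct argument along your intended lines must avoid deformation moves starting from $\mathcal{G}(\Gamma)$ altogether: either identify $\mathcal{G}'(\Gamma)$ outright with the decomposition Clay constructs and proves to be a JSJ decomposition, or verify directly from Clay's classification of two-ended splittings of $A(\Gamma)$ that $\mathcal{G}'(\Gamma)$ is universally elliptic and dominates every universally elliptic tree.
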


\begin{lem}\label{lem:collapsecyl}
Let $T(\Gamma)$ and $T'(\Gamma)$ denote the Bass-Serre trees associated to $\mathcal{G}(\Gamma)$ and $\mathcal{G}'(\Gamma)$. Then
$T'(\Gamma)_\mathrm{cyl}=T(\Gamma)_\mathrm{cyl}$. In particular, $T(\Gamma)_\mathrm{cyl}$ is the JSJ tree of cylinders of $A(\Gamma)$.
\end{lem}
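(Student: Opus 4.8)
The plan is to pass from $\mathcal{G}(\Gamma)$ to $\mathcal{G}'(\Gamma)$ in the two stages of the construction and to check that neither stage changes the tree of cylinders. Let $\hat T$ denote the Bass-Serre tree obtained from $T(\Gamma)$ by performing \emph{only} the HNN refinements at the hanging vertices, so that $T'(\Gamma)$ is obtained from $\hat T$ by the subsequent edge collapses at the valence-two black vertices. I would establish $T(\Gamma)_\mathrm{cyl}=\hat T_\mathrm{cyl}$ and $\hat T_\mathrm{cyl}=T'(\Gamma)_\mathrm{cyl}$ separately, and then combine them.

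For the collapse stage, each collapsed edge $e$ joins a black vertex $\tilde c$ (with vertex group $\langle c\rangle$) to a neighbour, and its edge group equals $\langle c\rangle$, i.e. equals the adjacent vertex group. Collapsing such an edge is an elementary collapse, so $\hat T$ and $T'(\Gamma)$ lie in the same deformation space. Since the commensurability relation on two-ended subgroups is admissible, the tree of cylinders is an invariant of the deformation space by Guirardel--Levitt \cite{guirardel2011cylinders}; hence $\hat T_\mathrm{cyl}=T'(\Gamma)_\mathrm{cyl}$. This is the same principle used above to define the JSJ tree of cylinders, so this stage is routine.

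The refinement stage is the crux and cannot be treated this way: attaching an HNN loop makes the $\mathbb{Z}^2$ hanging vertex group non-elliptic, so it genuinely changes the deformation space. Here I would argue directly from the characterisation of cylinders via commensurability of edge stabilizers. Fix a lift $\tilde w$ of a hanging vertex, with stabilizer $\mathbb{Z}^2=\langle v,w\rangle$, where $v$ is the incident cut vertex. Every edge of $T(\Gamma)$ incident to $\tilde w$ has stabilizer a conjugate of $\langle v\rangle$ in $\mathbb{Z}^2$, and since $\mathbb{Z}^2$ is abelian all of these stabilizers are exactly $\langle v\rangle$; consequently $\tilde w$ lies in a single cylinder and is not a $V_1$-vertex. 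After the refinement, $\tilde w$ is blown up into a line $L$ whose vertices carry the group $\langle v\rangle$ and whose loop edges also carry $\langle v\rangle$, while the original edges to black vertices are redistributed along $L$. As every one of these edge stabilizers remains equal to $\langle v\rangle$, the whole of $L$ together with its incident edges lies in the same single cylinder, and each vertex of $L$ is again contained in only that one cylinder. Thus the refinement neither creates nor destroys a $V_1$-vertex, and it does not change which $V_1$-vertices are joined to which cylinder; carrying this out equivariantly over all lifts of all hanging vertices yields an equivariant isomorphism $T(\Gamma)_\mathrm{cyl}\cong\hat T_\mathrm{cyl}$.

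Combining the two stages gives $T(\Gamma)_\mathrm{cyl}=T'(\Gamma)_\mathrm{cyl}$, and since $\mathcal{G}'(\Gamma)$ is a JSJ decomposition its tree of cylinders is by definition the JSJ tree of cylinders, which proves the final claim. The main obstacle is the refinement stage: one must verify carefully that blowing up a hanging vertex merely subdivides a \emph{single} cylinder --- using abelianness of $\mathbb{Z}^2$ to see that all the relevant edge stabilizers coincide --- and that no new vertex lying in two distinct cylinders is introduced, so that passing to the tree of cylinders washes out the refinement entirely.
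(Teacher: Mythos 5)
Your proposal is correct, and on the crux it coincides with the paper's argument. The paper's proof is a brief direct verification: both procedures --- blowing up a hanging $\mathbb{Z}^2$ vertex and collapsing edges at valence-two black vertices --- take place within a single cylinder and do not alter the vertices at which distinct cylinders intersect, so the tree of cylinders is unchanged. Your treatment of the refinement stage is exactly this observation, written out in detail: the abelianness of $\mathbb{Z}^2$ shows that all edges incident to a lift of a hanging vertex share the stabilizer $\langle v\rangle$ (up to conjugacy), so the blow-up merely inserts a line of $\langle v\rangle$-stabilized vertices and edges inside one cylinder, creating no new $V_1$-vertices and changing no adjacencies; this is a useful fleshing-out of what the paper leaves implicit. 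Where you genuinely diverge is the collapse stage: the paper disposes of it with the same within-a-cylinder observation, whereas you route it through deformation theory --- an elementary collapse (edge group equal to an endpoint vertex group) preserves the set of elliptic subgroups, hence the deformation space, and by \cite{guirardel2011cylinders} trees in the same deformation space have the same tree of cylinders. This is a legitimate alternative, and in fact it is the same invariance the paper already invokes to make the notion of ``the'' JSJ tree of cylinders well defined; the trade-off is that it is heavier machinery than needed, since the one-line direct argument covers this stage as well. Combined with Clay's theorem that $\mathcal{G}'(\Gamma)$ is a JSJ decomposition, your final claim follows exactly as stated.
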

\begin{proof}
We obtain $T'(\Gamma)$ from $T(\Gamma)$ by applying the following two procedures. Firstly, we blow up a vertex corresponding to a hanging $\mathbb{Z}^2$ subgroup. Secondly, we collapse edges adjacent to valence two black vertices. Each of these procedures take place within a cylinder, and neither alters vertices at which distinct cylinders intersect. Thus $T'(\Gamma)_\mathrm{cyl}=T(\Gamma)_\mathrm{cyl}$. \end{proof}

The following lemma allows us to to describe cylinder stabilizers in the JSJ tree of cylinders.

\begin{lem}\label{lem:cylinder}
Let $v\in V\Gamma$ be a cut vertex and let $e$ be an edge of $T(\Gamma)$ with stabilizer $A(\{v\})\cong \mathbb{Z}$ for some $v\in V\Gamma$. Then an edge $f$ of $T(\Gamma)$ lies in the same cylinder as $e$  if and only if $f=ge$ for some $g\in A(\mathrm{star}(v))$.
\end{lem}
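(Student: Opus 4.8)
The plan is to run everything through the characterisation of cylinders recorded in Section~\ref{sec:prelims}: two edges of $T(\Gamma)$ lie in the same cylinder exactly when their edge stabilisers are commensurable (equivalently, by Proposition~\ref{prop:sbgpcommen}, when the associated edge spaces are coarsely equivalent). Since $\mathrm{Stab}(e)=A(\{v\})=\langle v\rangle$, the cylinder of $e$ is the set of edges $f$ whose stabiliser is commensurable with $\langle v\rangle$. By Bass--Serre theory every edge stabiliser has the form $h\langle w\rangle h^{-1}$ for a cut vertex $w$ and $h\in A(\Gamma)$ (Definition~\ref{def:gogjsj}), so the whole question reduces to identifying the commensurator $\mathrm{Comm}_{A(\Gamma)}(\langle v\rangle)$ and checking which edge stabilisers fall into the commensurability class of $\langle v\rangle$.

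The computation at the heart of the argument is $\mathrm{Comm}_{A(\Gamma)}(\langle v\rangle)=A(\mathrm{star}(v))$. For the containment $\supseteq$ — which already yields the `if' direction — observe that $A(\mathrm{star}(v))=\langle v\rangle\times A(\mathrm{lk}(v))$, so every $g\in A(\mathrm{star}(v))$ commutes with $v$; hence $\mathrm{Stab}(ge)=g\langle v\rangle g^{-1}=\langle v\rangle=\mathrm{Stab}(e)$, and $ge$ shares the cylinder of $e$. For $\subseteq$, suppose some edge stabiliser $h\langle w\rangle h^{-1}$ is commensurable with $\langle v\rangle$, so that $h w^m h^{-1}=v^n$ for nonzero $m,n$. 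Here I would invoke the standard rigidity of conjugacy in RAAGs: $v^n$ and $w^m$ are cyclically reduced with supports $\{v\}$ and $\{w\}$, and conjugate cyclically reduced elements have the same syllable content, forcing $w=v$ and $m=n$. Thus $h$ centralises $v^n$, and since RAAGs have unique roots the centraliser of any nontrivial power of $v$ equals $C_{A(\Gamma)}(v)=A(\mathrm{star}(v))$ (Servatius' centraliser theorem), giving $h\in A(\mathrm{star}(v))$.

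With this the `only if' direction is bookkeeping. If $f$ lies in the cylinder then, by the above, $\mathrm{Stab}(f)=h\langle v\rangle h^{-1}$ with $h\in A(\mathrm{star}(v))$; as $h$ centralises $v$ this is just $\langle v\rangle$, so $f$ is incident to a black vertex whose stabiliser is $\langle v\rangle$. The normaliser of $\langle v\rangle$ is again $A(\mathrm{star}(v))$ (the same argument together with the fact that $v$ is not conjugate to $v^{-1}$ in a RAAG), so every such black vertex is an $A(\mathrm{star}(v))$-translate of the black endpoint of $e$; translating $f$ back by the appropriate element of $A(\mathrm{star}(v))$ then identifies it with an edge incident to that endpoint, whence $f=ge$.

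The main obstacle is precisely the commensurator identity $\mathrm{Comm}_{A(\Gamma)}(\langle v\rangle)=A(\mathrm{star}(v))$; everything else is manipulation of Bass--Serre stabilisers. The delicate input is the conjugacy rigidity of generator powers — that $v^m\sim v^n$ forces $m=n$, and that passing to powers does not enlarge the centraliser — since this is exactly what prevents any stray edge from entering the cylinder and what pins the conjugator down to $A(\mathrm{star}(v))$ rather than to the normaliser of a larger subgroup. I would also take care in the final step to track which lift of the black vertex $v$ each edge is incident to, so that the concluding identification is genuinely with the fixed edge $e$ and not merely with some edge sharing its cylinder.
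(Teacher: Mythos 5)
Your proof is correct, but it takes a genuinely different route from the paper's. The paper argues coarse-geometrically: it identifies edge spaces of $T(\Gamma)$ with standard geodesics in $X(\Gamma)$, invokes Huang's results that standard geodesics are coarsely equivalent if and only if they are parallel, and parallel if and only if they have the same support, and then uses the splitting of the parallel set of $K$ (the geodesic of $\langle v\rangle$) as $K\times X(\mathrm{lk}(v))$ to conclude that $gK$ is parallel to $K$ exactly when $g\in A(\mathrm{star}(v))$. You instead stay with the original commensurability definition of cylinders and prove the algebraic identity $\mathrm{Comm}_{A(\Gamma)}(\langle v\rangle)=A(\mathrm{star}(v))$, using conjugacy rigidity of cyclically reduced elements, unique roots, and Servatius' centraliser theorem. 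Your argument is self-contained modulo classical RAAG algebra and makes no use of cube-complex machinery; the paper's argument is the one that fits its broader framework, since the coarse formulation of cylinders is the one used throughout, and the parallel-set splitting $K\times X(\mathrm{lk}(v))$ resurfaces as the cylindrical decomposition of Definition \ref{defn:cyldecomp} and in Remark \ref{rem:peripheralstrucutreraags}. Both routes establish the same content.

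One caveat, which your closing paragraph gestures at but does not quite pin down: in the Bass--Serre tree, the black vertex fixed by $\langle v\rangle$ is incident to one edge for each maximal biconnected subgraph of $\Gamma$ containing $v$, and since $v$ is a cut vertex there are at least two of these. Such edges lie in distinct $A(\Gamma)$-orbits yet all have stabiliser exactly $\langle v\rangle$, so they lie in the cylinder of $e$ without being of the form $ge$ for \emph{any} $g\in A(\Gamma)$. Hence the literal conclusion ``$f=ge$'' cannot hold edge-by-edge; it must be read at the level of edge spaces, i.e.\ the coset of $f$ is $g\langle v\rangle$ for some $g\in A(\mathrm{star}(v))$. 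That coset-level statement is exactly what your commensurator computation delivers (the stabiliser of $f$ is forced to be $\langle v\rangle$, and its coset is an $A(\mathrm{star}(v))$-translate of $\langle v\rangle$), it is what the paper's own proof establishes, and it is all that the application in Proposition \ref{prop:jsjtoc} uses. So this imprecision is inherited from the statement of Lemma \ref{lem:cylinder} itself rather than introduced by your argument; just be aware that your final step (``whence $f=ge$'') is valid only under that reading, and no amount of tracking lifts of the black vertex will repair it in the strict orbit-labelled sense.
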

\begin{proof}
By the definition of $\mathcal{G}(\Gamma)$ and Proposition \ref{prop:qitreebs}, edge spaces  of $T(\Gamma)$ may be identified with cosets of cyclic subgroups $\langle w \rangle_{w\in V\Gamma}$.  Identifying the 1-skeleton of $X(\Gamma)$ with the Cayley graph of $G$, we identify edge spaces with standard geodesics. Two edges lie in the same cylinder if and only if the associated standard geodesics are coarsely equivalent.

Let $K$ be the standard geodesic corresponding to the cyclic subgroup $\langle v \rangle $.
It can easily be deduced  from Lemma 2.10 of \cite{huang2017quasiflat}  that two standard geodesics are coarsely equivalent precisely when they are parallel.
 Moreover,  Corollary 3.2 of \cite{huang2014quasi} tells us that two standard geodesics are parallel if and only if they have the same support. We therefore deduce that any standard geodesic parallel to $K$ is a translate of $K$.
  Lemma 3.4 of \cite{huang2014quasi} then says the parallel set of $K$ canonically splits as $K\times X({\mathrm{lk}(v)})$. We thus deduce that $gK$ is parallel to $K$ if and only if $g\in A({\mathrm{star}(v)})$.
\end{proof}

We now describe the visual construction of the JSJ  tree of cylinders of $A(\Gamma)$, mirroring the definition of $\mathcal{G}(\Gamma)$. 

\begin{defn}[cf. Definition \ref{def:gogjsj}] \label{def:gogtoc}
Let $C_0$  denote the set of cut vertices of $\Gamma$.  Let $C_1\subseteq B_1$ denote the set of maximal biconnected subgraphs of $\Gamma$  that either contain at least two elements of $C_0$, or are not contained in $\mathrm{star}(v)$ for any $v\in C_0$. We define a bipartite tree of groups $C(\Gamma)$ as follows. 
\begin{enumerate}
\item Black vertices are vertices $v\in C_0$ with vertex group $A({\mathrm{star}^\Gamma(v)})$.
\item White vertices are subgraphs $\Gamma'\in C_1$ with vertex group $A({\Gamma'})$.
\item A black vertex $v\in C_0$ and a white vertex $\Gamma'\in C_1$ are joined by an edge if and only if $v\in \Gamma'$. The corresponding edge group is $A({\mathrm{star}^{\Gamma'}(v)})=A({\Gamma'})\cap A({\mathrm{star}^\Gamma(v)}) =A(\{v\})\times A(\mathrm{lk}^{\Gamma'}(v))$. 
\end{enumerate}
The inclusions of edge groups into adjacent vertex groups are as described in Convention \ref{conv:inclusionstandard}. 
\end{defn}

It is easy to verify that $\pi_1(\mathcal{G}(\Gamma))=A(\Gamma)$. 
The definition of  $C_1$ is precisely the condition needed to ensure a vertex of the JSJ tree is contained in at least two cylinders, and thus corresponds to a $V_1$-vertex in its tree of cylinders.

\begin{prop}\label{prop:jsjtoc}
Let $\Gamma$ be a finite, connected simplicial graph.
The Bass-Serre tree of $C(\Gamma)$ is equivariantly isomorphic to the JSJ tree of cylinders of  $A(\Gamma)$. Black and white vertices  of $C(\Gamma)$ correspond to cylindrical and rigid vertices respectively.
\end{prop}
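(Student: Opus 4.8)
The plan is to invoke Lemma \ref{lem:collapsecyl}, which already identifies the JSJ tree of cylinders of $A(\Gamma)$ with $T(\Gamma)_\mathrm{cyl}$, and then to show that the quotient graph of groups of the $A(\Gamma)$-action on $T(\Gamma)_\mathrm{cyl}$ is exactly $C(\Gamma)$, with matching vertex stabilizers, edge stabilizers and inclusions \emph{as subgroups of $A(\Gamma)$}. Since one checks directly that $\pi_1(C(\Gamma))=A(\Gamma)$, the fundamental theorem of Bass--Serre theory then yields an equivariant isomorphism between the Bass--Serre tree of $C(\Gamma)$ and $T(\Gamma)_\mathrm{cyl}$. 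Thus the substance of the proof is to match up, one $A(\Gamma)$-orbit at a time, the $V_0$-vertices, the $V_1$-vertices and the edges of $T(\Gamma)_\mathrm{cyl}$ with the black vertices, white vertices and edges of $C(\Gamma)$.

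For the $V_0$-vertices (cylinders), recall that edge spaces of $T(\Gamma)$ are standard geodesics, and two of them lie in a common cylinder if and only if their supports agree (as used in the proof of Lemma \ref{lem:cylinder}). Hence every cylinder has a well-defined support $\{v\}$ for a single cut vertex $v\in C_0$, and by Lemma \ref{lem:cylinder} the cylinder through a $v$-edge $e$ is the orbit $A(\mathrm{star}^\Gamma(v))\cdot e$. A short computation shows that its setwise stabilizer is exactly $A(\mathrm{star}^\Gamma(v))$, so the $V_0$-vertices fall into $A(\Gamma)$-orbits indexed by $C_0$ with stabilizer $A(\mathrm{star}^\Gamma(v))$, matching the black vertices of $C(\Gamma)$.

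For the $V_1$-vertices I would analyse the two vertex types of $T(\Gamma)$ separately. A lift of a black vertex of $\mathcal{G}(\Gamma)$ has vertex group $\langle v\rangle$, and every incident edge group equals this whole vertex group; hence all incident edge spaces coincide, the vertex lies in a single cylinder, and it is not a $V_1$-vertex. A lift $\widetilde{\Gamma'}$ of a white vertex $\Gamma'\in B_1$ has incident $v$-edges for each cut vertex $v\in\Gamma'\cap C_0$, and two incident edges of common support $v$ lie in the same cylinder if and only if they differ by an element of $A(\mathrm{star}^\Gamma(v))\cap A(\Gamma')=A(\mathrm{star}^{\Gamma'}(v))$. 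From this I would deduce that $\widetilde{\Gamma'}$ meets at least two cylinders precisely when either $\Gamma'$ contains at least two cut vertices (which yield distinct supports, hence distinct cylinders), or $\Gamma'$ contains a single cut vertex $v$ with $A(\mathrm{star}^{\Gamma'}(v))\subsetneq A(\Gamma')$, i.e. $\Gamma'\not\subseteq\mathrm{star}^\Gamma(v)$. Matching the latter with the defining condition of $C_1$ is where a maximality argument is needed: if $\Gamma'\subseteq\mathrm{star}^\Gamma(u)$ for some cut vertex $u\neq v$, then either $u\in\Gamma'$ (contradicting that $v$ is the only cut vertex of $\Gamma$ inside $\Gamma'$) or $\Gamma'\cup\{u\}$ is a strictly larger biconnected subgraph (contradicting maximality of $\Gamma'$); hence $\Gamma'$ is contained in the star of \emph{some} cut vertex if and only if it is contained in $\mathrm{star}^\Gamma(v)$. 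This identifies the $V_1$-vertices as exactly the lifts of subgraphs in $C_1$, with stabilizer $A(\Gamma')$, matching the white vertices. I expect this single-cut-vertex bookkeeping to be the main obstacle.

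Finally, an edge of $T(\Gamma)_\mathrm{cyl}$ joins a cylinder of support $\{v\}$ to a $V_1$-vertex $\widetilde{\Gamma'}$ lying in it, which happens if and only if $v\in\Gamma'$; its stabilizer is $A(\mathrm{star}^\Gamma(v))\cap A(\Gamma')=A(\mathrm{star}^{\Gamma'}(v))$, with the two inclusions being the standard inclusions of Convention \ref{conv:inclusionstandard}. This is precisely the edge data of $C(\Gamma)$, completing the identification of quotient graphs of groups and hence, via Bass--Serre theory, the equivariant tree isomorphism. For the statement on vertex types, the $V_0$-vertices are cylindrical by the very definition of the tree of cylinders, while the $V_1$-vertices are rigid because one-ended RAAGs have no quadratically hanging vertices, as noted in Section \ref{sec:prelims}.
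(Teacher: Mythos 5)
Your proposal is correct, and its mathematical substance coincides with the paper's: both rest on Lemma \ref{lem:collapsecyl} and Lemma \ref{lem:cylinder}, the identification of cylinders with stars of cut vertices, the characterisation of the $V_1$-vertices via $C_1$, and the adjacency check $v\in\Gamma'$. The difference is the packaging. The paper constructs an explicit equivariant map $\Omega:\tilde C(\Gamma)\rightarrow T(\Gamma)_\mathrm{cyl}$ on cosets and verifies directly that it is a bijection on black vertices (onto cylinders), a bijection on white vertices (onto $V_1$-vertices), and preserves adjacency; you instead work downstairs, computing the quotient graph of groups of the $A(\Gamma)$-action on $T(\Gamma)_\mathrm{cyl}$ and invoking Bass--Serre duality. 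Your route costs a little extra bookkeeping: you must compute setwise stabilizers (e.g.\ that the stabilizer of $\mathrm{cyl}(e_v)$ is exactly $A(\mathrm{star}^\Gamma(v))$, which your short computation does give), and to identify the quotient graph of groups with $C(\Gamma)$ literally you need a connected transversal whose stabilizers are the standard subgroups with the standard inclusions --- the natural choice $\{\mathrm{cyl}(e_v)\}_{v\in C_0}\cup\{A(\Gamma')\}_{\Gamma'\in C_1}$ works because the underlying graph of $C(\Gamma)$ is the block-cut tree of $\Gamma$ with its non-$C_1$ leaves removed, hence a tree. In return, the vertex-type statement comes for free, and you make explicit two points the paper leaves implicit: that lifts of black vertices of $\mathcal{G}(\Gamma)$ lie in a single cylinder (needed for surjectivity onto the $V_1$-vertices), and the maximality argument showing that a block with a unique cut vertex $v$ is contained in $\mathrm{star}^\Gamma(u)$ for some $u\in C_0$ only if it is contained in $\mathrm{star}^\Gamma(v)$ (the definition of $C_1$ quantifies over all of $C_0$, not just $C_0\cap\Gamma'$). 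Two small repairs: your parenthetical claim that two edge spaces lie in a common cylinder ``if and only if their supports agree'' is true only in the ``only if'' direction --- the converse fails, as your own later criterion involving $A(\mathrm{star}^{\Gamma'}(v))$ shows --- though nothing in your argument actually uses the false direction; and you should dispatch the degenerate case where $\Gamma$ has no cut vertex (both trees are single points) before speaking of cylinders, as the paper does at the start of its proof.
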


\begin{proof}
By Lemma \ref{lem:collapsecyl},  it is sufficient to show that $T(\Gamma)_\mathrm{cyl}$ is equivariantly isomorphic to the Bass-Serre tree of $C(\Gamma)$. 
We first remark that the colouring of  $C(\Gamma)$ induces a colouring of its Bass-Serre tree $\tilde C(\Gamma)$ into black and white vertices.  We define $e_v$ to be the edge of $T(\Gamma)$ corresponding to the coset $\langle v \rangle$.
We may assume that $\Gamma$ contains a cut vertex (hence at least three vertices), otherwise both $\tilde C(\Gamma)$ and the JSJ tree of cylinders are points. In particular, since $\Gamma$ is assumed to be connected, every graph $\Gamma'\in B_1$ contains at least one vertex in $C_0$, where $B_1$ is as in Definition \ref{def:gogjsj}.

We define an $A(\Gamma)$-equivariant graph isomorphism $\Omega:\tilde{C}(\Gamma)\rightarrow T(\Gamma)_\mathrm{cyl}$ as follows. Let $gA({\mathrm{star}^\Gamma(v)})$ be a black vertex of $\tilde C(\Gamma)$. We define $\Omega(gA({\mathrm{star}^\Gamma(v)})):=\mathrm{cyl}(ge_v)$, where $\mathrm{cyl}(ge_v)$ is the vertex of $T(\Gamma)_\mathrm{cyl}$ corresponding to the cylinder of $T(\Gamma)$ containing $ge_v$.
Lemma \ref{lem:cylinder} tells us  that $\Omega$ is well-defined and injective. Moreover, every edge $f$ of $T(\Gamma)$ is of the form $ge_v$ for some $g\in A(\Gamma)$ and $v\in V\Gamma$. Thus $\Omega(gA(\mathrm{star}(v)))=\mathrm{cyl}(f)$, so $\Omega$ defines a bijection between black vertices of $\tilde C(\Gamma)$ and cylinders of $T(\Gamma)$.

Now suppose $w=gA({\Gamma'})$ is a white vertex of $\tilde{C}(\Gamma)$. Note that $\Gamma'\in C_1\subseteq B_1$, so it follows from the definition of $\mathcal{G}(\Gamma)$ that $w$ corresponds to the vertex $w'=gA({\Gamma'})$ of $T(\Gamma)$. We claim that $w'$ is contained in at least two cylinders, hence is a  $V_1$-vertex of  $T(\Gamma)_\mathrm{cyl}$; we thus define $\Omega(w)=w'$. 
As remarked earlier, every $\Gamma'\in B_1$ contains at least one vertex in $C_0$. We claim that a white vertex $gA(\Gamma')$ of $T(\Gamma)$ is contained in at least two cylinders if and only if $\Gamma'\in C_1$.
If $\Gamma'\in C_1$, it  either contains at least two vertices $v,v'\in C_0$, or is not contained in $\mathrm{star}(v)$ for the unique $v\in C_0\cap \Gamma'$.  In the former case, $w'$ is contained in the two cylinders $\mathrm{cyl}(ge_v)$ and $\mathrm{cyl}(ge_{v'})$. In the latter case, there exists a $k\in A({\Gamma'})\backslash A({\mathrm{star}(v)})$, such that $\mathrm{cyl}(ge_v)$ and $\mathrm{cyl}(gke_v)$ are distinct cylinders containing $w'=gA({\Gamma'})$.

Now suppose $w'=gA(\Gamma')\in VT(\Gamma)$ is adjacent to edges $g'e_v$ and $g''e_{v'}$ that are contained in distinct cylinders. As these cylinders are distinct, it follows from Lemma \ref{lem:cylinder} that either $v\neq v'$, or $(g')^{-1}g''\in A(\Gamma')\backslash A(\mathrm{star}^{\Gamma'}(v))$. In either case, we conclude that $\Gamma\in C_1$. Thus $\Omega$ defines a equivariant bijection from white vertices of $\tilde{C}(\Gamma)$ to $V_1$-vertices of $T(\Gamma)_\mathrm{cyl}$.

Since $\tilde{C}(\Gamma)$ and $T(\Gamma)_\mathrm{cyl}$ are both bipartite, it is sufficient to show a black vertex $w=gA({\mathrm{star}(v)})$ and a white vertex $w'=g'A({\Gamma'})$ are joined by an edge in $\tilde C(\Gamma)$ if and only if $\Omega(w)$ and $\Omega(w')$ are  joined by an edge in $T(\Gamma)_\mathrm{cyl}$. 
If $w$ and $w'$ are joined by an edge, then  $v\in \Gamma'$ and there exist $k\in A({\mathrm{star}(v)})$ and $k'\in A({\Gamma'})$ such that $gk=g'k'$. Therefore, the edge $f=gke_v$ of $T(\Gamma)$ is adjacent to the vertex $g'k'A({\Gamma'})=g'A({\Gamma'})$. By Lemma \ref{lem:cylinder}, $\mathrm{cyl}(f)=\mathrm{cyl}(ge_v)$, thus $g'A({\Gamma'})$  lies in the cylinder $\mathrm{cyl}(ge_v)$ of $T(\Gamma)$, and so $\Omega(w)$ and $\Omega(w')$ are joined by an edge in $T(\Gamma)_\mathrm{cyl}$.

Conversely, if $\Omega(w)$ and $\Omega(w')$ are joined by an edge, the vertex $g'A({\Gamma'})$  is contained in $\mathrm{cyl}(ge_v)$. Thus some edge $f$ of $\mathrm{cyl}(ge_v)$ is incident to $g'A({\Gamma'})$ in $T(\Gamma)$. By Lemma \ref{lem:cylinder}, $f=gke_v$ for some $k\in A({\mathrm{star}(v)})$. Hence $v\in \Gamma'$, and there is some $k'\in A({\Gamma'})$ such that $g'k'=gk$. Thus $w=gkA({\mathrm{star}(v)})$ and $w'=gkA({\Gamma'})$, so $w$ and $w'$ are indeed joined by an edge in $\tilde C(\Gamma)$.
\end{proof}

\begin{conv}
Given a RAAG $A(\Gamma)$, we work with the Bass-Serre tree of spaces of $C(\Gamma)$. However, rather than equipping each vertex and edge space with the word metric as in the proof of Proposition \ref{prop:qitreebs}, we equip it with the CAT(0) metric. Indeed, since each vertex stabilizer $A(\Gamma_v)$ of the Bass-Serre tree is itself a RAAG, we let the corresponding vertex space be $X(\Gamma_v)$ equipped with the CAT(0) metric.  We equip edge spaces with the CAT(0) metric in the same way.
\end{conv}

\begin{defn}\label{defn:cyldecomp}
Each vertex stabiliser associated to the cylinder $gA(\mathrm{star}(v))$ splits as a  product $gA(\{v\})g^{-1}\times gA(\mathrm{lk}(v))g^{-1}\cong \mathbb{Z}\times gA(\mathrm{lk}(v))g^{-1}$. Similarly, each edge stabilizer $gA(\mathrm{star}^{\Gamma'}(v))g^{-1}$ splits as $gA(\{v\})g^{-1}\times gA(\mathrm{lk}^{\Gamma'}(v))g^{-1}\cong \mathbb{Z}\times gA(\mathrm{lk}^{\Gamma'}(v))g^{-1}$. There is an associated  decomposition of the vertex and edge spaces: cylindrical vertex spaces split as $X_v=\mathbb{E}^1\times Y_v=gX(\{v\})\times gX(\mathrm{lk}^{}(v))$ and edge spaces split as $X_e=\mathbb{E}^1\times Y_e=gX(\{v\})\times gX(\mathrm{lk}^{\Gamma'}(v))$. We call this the \emph{cylindrical decomposition} of a cylindrical vertex space or an edge space. 
\end{defn}

A consequence of working with the CAT(0) metric is that each edge map $\alpha_e:X_e\rightarrow X_{\overline e}$  splits as a product of cubical isomorphisms $(\mathbb{E}^1,Y_e)\rightarrow (\mathbb{E}^1,Y_{\overline{e}})$.

\begin{rem}\label{rem:peripheralstrucutreraags}
Using Definitions \ref{def:gogjsj} and \ref{def:gogtoc}, we  can give an explicit description of elements in the peripheral structure of a vertex space in the JSJ tree of cylinders. Let $e=gA(\mathrm{star}^{\Gamma'}(v))$ be an edge in the Bass-Serre tree of $C(\Gamma)$ adjacent to a rigid vertex $gA(\Gamma')$ and a cylinder $g(\mathrm{star}^{\Gamma}(v))=\mathrm{cyl}(ge_v)$, where $e_v$ is as in the proof of Proposition \ref{prop:jsjtoc}. The edges of $T(\Gamma)$ that are contained in $\mathrm{cyl}(ge_v)$ and adjacent to $gA(\Gamma')$ are precisely those of the form $gg'e_v$ for some $g'\in A(\mathrm{lk}^{\Gamma'}(v))$. Notice that the vertex set of $Y_e$ is $gA(\mathrm{lk}^{\Gamma'}(v))$ --- see Definition \ref{defn:cyldecomp}. Thus elements of  $\mathcal{S}_e$ correspond precisely to subsets of $X_e$ of the form $\mathbb{E}^1\times \{y\}$ for some vertex $y\in  Y_e$.

\end{rem}

\section{Quasi-isometries of  free products}\label{sec:freeproduct}
In this section we develop techniques to  determine  when cylindrical vertex spaces in a  JSJ tree of cylinders are relatively quasi-isometric 
by adapting work of  Papasoglu--Whyte \cite{papasoglu2002quasi}. These techniques are heavily used  in the proofs of both Proposition \ref{prop:relqicyl} and Theorem \ref{thm:mainthm}. This section can be skipped on first reading and referred back to when needed in the proofs of these results.

To motivate the problem, we consider the RAAGs $A(\Gamma)$ and $A(\Lambda)$, where $\Gamma$ and $\Lambda$ are as in Figure \ref{fig:raagsnotqi}. The cylindrical vertex group in $C(\Gamma)$ is isomorphic to  $\mathbb{Z}\times (F_2 * F_2 * \mathbb{Z} * \mathbb{Z}^2)$, whereas the  cylindrical vertex group  in $C(\Lambda)$ is isomorphic to  $\mathbb{Z}\times (F_2 * F_2)$. It follows from Theorems \ref{thm:jsjtocqi} and \ref{thm:derahm} that if $A(\Gamma)$ and $A(\Lambda)$ are quasi-isometric, then  $F_2 * F_2$ must be quasi-isometric to $F_2 * F_2 * \mathbb{Z} * \mathbb{Z}^2$. However, the work of Papasoglu--Whyte ensures these infinite-ended groups are not quasi-isometric \cite{papasoglu2002quasi}. We thus deduce that $A(\Gamma)$ and $A(\Lambda)$ are not quasi-isometric.

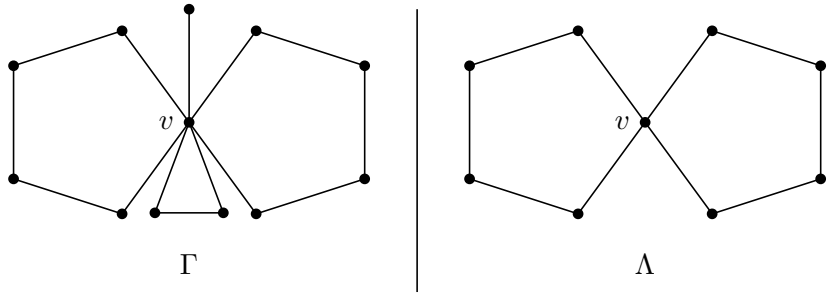
\begin{figure}
    \begin{tikzpicture}[semithick,scale=1.5]

      \draw (0, 0) -- (-0.588, 0.809) -- (-1.54,  0.500) -- (-1.54, -0.500) -- (-0.588, -0.809) -- (0,0);
      \filldraw[fill=black] (0,0) circle [radius=0.04];
      \filldraw[fill=black] (-0.588, 0.809) circle [radius=0.04];
      \filldraw[fill=black] (-1.54, 0.500) circle [radius=0.04];
      \filldraw[fill=black] (-1.54, -0.500) circle [radius=0.04];
      \filldraw[fill=black] (-0.588, -0.809) circle [radius=0.04];
      \draw (0, 0) -- (0.588, 0.809) -- (1.54,  0.500) -- (1.54, -0.500) -- (0.588, -0.809) -- (0,0);
      \filldraw[fill=black] (0.588, 0.809) circle [radius=0.04];
      \filldraw[fill=black] (1.54, 0.500) circle [radius=0.04];
      \filldraw[fill=black] (1.54, -0.500) circle [radius=0.04];
      \filldraw[fill=black] (0.588, -0.809) circle [radius=0.04];
	\draw (0,0) -- (0,1);
	\draw (0,0) -- (-.3,-0.8) -- (0.3,-0.8) -- (0,0);
	\filldraw[fill=black] (0,1) circle [radius=0.04];
      \filldraw[fill=black] (-.3,-0.8) circle [radius=0.04];
      \filldraw[fill=black] (.3,-0.8) circle [radius=0.04];      
      \draw (-0.2,0) node {$v$};
      
     \draw (4+0, 0) -- (4+-0.588, 0.809) -- (4+-1.54,  0.500) -- (4+-1.54, -0.500) -- (4+-0.588, -0.809) -- (4+0,0);
      \filldraw[fill=black] (4+0,0) circle [radius=0.04];
      \filldraw[fill=black] (4+-0.588, 0.809) circle [radius=0.04];
      \filldraw[fill=black] (4+-1.54, 0.500) circle [radius=0.04];
      \filldraw[fill=black] (4+-1.54, -0.500) circle [radius=0.04];
      \filldraw[fill=black] (4+-0.588, -0.809) circle [radius=0.04];
      \draw (4+0, 0) -- (4+0.588, 0.809) -- (4+1.54,  0.500) -- (4+1.54, -0.500) -- (4+0.588, -0.809) -- (4+0,0);
      \filldraw[fill=black] (4+0.588, 0.809) circle [radius=0.04];
      \filldraw[fill=black] (4+1.54, 0.500) circle [radius=0.04];
      \filldraw[fill=black] (4+1.54, -0.500) circle [radius=0.04];
      \filldraw[fill=black] (4+0.588, -0.809) circle [radius=0.04];     
      \draw (4-0.2,0) node {$v$};
\draw (2,-1.5) -- (2,1);      	
      \draw (0,-1.25) node {$\Gamma$};
      \draw (4,-1.25) node {$\Lambda$};      
   \end{tikzpicture}
    \caption{The defining graphs of RAAGs that are not quasi-isometric}\label{fig:raagsnotqi}
    \end{figure}

To obtain more refined quasi-isometry invariants, we need to determine whether two cylindrical vertex spaces are  \emph{relatively} quasi-isometric with respect to some peripheral structure. This is done in Proposition \ref{prop:relqicyl}. The argument required here is more technical, and we  need to adapt the methods used in \cite{papasoglu2002quasi} to build quasi-isometries that preserve a given peripheral structure.  
We thus define spaces called \emph{trees of graphs} and explain how to build quasi-isometries between trees of graphs using \emph{moves} of type I, II and III. 
Specifically, we need an understanding of why, using work of Papasoglu--Whyte, the free product $G_1 * G_2$ of two finitely generated, infinite groups  is quasi-isometric to:
\begin{enumerate}[label={(\Roman*)}]
\item $G_1 * G_1 * G_2$;
\item $G'_1 * G_2$, whenever $G'_1$ is quasi-isometric to $G_1$;
\item $G_1 * G_2 * \mathbb{Z}$.
\end{enumerate}
This can be done by applying moves of type I, II and III respectively.
Although the main arguments are based on  \cite{papasoglu2002quasi}, the terminology used here (moves, protographs, special subgraphs, isolated vertices, depth etc.) is not found in \cite{papasoglu2002quasi}.

\begin{figure}[b]
\input{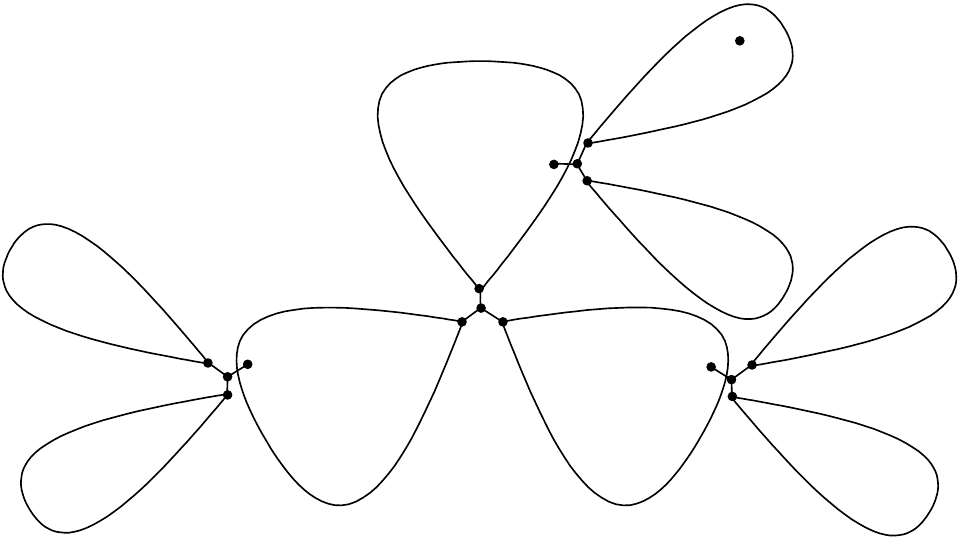_t}
\caption[Schematic diagram of a free product of graphs]{Schematic diagram of a part of the graph $A*B*C$. If we remove the component of $(A*B*C)\backslash  \{v_a\}$ containing $v_0$, we obtain $\mathrm{branch}(v_a)$.}\label{fig:freeproductscheme}
\end{figure}

\begin{defn}
A \emph{based graph} $(A,a_0)$ is a graph $A$ with a distinguished vertex $a_0\in A$ called the \emph{basepoint}.
Let $\mathcal{R}:=\{(A,a_0), (B,b_0), \dots, (E,e_0)\}$ be a set of finitely many connected, infinite based graphs  of uniformly bounded valance, which we call \emph{protographs}.  
The free product is built out of copies of the protographs called \emph{special subgraphs}. Vertices that are not contained in a special subgraph are called \emph{isolated vertices}. Given $\mathcal{R}$ as above, we define the \emph{free product}\index{free product of graphs} $\Gamma= A*\dots *E$ of graphs inductively. 

We define $\Gamma^1$ by taking the disjoint union of a vertex $v_0$ and copies of each based protograph $(A,a_0)\in \mathcal{R}$.  We then join $v_0$ to each of the basepoints $a_0$ of the protographs via an edge. We call each copy of a protograph  	a special subgraph, and call the vertex $v_0$  an isolated vertex.
We define $\Gamma^{i+1}$ as follows. For every vertex of $\Gamma^i$, say $a\in A$, that is neither an isolated vertex nor  incident to an isolated vertex,    we attach a copy of each of the based protographs in $\mathcal{R}\backslash \{(A,a_0)\}$  via their basepoints to an isolated vertex, say $v_a$, which is itself attached via an edge to  $a$.  This is done so that for every isolated vertex $v_a$ of $\Gamma^{i+1}$, there is exactly one copy of every protograph attached to $v_a$. 

Noting that $\Gamma^1\subseteq \Gamma^2\subseteq\dots$ is a nested sequence of graphs, we define the graph $\Gamma:=\cup_{i\in \mathbb{N}} \Gamma^{i}$. We call each $\Gamma^i$ the \emph{depth $i$ subgraph of $\Gamma$}.
We endow $\Gamma$ with the path metric in which every edge has length one.
\end{defn}

The free product $A*B*C$ is illustrated schematically in Figure \ref{fig:freeproductscheme}. 
This definition differs very slightly from the procedure given in \cite{papasoglu2002quasi}, which doesn't include isolated vertices.  It is shown in \cite{papasoglu2002quasi} that the choice of basepoints doesn't affect the construction (up to bi-Lipschitz equivalence), provided the special subgraphs satisfy a mild quasi-homogeneous property; this will be evident from the description of type II moves, which we will describe shortly. 

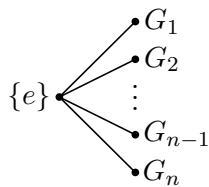
\begin{figure}[b]
    \begin{tikzpicture}[semithick,scale=1.0]

      \draw (0, 0) -- (1, 1.0);
      \draw (0, 0) -- (1, 0.5);
      \draw (0, 0) -- (1, -0.5);
      \draw (0, 0) -- (1, -1.0);
      \filldraw[fill=black] (0,0) circle [radius=0.04];
      \filldraw[fill=black] (1, 1.0) circle [radius=0.04];
      \filldraw[fill=black] (1, 0.5) circle [radius=0.04];
      \filldraw[fill=black] (1, -0.5) circle [radius=0.04];
      \filldraw[fill=black] (1, -1.0) circle [radius=0.04];
      \draw (1,0.1) node {$\vdots$};
      \draw (1.35,1.0) node {$G_1$};
      \draw (1.35,0.5) node {$G_2$};
      \draw (1.55,-0.5) node {$G_{n-1}$};
      \draw (1.35,-1.0) node {$G_n$};
      \draw (-0.4,0) node {$\{ e\}$};

   \end{tikzpicture}
    \caption{The graph of groups $\mathcal{G}$}\label{fig:treeofspacesgog}
  \end{figure}

\begin{rem}\label{rem:freeproductqitreeofgraphs}
The above construction is motivated by the following. Suppose $G=G_1*\dots *G_n$ is the free product of finitely many finitely generated infinite groups. For each $i$, let $A_i$ denote the Cayley graph of $G_i$ with respect to some finite generating set $S_i\subseteq G_i\backslash \{e\}$. Let $\mathcal{G}$ be a finite connected graph of groups as shown in Figure \ref{fig:treeofspacesgog}. Clearly $\pi_1(\mathcal{G})=G$. We equip $G$ with the word metric with respect to $S_1\sqcup \dots \sqcup S_n$. 

Let $X$ be the tree of spaces built by blowing up the Bass-Serre tree of $\mathcal{G}$ as in Proposition \ref{prop:qitreebs}. We observe that $X$ is isomorphic (as a graph) to the free product of graphs $A_1*\dots * A_n$. Each isolated vertex corresponds to a coset of the form $g\{e\}$. Each special subgraph that is a copy of the protograph $A_i$ corresponds to a coset of the form $g G_i$. 
By Proposition \ref{prop:qitreebs}, there is an equivariant quasi-isometry $f:G\rightarrow X$ such that $f(g G_i)$ gets mapped to within uniform finite Hausdorff distance of the corresponding special subgraph obtained from the protograph $A_i$.
\end{rem}

We may represent each non-isolated vertex $x\in A*\dots *E$ by an ordered tuple $(v_1,\dots, v_n)$ of vertices of protographs, chosen such that every adjacent pair $v_i$ and $v_{i+1}$ lie in different protographs, and  that  $v_i$ is not a basepoint for $i<n$. The first $(n-1)$ elements of the tuple encodes the unique sequence of points at which a path from $v_0$ to $x$ with no backtracking  exits a special subgraph. For example, $(a,b')$ denotes the vertex $b'$ contained in the copy of the protograph  $B$ attached to $(a)$  as shown in Figure \ref{fig:freeproductscheme}. We denote this special subgraph by $(a,B)$. 

Using this notation, we use $*$ as a wildcard character to define maps between free products of graphs. For instance, $(a,*)\mapsto (a',*)$  implies that $(a,b)\mapsto (a',b)$, that $(a,b,c)\mapsto (a',b,c)$, and so on. We write $(a,B,*)\mapsto (a',B,*)$ to denote $(a,b,*)\mapsto (a',b,*)$ for all $b\in B$. We may also write $(-,a,*)\mapsto (-,a',*)$, where the $-$ symbol denotes some fixed unspecified initial string.

The \emph{depth} of a vertex or special subgraph is defined to be the least $i$ such that $\Gamma^i$ contains that vertex or subgraph. We define the \emph{ascending link}  $\mathrm{lk}^+(v)$ of an isolated vertex $v$ to be the set of special subgraphs attached to $v$, whose depth is equal to that of $v$. We define $\mathrm{\emph{branch}}(v)$ to be the complement in $\Gamma$ of the component of $\Gamma\backslash \{v\}$ containing $v_0$.

A free product of graphs is a specific example of a more general construction known as a \emph{tree of graphs}\index{tree of graphs}. This is defined like a free product of graphs, except we no longer assume there  is  exactly one copy of each protograph  attached to each isolated vertex, and we no longer require the set $\mathcal{R}$ of protographs to be finite.
For instance, an isolated vertex could have two copies of some protograph $(A,a_0)\in \mathcal{R}$ attached, or could have no copies attached. We do however, assume that there is a uniform bound to the number of special subgraphs attached  to each isolated vertex. We extend the definition of depth and $\mathrm{lk}^+$  to trees of graphs. However, we ensure that $\mathrm{lk}^+$ includes multiplicities. For example, if two copies of a protograph $A$ are attached to $v$, these represent distinct elements of $\mathrm{lk}^+(v)$.
 
We also use the tuple notation to describe points on a tree of graphs. However, care must be taken when doing this, since it is not immediate when a tuple gives a well-defined vertex. For instance, the notation $(a,b)$ is ambiguous if there are two copies of the protograph $B$ attached to $a$.

We now describe three moves, which we call moves of type I, II and III, that map a tree of graphs to another tree of graphs.
 Each move will be a quasi-isometry  of the underlying space. Moves of type I and II will induce a bijection between special subgraphs in the domain and target trees of graphs. By applying these moves iteratively, we can construct   quasi-isometries of the form I, II and III described above.

Each of these moves is said to \emph{take place at an isolated vertex $v$}. A move that takes place at $v$ changes at least one special subgraph of $\mathrm{lk}^+(v)$, but acts trivially outside  $\mathrm{branch}(v)$. In particular, if $v$ is an isolated vertex of depth $i$, then a move that takes place at $v$ will fix all vertices of depth at most $i-1$.

\begin{figure}
\input{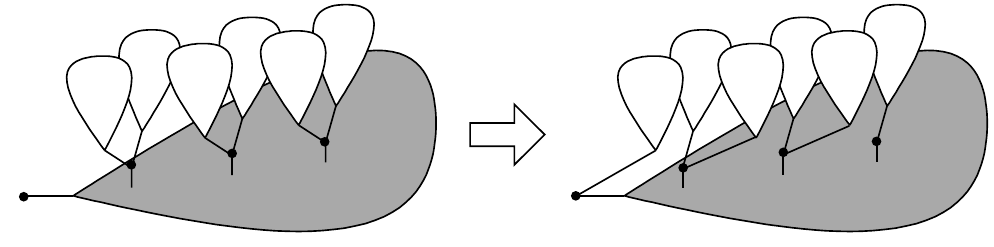_t}
\caption[An example of a type I move]{A type I move that takes place at $v$: sliding $B$ onto $v$.}
\label{fig:sliding}
\end{figure}
\subsection*{Type I moves}\index{move on a tree of graphs!type I}\index{move on a tree of graphs!sliding}
A move of type I, also known as a \emph{sliding move}, is   shown pictorially in Figure \ref{fig:sliding}. We describe a type I move that takes place at an isolated vertex $v$. We fix some special subgraph $(A,a_0)\in \mathrm{lk}^+(v)$ and choose an infinite geodesic ray $(a_0,a_1,\dots)$ in $A$, starting at the basepoint $a_0\in A$.

 Let $v_{a_i}$ be the isolated vertex attached to $a_i$ for each $i>0$. We fix a based protograph $(B,b_0)$. A type I move involves `sliding' the edge attaching  $b_0$ from the isolated vertex $v_{a_{i+1}}$ to $v_{a_i}$ if $i>1$, and to  $v$ if $i=1$.  This can be described in the tuple notation  by the following transformations:   \begin{align*}
(-,a_1,B,*)&\mapsto (-,\bar B,*)\\
(-,a_2,B,*)&\mapsto (-,a_1,B,*)\\
(-,a_3,B,*)&\mapsto (-,a_2,B,*)\\
&\hspace{7pt}\vdots
\end{align*} 
We write $B$ and $\bar B$  to distinguish between the two copies of the photograph $B$ attached to the isolated vertex $v$.

 We denote the procedure described above as \emph{sliding $B$ onto $v$}. We can also perform the inverse move, which we call \emph{sliding $B$ off $v$} (providing there are copies of $B$ attached to $v$ and each $v_{a_i}$ for $i>0$). Either of these is called a type I move taking place at $v$. As shown in Lemma 1.2 of  \cite{papasoglu2002quasi}, this is a  bi-Lipschitz equivalence and induces a bijection of special subgraphs.

Using sliding moves, one can show that $A*A*B$ is bi-Lipschitz equivalent to $A*B$. This is done by sliding a copy of $A$ onto every isolated vertex. 
Provided we only perform a uniformly finite number of slides at each isolated vertex, the composition of infinitely many  sliding moves will always be a bi-Lipschitz equivalence. This will be discussed in more detail at the end of this section.

Sliding moves are much more flexible than one might first think. For instance, it would appear  in Figure \ref{fig:sliding} that  one cannot slide $A$ onto $v$. Although this cannot be done directly, it can be done stages: we  first slide $A$ onto $v_{a_i}$ for every $i>0$, then slide $A$ onto $v$, and then slide $A$ off $v_{a_i}$ for every $i>0.$\footnote{We are implicitly assuming that the initial tree of graphs in Figure \ref{fig:sliding} is the free product of graphs $A*B*C$.} Although this trick will not work for an arbitrary tree of graphs, it will work for all trees of graphs that arise in practice. We will therefore be able to slide any protograph onto or off any isolated vertex.

\subsection*{Type II moves}\index{move on a tree of graphs!type II}
We first observe the following: suppose $(A,a_0)$ and $(A',a'_0)$ are based protographs and $\phi:A\rightarrow A'$ is a bi-Lipschitz equivalence with $\phi(a_0)=a'_0$. Then the transformation $$(-,a,*)\mapsto (-,\phi(a),*)$$ is  a bi-Lipschitz equivalence. 
Quasi-isometries are significantly less rigid than bi-Lipschitz equivalences; for instance, they are not necessarily bijections. Consequently, if $\phi$ is a quasi-isometry, the above transformation is not necessarily a quasi-isometry.  However, by combining with type I moves, we can nonetheless replace the special subgraph $A$ in a tree of spaces with any  graph $A'$ quasi-isometric to $A$.

\begin{figure}
\input{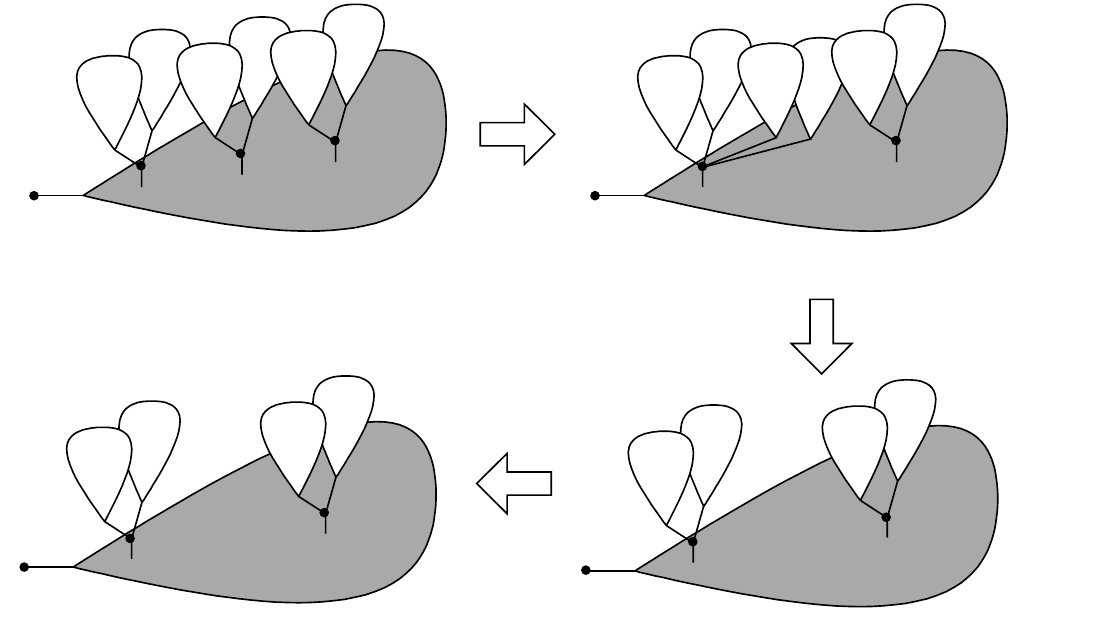_t}
\caption[An example of a type II move]{
A type II move that takes place at $v$: 1) We apply $r$ to the special subgraph $A$ identifying isolated vertices $v_{a}$ and $v_{a'}$ if $r(a)=r(a')$. 2) We slide special subgraphs off  so that every point of $N$ not equal to the basepoint has exactly one copy of $B$ and $C$ attached. 3) We apply $\hat{f}$ to the special subgraph $N$. 4) We apply steps 1) and 2) in reverse.}
\label{fig:quasiisomtree}
\end{figure}

We now describe a type II moves that takes place at an isolated vertex $v$. Suppose $(A,a_0)\in \mathrm{lk}^+(v)$ and  that $f:A\rightarrow A'$ is a quasi-isometry with $f(a_0)=a'_0$. We may choose nets  $N\subseteq A$ and $N'\subseteq A'$ such that $f(N)=N'$ and $\hat f:=f|N:N\rightarrow N'$ is a bi-Lipschitz equivalence. Let $r:A\rightarrow N$ and $s:A'\rightarrow N'$ be surjective maps that are close to the identity, chosen such that $r^{-1}(r(a_0))$ and $s^{-1}(s(a'_0))$ each contain exactly one point, namely $a_0$ and $a'_0$ respectively.

We follow the procedure shown in Figure \ref{fig:quasiisomtree}, which is also described in the proof of Theorem 0.1 of \cite{papasoglu2002quasi}. We first apply the transformations $(-,a,*)\mapsto (-,r(a),*)$.
We do this so that whenever there are two points $a,a'\in A\backslash \{a_0\}$ such that $r(a)=r(a')$, special subgraphs in $\mathrm{lk}^+(v_a)$ and $\mathrm{lk}^+(v_{a'})$  get sent to special subgraphs in $\mathrm{lk}^+(v_{r(a)})$.

At each such isolated vertex $v_{r(a)}$, we slide off extra special subgraphs as necessary, ensuring that $\mathrm{lk}^+(v_{r(a)})=\mathrm{lk}^+(v_{a})$ for all $a\in A$. By our choice of $r$, we do not need to apply these sliding moves to the basepoint $a_0\in A$. After doing this, we apply the transformation $(-,a,*)\mapsto (-,\hat f (a),*)$. This does give a well-defined map as $\hat f$ is a bi-Lipschitz equivalence. We have thus `replaced' the special subgraph $A$ with $N'$.

Finally, we perform the inverse of the operations we applied when replacing $A$ with $N$, allowing us to replace $N'$ with $A'$. For each $n'\in N$, we slide extra special subgraphs onto $v_{n'}$, allowing us to identify $\mathrm{lk}^+(v_{n'})$ with $\sqcup_{x\in s^{-1}(n')}\mathrm{lk}^+(v_{x})$. For instance, suppose  $s^{-1}(n')=\{x,\overline{x}\}$ with $\mathrm{lk}^+(v_{x})=\{B,C\}$ and $\mathrm{lk}^+(v_y)=\{B,C\}$. 
Then we slide each of  $B$ and $C$ onto   $v_{n'}$  so that $\mathrm{lk}^+(v_{n'})=\{B,C,\overline{B},\overline{C}\}$, where  we write $\overline{B}$ and $\overline{C}$ to distinguish between the two copies of the protographs $B$ and $C$ in $\mathrm{lk}^+(v_{n'})$. We now apply the following transformations:
\begin{align*}
(-,n',B,*)&\mapsto (-,x, B,*)\\
(-,n',C,*)&\mapsto (-,x,C,*)\\
(-,n',\overline{B},*)&\mapsto (-,\overline{x},B,*)\\
(-,n',\overline{C},*)&\mapsto (-,\overline{x},C,*)
\end{align*}
We send $n'$ and $v_n'$ to $x$ and $v_x$ respectively for some arbitrarily chosen $x\in s^{-1}(n)$.
We apply this procedure for each $n'\in N$.

  We call this a \emph{type II move}. Type II moves allow one  to show that $A*B$ is quasi-isometric to $A'*B$ if $A$ is quasi-isometric to $A'$. 
We note the number of sliding moves we have to perform in this procedure is determined by the quasi-isometry constants of $f$.
  
This procedure is quite powerful, because it not only tells us that two trees of graphs are quasi-isometric, but allows us build a prescribed quasi-isometry of protographs into the quasi-isometry of the tree of graphs. More precisely, we have shown above that if $f:A\rightarrow A'$ is a quasi-isometry and $A$ is a special subgraph of a tree of graphs $\Gamma$, we may apply a type II move to $\Gamma$, obtaining  a tree of graphs $\Gamma'$ and a
quasi-isometry $\phi:\Gamma\rightarrow \Gamma'$ such that $\phi|_A$ is close to $f$. 

This observation is essential in proving Theorem \ref{thm:mainthmintro}, since it allows us to define quasi-isometries such that the diagram of Proposition \ref{prop:qitree} commutes up to uniformly bounded error. We summarise with the following Lemma, which can be proved by applying type II moves to all vertices in $\mathrm{lk}^+(v)$.

\begin{figure}
\resizebox{1.27\totalheight}{!}{\input{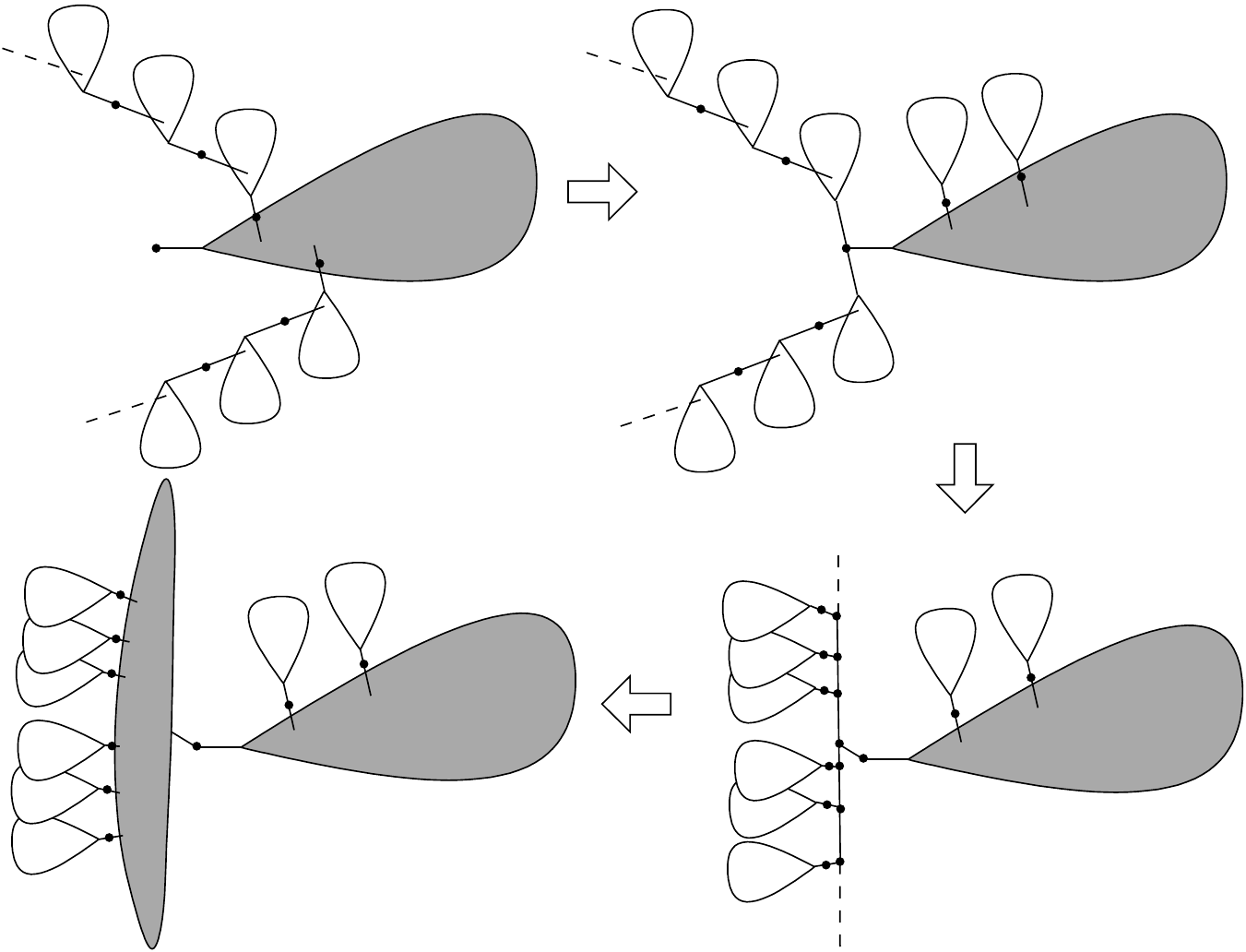_t}}
\caption[An example of a type III move]{A type III move that takes place at $v$: we add the two-ended special subgraph $L$.}
\label{fig:addline}
\end{figure}
\begin{lem}\label{lem:qilink}
Let $\Gamma$ be a tree of spaces and $v\in \Gamma$ be an isolated vertex.  Suppose there is a collection $\mathcal{R}$ of based protographs and a bijection $\chi:\mathrm{lk}^+(v)\rightarrow \mathcal{R}$ such that for every $A\in \mathrm{lk}^+(v)$, there is a basepoint preserving quasi-isometry $f_A:A\rightarrow \chi(A)$. Then we may apply moves of type II at $v$ to obtain a quasi-isometry $f:\Gamma\rightarrow \Gamma'$ such that:
\begin{enumerate}
\item $f$ is a graph isomorphism outside $\mathrm{branch}(v)$;
\item $\mathrm{lk}^+(f(v))=\mathcal{R}$;
\item for every $A\in \mathrm{lk}^+(v)$,  $f|_A$ is close to $f_A$.
\end{enumerate}  
\end{lem}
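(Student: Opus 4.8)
The plan is to obtain $f$ as a composition of finitely many type II moves at $v$, one for each special subgraph in $\mathrm{lk}^+(v)$. Since a tree of graphs has a uniform bound on the number of special subgraphs attached to each isolated vertex, the set $\mathrm{lk}^+(v)$ is finite; write $\mathrm{lk}^+(v)=\{A_1,\dots,A_k\}$. For each $j$ the hypothesis provides a basepoint-preserving quasi-isometry $f_{A_j}\colon A_j\to \chi(A_j)$, which is precisely the data needed to perform a type II move at $v$ replacing $A_j$ by $\chi(A_j)$.

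First I would apply the type II move at $v$ associated to $f_{A_1}$. As explained in the discussion preceding this lemma, this produces a tree of graphs $\Gamma_1$ together with a quasi-isometry $\phi_1\colon\Gamma\to\Gamma_1$ such that $\phi_1|_{A_1}$ is close to $f_{A_1}$, and such that the special subgraph $A_1$ has been replaced by $\chi(A_1)$. Because the move takes place at $v$, it acts trivially outside $\mathrm{branch}(v)$; inspecting its explicit description (apply $r$, slide off extras, apply $\hat f$, then reverse), every constituent sliding move takes place at an isolated vertex of the form $v_a$ for a \emph{non-basepoint} vertex $a\in A_1$, so $\phi_1$ only alters $A_1$ and the subgraphs hanging off its non-basepoint vertices. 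In particular the edges joining $v$ to the basepoints of $A_2,\dots,A_k$ and everything above them are fixed, so $A_2,\dots,A_k$ persist unchanged in $\mathrm{lk}^+(v)$ of $\Gamma_1$.

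Iterating, having built $\phi_{j-1}\circ\cdots\circ\phi_1\colon\Gamma\to\Gamma_{j-1}$, I would apply the type II move at $v$ associated to $f_{A_j}$ to obtain $\phi_j\colon\Gamma_{j-1}\to\Gamma_j$. Setting $f:=\phi_k\circ\cdots\circ\phi_1$ and $\Gamma':=\Gamma_k$ yields a composition of boundedly many quasi-isometries, hence a quasi-isometry. Property (1) holds because each $\phi_j$ is a graph isomorphism outside $\mathrm{branch}(v)$; property (2) holds because every $A_j$ has been replaced by $\chi(A_j)$ and $\chi$ is a bijection, so $\mathrm{lk}^+(f(v))=\{\chi(A_1),\dots,\chi(A_k)\}=\mathcal{R}$; and property (3) holds because $\phi_j|_{A_j}$ is close to $f_{A_j}$ while all later moves fix $\chi(A_j)$.

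The one point that must be checked carefully — the only genuine obstacle — is the \emph{independence} of the successive moves: that the move for $A_j$ leaves the previously processed copies $\chi(A_1),\dots,\chi(A_{j-1})$ (and the closeness of $f|_{A_i}$ to $f_{A_i}$ for $i<j$) untouched. This is exactly the observation made above, namely that a type II move only slides and re-identifies special subgraphs attached to the non-basepoint vertices of the protograph it is replacing, and hence is supported in the portion of $\mathrm{branch}(v)$ growing out of $A_j$, which is disjoint from each $\chi(A_i)$ with $i<j$ and from the subgraphs lying above them. I would spell this out by tracking, in the explicit four-step description of the type II move, that each sliding move and each transformation $(-,a,*)\mapsto(-,\hat f(a),*)$ acts nontrivially only on tuples whose relevant entry lies in $A_j$, thereby confirming that distinct moves have disjoint supports among the $\chi(A_i)$.
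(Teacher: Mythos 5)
Your proposal is correct and matches the paper's approach: the paper proves this lemma precisely by applying a type II move at $v$ for each special subgraph in $\mathrm{lk}^+(v)$, relying on the fact that a type II move replaces $A$ by $\chi(A)$ while restricting on $A$ to a map close to $f_A$ and acting trivially elsewhere. Your careful verification that the successive moves have disjoint supports (each being confined to the sub-branch growing out of the protograph it replaces, thanks to the choice of $r$ fixing the basepoint) is exactly the implicit content of the paper's one-line argument.
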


\subsection*{Type III moves}\index{move on a tree of graphs!type III} A \emph{type III   move}  allows us to add or remove special subgraphs  to trees of graphs. More specifically, only bi-infinite lines can be added or removed.

  An example of a type III move that takes place at $v$ is shown in Figure \ref{fig:addline}. We choose vertices $a_1,a_2\in A$ and $b_1\in B$ such that $d(a_0,a_1)=d(a_1,a_2)=d(b_0,b_1)=1$. As shown in Figure \ref{fig:addline}, we define each $l_i$ to be the isolated vertex adjacent to either $(-,a_1,b_1,a_1,b_1,\dots)$ or $(-,a_2,b_1,a_1,b_1,\dots)$. We slide $B$ onto $v$ twice, and then observe that \begin{align}
 d(l_i,l_j) =    3\lvert i-j\rvert.
\end{align}
  
As shown in the third step of Figure \ref{fig:addline}, we can rearrange the $l_i$ vertices so they  lie along a single line $\{\dots,l_{-2},l_{-1},l_0,l_1,l_2,\dots \}$, adding vertices as in Figure \ref{fig:addline} by subdividing edges. We now artificially declare $L$ to be a special subgraph with basepoint $l_0$. This gives a tree of graphs with a two-ended special subgraph that doesn't correspond to a special subgraph of the starting tree of graphs. A type III move is defined to be either this procedure or its inverse.

A combination of type I and type III moves allows one to see that $A*B$ is quasi-isometric to  $A*B*\mathbb{Z}$ (or more generally, to  $A*B*F_n$). This is shown in Theorem 2.1 of \cite{papasoglu2002quasi}.  It follows from Lemma 3.2 of \cite{papasoglu2002quasi} that one-ended special subgraphs can never be added or removed.

\subsection*{Combining moves}
Each application of a move of type I, II or III provides us with a quasi-isometry $\gamma:\Gamma\rightarrow \Gamma'$ between trees of graphs. If for example,  we want to show that $A*B$ is quasi-isometric to  $A*B*B$, we need to apply infinitely many moves of type I, which means composing infinitely many quasi-isometries. We explain why this can be done.

We first consider the slide move shown in Figure \ref{fig:sliding}, which we denote $\gamma:\Gamma\rightarrow \Gamma'$. We have already remarked that $\gamma$ acts as an isometry outside $\mathrm{brach}(v)$. In fact, we observe that in some sense $\gamma$ only alters distances near the special subgraph $A$.  Although $\gamma$ is not an isometry when restricted to $\Gamma\backslash A$, it is an isometry when restricted to components of $\Gamma\backslash A$. Similarly, type II and III moves also only alter distances  `locally'. This allows us to compose infinitely many moves, provided they satisfy the following condition:

\begin{defn}
A sequence of moves applied to a tree of graphs is said to be \emph{uniformly locally finite}\index{move on a tree of graphs!uniformly locally finite sequence} if there is a uniform bound on the number of moves that take place at an isolated vertex, and there is a uniform bound on the  quasi-isometry constants of quasi-isometries used to define moves of type II.
\end{defn}
\begin{prop}\label{prop:compmoves}
The composition of a uniformly locally finite sequence of moves is a quasi-isometry.
\end{prop}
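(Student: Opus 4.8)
The plan is to first verify that the infinite composition defines an honest map, and then to bound its quasi-isometry constants by exploiting the locality of the individual moves, so that the constants of infinitely many composed quasi-isometries do not compound.

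\textbf{Well-definedness.} First I would reorder the sequence so that moves are applied in non-decreasing order of the depth of the isolated vertex at which they take place; the uniform bound on the number of moves per isolated vertex ensures this loses no move. If $v$ and $w$ are distinct isolated vertices of the same depth, then $w\notin\mathrm{branch}(v)$ and $v\notin\mathrm{branch}(w)$, so $\mathrm{branch}(v)\cap\mathrm{branch}(w)=\emptyset$; since each move acts trivially outside the branch of its vertex, moves at distinct vertices of equal depth act on disjoint sets and hence commute. Because a move at a vertex of depth $i$ fixes everything of depth at most $i-1$, once all moves of depth $\le i$ have been applied the image of any fixed point of depth $\le i$ has stabilised. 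Thus the composite stabilises pointwise and defines a map $\gamma_\infty\colon\Gamma\to\Gamma_\infty$, where $\Gamma_\infty$ is the direct limit of the resulting nested sequence of trees of graphs. Applying the same reasoning to the sequence of inverse moves produces a candidate coarse inverse.

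\textbf{Uniform per-move constants and locality.} Next I would record that each individual move is a $(K_0,C_0)$-quasi-isometry with $K_0,C_0$ depending only on the data controlling uniform local finiteness: the uniform valence bound on protographs, the uniform bound on the number of special subgraphs attached to each isolated vertex, and the uniform bound on the constants of the maps $f$ defining type II moves. For type I this is Lemma 1.2 of \cite{papasoglu2002quasi}, for type II it follows from the construction in the proof of Theorem 0.1 of \cite{papasoglu2002quasi} together with the uniform bound on the constants of $f$, and for type III from Theorem 2.1 of \cite{papasoglu2002quasi}. The crucial refinement is the observation highlighted before the statement: a move taking place at $v$ and acting on a special subgraph $A$ restricts to an isometry on each component of $\Gamma\setminus A$, so it only distorts distances between points whose connecting geodesic traverses (a bounded neighbourhood of) $A$.

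\textbf{Aggregation.} To promote these per-move bounds to a uniform bound on the composite, I would take $x,y\in\Gamma$, fix a geodesic, and decompose it into the maximal sub-arcs lying in the successive special subgraphs it meets, together with the connecting bridges through isolated vertices. Distinct special subgraphs are disjoint, and by uniform local finiteness each is altered by at most a bounded number of moves of uniform constants; hence the image of each sub-arc is distorted by at most a uniform multiplicative factor and a uniform additive error, while the bridges are preserved up to uniformly bounded error. Summing over the arcs, the additive errors accrue at most once per special subgraph traversed, and a geodesic of length $d(x,y)$ meets at most $d(x,y)$ special subgraphs, so the per-subgraph additive errors aggregate into a single extra multiplicative term rather than compounding. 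This yields $d(\gamma_\infty x,\gamma_\infty y)\le K\,d(x,y)+C$ with $K,C$ independent of the number of moves; the same argument applied to the coarse inverse gives the matching lower bound, and coarse surjectivity follows since $\gamma_\infty$ composed with the inverse composite is close to the identity.

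The main obstacle is this last step, and specifically making the locality precise in the presence of type I moves: sliding redistributes special subgraphs among isolated vertices and so changes the distances between subgraphs, meaning the decomposition of a geodesic must be organised coherently (for instance with respect to the final configuration $\Gamma_\infty$) and one must track carefully how the arcs and bridges of the geodesic are carried along. The essential point to verify is that the distortions contributed by different moves remain \emph{additive across disjoint regions} rather than \emph{multiplicative along the composition}, which is exactly what prevents the quasi-isometry constants from degenerating as infinitely many moves are composed.
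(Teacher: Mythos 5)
Your proposal is correct and follows essentially the same route as the paper's proof: the same decomposition of the distance between two points into segments through successive special subgraphs and isolated vertices, the same locality observation (each segment is altered by only a uniformly bounded number of moves, each with uniform constants), and the same underlying reason that additive errors cannot compound, namely that each segment has length bounded below. The only cosmetic differences are that you aggregate additive errors by counting segments against the total length while the paper converts them into multiplicative errors via a per-segment case analysis, and that you obtain the lower bound through a coarse inverse while the paper derives two-sided bounds directly.
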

\begin{proof}
 By the uniformly locally finite condition, we can assume each move is a $(K,A)$-quasi-isometry between trees of graphs for some uniform constants $K$ and $A$. We claim that the composition of moves is then a $(K',A')$-quasi-isometry for suitable constants $K'$ and $A'$.
Suppose $x$ and $y$ are vertices in the tree of graphs that lie in special subgraphs (other cases can be dealt with similarly). Then there exists a unique sequence of points \begin{align}\label{eqn:diststring} x=x_0,y_0,v_1,x_1,y_1,v_2,x_2,\dots, v_n,x_n,y_n=y\end{align} such that $x_i$ and $y_i$ are vertices in the same special subgraph, $v_i$ is the isolated vertex joining $y_{i-1}$ and $x_i$, and there is no backtracking, i.e. no $v_i$ appears twice. The distance  $d(x,y)$ can be computed by summing the distance between successive points of (\ref{eqn:diststring}).

We first suppose that every move acts as a $K$-bi-Lipschitz equivalence on the tree of spaces. Indeed, suppose that   for all $x,y$, each move only alters distances in a finite subsequence of (\ref{eqn:diststring}). As only uniformly finitely many moves take place at each isolated vertex, we see that for any fixed finite substring of (\ref{eqn:diststring}) of length $R$, there are at most $S=S(R)$ moves that alter distances in that substring. This is  independent of $x$ and $y$. Thus when we compose infinitely many moves, the resulting map is  a $K^N$-bi-Lipschitz equivalence  for some sufficiently large $N$. 

We now explain how to deal with the general case where moves act as $(K,A)$-quasi-isometries. Quasi-isometries have additive errors  (the `$A$' factors). We need to show these cannot accumulate when applying a uniformly locally finite sequence of moves, e.g. if we apply 1000 moves, we need to show there is no $1000A$ factor. This does not follow directly from the above argument, which only shows multiplicative errors (the `$K$' factors) don't accumulate.

 It is sufficient to show this for maps such that $$\dots,v_i,x_i,y_i,v_{i+1},\dots$$ maps to $$ \dots,v_i,f(x_i),f(y_i),v_{i+1},\dots,$$ where $f$ is a $(K,A)$-quasi-isometry of the special subgraph containing $x_i$ and $y_i$. 
 Such maps occur in  type II moves, e.g. step 1 of Figure \ref{fig:quasiisomtree}. We can recover moves of type I, II and  III by composing such maps with bi-Lipschitz equivalences.

There are two cases. First suppose $d(x_i,y_i)\geq 2KA$. Then \begin{align*}\frac{1}{2K}d(x_i,y_i)\leq \frac{1}{K}d(x_i,y_i)-A&\leq d(f(x_i),f(y_i))\\&\leq Kd(x_i,y_i)+A\leq (K+\frac{1}{2K})d(x_i,y_i).\end{align*} 

Now suppose $d(x_i,y_i)\leq 2KA$. Let $i>0$. Since $d(v_i,x_i)=1$, we see that $$d(v_i,f(y_i))=1+d(f(x_i),f(y_i))\geq 1\geq \frac{1}{2KA+1}d(v_i,y_i).$$  

If $x_i\neq y_i$,  then $d(x_i,y_i)\geq 1$ (as $x_i$ and $y_i$ are vertices), so that $d(f(x_i),f(y_i))\leq (K+A)d(x_i,y_i)$. The case $x_i=y_i$ is clear, since $0=d(f(x_i),f(y_i))=d(x_i,y_i)=0$.
Consequently, we see that additive errors cannot accumulate when we apply a uniformly locally finite sequence of moves.    We conclude by applying the above argument in the case where all moves are bi-Lipschitz equivalences.
\end{proof}

\section{Relative quasi-isometry type of cylinders}\label{sec:cyl}
In this section, we analyse the relative quasi-isometry type of  cylinder stabilizers in the JSJ tree of cylinders of a RAAG. We recall from Section \ref{sec:jsjtoc}  that the cylindrical  vertex groups in the JSJ tree of cylinders of $A(\Gamma)$ are of the form $A({\mathrm{star}(v)})$ for some cut vertex  $v\in V\Gamma$. Proposition \ref{prop:relqicyl} gives necessary and sufficient for two cylindrical vertex groups to be relatively quasi-isometric.

Throughout this section, we assume that $\Gamma$ is connected and isn't of the form $\mathrm{star}(v)$ for some $v\in V\Gamma$, otherwise the JSJ tree of cylinders consists  of a single vertex.
Recall the definition of $B_1$ and $C_1$ from Definitions \ref{def:gogjsj} and \ref{def:gogtoc}. We let $\Gamma_1,\dots, \Gamma_n\in C_1$ and $\Gamma'_1,\dots \Gamma'_m\in B_1\backslash C_1$ be all the subgraphs in $B_1$ that contain $v$. We define $\emph{blocks}$ $A_i:=\mathrm{lk}^{\Gamma_i}(v)$ and $B_i:=\mathrm{lk}^{\Gamma'_i}(v)$ for each $i$.
 We say the $A_i$ are \emph{peripheral blocks} and the $B_i$ are \emph{non-peripheral blocks}. A \emph{coned-off block} is the join of a block with the cut vertex $v$. We call each $A(A_i)$ a \emph{peripheral factor} and each  $A(B_i)$ a \emph{non-peripheral factor} of the cylinder.

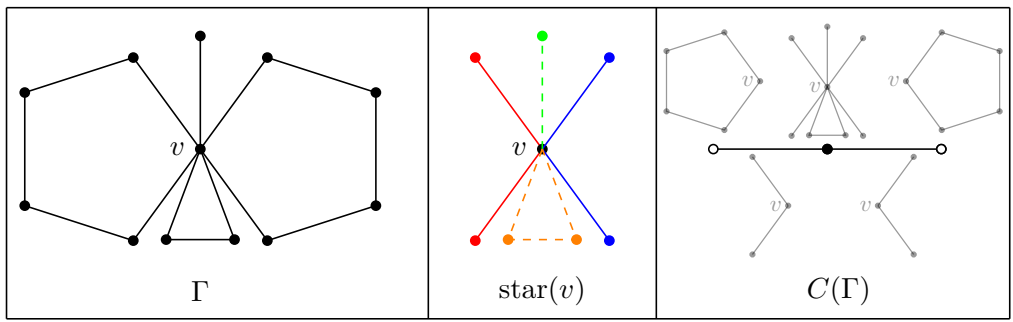
\begin{figure}[bp]
    \begin{tikzpicture}[semithick,scale=1.5]

      \draw (0, 0) -- (-0.588, 0.809) -- (-1.54,  0.500) -- (-1.54, -0.500) -- (-0.588, -0.809) -- (0,0);
      \filldraw[fill=black] (0,0) circle [radius=0.04];
      \filldraw[fill=black] (-0.588, 0.809) circle [radius=0.04];
      \filldraw[fill=black] (-1.54, 0.500) circle [radius=0.04];
      \filldraw[fill=black] (-1.54, -0.500) circle [radius=0.04];
      \filldraw[fill=black] (-0.588, -0.809) circle [radius=0.04];
      \draw (0, 0) -- (0.588, 0.809) -- (1.54,  0.500) -- (1.54, -0.500) -- (0.588, -0.809) -- (0,0);
      \filldraw[fill=black] (0.588, 0.809) circle [radius=0.04];
      \filldraw[fill=black] (1.54, 0.500) circle [radius=0.04];
      \filldraw[fill=black] (1.54, -0.500) circle [radius=0.04];
      \filldraw[fill=black] (0.588, -0.809) circle [radius=0.04];
	\draw (0,0) -- (0,1);
	\draw (0,0) -- (-.3,-0.8) -- (0.3,-0.8) -- (0,0);
	\filldraw[fill=black] (0,1) circle [radius=0.04];
      \filldraw[fill=black] (-.3,-0.8) circle [radius=0.04];
      \filldraw[fill=black] (.3,-0.8) circle [radius=0.04];      
      \draw (-0.2,0) node {$v$};
      
  \draw[red] (3, 0) -- (3-0.588, 0.809);
  \draw[red] (3,0) -- (3-0.588, -0.809);
      \filldraw[fill=black] (3,0) circle [radius=0.04];
      \filldraw[red,fill=red] (3-0.588, 0.809) circle [radius=0.04];
      \filldraw[red,fill=red] (3-0.588, -0.809) circle [radius=0.04];
  \draw[blue] (3, 0) -- (3+0.588, 0.809);
  \draw[blue] (3,0) -- (3+0.588, -0.809);
      \filldraw[blue] (3+0.588, 0.809) circle [radius=0.04];
      \filldraw[blue] (3+0.588, -0.809) circle [radius=0.04];
	\draw[dashed,green] (3,0) -- (3,1);
	\draw[dashed, orange] (3,0) -- (3-.3,-0.8) -- (3+0.3,-0.8) -- (3,0);
	\filldraw[green] (3,1) circle [radius=0.04];
      \filldraw[orange] (3-.3,-0.8) circle [radius=0.04];
      \filldraw[orange] (3+.3,-0.8) circle [radius=0.04];      
      \draw (3-0.2,0) node {$v$};      
      \draw (0,-1.25) node {$\Gamma$};
      \draw (3,-1.25) node {$\mathrm{star}(v)$};
      \draw (5.6,-1.25) node {$C(\Gamma)$};

      \draw (2,-1.5) -- (2,1.25);
      \draw (4,-1.5) -- (4,1.25);
      \draw (-1.7,1.25) -- (7.1,1.25);
      \draw (-1.7,-1.5) -- (7.1,-1.5);
      \draw (4.5,0) -- (5.5,0) -- (6.5,0); 
      \draw (-1.7,1.25) -- (-1.7,-1.5);
      \draw (7.1,1.25) -- (7.1,-1.5);
      \filldraw[fill=white] (4.5, 0) circle [radius=0.04];
      
      \draw (5.5,0.6) node[scale=0.8] {\begin{tikzpicture}[semithick,scale=1,fill opacity=0.4,draw opacity=0.4]
      \draw (0, 0) -- (-0.588, 0.809);
      \draw (-0.588, -0.809) -- (0,0);
      \filldraw[fill=black] (0,0) circle [radius=0.04];
      \filldraw[fill=black] (-0.588, 0.809) circle [radius=0.04];
      \filldraw[fill=black] (-0.588, -0.809) circle [radius=0.04];
      \draw (0, 0) -- (0.588, 0.809);
      \draw (0.588, -0.809) -- (0,0);
      \filldraw[fill=black] (0.588, 0.809) circle [radius=0.04];
      \filldraw[fill=black] (0.588, -0.809) circle [radius=0.04];
	\draw (0,0) -- (0,1);
	\draw (0,0) -- (-.3,-0.8) -- (0.3,-0.8) -- (0,0);
	\filldraw[fill=black] (0,1) circle [radius=0.04];
      \filldraw[fill=black] (-.3,-0.8) circle [radius=0.04];
      \filldraw[fill=black] (.3,-0.8) circle [radius=0.04];      
      \draw (-0.2,0) node {$v$};
      \end{tikzpicture}};
      
      \draw (4.5,0.6) node[scale=0.8] {\begin{tikzpicture}[semithick,scale=1,fill opacity=0.4,draw opacity=0.4]
       \draw (0, 0) -- (-0.588, 0.809) -- (-1.54,  0.500) -- (-1.54, -0.500) -- (-0.588, -0.809) -- (0,0);
      \filldraw[fill=black] (0,0) circle [radius=0.04];
      \filldraw[fill=black] (-0.588, 0.809) circle [radius=0.04];
      \filldraw[fill=black] (-1.54, 0.500) circle [radius=0.04];
      \filldraw[fill=black] (-1.54, -0.500) circle [radius=0.04];
      \filldraw[fill=black] (-0.588, -0.809) circle [radius=0.04];    
      \draw (-0.2,0) node {$v$};
      \end{tikzpicture}};
      \filldraw[fill=black] (5.5, 0) circle [radius=0.04];
      \filldraw[fill=white] (6.5, 0) circle [radius=0.04];
      
      \draw (6.5,0.6) node[scale=0.8] {\begin{tikzpicture}[semithick,scale=1,fill opacity=0.4,draw opacity=0.4]
      \draw (0, 0) -- (0.588, 0.809) -- (1.54,  0.500) -- (1.54, -0.500) -- (0.588, -0.809) -- (0,0);
      \filldraw[fill=black] (0,0) circle [radius=0.04];
      \filldraw[fill=black] (0.588, 0.809) circle [radius=0.04];
      \filldraw[fill=black] (1.54, 0.500) circle [radius=0.04];
      \filldraw[fill=black] (1.54, -0.500) circle [radius=0.04];
      \filldraw[fill=black] (0.588, -0.809) circle [radius=0.04];
      \draw (-0.2,0) node {$v$};
      \end{tikzpicture}};
      
      \draw (5,-0.5) node[scale=0.8] {\begin{tikzpicture}[semithick,scale=1,fill opacity=0.4,draw opacity=0.4]
      \draw (0, 0) -- (-0.588, 0.809);
      \draw (-0.588, -0.809) -- (0,0);
      \filldraw[fill=black] (0,0) circle [radius=0.04];
      \filldraw[fill=black] (-0.588, 0.809) circle [radius=0.04];
      \filldraw[fill=black] (-0.588, -0.809) circle [radius=0.04];     
      \draw (-0.2,0) node {$v$};
      \end{tikzpicture}};
      
      \draw (6,-0.5) node[scale=0.8] {\begin{tikzpicture}[semithick,scale=1,fill opacity=0.4,draw opacity=0.4]
      \draw (0, 0) -- (0.588, 0.809);
      \draw (0.588, -0.809) -- (0,0);
      \filldraw[fill=black] (0,0) circle [radius=0.04];
      \filldraw[fill=black] (0.588, 0.809) circle [radius=0.04];
      \filldraw[fill=black] (0.588, -0.809) circle [radius=0.04];     
      \draw (-0.2,0) node {$v$};
      \end{tikzpicture}};
      
      \filldraw[fill=black] (5.5, 0) circle [radius=0.04];
      \filldraw[fill=white] (6.5, 0) circle [radius=0.04];
      
   \end{tikzpicture}
    \caption{The decomposition of $\mathrm{star}(v)$ into coned-off blocks}\label{fig:cylinderdecomposition}
  \end{figure}
 
 The black (cylindrical) vertex groups of $C(\Gamma)$ are thus of the form \begin{align}\label{eqn:cyl} A({\mathrm{star}(v)})\cong\mathbb{Z}\times (A({A_1})*\dots * A({A_n})* A({B_1}) * \dots * A({B_m})).\end{align} Adjacent edge groups are precisely the $\mathbb{Z}\times A({A_i})$ terms for each peripheral factor $A(A_i)$.
  We note that $n\geq 1$ and $n+m\geq 2$. 
  \begin{rem}\label{rem:npfac}
As each non-peripheral block is connected, every $A(B_i)$ is either infinite cyclic or is one-ended.
  \end{rem}

We illustrate these ideas with Figure \ref{fig:cylinderdecomposition}. There is a single cut vertex of $\Gamma$, labelled $v$. The graph of groups $C(\Gamma)$  has three vertices: two rigid (white) vertices that correspond to the left and right pentagons, and one cylindrical (black) vertex. 
The cylinder has four coned-off blocks illustrated in different colours. The two coned-off blocks shown using filled lines are  peripheral. These correspond to incident edge groups in $C(\Gamma)$. The  two  coned-off  blocks shown using dashed lines are non-peripheral. These do not correspond to incident edge groups in $C(\Gamma)$.

Theorem \ref{thm:jsjtocqi} tells us that a quasi-isometry between two RAAGs induces relative quasi-isometries between their cylinder stabilizers. Cylinders have a direct product decomposition corresponding to the join decomposition $\{v\} \circ \mathrm{lk}(v)$. As $v$ is a cut vertex, $\mathrm{lk}(v)$ is disconnected and so doesn't split as a join.
Therefore,  Theorem \ref{thm:derahm} tells us that   a quasi-isometry between RAAGs induces quasi-isometries between the corresponding direct product factors.  By applying \cite{papasoglu2002quasi}, we thus find obstructions to RAAGs being quasi-isometric.

At the beginning of Section \ref{sec:freeproduct}, we demonstrated that the RAAGs with defining graphs in  Figure \ref{fig:raagsnotqi} can  be shown not to be quasi-isometric via such an argument.
We contrast this with Example 3.22 of \cite{huang2014quasi}, which describes two RAAGs that are commensurable, hence quasi-isometric. The cylinders in this example are of the form $\mathbb{Z}\times (F_2*F_2)$ and $\mathbb{Z}\times(F_3*F_3* \mathbb{Z})$, and these are quasi-isometric.
As remarked in the introduction, the RAAGs defined by graphs shown in Figure 1 of \cite{behrstock2017quasiflats} can also be shown not to be quasi-isometric by analysing the quasi-isometry types of cylinders.

We can do much better than this by observing that a quasi-isometry  doesn't just preserve the quasi-isometry type of cylinders but also their relative quasi-isometry type. The following proposition tells us when cylinders are relatively quasi-isometric.

\begin{prop}\label{prop:relqicyl}
Let $\Gamma$ and $\Lambda$ be connected subgraphs containing cut vertices $v\in V\Gamma$ and $w\in V\Lambda$. The cylindrical vertex groups $A(\mathrm{star}^\Gamma(v))$ and $A(\mathrm{star}^\Lambda(w))$ are relatively quasi-isometric if and only if they have the same quasi-isometry types  of peripheral factors (one-ended or otherwise), and the same  quasi-isometry types of one-ended non-peripheral factors.
\end{prop}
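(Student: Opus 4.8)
The plan is to peel off the Euclidean direction and reduce the statement to a relative quasi-isometry question about the free-product factor, then run the two directions through the move-machinery of Section~\ref{sec:freeproduct}.

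First I would set up the reduction. By Definition~\ref{defn:cyldecomp} the cylindrical vertex space splits as $\mathbb{E}^1\times X(\mathrm{lk}(v))$, where $A(\mathrm{lk}(v))=A(A_1)*\dots*A(A_n)*A(B_1)*\dots*A(B_m)$; thus $X(\mathrm{lk}(v))$ is the free product of graphs whose peripheral protographs are the $X(A_i)$ and whose non-peripheral protographs are the $X(B_j)$ (Remark~\ref{rem:freeproductqitreeofgraphs}). Since $v$ is a cut vertex, $\mathrm{lk}(v)$ is disconnected, so $A(\mathrm{lk}(v))$ is a nontrivial free product and carries no product splitting; hence $\mathbb{E}^1$ is the entire Euclidean de Rham factor. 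Given a relative quasi-isometry of cylinders, Theorem~\ref{thm:derahm} produces a quasi-isometry $f_2\colon X(\mathrm{lk}^\Gamma(v))\to X(\mathrm{lk}^\Lambda(w))$ through which the projection onto the $X(\mathrm{lk}(\cdot))$-factor commutes with $f$ up to bounded error. The elements of the peripheral structure are the families of $v$-lines $\mathbb{E}^1\times\{y\}$ with $y$ ranging over a single peripheral special subgraph (Remark~\ref{rem:peripheralstrucutreraags}), and these $v$-lines are exactly the fibers of that projection; so tracking them through the commuting diagram (as in the proof of Proposition~\ref{prop:prodrelqi}) shows the relative-quasi-isometry condition of Definition~\ref{defn:relqi} is equivalent to asking that $f_2$ send each peripheral special subgraph $X(A_i)$ to within bounded Hausdorff distance of a peripheral special subgraph of the target, bijectively. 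This reduces the proposition to a relative quasi-isometry classification of trees of graphs.

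For sufficiency, suppose the two sets of quasi-isometry types agree. I would build $f_2$ as a uniformly locally finite composition of moves (Proposition~\ref{prop:compmoves}) transforming one tree of graphs into the other. Pairing peripheral factors of equal quasi-isometry type and one-ended non-peripheral factors of equal quasi-isometry type, I apply type II moves at every isolated vertex (Lemma~\ref{lem:qilink}) to replace each protograph by a fixed representative of its quasi-isometry type, and type I moves to normalise the attaching pattern; both preserve the peripheral marking because they act by bijections of special subgraphs that respect protograph type. Finally, type III moves insert or delete the two-ended non-peripheral factors $A(B_j)\cong\mathbb{Z}$, whose number is recorded by neither invariant and whose removal, being non-peripheral, leaves the peripheral structure intact. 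Here Remark~\ref{rem:npfac} is essential: every non-peripheral factor is $\mathbb{Z}$ or one-ended, so the $\mathbb{Z}$-factors are the only flexible non-peripheral data. Then $\mathrm{id}_{\mathbb{E}^1}\times f_2$ is the required relative quasi-isometry of cylinders.

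For necessity, $f_2$ coarsely permutes the peripheral special subgraphs, so restricting it and its coarse inverse to matched peripheral subgraphs yields quasi-isometries between them, giving equality of the sets of peripheral factor types. For the one-ended non-peripheral factors I pass to the finest free-product decomposition and invoke Papasoglu--Whyte: one-ended special subgraphs are coarsely intrinsic and are preserved, up to quasi-isometry, by any quasi-isometry (Lemma~3.2 of~\cite{papasoglu2002quasi}). The hard part will be separating the peripheral and non-peripheral one-ended content, since the union of the peripheral subgraphs can be coarsely dense and one cannot simply pass to its complement. The fix I intend is the dichotomy that a one-ended piece of $A(\mathrm{lk}(v))$ is coarsely contained in a single peripheral subgraph precisely when it arises from decomposing a peripheral factor, and is a non-peripheral factor $A(B_j)$ otherwise (distinct special subgraphs have bounded coarse intersection); because $f_2$ coarsely permutes peripheral subgraphs it preserves this property, and hence preserves the set of quasi-isometry types of one-ended non-peripheral factors. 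Some additional care is needed in the degenerate cases where $A(\mathrm{lk}(v))$, or a factor of it, is virtually free or two-ended and the Papasoglu--Whyte classification is exceptional; but Remark~\ref{rem:npfac} confines these exceptions to the $\mathbb{Z}$ non-peripheral factors already handled by type III moves.
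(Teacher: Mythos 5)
Your proposal is correct and follows essentially the same route as the paper: reduce to the free-product factor via the cylindrical decomposition and Theorem \ref{thm:derahm}, build the quasi-isometry for the sufficiency direction out of type I/II/III moves using Lemma \ref{lem:qilink} and Proposition \ref{prop:compmoves}, and deduce necessity from the relative quasi-isometry condition together with Lemma 3.2 of \cite{papasoglu2002quasi}. The only divergence is presentational: the paper applies Papasoglu--Whyte directly to the block special subgraphs of $\Sigma_v$, whereas you pass to the Gru\v{s}ko refinement and then need your (correct) dichotomy argument, via bounded coarse intersection of distinct special subgraphs, to recover the same conclusion.
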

Before proving this, we need to modify the tree of spaces slightly so we are able to apply the machinery of Section \ref{sec:freeproduct}. The idea is as follows: suppose that $G_1$ and $G_2$ are finitely generated groups with finite generating sets $S_1$ and $S_2$. We can \emph{blow-up} the Cayley graph of  $G_1* G_2$ with respect to the generating set $S_1\sqcup S_2$ by adding edges and isolated vertices, thus obtaining a free product of the Cayley graphs of $G_1$ and $G_2$. See Remark \ref{rem:freeproductqitreeofgraphs}.

We now observe that a cylindrical vertex space  $X(\mathrm{star}^\Gamma(v))$ has a cylindrical decomposition $\mathbb{E}^1\times X(\mathrm{link}^\Gamma(v))$. 
As $v$ is a cut vertex, $A(\mathrm{link}^\Gamma(v))$ decomposes non-trivially as a free product $A({A_1})*\dots * A({A_n})* A({B_1}) * \dots * A({B_m})$ as shown above. We apply the above blowing up procedure so that the cylindrical vertex space is of the form $\mathbb{E}^1\times \Sigma_v$, where $\Sigma_v$ is a free product of graphs.

\begin{defn}
Let $G$ be a RAAG with JSJ tree of cylinders $T$ and tree of spaces $X$. Applying the above blowing-up procedure yields the \emph{blown-up tree of spaces} $\hat X$. It is clear that $X$ is equivariantly quasi-isometric to $\hat X$.
\end{defn}

For clarity, we recall the peripheral structure of the cylindrical vertex space $\mathbb{E}^1\times \Sigma_v$. Each incident edge $e$ of $T$ is identified with a coset $gA(A_i)$, and hence with a special subgraph $A_e$ in $\Sigma_v$. The associated edge space is then $\mathbb{E}^1\times A_e$. Elements of $\mathcal{S}_e$ are precisely subsets of the form $\mathbb{E}^1\times \{a\}$, where $a$ is a vertex of the special subgraph $A_e$. This follows from Remark \ref{rem:peripheralstrucutreraags}. Each special subgraph  of $\Sigma_v$ is either \emph{peripheral} if it is equal to $A_e$ for some  $e\in ET$, or \emph{non-peripheral} otherwise.

\begin{proof}[Proof of Proposition \ref{prop:relqicyl}]
 Suppose that two cylindrical vertex groups  $A(\mathrm{star}^\Gamma(v))$ and $A(\mathrm{star}^\Lambda(w))$  have the same quasi-isometry types of peripheral factors and the same quasi-isometry types of one-ended non-peripheral factors. We claim that their  vertex spaces in the blown-up tree of spaces are relatively quasi-isometric.  We do this by constructing a quasi-isometry $f:\Sigma_v\rightarrow \Sigma_w$ that bijectively sends peripheral special subgraphs to within uniform finite Hausdorff distance of peripheral special subgraphs. This implies the claim.  We proceed as follows.

 Let $v_0$ and $w_0$ be the depth one isolated vertices of $\Sigma_v$ and $\Sigma_w$ respectively. By Remark \ref{rem:npfac}, non-peripheral special subgraphs  are either lines  or are one-ended.  If necessary, we  apply a type III move at $v_0$ so that $\mathrm{lk}^+(v_0)$ and $\mathrm{lk}^+(w_0)$ consist of  the same quasi-isometry types of non-peripheral special subgraphs. 
 
We now apply type I moves at $v_0$ to obtain a bijection $\chi:\mathrm{lk}^+(v_0)\rightarrow\mathrm{lk}^+(w_0)$ that preserves quasi-isometry class and such that $\chi(A)$ is peripheral if and only if $A$ is.  We  apply Lemma \ref{lem:qilink} at $v_0$, where for each $A\in\mathrm{lk}^+(v_0)$, we choose a quasi-isometry $f_A:A\rightarrow \chi(A)$ that preserves the  base vertex; this can be done since each special subgraph is homogeneous. The depth one part of the  resulting tree of spaces  now agrees with  $\Sigma_w$.
 
 We proceed inductively, first applying the necessary type I and type III moves at each isolated vertex of depth $i$ and then  applying Lemma \ref{lem:qilink}. The tree of graphs we end up with is  $\Sigma_w$. As $\Sigma_v$ and $\Sigma_w$ each consist of finitely many isometry classes of special subgraphs, the quasi-isometry constants of all the $f_A$ can be assumed to be uniformly bounded.  This composition of moves is uniformly locally finite, so by Proposition \ref{prop:compmoves} we have the required quasi-isometry $\Sigma_v\rightarrow \Sigma_w$.
 
For the converse, we suppose there is a relative quasi-isometry $\phi:A(\mathrm{star}^\Gamma(v))\rightarrow A(\mathrm{star}^\Lambda(w))$. Theorem \ref{thm:derahm} then tells us that $\phi$ induces a quasi-isometry $f:\Sigma_v\rightarrow \Sigma_w$. As $\phi$  is a relative quasi-isometry, it must  bijectively map peripheral special subgraphs to within uniformly finite Hausdorff distance of peripheral special subgraphs. Lemma 3.2 of \cite{papasoglu2002quasi} also says that $f$ preserves one-ended special subgraphs up to uniformly finite Hausdorff distance. It follows that $\Sigma_v$ and $\Sigma_w$ must have the same quasi-isometry types of peripheral special subgraphs, and the same quasi-isometry types of one-ended non-peripheral special subgraphs.
\end{proof}

\section{Relative stretch factors}\label{sec:stretch}
So far, we have given a necessary condition for any two RAAGs to be quasi-isometric: their JSJ trees of cylinders, each of which is endowed with the na{\"\i}ve decoration, must be equivalent. We will   prove in Section \ref{sec:constructqi} that for a large class of RAAGs, the JSJ tree of cylinders is a complete quasi-isometry invariant.
This is done by finding some geometric decoration $\delta:T\rightarrow \mathcal{O}$ that encodes more data than the na{\"\i}ve decoration. We  want such a decoration $\delta$ to include relative stretch factors, which we define shortly. This mirrors Section 4 of \cite{cashenmartin2017quasi}, in which stretch factors are assigned to edges adjacent to \emph{quasi-isometrically rigid} vertices.

To motivate this, we give an example of two RAAGs that are not quasi-isometric but have JSJ trees that, when endowed with the na{\"\i}ve decoration, are equivalent.
\begin{exmp}\label{exmp:stretch}
Consider the RAAGs $A(\Gamma)$ and $A(\Lambda)$, where $\Gamma$ and $\Lambda$ are as shown in Figure \ref{fig:raagsnotqistretch}.  Let $P$ be the graph of the right-hand pentagon of $\Gamma$, and let $\Lambda'$ be the right-hand maximal biconnected subgraph of $\Lambda$. We observe that $\Lambda'$ is the graph obtained by doubling a pentagon along the star of a vertex.
In fact, $A(\Lambda')$ is isomorphic to an index 2 subgroup of the RAAG $A(P)$; see \cite[Example 1.4 and Section 11]{bestvina08raags}.  It is isomorphic to  the kernel of the map $\psi:A(P)\rightarrow\mathbb{Z}_2$ that sends the generator $v$ to 1 and all other generators to 0. In particular, $A(\Lambda')$ is quasi-isometric to $A(P)$. 

It is not hard to see that if we decorate the  JSJ trees of cylinders of $A(\Gamma)$ and $A(\Lambda)$ with the na{\"\i}ve decoration, then this decoration is stable under neighbour and vertex refinement. Thus the decorated trees are equivalent.

\begin{figure}[ht!]
    \begin{tikzpicture}[semithick,scale=1.5]

\draw (0+0, 0) -- (0+-0.588, 0.809) -- (0+-1.54,  0.500) -- (0+-1.54, -0.500) -- (0+-0.588, -0.809) -- (0+0,0);
      \filldraw[fill=black] (0+0,0) circle [radius=0.04];
      \filldraw[fill=black] (0+-0.588, 0.809) circle [radius=0.04];
      \filldraw[fill=black] (0+-1.54, 0.500) circle [radius=0.04];
      \filldraw[fill=black] (0+-1.54, -0.500) circle [radius=0.04];
      \filldraw[fill=black] (0+-0.588, -0.809) circle [radius=0.04];
      \draw (0+0, 0) -- (0+0.588, 0.809) -- (0+1.54,  0.500) -- (0+1.54, -0.500) -- (0+0.588, -0.809) -- (0+0,0);
      \filldraw[fill=black] (0+0.588, 0.809) circle [radius=0.04];
      \filldraw[fill=black] (0+1.54, 0.500) circle [radius=0.04];
      \filldraw[fill=black] (0+1.54, -0.500) circle [radius=0.04];
      \filldraw[fill=black] (0+0.588, -0.809) circle [radius=0.04];     
	\draw (-0.2,0) node {$v$};
      
     \draw (4+0, 0) -- (4+-0.588, 0.809) -- (4+-1.54,  0.500) -- (4+-1.54, -0.500) -- (4+-0.588, -0.809) -- (4+0,0);
      \filldraw[fill=black] (4+0,0) circle [radius=0.04];
      \filldraw[fill=black] (4+-0.588, 0.809) circle [radius=0.04];
      \filldraw[fill=black] (4+-1.54, 0.500) circle [radius=0.04];
      \filldraw[fill=black] (4+-1.54, -0.500) circle [radius=0.04];
      \filldraw[fill=black] (4+-0.588, -0.809) circle [radius=0.04];
      \draw (4+0, 0) -- (4+0.588, 0.809) -- (4+1.54,  0.500) -- (4+1.54, -0.500) -- (4+0.588, -0.809) -- (4+0,0);
      \filldraw[fill=black] (4+0.588, 0.809) circle [radius=0.04];
      \filldraw[fill=black] (4+1.54, 0.500) circle [radius=0.04];
      \filldraw[fill=black] (4+1.54, -0.500) circle [radius=0.04];
      \filldraw[fill=black] (4+0.588, -0.809) circle [radius=0.04];     
      \draw (4+0.588, 0.809) -- (4+1.54+0.5,  0.600) -- (4+1.54+0.5, -0.600) -- (4+0.588, -0.809);
      \filldraw[fill=black] (4+1.54+0.5,  0.600) circle [radius=0.04];
      \filldraw[fill=black] (4+1.54+0.5, -0.600) circle [radius=0.04];   
\draw (2,-1.5) -- (2,1);      	
	\draw (4-0.2,0) node {$v$};
      \draw (0,-1.25) node {$\Gamma$};
      \draw (4,-1.25) node {$\Lambda$};      
   \end{tikzpicture}
    \caption{The defining graphs of RAAGs that are not quasi-isometric, yet have equivalent JSJ trees of cylinders when equipped with the na\"ive decoration.}\label{fig:raagsnotqistretch}
    \end{figure}
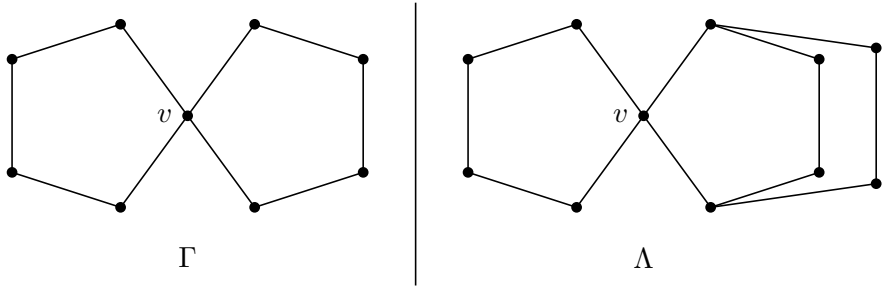

However, the two RAAGs are not quasi-isometric. This is because a quasi-isometry $A(P)\rightarrow A(\Lambda')$ necessarily shrinks distances by a factor of two along $v$-geodesics --- this will be shown in Lemma \ref{lem:stretcoarse}, and is witnessed by the fact that $A(\Lambda')$ is isomorphic to an index two subgroup of $A(P)$.   However, this shrinking cannot occur for a quasi-isometry $A(P)\rightarrow A(P)$. 

If  $A(\Gamma)$ and $A(\Lambda)$ are quasi-isometric, then it follows from Theorem \ref{thm:jsjtocqi} that there are suitable relative quasi-isometries $f_1:A(P)\rightarrow A(P)$ and $f_2:A(P)\rightarrow A(\Lambda')$ that agree along a $v$-geodesic. This cannot occur, since then $f_2$ would shrink distances on $l$ by a factor of two and $f_1$ would not. 

\end{exmp}

\subsection{Rigidity of standard geodesics}\label{sec:rigflexstretch}
We recall that if $X$ is a metric space and $A\subseteq X$ is a subspace, $[A]$ is the \emph{coarse equivalence class of $A$}, i.e. the set of all subspaces of $X$ that are at finite Hausdorff distance from $A$.

\begin{defn}
If $l\subseteq X(\Gamma)$ and $l'\subseteq X(\Lambda)$ are standard geodesics, then a \emph{based quasi-isometry} $f:(X(\Gamma),l)\rightarrow (X(\Lambda),l')$ is a quasi-isometry $f:X(\Gamma)\rightarrow X(\Lambda)$ such that $[f(l)]=[l']$.
Let $f:(X(\Gamma),l)\rightarrow (X(\Lambda),l')$ be a based quasi-isometry. We isometrically identify  $v(l)$ and $v(l')$ with copies of $\mathbb{Z}$. We define the \emph{stretch factor} $\mathrm{str}(f,l,C):=r$ if  $f|_{v(l)}$ is $C$-close to a map $v(l)\rightarrow v(l')$ of the form $n\mapsto \pm\lfloor r n\rfloor+b$ for some $r,b\in \mathbb{Q}$. If $\mathrm{str}(f,l,C)$ is defined for some sufficiently large $C$, we denote it simply by $\mathrm{str}(f,l)$.
\end{defn}

We are interested in what possible values  $\mathrm{str}(f,l)$ can take as we vary $f$. 
On the one hand, if $\Gamma$ and $\Lambda$ are complete graphs on $n$ vertices, then $f$ can be chosen such that $\mathrm{str}(f,l)$ takes any positive rational number, and may not even have a well-defined stretch factor.
In contrast, if a RAAG $A(\Gamma)$ has finite outer automorphism group, then $\mathrm{str}(f,l)$ is always well-defined and is independent of the choice of $f$, provided the coarse equivalence class $[f(l)]$ is fixed --- this will be shown in Lemma \ref{lem:stretcoarse}.  We thus say that the stretch factors are \emph{rigid}.

\begin{defn}
Let $A(\Gamma)$ be a RAAG and $l\subseteq X(\Gamma)$ be a standard geodesic.
We say that $l$ is an \emph{$\mathcal{R}$-geodesic} if for any RAAG $A(\Lambda)$, any standard geodesic $l'\subseteq X(\Lambda)$, and any pair of based quasi-isometries $f,g:(X(\Gamma),l)\rightarrow (X(\Lambda),l')$, then $\mathrm{str}(f,l)$ and $\mathrm{str}(g,l)$ are well-defined and equal.
\end{defn}
The following proposition is straightforward to prove.
\begin{prop}\label{prop:frgeodprop}
Let $A(\Gamma)$ be a RAAG and let $l\subseteq X(\Gamma)$ be a standard geodesic. Then:
\begin{enumerate}
\item for every $g\in A(\Gamma)$, the standard geodesic $gl$ is an $\mathcal{R}$-geodesic if and only if $l$ is;
\item if $f:(X(\Gamma),l)\rightarrow (X(\Lambda),l')$ is a  based quasi-isometry, then $l'$ is an $\mathcal{R}$-geodesic 
 if and only if $l$ is.
\end{enumerate}
\end{prop}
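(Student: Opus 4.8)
The plan is to reduce everything to one elementary fact about ``affine–floor maps'' $\mathbb{Z}\to\mathbb{Z}$ of the form $n\mapsto\pm\lfloor rn\rfloor+b$, together with the definitions. First I would record the facts used repeatedly: if $\sigma(n)=\pm\lfloor sn\rfloor+c$ and $\rho(m)=\pm\lfloor rm\rfloor+b$ with $r,s\in\mathbb{Q}_{>0}$, then $\rho\circ\sigma$ is uniformly close to an affine–floor map with stretch factor $rs$; and $\sigma$ admits a coarse inverse that is itself uniformly close to an affine–floor map with stretch factor $s^{-1}$. Both are immediate from $\lfloor x+k\rfloor=\lfloor x\rfloor+k$ for $k\in\mathbb{Z}$ and $|\lfloor x\rfloor-\lfloor y\rfloor|\le|x-y|+1$. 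Consequently stretch factors are multiplicative under composition, up to these coarse identifications: whenever the relevant restrictions are close to affine–floor maps, $\mathrm{str}$ of a composite is the product of the individual stretch factors.

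For (1), note that left translation $L_g\colon X(\Gamma)\to X(\Gamma)$, $x\mapsto gx$, is an isometry carrying $l$ to $gl$, and that under the isometric identifications $v(l)\cong\mathbb{Z}\cong v(gl)$ its restriction $L_g|_{v(l)}$ is an honest bijective isometry $n\mapsto\pm n+c$, hence has stretch factor $1$ and an exact inverse of the same form. Assume $l$ is an $\mathcal{R}$-geodesic and let $f,h\colon(X(\Gamma),gl)\to(X(\Lambda),l')$ be any two based quasi-isometries. Then $f\circ L_g$ and $h\circ L_g$ are based quasi-isometries $(X(\Gamma),l)\to(X(\Lambda),l')$, since $[(f\circ L_g)(l)]=[f(gl)]=[l']$. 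Because $l$ is an $\mathcal{R}$-geodesic, $\mathrm{str}(f\circ L_g,l)$ and $\mathrm{str}(h\circ L_g,l)$ are well defined and equal; precomposing with the exact inverse of $L_g|_{v(l)}$ then shows $\mathrm{str}(f,gl)=\mathrm{str}(f\circ L_g,l)$ and $\mathrm{str}(h,gl)=\mathrm{str}(h\circ L_g,l)$ are well defined and equal. As $f,h$ were arbitrary, $gl$ is an $\mathcal{R}$-geodesic. Applying this with $g^{-1}$ to $gl$ gives the converse, proving (1).

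For (2), I would first prove the asymmetric claim: if $l$ is an $\mathcal{R}$-geodesic and $f\colon(X(\Gamma),l)\to(X(\Lambda),l')$ is a based quasi-isometry, then $l'$ is an $\mathcal{R}$-geodesic. Taking the pair $(f,f)$ in the definition shows $s:=\mathrm{str}(f,l)$ is well defined; since $f|_{v(l)}$ is a quasi-isometric embedding with coarsely dense image in $v(l')$ (because $[f(l)]=[l']$ and $v(l)$ is a net in $l$), the slope $s$ is a positive rational. Now let $p,q\colon(X(\Lambda),l')\to(X(M),l'')$ be any two based quasi-isometries; then $p\circ f$ and $q\circ f$ are based quasi-isometries from $(X(\Gamma),l)$, so $\mathrm{str}(p\circ f,l)=\mathrm{str}(q\circ f,l)$ is well defined. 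Writing $\sigma$ for the affine–floor map approximating $f|_{v(l)}$ and $\tau$ for its coarse inverse (stretch factor $s^{-1}$), the identity $(p\circ f)|_{v(l)}\approx(p|_{v(l')})\circ\sigma$ together with precomposition by $\tau$ shows that $p|_{v(l')}$ is uniformly close to an affine–floor map of stretch factor $\mathrm{str}(p\circ f,l)/s$; in particular $\mathrm{str}(p,l')$ is well defined and equals $\mathrm{str}(p\circ f,l)/s$, and likewise for $q$, so $\mathrm{str}(p,l')=\mathrm{str}(q,l')$. Finally, a coarse inverse $\bar f$ of $f$ satisfies $[\bar f(l')]=[l]$ and so is a based quasi-isometry $(X(\Lambda),l')\to(X(\Gamma),l)$; applying the claim to $\bar f$ yields the reverse implication, completing (2).

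The main obstacle is the final step of the asymmetric claim: deducing that $\mathrm{str}(p,l')$ is \emph{well defined}, not merely that it has the expected value, from well-definedness of $\mathrm{str}(p\circ f,l)$. This is precisely where positivity of $s=\mathrm{str}(f,l)$ is needed, so that the coarse inverse $\tau$ of $f|_{v(l)}$ is again an affine–floor map of finite slope $s^{-1}$ and the division by $s$ is legitimate; the rationality of $\mathrm{str}(p\circ f,l)/s$ and the uniform control of the additive constants are the routine bookkeeping I would relegate to the affine–floor lemma stated at the outset.
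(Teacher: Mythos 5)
Your proof is correct, and it fills in exactly the routine verification the paper omits: the paper states this proposition without proof, declaring it ``straightforward,'' and the intended argument is precisely yours — stretch factors are multiplicative under composition with affine--floor maps and their coarse inverses, so one transports the defining property of an $\mathcal{R}$-geodesic through the isometry $L_g$ (for part (1)) and through $f$ and its coarse inverse (for part (2)). Your attention to the two non-obvious points — that $s=\mathrm{str}(f,l)$ is a nonzero rational, so division by it is legitimate, and that well-definedness (not just the value) of $\mathrm{str}(p,l')$ follows from that of $\mathrm{str}(p\circ f,l)$ — is exactly where the care is needed.
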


The following lemma, coupled with combination theorems  (Propositions   \ref{prop:derahmsystem}, \ref{prop:systemfreeprod} and Proposition \ref{prop:frsysjsj})  furnishes us with many  RAAGs containing $\mathcal{R}$-geodesics. 
\begin{lem}\label{lem:stretcoarse}
Let $A(\Gamma)$ be a RAAG of type II with trivial centre. Then every standard geodesic in $X(\Gamma)$ is an $\mathcal{R}$-geodesic.
More precisely, suppose that $f:X(\Gamma) \rightarrow X(\Lambda)$ is a $(K,A)$-quasi-isometry such that $d_\mathrm{Haus}(f(l),l')\leq A$ for some standard geodesics $l$ and $l'$. Then there exists a $C=C(K,A,\Gamma,\Lambda)$ such that $\mathrm{str}(f,l,C)=\mathrm{str}(f,l)$ is well-defined.

Moreover, let $f_*:\mathcal{R}(\Gamma)\rightarrow \mathcal{R}(\Lambda)$ be the simplicial isomorphism as in Proposition \ref{prop:inducedmapextcmplx}. 
Then  $\mathrm{str}(f,l)$ is the \emph{stretch factor of $f_*$ at $\Delta(l)$}, as defined in Lemma 5.4 of \cite{huang2015quasi}. 
\end{lem}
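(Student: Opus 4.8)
The plan is to reduce the statement for $\mathcal{R}$-geodesics in a type II RAAG with trivial centre to the rigidity of the induced map $f_*$ on the extension complex, which is provided by Proposition \ref{prop:inducedmapextcmplx} together with the quasi-isometric rigidity results of \cite{huang2015quasi}. First I would invoke Proposition \ref{prop:inducedmapextcmplx}: since $A(\Gamma)$ and $A(\Lambda)$ are both type II (note that if $A(\Gamma)$ is type II with trivial centre, any RAAG quasi-isometric to it must also be type II, which is where these hypotheses are used), a quasi-isometry $f:X(\Gamma)\to X(\Lambda)$ with $d_\mathrm{Haus}(f(l),l')\leq A$ induces a simplicial isomorphism $f_*:\mathcal{R}(\Gamma)\to\mathcal{R}(\Lambda)$ with $f_*(\Delta(l))=\Delta(l')$. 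The key input is Lemma 5.4 of \cite{huang2015quasi}, which assigns to $f_*$ a well-defined stretch factor at each vertex $\Delta(l)$ of the extension complex, depending only on $f_*$ and not on further choices.

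The heart of the argument is then to identify the combinatorially-defined stretch factor of $f_*$ at $\Delta(l)$ with the metrically-defined $\mathrm{str}(f,l)$. Concretely, the stretch factor in \cite{huang2015quasi} measures how $f_*$ distorts the parallel classes of standard geodesics sharing the support of $l$, i.e. how the grid of parallel $v$-geodesics crossing $l$ is stretched; I would argue that this grid-spacing data is exactly captured by the restriction $f|_{v(l)}$ up to bounded error. The precise step is to show that $f|_{v(l)}$ is $C$-close to an affine map $n\mapsto\pm\lfloor rn\rfloor + b$ with $r$ equal to Huang's stretch factor. This uses the fact that in a type II RAAG with trivial centre, the parallel set of $l$ splits as a product $l\times X(\mathrm{lk}(v))$ (as in Lemma 3.4 of \cite{huang2014quasi}, used in the proof of Lemma \ref{lem:cylinder}), so the standard $2$-flats incident to $l$ give a rigid coordinate system along $l$ that $f$ must respect up to the quasi-isometry constants; the constant $C=C(K,A,\Gamma,\Lambda)$ arises because there are only finitely many orbits of such configurations and the distortion is uniformly controlled.

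Once this identification is in place, well-definedness and independence of $f$ are immediate: since $\mathrm{str}(f,l)$ equals the stretch factor of $f_*$ at $\Delta(l)$, and the latter is an invariant of the induced simplicial isomorphism by \cite{huang2015quasi}, any two based quasi-isometries $f,g:(X(\Gamma),l)\to(X(\Lambda),l')$ with $[f(l)]=[g(l)]=[l']$ yield the same value. This simultaneously proves that $l$ is an $\mathcal{R}$-geodesic, establishing the first assertion of the lemma, and gives the ``moreover'' identification verbatim.

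The main obstacle I anticipate is the precise matching of the two definitions of stretch factor — bridging Huang's simplicial/combinatorial definition on $\mathcal{R}(\Gamma)$ with the metric definition along $v(l)$ in terms of the affine map $n\mapsto\pm\lfloor rn\rfloor+b$. This requires unwinding exactly what Lemma 5.4 of \cite{huang2015quasi} measures and checking that the induced map on the $1$-dimensional lattice $v(l)\cong\mathbb{Z}$ has the same multiplicative rate. I expect one must track how $f$ moves the finitely many parallel standard geodesics in the support class of $l$ and relate their coarse displacement to the simplicial data; controlling this uniformly (to extract the single constant $C$) is the delicate point, though it follows from the finiteness of orbits and the uniform control furnished by Proposition \ref{prop:inducedmapextcmplx}. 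The rigidity hypothesis (type II, trivial centre) is essential precisely to guarantee $f_*$ exists and is canonical; without it the parallel classes could be permuted with arbitrary stretching, as the complete-graph example preceding the lemma illustrates.
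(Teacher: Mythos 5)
Your high-level skeleton is the same as the paper's: invoke Proposition \ref{prop:inducedmapextcmplx} to obtain $f_*$ with $f_*(\Delta(l))=\Delta(l')$, identify $\mathrm{str}(f,l)$ with the stretch factor of $f_*$ at $\Delta(l)$ from Lemma 5.4 of \cite{huang2015quasi}, and deduce that $l$ is an $\mathcal{R}$-geodesic from the invariance of the latter. The gap is in the step you yourself flag as the heart of the matter --- showing $f|_{v(l)}$ is uniformly close to a map $n\mapsto\pm\lfloor rn\rfloor+b$ with $r$ equal to Huang's stretch factor --- because the mechanism you propose for it is wrong. You attribute the rigidity to the splitting $P_l\cong l\times X(\mathrm{lk}(v))$ of the parallel set, with the standard $2$-flats incident to $l$ forming a ``rigid coordinate system along $l$.'' That structure is precisely the flexible part: a shear $(t,y)\mapsto(t+\gamma(y),y)$ of $l\times X(\mathrm{lk}(v))$, with $\gamma$ coarse Lipschitz, preserves setwise every standard flat of $P_l$ containing the $l$-direction and every $v$-geodesic parallel to $l$, yet displaces points of $v(l)$ arbitrarily. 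Likewise, the link of $\Delta(l)$ in $\mathcal{R}(\Gamma)$ records only parallelism classes of crossing geodesics, never positions along $l$, so no bounded-error control of $f|_{v(l)}$ can be extracted from the simplicial data near $\Delta(l)$. (This is the same flexibility that makes every standard geodesic in $\mathbb{Z}^n$ an $\mathcal{F}$-geodesic.)

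What actually pins down $f|_{v(l)}$ is the structure transverse to the parallel set, namely the $v$-branches, and these exist only because the centre is trivial, i.e. $\mathrm{Star}(\overline v)\neq\Gamma$ --- so the trivial-centre hypothesis is the source of rigidity, not merely a condition ensuring $f_*$ is canonical. The paper's argument runs as follows: each component $L_x$ of $X(\Gamma)\backslash P_l$ projects under the CAT(0) projection to the single point $x\in v(l)$, its height (Remark \ref{rem:projcompcomp}); by part (3) of Lemma \ref{lem:vbranchparallelset} (Huang's Lemma 3.26), $f$ carries $\partial L_x$ uniformly Hausdorff-close to $\partial L'_x$, where $L'_x$ is the component of $X(\Lambda)\backslash P_{l'}$ corresponding to $f_*(B_x)$. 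Since $x\in\partial L_x$, the point $f(x)$ lies in $N_D(\partial L'_x)\cap N_R(l')$, and by Lemma 2.10 of \cite{huang2017quasiflat} this set has uniformly bounded diameter about the height $h(x)$ of $f_*(B_x)$. Hence $f|_{v(l)}$ is uniformly close to Huang's height map $h$, which is itself uniformly close to $n\mapsto\pm\lfloor rn\rfloor+b$; this is exactly the identification your proposal asserts but does not prove, and it cannot be recovered from the $2$-flat incidence structure you invoke. A smaller point: your parenthetical claim that any RAAG quasi-isometric to a type II RAAG with trivial centre is again type II is indeed needed for Proposition \ref{prop:inducedmapextcmplx} to apply to $X(\Lambda)$, but it requires citation to \cite{huang2015quasi} rather than assertion.
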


We first sketch  how the stretch factor   is defined in  \cite{huang2015quasi}.  This material will not be used elsewhere in this paper. Recall that $\mathcal{R}(\Gamma)$ is the extension complex associated to $A(\Gamma)$ and that $\Delta$ is the map that takes a  standard flat in $X(\Gamma)$ to the corresponding simplex of $\mathcal{R}(\Gamma)$.

Let $A(\Gamma)$ be a RAAG of type II with trivial centre and let $l\subseteq X(\Gamma)$ be a standard geodesic with $v:=\Delta(l)\in \mathcal{R}(\Gamma)$. Suppose that $\overline{v}\in V\Gamma$ is the label of $l$. A \emph{$v$-branch} is the induced subcomplex spanned by vertices in a component of $\mathcal{R}(\Gamma)\backslash \mathrm{Star}(v)$. Note that a $v$-branch always exists, since we have assumed that $\langle \overline{v}\rangle$ is not central in $A(\Gamma)$.
Lemma 6.2 of \cite{huang2014quasi} implies that the CAT(0) projection $X(\Gamma)\rightarrow l$ induces a well defined map $\pi_l$ from $v$-branches to vertices of $l$. Given a vertex $v$ of $l$, we say that a $v$-branch $B$ has \emph{height} $x$ if $\pi_l(B)=x$. 

Given a component $C$ of $\Gamma\backslash \mathrm{Star}(\overline v)$, we define $\partial C$ to be the induced subgraph spanned by vertices $\{x\in V\Gamma \backslash C\mid \textrm{there exists $y\in C$ such that $d(x,y)=1$}\}$. We define a \emph{$v$-peripheral subcomplex of $X(\Gamma)$} to be a standard subcomplex $K\subseteq X(\Gamma)$ such that $\Gamma_K=\partial C$ for some component  $C$ of $\Gamma\backslash \mathrm{Star}(\overline v)$.

Given a $v$-branch $B$, we define $\partial B$ to be the induced subcomplex of $\mathcal{R}(\Gamma)$ spanned by vertices $\{x\in \mathcal{R}(\Gamma)\backslash B\mid \textrm{there exists a $u\in B$ such that $d(u,x)=1$ }\}$. We say that $\partial B$ is a \emph{$v$-peripheral subcomplex} of $\mathcal{R}(\Gamma)$. We define  $\overline{B}$ to be the induced subcomplex of $\mathcal{R}(\Gamma)$ spanned by vertices in $B$ and $\partial B$. We remark that $\overline{B}$ is not necessarily equal to the topological closure of $B$.

Let $L$ be a component of $X(\Gamma)\backslash P_l$, where $P_l$ is the parallel set of $l$. We  define $\partial L$ to be the induced subcomplex of $X(\Gamma)$ spanned by the vertices  $$\{x\in X(\Gamma)\backslash L\mid \textrm{there exists a $y\in L$ such}  \textrm{ that $d(y,x)=1$}\}.$$ We define  $\overline{L}$ to be the induced subcomplex of $X(\Gamma)$ spanned by vertices in $L$ and $\partial L$. As above, we remark that $\overline{L}$ is not necessarily equal to the topological closure of $L$.

\begin{lem}[see Lemma 3.14 and Lemma 3.26 of \cite{huang2015quasi}]\label{lem:vbranchparallelset}
Let $X(\Gamma)$ be a RAAG of type II. Suppose that $l$ is a standard geodesic in $X(\Gamma)$ and $v=\Delta(l)$. Let $\overline{v}\in V\Gamma$ be the label of  $l$.
\begin{enumerate}
\item \label{vbranchparallelsetcorrespond} There is a 1-1 correspondence between $v$-branches and pairs $(C,K)$, where $C$ is a component of $\Gamma\backslash \mathrm{Star}(\overline{v})$ and $K$ is a $v$-peripheral subcomplex of $X(\Gamma)$ with $\Gamma_K=\partial C$.
\item There is a 1-1 correspondence between $v$-branches and components of $X(\Gamma)\backslash P_l$ defined as follows: for every $v$-branch $B$ there is a component $L$ of $X(\Gamma)\backslash P_l$ such that $\Delta(\overline{L})=\overline{B}$ and $\Delta(\partial L)=\partial B$. Moreover, $\partial L$ is a $v$-peripheral subcomplex and if $B$ corresponds to the pair $(C,K)$ as in \ref{vbranchparallelsetcorrespond}, then $\partial L=K$.
\end{enumerate}
Suppose $q:(X(\Gamma),\mathcal{P}_{\partial\Gamma})\rightarrow (X(\Lambda),\mathcal{P}_{\partial\Lambda})$ is a $(K,A)$-relative quasi-isometry and $q_*$ is a map as in Proposition \ref{prop:inducedmapextcmplx}. Let $l'$ be a standard geodesic such that $\Delta(l')=q_*(v)$.
\begin{enumerate}
\item[(3)]  There is a $D=D(K,A,\Gamma,\Lambda)$ such that for every component $L$ of $X(\Gamma)\backslash P_l$ there is a component $L'$ of $X(\Lambda)\backslash P_{l'}$ such that $d_\mathrm{Haus}(q(L),L')\leq D$, where $\Delta(\overline{L'})=q_*(\Delta(\overline{L}))$. Moreover $d_{\mathrm{Haus}}( q(\partial L),\partial L')\leq D$.
\end{enumerate}
\end{lem}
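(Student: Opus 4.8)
The plan is to prove Lemma \ref{lem:vbranchparallelset} by establishing each of the three claims in turn, leaning heavily on the correspondence between the geometry of $X(\Gamma)$ and the combinatorics of the extension complex $\mathcal{R}(\Gamma)$, as developed in Huang's work \cite{huang2014quasi}, \cite{huang2015quasi}. The three statements split naturally: parts (1) and (2) are purely intrinsic structural correspondences about a single RAAG of type II, while part (3) concerns the behaviour of these correspondences under a relative quasi-isometry. I would prove (1) and (2) first, and then deduce (3) using the induced simplicial isomorphism $q_*$ of Proposition \ref{prop:inducedmapextcmplx}.

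For part (1), the key is that a component $C$ of $\Gamma\backslash \mathrm{Star}(\overline{v})$ determines, by the type II hypothesis, a unique way that $X(\Gamma)$ falls apart relative to the standard geodesic $l$. I would recall the cited Lemma 3.14 of \cite{huang2015quasi}, which identifies $v$-branches with the combinatorial data $(C, K)$. The point is that $\partial C$, being the boundary of a component of $\Gamma$ minus a star, records precisely which standard subcomplex $K$ with $\Gamma_K = \partial C$ is ``visible'' from the branch; the type II condition (no $\mathrm{lk}(v)\cap \mathrm{lk}(w)$ separating $\Gamma$) is exactly what rules out the pathological overlaps that would break injectivity of this correspondence. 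For part (2), I would pass from the extension complex to the CAT(0) cube complex: a $v$-branch $B$ in $\mathcal{R}(\Gamma)$ should correspond to a component $L$ of $X(\Gamma)\backslash P_l$, where $P_l$ is the parallel set of $l$. The bridge between the two is the map $\Delta$ sending standard flats to simplices; one verifies that $\Delta(\overline{L}) = \overline{B}$ and $\Delta(\partial L) = \partial B$ by checking that the standard flats contained in $\overline{L}$ are exactly those whose classes span $\overline{B}$. Combining with (1) then identifies $\partial L$ with the peripheral subcomplex $K$. Here I would invoke Lemma 3.26 of \cite{huang2015quasi} directly, as the bulk of this correspondence is established there.

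For part (3), the strategy is to transport the correspondence of (2) across the relative quasi-isometry $q$. Since $q$ is a $(K,A)$-relative quasi-isometry preserving the peripheral structures $\mathcal{P}_{\partial\Gamma}$ and $\mathcal{P}_{\partial\Lambda}$, it coarsely preserves parallel sets of standard geodesics, so $q(P_l)$ is within bounded Hausdorff distance of $P_{l'}$ where $\Delta(l') = q_*(v)$. The induced simplicial isomorphism $q_*$ sends $v$-branches to $q_*(v)$-branches, and via the correspondence of (2) this should pick out, for each component $L$ of $X(\Gamma)\backslash P_l$, a component $L'$ of $X(\Lambda)\backslash P_{l'}$ with $\Delta(\overline{L'}) = q_*(\Delta(\overline{L}))$. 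The uniform bound $D = D(K,A,\Gamma,\Lambda)$ comes from the fact that $q$ coarsely respects the decomposition into components: since there are only finitely many $A(\Gamma)$-orbits of components and finitely many orbits of peripheral subcomplexes, the additive constants arising when translating combinatorial adjacency in $q_*$ back to genuine Hausdorff estimates in $X(\Lambda)$ can be taken uniform.

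The main obstacle I anticipate is part (3), specifically extracting the \emph{uniform} constant $D$ and verifying that $d_\mathrm{Haus}(q(L), L')\leq D$ rather than merely that $q(L)$ is coarsely contained in a neighbourhood of $L'$. The simplicial isomorphism $q_*$ only records the combinatorial shadow of $q$; the hard part is to show that coarse containment at the level of simplices lifts to a genuine two-sided Hausdorff bound between the actual components, and that the estimate does not degrade as one moves deep into a component. I would handle this by using that components of $X(\Gamma)\backslash P_l$ are themselves quasiconvex standard subcomplexes, so their coarse geometry is controlled by their defining subgraphs; combined with the fact that $q$ is a genuine quasi-isometry (not merely coarsely defined on orbits), the neighbourhood containment in both directions follows, with $D$ depending only on $K$, $A$ and the finitely many subgraph types occurring in $\Gamma$ and $\Lambda$. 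The estimate $d_\mathrm{Haus}(q(\partial L),\partial L')\leq D$ is then a consequence, since $\partial L$ and $\partial L'$ are the $v$- and $q_*(v)$-peripheral subcomplexes already matched up by $q_*$.
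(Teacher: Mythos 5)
Your proposal is correct and takes essentially the same route as the paper: the paper supplies no proof of this lemma at all, importing it wholesale via the bracketed attribution to Lemmas 3.14 and 3.26 of \cite{huang2015quasi}, and your argument likewise defers the substance of parts (1) and (2) to exactly those lemmas, with part (3) being the quasi-isometry statement also contained in (and cited from) Huang's work together with Proposition \ref{prop:inducedmapextcmplx}. The additional sketching you provide for the uniformity of $D$ in part (3) goes beyond what the paper records, but it is consistent with the cited source rather than a different method.
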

\begin{rem}\label{rem:projcompcomp}
If $L$ is a component of $X(\Gamma)\backslash P_l$ corresponding to a $v$-branch $B$, then the image of $\overline{L}$ under the projection $\pi_l:X(\Gamma)\rightarrow l$ is a single point and is equal to the height of $B$. This follows from  Lemma 6.2 of \cite{huang2014quasi} and  Lemma 3.26 of \cite{huang2015quasi}.
\end{rem}

Let $v$, $l$ and $l'$ be as in Lemma \ref{lem:vbranchparallelset}. We identify $v(l)$ and $v(l')$ with $\mathbb{Z}$. Lemma 5.4 of \cite{huang2015quasi} defines a quasi-isometry $h:\mathbb{Z}\rightarrow \mathbb{Z}$ as follows. 
Let $x\in v(l)$ and let $B$ be a $v$ branch of height $x$. Then $h(x)$ is defined to be the height of the $q_*(v)$-branch $q_*(B)$. (In fact, Lemma 5.4 of \cite{huang2015quasi} actually defines a collections of maps $h_i$, all of which are equal up to bounded distance.) 

It is shown in the proof of Lemma 5.4 of \cite{huang2015quasi} that $h$ is indeed a quasi-isometry and  that up to bounded distance, $h$ is independent of the choice of $B$. It is also shown that $h$ is bounded distance $E=E(K,A,\Gamma,\Lambda)$ from a map of the form $a\mapsto \pm \lfloor r a \rfloor +b$, where $b\in \mathbb{Z}$ and  $r$ is  a positive rational number that  is defined to be the \emph{stretch factor} of $q_*$ at $v$.

\begin{proof}[Proof of Lemma \ref{lem:stretcoarse}]

We fix a component $C$ of $\Gamma\backslash \mathrm{Star}(\overline{v})$. For each $x\in v(l)$, let $K_x$ denote the $v$-peripheral subcomplex such that $x\in K_x$ and $\Gamma_{K_x}=\partial C$. Using Lemma \ref{lem:vbranchparallelset}, let $B_x$ be the $v$-branch corresponding to $(C,K_x)$ and let $L_x$ denote the corresponding component of $X(\Gamma)\backslash P_l$. Let $L'_x$ be the component of $X(\Lambda)\backslash P_{l'}$ corresponding to the $q_*(v)$-branch $q_*(B_x)$. As above, we set $h(x)$ to be the height of $q_*(B_x)$. We  show that $h$, thought of as a map $v(l)\rightarrow v(l')$, is close to $q|_{v(l)}$. 

We note that $x\in K_x=\partial L_x$ so we see that $q(x)\in N_D(\partial L'_x)$, where $D$ is a constant as in Lemma \ref{lem:vbranchparallelset}. By the choice of $l'$, there is a constant $R$ such that $d_\mathrm{Haus}(q(l),l')\leq R$. Hence $q(x)\in N_D(\partial L'_x)\cap N_R(l')$. 
By Remark \ref{rem:projcompcomp} and Lemma 2.10 of \cite{huang2017quasiflat}, it follows that $d(h(x),q(x))\leq R'$, where $R'$ depends only on $R,D$ and the dimension of $X(\Lambda)$.
\end{proof}
\begin{rem}
It should be noted that Lemma \ref{lem:stretcoarse} is not necessarily true if we drop the assumption that $[f(l)]=[l']$ for some standard geodesic $l'$. Even if such an $l'$ does exist, we still need to ensure that $q_*(\Delta(l))=\Delta(l')$. This is  the case if $q_*$ is obtained via Proposition \ref{prop:inducedmapextcmplx}, but not necessarily true  for an arbitrary $q_*$ obtained using Theorem 2.5 of \cite{huang2015quasi}. 
\end{rem}

\subsection{Embellished decorations}
\label{sec:embellished}
We now use  $\mathcal{R}$-geodesics to refine the na{\"\i}ve decoration assigned to the JSJ tree of cylinder of a RAAG. We call this new decoration the \emph{embellished decoration}. Crucially, the RAAGs described in Example \ref{exmp:stretch} do not have equivalent JSJ trees of cylinders with respect to the embellished decoration. 

The ideas in this subsection are heavily influenced by Section 4 of \cite{cashenmartin2017quasi}.  However, the notion of quasi-isometric rigidity assumed here is weaker than that assumed in \cite{cashenmartin2017quasi}. For instance, in a RAAG with finite outer automorphism group, not every quasi-isometry $X(\Gamma)\rightarrow X(\Gamma)$ is close to an isometry. See Figure 1 in \cite{huang2014quasi} and the accompanying discussion for an example of such a quasi-isometry. 

For the remainder of this section, we let $\mathcal{Q}:=\{[[X(\Gamma),l]]\}$ denote the set of based quasi-isometry classes of RAAGs. For each $Q\in \mathcal{Q}$, we fix some  $(X(\Gamma_Q),l_Q)$  such that $[[X(\Gamma_Q),l_Q]]=Q$. The space $(X(\Gamma_Q),l_Q)$ will play a similar role to that of the rigid model spaces in Section 4 of \cite{cashenmartin2017quasi}.
\begin{defn}\label{defn:stretch}
Suppose that  $l\subseteq X(\Gamma')$ is an $\mathcal{R}$-geodesic. If $Q=[[(X(\Gamma'),l)]]$, we pick some based quasi-isometry $f:(X(\Gamma'),l)\rightarrow (X(\Gamma_Q),l_Q)$. We then define $\mathrm{str}(\Gamma', l):=\mathrm{str}(f,l)$. 
\end{defn}
It follows from the definition of an $\mathcal{R}$-geodesic that $\mathrm{str}(\Gamma', l)$  is well-defined and independent of the choice of $f$. However, $\mathrm{str}(\Gamma', l)$ does depend on the choice of $(X(\Gamma_Q),l_Q)$. If we use a different space $(X(\Gamma_Q),l_Q)$ to define stretch factors, then for every $(X(\Gamma'),l)\in Q$, the stretch factor $\mathrm{str}(\Gamma', l)$ will be scaled by some $\lambda_Q$.

The following definition makes use of the graph of groups $C(\Gamma)$ and subgraphs $C_0,C_1$ of $\Gamma$ as in  Definition \ref{def:gogtoc}.
\begin{defn}\label{defn:r edge and cyclinder}
Let $A(\Gamma)$ be a one-ended RAAG. Every edge in $C(\Gamma)$ is of the form $e=A(\mathrm{star}^{\Gamma'}(v))$, where  $\Gamma'\in C_1$ and $v\in \Gamma'\cap C_0$.  Let $l$ be the standard geodesic in $X(\Gamma')$ corresponding to the coset $\langle v \rangle$. 
We say that $e$ is an  \emph{$\mathcal{R}$-edge} if $l$ is an $\mathcal{R}$-geodesic. 
For every such  $\mathcal{R}$-edge $e$, we define $\mathrm{str}(e):=\mathrm{str}(\Gamma', l)$.
We then say that a cylinder is an \emph{$\mathcal{R}$-cylinder} if it has at least one incident $\mathcal{R}$-edge.
\end{defn}

These notions can be lifted to the JSJ tree of cylinders $T(\Gamma)$ in the obvious way, i.e. an edge $f=gA(\mathrm{star}^{\Gamma'}(v))$ of $T(\Gamma)$ is an $\mathcal{R}$-edge precisely when $e=A(\mathrm{star}^{\Gamma'}(v))$ is an $\mathcal{R}$-edge of $C(\Gamma)$, in which case $\mathrm{str}(f)=\mathrm{str}(e)$.

\begin{exmp}
Consider the RAAG $A(\Gamma)$ defined in Figure \ref{fig:gogexamples}. There are two  cylindrical vertex groups in $C(\Gamma)$. Since the RAAG whose defining graph is a pentagon is of type II with trivial centre, the left cylinder  of $C(\Gamma)$, as shown in Figure \ref{fig:gogexamples}, is an $\mathcal{R}$-cylinder. It is straightforward to see that no standard geodesic in a free abelian group is an $\mathcal{R}$-geodesic. Thus the right  cylinder in Figure \ref{fig:gogexamples} is not an $\mathcal{R}$-cylinder.
\end{exmp}

\begin{lem}\label{lem:redgeqi}
If $\phi:A(\Gamma)\rightarrow A(\Lambda)$ is a quasi-isometry, then the induced tree isomorphism between JSJ trees of cylinders   preserves $\mathcal{R}$-edges and hence $\mathcal{R}$-cylinders.
\end{lem}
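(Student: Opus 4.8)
The plan is to show that the property of being an $\mathcal{R}$-edge is invariant under the tree isomorphism $\phi_*$ induced by a quasi-isometry, using the structure theory already established. Recall from Theorem \ref{thm:jsjtocqi} that a quasi-isometry $\phi:A(\Gamma)\rightarrow A(\Lambda)$ induces a tree isomorphism $\phi_*:T(\Gamma)\rightarrow T(\Lambda)$ between the JSJ trees of cylinders, and that $\phi$ restricted to each vertex space is close to a relative quasi-isometry $(X_v,\mathcal{P}_v)\rightarrow (X_{\phi_*(v)},\mathcal{P}_{\phi_*(v)})$. Since $\phi_*$ preserves vertex type, it sends cylinders to cylinders and rigid vertices to rigid vertices, and hence sends edges to edges in a way compatible with the bipartite structure. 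So the task reduces to showing: if $f=gA(\mathrm{star}^{\Gamma'}(v))$ is an $\mathcal{R}$-edge of $T(\Gamma)$, then $\phi_*(f)$ is an $\mathcal{R}$-edge of $T(\Lambda)$.

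First I would unpack what it means for $\phi_*(f)$ to be an $\mathcal{R}$-edge via Proposition \ref{prop:frgeodprop}. An edge $f$ is an $\mathcal{R}$-edge precisely when the standard geodesic $l$ labelled by $v$ inside the rigid vertex $X(\Gamma')$ is an $\mathcal{R}$-geodesic. By part (2) of Proposition \ref{prop:frgeodprop}, the property of being an $\mathcal{R}$-geodesic is preserved under any based quasi-isometry: if $f':(X(\Gamma),l)\rightarrow (X(\Lambda),l')$ is a based quasi-isometry then $l'$ is an $\mathcal{R}$-geodesic if and only if $l$ is. The key geometric input is that the standard geodesic $l$ determining whether $f$ is an $\mathcal{R}$-edge is (up to coarse equivalence) precisely the two-ended factor of the edge space $X_f$ arising from the cylindrical decomposition $X_f=\mathbb{E}^1\times Y_f$ of Definition \ref{defn:cyldecomp}. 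Thus I would argue that the relative quasi-isometry $\phi|_{X_w}$ on the rigid vertex $w=\iota f$ carries $l$ to a standard geodesic $l'$ in the image rigid vertex with $[\phi(l)]=[l']$, and this produces the required based quasi-isometry between the pointed spaces.

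The main step is to identify the image standard geodesic $l'$ and verify that $\phi$ restricts to an honest based quasi-isometry $(X(\Gamma'),l)\rightarrow (X(\Lambda'),l')$ where $\Gamma'$ and $\Lambda'$ are the relevant rigid vertex graphs. Here I would use Theorem \ref{thm:jsjtocqi}: $\phi$ restricted to the rigid vertex space $X_w$ is close to a relative quasi-isometry onto $X_{\phi_*(w)}$. Since $l$ is (the two-ended factor of) the edge space of $f$, and relative quasi-isometries coarsely preserve the peripheral structure, the image $\phi(l)$ is coarsely equivalent to the corresponding two-ended factor $l'$ of the edge space $X_{\phi_*(f)}$; concretely $l'$ is the standard geodesic labelled by the cut vertex $w'$ with $\phi_*(f)=g'A(\mathrm{star}^{\Lambda'}(w'))$. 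This $l'$ is a standard geodesic in the rigid vertex $X(\Lambda')$, and $[\phi(l)]=[l']$, so $\phi$ induces a based quasi-isometry $(X(\Gamma'),l)\rightarrow (X(\Lambda'),l')$ after restricting and adjusting by bounded Hausdorff distance.

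Once this based quasi-isometry is in hand, Proposition \ref{prop:frgeodprop}(2) immediately gives that $l'$ is an $\mathcal{R}$-geodesic since $l$ is, which by definition means $\phi_*(f)$ is an $\mathcal{R}$-edge. Applying the same argument to a coarse inverse of $\phi$ shows the converse containment, so $\phi_*$ gives a bijection between $\mathcal{R}$-edges of $T(\Gamma)$ and $T(\Lambda)$. The statement about $\mathcal{R}$-cylinders then follows formally: a cylinder is an $\mathcal{R}$-cylinder iff it has an incident $\mathcal{R}$-edge, and since $\phi_*$ is a tree isomorphism preserving incidence and sending $\mathcal{R}$-edges to $\mathcal{R}$-edges, it sends $\mathcal{R}$-cylinders to $\mathcal{R}$-cylinders. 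I expect the main obstacle to be the careful verification that the restriction of $\phi$ genuinely yields a \emph{based} quasi-isometry between the pointed rigid vertex spaces — that is, that the coarse equivalence class of $\phi(l)$ is exactly that of a single standard geodesic $l'$ in the target rigid vertex rather than merely some two-ended set — which requires invoking the identification of edge spaces with standard geodesics (Lemma \ref{lem:cylinder}) together with the fact from Theorem \ref{thm:jsjtocqi} that $\phi$ restricted to vertex spaces is close to a relative quasi-isometry respecting the peripheral structure.
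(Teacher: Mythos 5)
Your proposal is correct and follows exactly the paper's own route: the paper's proof is simply the observation that the lemma follows from Theorem \ref{thm:jsjtocqi} (the induced tree isomorphism with restrictions close to relative quasi-isometries) together with Proposition \ref{prop:frgeodprop} (invariance of $\mathcal{R}$-geodesics under based quasi-isometries). Your write-up just fills in the details of how the relative quasi-isometry on a rigid vertex space, via preservation of the peripheral structure, produces the based quasi-isometry to which Proposition \ref{prop:frgeodprop}(2) is applied.
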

\begin{proof}
This follows from Theorem \ref{thm:jsjtocqi} and Proposition \ref{prop:frgeodprop}.
\end{proof}

Suppose a cylindrical vertex $v$ in the JSJ tree of cylinders is incident to two $\mathcal{R}$-edges $e$ and $f$. Then we define the \emph{relative stretch factor} $$\mathrm{relstr}(e,f):=\frac{\mathrm{str}(f)}{\mathrm{str(e)}}.$$

\begin{lem}\label{lem:relstrqi}
Relative stretch factors are preserved under quasi-isometries.
\end{lem}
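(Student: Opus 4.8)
The plan is to reduce everything to a single stretch factor of $\phi$ along the cut-vertex geodesic of the cylinder, and then to exploit the fact that the model-space ambiguity hidden in the individual stretch factors cancels in the ratio. To set up, let $\phi\colon A(\Gamma)\to A(\Lambda)$ be a quasi-isometry and let $\phi_*$ be the induced tree isomorphism of JSJ trees of cylinders furnished by Theorem \ref{thm:jsjtocqi}. Let $v$ be a cylindrical vertex incident to two $\mathcal{R}$-edges $e$ and $f$, and write $v'=\phi_*(v)$, $e'=\phi_*(e)$, $f'=\phi_*(f)$. By Lemma \ref{lem:redgeqi} the edges $e',f'$ are again $\mathcal{R}$-edges, and since $\phi_*$ is a tree isomorphism they are incident to the common cylinder $v'$; thus $\mathrm{relstr}(e',f')$ is defined and it remains to prove $\mathrm{relstr}(e,f)=\mathrm{relstr}(e',f')$.

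Next I would identify the relevant geodesics. Let $w_e,w_f$ be the rigid vertices adjacent to $e,f$, with $v$-geodesics $l_e\subseteq X_{w_e}$ and $l_f\subseteq X_{w_f}$ as in Definition \ref{defn:r edge and cyclinder}. By the cylindrical decomposition (Definition \ref{defn:cyldecomp}) and Lemma \ref{lem:cylinder}, both $l_e$ and $l_f$ are parallel to the $\mathbb{E}^1$-factor of $X_v$, so that $[l_e]=[l_f]$. Using that $\phi|_{X_v}$ is close to a relative quasi-isometry of $(X_v,\mathcal{P}_v)$ together with the description of the peripheral structure in Remark \ref{rem:peripheralstrucutreraags}, the map $\phi$ carries $l_e$ to within finite Hausdorff distance of the $v'$-geodesic $l_{e'}$ used to define $\mathrm{str}(e')$, and likewise for $f$. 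Since $l_e$ is an $\mathcal{R}$-geodesic, the restriction of $\phi$ furnishes a based quasi-isometry $(X_{w_e},l_e)\to(X_{w_e'},l_{e'})$ with well-defined stretch $s_e$, and similarly $s_f$. Because $[l_e]=[l_f]$ and $\phi$ is a single coarsely Lipschitz map, $\phi|_{l_e}$ and $\phi|_{l_f}$ agree up to bounded error and have coarsely equal image, whence $s_e=s_f=:s$.

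Then I would relate $s$ to the edge stretch factors through the fixed model spaces of Definition \ref{defn:stretch}. As $\phi|_{X_{w_e}}$ is a based quasi-isometry between $(X_{w_e},l_e)$ and $(X_{w_e'},l_{e'})$, these two based spaces lie in the same class $Q\in\mathcal{Q}$, so the single model $(X(\Gamma_Q),l_Q)$ computes both $\mathrm{str}(e)$ and $\mathrm{str}(e')$. Composing $\phi|_{X_{w_e}}$ with the chosen based quasi-isometry $g_{e'}\colon(X_{w_e'},l_{e'})\to(X(\Gamma_Q),l_Q)$ yields a based quasi-isometry $(X_{w_e},l_e)\to(X(\Gamma_Q),l_Q)$. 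Stretch factors multiply under composition (a short computation with the floor maps $n\mapsto\pm\lfloor r n\rfloor+b$), so this composite has stretch $\mathrm{str}(e')\cdot s$; but the $\mathcal{R}$-geodesic property (Proposition \ref{prop:frgeodprop}) forces every based quasi-isometry $(X_{w_e},l_e)\to(X(\Gamma_Q),l_Q)$ to have stretch $\mathrm{str}(e)$. Hence $\mathrm{str}(e)=s\cdot\mathrm{str}(e')$, and the identical argument gives $\mathrm{str}(f)=s\cdot\mathrm{str}(f')$. Dividing,
$$\mathrm{relstr}(e,f)=\frac{\mathrm{str}(f)}{\mathrm{str}(e)}=\frac{s\,\mathrm{str}(f')}{s\,\mathrm{str}(e')}=\frac{\mathrm{str}(f')}{\mathrm{str}(e')}=\mathrm{relstr}(e',f').$$

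The main obstacle is the bookkeeping of the second and third paragraphs: one must verify that $e$ and $f$ genuinely see one and the same geodesic inside the cylinder, so that $s_e=s_f$, and that the arbitrary scaling $\lambda_Q$ latent in each individual $\mathrm{str}(e)$ cancels in the ratio. This cancellation is exactly why relative, rather than absolute, stretch factors are quasi-isometry invariants. The only genuinely computational ingredient, multiplicativity of stretch factors under composition, is routine.
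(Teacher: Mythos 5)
Your proof is correct and follows essentially the same route as the paper's: both arguments hinge on the parallelism of the two cut-vertex geodesics inside the cylinder (so $\phi$ has a single stretch $s$ on both) and on multiplicativity of stretch factors when factoring through the model space $(X(\Gamma_Q),l_Q)$, yielding $\mathrm{str}(e)=s\cdot\mathrm{str}(\phi_*e)$ and $\mathrm{str}(f)=s\cdot\mathrm{str}(\phi_*f)$ before cancelling $s$. The only cosmetic difference is that the paper factors $\phi_{\tau e}$ as $f_2\circ f_1$ through the model, whereas you post-compose $\phi|_{X_{w_e}}$ with a map into the model and invoke the $\mathcal{R}$-geodesic well-definedness; these are equivalent rearrangements of the same computation.
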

\begin{proof}
Let $\phi:A(\Gamma)\rightarrow A(\Lambda)$ be a quasi-isometry. Suppose $e$ and $f$ are $\mathcal{R}$-edges in the JSJ tree of cylinders of a RAAG $A(\Gamma)\in \mathcal{C}$, and that $\iota e=\iota f$ is a cylinder. We want to show that $\mathrm{relstr}(e,f)=\mathrm{relstr}(\phi_*e,\phi_*f)$.  Let $l$ be a standard geodesic in $X_{\tau e}$  corresponding to $e$, and let $l'$ be a standard geodesic in $X_{\phi_*(\tau e)}$ such that $[\phi(l)]=[l']$. 

 We know from Theorem \ref{thm:jsjtocqi} that $\phi$ induces a relative quasi-isometry $\phi_{\tau_e}:(X_{\tau e},\mathcal{P}_{\tau e})\rightarrow (X_{\phi_*(\tau e)},\mathcal{P}_{\phi_*(\tau e)})$. We let $Q=[[X_{\tau(e)},l]]=[[X_{\phi_*(\tau e)},l']]$. We define maps  $$(X_{\tau(e)},l)\xrightarrow{f_1} (X(\Gamma_Q),l_Q) \xrightarrow{f_2} (X_{\phi_*(\tau e)},l')$$ such that the composition $f_2\circ f_1$ is close to $\phi_{\tau_e}$. 
 Using Definitions \ref{defn:stretch} and \ref{defn:r edge and cyclinder}, we see that $\mathrm{str}(\phi,l)=\mathrm{str}(\phi_{\tau_e},l)=\mathrm{str}(f_1,l)\mathrm{str}(f_2,l_Q)=\frac{\mathrm{str}(e)}{\mathrm{str}(\phi_*(e))}$.

We apply the same argument with a standard geodesic $k$ in $X_{\tau f}$ representing the edge $f$, and a standard geodesic $k'$ in $X_{\phi_*(\tau f)}$ such that $[\phi(k)]=[k']$. We remark that $k$ and $l$ are parallel, since they both correspond to subsets of the form $\mathbb{E}^1\times \{x\}$ in the  vertex space of $\iota e=\iota f$ (see Definition \ref{defn:cyldecomp} and the following discussion).
 Thus $$\frac{\mathrm{str}(e)}{\mathrm{str}(\phi_*(e))}=\mathrm{str}(\phi,l)=\mathrm{str}(\phi,k)=\frac{\mathrm{str}(f)}{\mathrm{str}(\phi_*(f))},$$ which  implies that $\mathrm{relstr}(e,f)=\mathrm{relstr}(\phi_*e,\phi_*f)$.
\end{proof}

In light of Lemma \ref{lem:relstrqi}, it is natural to incorporate relative stretch factors into decorations. To do this, we first \emph{normalise} the relative stretch factors as follows. For each cylinder $v$, we observe that  $\{\mathrm{str}(e)\mid \iota e=v \textrm{ and } e \textrm{ is an } \mathcal{R}\textrm{-edge}\}$ is finite. We therefore define $\mathrm{relstr}(f):=\mathrm{relstr}(e,f)$ for  an $\mathcal{R}$-edge $e$, where $e$ is chosen such that  $\mathrm{str}(e)$ is minimal amongst all edges $e$ that are incident to the same cylinder as $f$.

\begin{defn}\label{defn:initialdecoration}
Let $A(\Gamma)$ be a one-ended RAAG  with JSJ tree of cylinders $T$. We define a decoration on $T$ as follows: we decorate each vertex with its type (cylinder or rigid) and relative quasi-isometry class, and decorate each $\mathcal{R}$-edge $f$ with the normalised relative stretch factor $\mathrm{relstr}(f)$.  We call this the \emph{embellished decoration}.
\end{defn}
\begin{cor}\label{cor:embeldecgeom}
The embellished decoration of the JSJ tree of cylinders of a RAAG is geometric.
\end{cor}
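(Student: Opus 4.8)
The plan is to verify directly from the definition of a \emph{geometric} decoration that the embellished decoration of Definition \ref{defn:initialdecoration} is invariant under the tree isomorphism $\phi_*$ induced by an arbitrary self-quasi-isometry $\phi:A(\Gamma)\rightarrow A(\Gamma)$, where $\phi_*$ is the map supplied by Theorem \ref{thm:jsjtocqi}. Since the embellished decoration assigns data to both vertices and edges, I would treat the two separately and combine the invariance results already established.

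First, the vertex data. Each vertex $v$ is decorated with its type (cylinder or rigid) together with its relative quasi-isometry class $[[X_v,\mathcal{P}_v]]$. This is precisely the na\"{\i}ve decoration, which is already geometric by Theorem \ref{thm:jsjtocqi}: that theorem guarantees that $\phi_*$ preserves vertex type and that $\phi$ restricts to a relative quasi-isometry $(X_v,\mathcal{P}_v)\rightarrow (X_{\phi_*(v)},\mathcal{P}_{\phi_*(v)})$. Hence $\phi_*$ preserves the vertex part of the ornament.

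Next, the edge data. Only $\mathcal{R}$-edges carry a decoration, namely the normalised relative stretch factor $\mathrm{relstr}(f)$, so I would first invoke Lemma \ref{lem:redgeqi} to conclude that $\phi_*$ sends $\mathcal{R}$-edges to $\mathcal{R}$-edges (and $\mathcal{R}$-cylinders to $\mathcal{R}$-cylinders). It then remains to show $\mathrm{relstr}(\phi_*(f))=\mathrm{relstr}(f)$ for every $\mathcal{R}$-edge $f$. The main input here is Lemma \ref{lem:relstrqi}, which gives $\mathrm{relstr}(e,f)=\mathrm{relstr}(\phi_* e,\phi_* f)$ for any two $\mathcal{R}$-edges $e,f$ incident to a common cylinder.

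The one point requiring care, and the only real obstacle I anticipate, is the normalisation: by definition $\mathrm{relstr}(f)=\mathrm{relstr}(e_0,f)$, where $e_0$ is an $\mathcal{R}$-edge incident to the cylinder $v=\iota f$ chosen so that $\mathrm{str}(e_0)$ is minimal. I would argue that $\phi_*$ carries a minimiser at $v$ to a minimiser at $\phi_*(v)$: for any $\mathcal{R}$-edge $e$ incident to $v$, the inequality $\mathrm{str}(e)\geq \mathrm{str}(e_0)$ is equivalent to $\mathrm{relstr}(e_0,e)\geq 1$, and Lemma \ref{lem:relstrqi} shows this ratio is preserved, so $\mathrm{str}(\phi_* e)\geq \mathrm{str}(\phi_* e_0)$. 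Since $\phi_*$ restricts to a bijection between the edges incident to $v$ and those incident to $\phi_*(v)$, the edge $\phi_*(e_0)$ is indeed a minimiser at $\phi_*(v)$, and its minimal value is well-defined even when several edges attain it. Therefore $\mathrm{relstr}(\phi_* f)=\mathrm{relstr}(\phi_* e_0,\phi_* f)=\mathrm{relstr}(e_0,f)=\mathrm{relstr}(f)$, which together with the vertex argument shows $\delta\circ\phi_*=\delta$ and completes the proof.
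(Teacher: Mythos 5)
Your proof is correct and takes exactly the same route as the paper: the paper's proof of Corollary \ref{cor:embeldecgeom} is simply the citation of Theorem \ref{thm:jsjtocqi}, Lemma \ref{lem:redgeqi} and Lemma \ref{lem:relstrqi}, which are precisely the three inputs you use for the vertex data, the $\mathcal{R}$-edge preservation, and the stretch factors respectively. Your verification that $\phi_*$ carries a minimiser of $\mathrm{str}$ at a cylinder to a minimiser at its image is exactly the detail about the normalisation that the paper leaves implicit when invoking Lemma \ref{lem:relstrqi}.
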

\begin{proof}
This follows from Theorem \ref{thm:jsjtocqi}, Lemma \ref{lem:redgeqi} and Lemma \ref{lem:relstrqi}.
\end{proof}
\begin{rem}
We have defined the embellished decoration using all $\mathcal{R}$-geodesics, which may be hard to calculate in general. However, one can still define a similar decoration using a subcollection of  $\mathcal{R}$-geodesics, without knowing that these are all the $\mathcal{R}$-geodesics. This decoration is a priori finer than the na{\"\i}ve decoration and coarser than than the embellished decoration, and can thus be used to distinguish quasi-isometry classes of RAAGs.
\end{rem}

\subsection{Dovetail RAAGs}\label{sec:dovetail}
We now begin the proof of Theorem \ref{thm:mainthm}, which says for a large class of RAAGs, the embellished decoration is a complete quasi-isometry invariant. To do this we define the class of   \emph{dovetail} RAAGs. Recall that in carpentry, a dovetail join consists of two pieces of wood that fit together nicely via interlocking joints. Analogously, if we have a tree of dovetail RAAGs that are equivalent with respect to the embellished decoration, we can choose a tree of quasi-isometries that fit together nicely, i.e. the hypotheses of Proposition \ref{prop:qitree} are satisfied. 

\begin{defn}
If $l\subseteq X(\Gamma)$ is a standard geodesic and is not an $\mathcal{R}$-geodesic, we say that $l$ is an \emph{$\mathcal{F}$-geodesic}.
\end{defn}

To motivate the definition of a dovetail RAAG,  we  examine the behaviour of $\mathcal{F}$-geodesics.
It follows easily from the definitions that if  $l\subseteq X(\Gamma)$ is an $\mathcal{F}$-geodesic, there is a based quasi-isometry $f:(X(\Gamma),l)\rightarrow (X(\Gamma),l)$ such that $\mathrm{str}(f,l)\neq 1$, i.e. $\mathrm{str}(f,l)$ is either undefined or is defined and not equal to 1. By taking powers of $f$ and its coarse inverse, we see that rigidity fails spectacularly: there are infinitely many based self quasi-isometries of $(X(\Gamma),l)$ which induce (coarsely) distinct maps $v(l)\rightarrow v(l)$. Thus every standard geodesic is either an $\mathcal{R}$-geodesic or has a very large amount of flexibility. (The $\mathcal{F}$ stands for flexibility.)

We require $\mathcal{F}$-geodesic of a dovetail RAAG  to have an even larger amount of flexibilty. Namely, if two dovetail RAAGs are quasi-isometric, there is a nice quasi-isometry between them which can take any set of prescribed stretch factors on parallelism  classes of  $\mathcal{F}$-geodesics. Moreover, we require that if the original quasi-isometry preserves some peripheral structure and decoration in the sense described below, then the new quasi-isometry does too. This is necessary to construct a tree of quasi-isometries that piece together to satisfy the hypotheses of Proposition \ref{prop:qitree}. While the definition is quite technical, we later show that it is satisfied by a very large class of RAAGs. 
%
%

Let $A(\Gamma)$ be a RAAG and suppose $B\subseteq  V\Gamma$. We recall from Definition \ref{defn:pstrucraag} that $B$ induces the peripheral structure $\mathcal{P}_B$ of a RAAG. A \emph{decoration} on the peripheral structure $\mathcal{P}_B$ is an $A(\Gamma)$-invariant map $\delta:\mathcal{P}_B\rightarrow \mathcal{O}$, where $\mathcal{O}$ is an (arbitrary)  set of \emph{ornaments}. Suppose we are given RAAGs  $A(\Gamma)$ and $A(\Lambda)$, peripheral structures $\mathcal{P}_{B_\Gamma}$ and $\mathcal{P}_{B_\Lambda}$, decorations $\delta_\Gamma$ and $\delta_\Lambda$ with the same set ornaments, and standard geodesics $l\subseteq X(\Gamma)$ and $l'\subseteq X(\Lambda)$. Then a \emph{decoration-preserving based relative quasi-isometry} $$(X(\Gamma),\mathcal{P}_{B_\Gamma},\delta_\Gamma,l)\rightarrow (X(\Lambda),\mathcal{P}_{B_\Lambda},\delta_\Lambda,l')$$   consists of a relative quasi-isometry $(f,f_*):(X(\Gamma),\mathcal{P}_{B_\Gamma})\rightarrow (X(\Lambda),\mathcal{P}_{B_\Lambda})$ such that:
\begin{enumerate}
\item $f_*$ is decoration preserving, i.e. $\delta_\Gamma=\delta_\Lambda \circ f_*$;
\item  $[f(l)]=[l']$.
\end{enumerate}
When unambiguous, we simply call such an $f$ a \emph{quasi-isometry} $$f:(X(\Gamma),\mathcal{P}_{B_\Gamma},\delta_\Gamma,l)\rightarrow (X(\Lambda),\mathcal{P}_{B_\Lambda},\delta_\Lambda,l').$$

We can now give the definition of a dovetail RAAG.
\begin{defn} \label{defn:frsys}
A function $\lambda:T\rightarrow \mathbb{Q}_{>0}$ is said to be \emph{bi-bounded} if there exists an $M \geq 1$ such that  $\frac{1}{M}\leq \lambda(t)\leq M$ for all $t\in T$. We say that $A(\Gamma)$ is a \emph{dovetail RAAG} if the following holds.

Suppose that $f:(X(\Gamma),\mathcal{P}_{B_\Gamma},\delta_\Gamma,l)\rightarrow (X(\Lambda),\mathcal{P}_{B_\Lambda},\delta_\Lambda,l')$ is a quasi-isometry as above. Furthermore, suppose that  $\lambda:\mathcal{P}_{B_\Gamma}\rightarrow \mathbb{Q}_{>0}$ is a bi-bounded function that is constant on $\delta^{-1}(o)$ for every ornament $o\in \mathcal{O}$. Then there exist a constant $C$ and a quasi-isometry $$(g,g_*):(X(\Gamma),\mathcal{P}_{B_\Gamma},\delta,l)\rightarrow( X(\Lambda),\mathcal{P}_{B_\Lambda},\delta',l')$$ such that:
\begin{enumerate}
\item \label{def:fsys} $\mathrm{str}(g,l'',C)=\lambda(\mathcal{S}_{l''})$ for every $\mathcal{F}$-geodesic $l''$ labelled by an element of $B_\Gamma$.
 \item \label{def:rsys} $\mathrm{str}(g,l'',C)=\mathrm{str}(g,l'')$ for every $\mathcal{R}$-geodesic $l''$ labelled by an element of $B_\Gamma$.
 
\end{enumerate}
\end{defn}
\begin{rem}
The value of $\lambda$ on parallelism classes defined by $\mathcal{R}$-geodesics is redundant: it is not used in either the above definition or any subsequent argument. Thus one should really think of $\lambda$ as a function defined on parellism classes of $\mathcal{F}$-geodesics. However, we live with the redundant values for ease of notation.
\end{rem}

\begin{ques}\label{ques:dovetail}
Is every RAAG a dovetail RAAG?
\end{ques}
 We provide positive evidence for Question \ref{ques:dovetail} by showing that free groups, free abelian groups, tree RAAGs, RAAGs with finite outer automorphism group, and more generally RAAGs of type II, are all dovetail RAAGs. We also show that the class of dovetail RAAGs is closed under taking free products, direct products, and amalgamating along special cyclic subgroups. An affirmative answer to this question would strengthen the results of this paper.

 \begin{prop}\label{prop:cliquesystem}
Free abelian groups are dovetail RAAGs, all of whose standard geodesics are $\mathcal{F}$-geodesics.
 \end{prop}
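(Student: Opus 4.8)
The plan is to verify the two defining properties of a dovetail RAAG directly for $\mathbb{Z}^n$, after first observing that every standard geodesic in $X(\mathbb{Z}^n)=\mathbb{E}^n$ is an $\mathcal{F}$-geodesic. This last point follows because $\mathbb{E}^n$ admits self-quasi-isometries with arbitrary stretch along coordinate directions: taking the product of $n$ homotheties $x_i\mapsto c_i x_i$ with $c_j\neq 1$ in the direction of a given standard geodesic $l$ produces a based quasi-isometry $(\mathbb{E}^n,l)\to(\mathbb{E}^n,l)$ with $\mathrm{str}(f,l)=c_j\neq 1$. Hence no standard geodesic is an $\mathcal{R}$-geodesic, so condition \eqref{def:rsys} in Definition \ref{defn:frsys} is vacuous and only condition \eqref{def:fsys} must be checked.

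First I would set up the geometry explicitly. Write $\Gamma$ for the complete graph on $n$ vertices, so $A(\Gamma)=\mathbb{Z}^n$ and $X(\Gamma)=\mathbb{E}^n$, with the standard generators giving an orthonormal coordinate frame $e_1,\dots,e_n$. Every standard geodesic is parallel to exactly one coordinate axis, so the parallelism classes $\mathcal{S}_l$ are indexed by the coordinate directions (equivalently by $V\Gamma$), and $B_\Gamma\subseteq V\Gamma$ picks out a subset of these directions. Given the hypotheses, we are handed a quasi-isometry $f:(\mathbb{E}^n,\mathcal{P}_{B_\Gamma},\delta_\Gamma,l)\to(\mathbb{E}^m,\mathcal{P}_{B_\Lambda},\delta_\Lambda,l')$; by Theorem \ref{thm:derahm} applied to the de Rham decompositions (which here are just $\mathbb{E}^n$ and $\mathbb{E}^m$ with $n=m$), and by Proposition \ref{prop:prodrelqi}, we may assume $f$ splits as a product $f_1\times\dots\times f_n$ of quasi-isometries of the coordinate lines, each respecting the correspondence of parallelism classes recorded by $f_*$. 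We are also handed a bi-bounded $\lambda:\mathcal{P}_{B_\Gamma}\to\mathbb{Q}_{>0}$, constant on each $\delta_\Gamma$-class.

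The construction of $g$ is then transparent. For each coordinate direction $i$ corresponding to a vertex in $B_\Gamma$, let $\lambda_i:=\lambda(\mathcal{S}_{l_i})$ be the prescribed stretch, and replace the $i$-th factor map $f_i:\mathbb{R}\to\mathbb{R}$ by the affine map $t\mapsto \lambda_i\, \mathrm{sgn}(f_i)\, t + b_i$ tracking the same coarse direction and basepoint as $f_i$; for directions not in $B_\Gamma$ leave $f_i$ unchanged. Define $g$ to be the product of these new factor maps. Since each replacement alters a factor only by a bi-Lipschitz rescaling (using bi-boundedness of $\lambda$ to keep the constants uniform), $g$ is again a quasi-isometry of $\mathbb{E}^n$, and because each parallelism class $\mathcal{S}_{l''}$ is sent to the same coarse class as under $f$, we have $g_*=f_*$; in particular $g_*$ is decoration preserving and $[g(l)]=[l']$. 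By construction $g$ stretches every $\mathcal{F}$-geodesic in direction $i$ by exactly $\lambda_i$, which is \eqref{def:fsys}. The one point requiring care is well-definedness: $\lambda$ is assumed constant on $\delta_\Gamma^{-1}(o)$, and since $\delta_\Gamma$ is $A(\Gamma)$-invariant and all geodesics in a fixed coordinate direction are parallel, this guarantees the chosen stretch is consistent across each parallelism class, so $g$ is genuinely well-defined as a product.

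The main obstacle is not any hard estimate but bookkeeping: one must confirm that prescribing stretch factors independently on the $\mathcal{F}$-directions is compatible with the map $f_*$ on peripheral structures and with the decoration-preservation requirement. This reduces to the observation that rescaling a coordinate factor by a positive constant neither merges nor separates parallelism classes and hence does not change the induced bijection on $\mathcal{P}_{B_\Gamma}$; the decoration constraint is exactly what ensures $\lambda$ is well-defined on classes, so no conflict can arise. With that verified, all conditions of Definition \ref{defn:frsys} hold, completing the proof.
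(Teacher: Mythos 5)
Your overall strategy is the paper's: every standard geodesic in $\mathbb{E}^n$ fails to be an $\mathcal{R}$-geodesic because products of homotheties give based self-quasi-isometries with stretch $\neq 1$, and the dovetail property is witnessed by a product of homotheties (composed with a coordinate correspondence) realising the prescribed $\lambda_i$. Your treatment of the $\mathcal{F}$-geodesic claim and the final construction of $g$ are both fine.

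However, one intermediate step as written is wrong: you claim that ``by Theorem \ref{thm:derahm} \dots and by Proposition \ref{prop:prodrelqi}, we may assume $f$ splits as a product $f_1\times\dots\times f_n$ of quasi-isometries of the coordinate lines.'' Neither result gives this. The de Rham decomposition of a free abelian RAAG is a \emph{single} factor: for a complete graph the maximal clique factor is all of $\Gamma$, so $X(\Gamma)=\mathbb{E}^n$ with $k=1$, and Theorem \ref{thm:derahm} yields only $n=m$, while Proposition \ref{prop:prodrelqi} splits maps along de Rham factors, not along individual coordinate lines inside the Euclidean factor. Indeed the asserted splitting is false for general quasi-isometries of $\mathbb{E}^n$ (consider a rotation by $45$ degrees in $\mathbb{E}^2$; it is even a relative quasi-isometry for $B_\Gamma=\emptyset$). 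The gap is inessential, though, because Definition \ref{defn:frsys} does not require $g$ to be close to $f$ or to split the way $f$ does: it only requires $g$ to be a decoration-preserving based relative quasi-isometry with the prescribed stretches. So you should discard the factorisation of $f$ entirely and instead read off from $f_*$ a decoration-preserving bijection between the coordinate directions in $B_\Gamma$ and $B_\Lambda$ sending the direction of $l$ to that of $l'$, extend it arbitrarily to a bijection of all $n=m$ directions, and define $g$ as the induced coordinate permutation composed with homotheties of ratio $\lambda_i$ in the $B_\Gamma$-directions (signs and additive constants are immaterial, since stretch factors are defined up to sign and translation). This recovers exactly the paper's argument; the rest of your write-up, including the well-definedness point about $\lambda$ being constant on parallelism classes, then goes through unchanged.
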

 \begin{proof}
By defining a quasi-isometry of $\mathbb{Z}^n$  as a product of $n$ homotheties, we can define arbitrary stretch factors on standard geodesics. Moreover, by permuting coordinates, we can assume any peripheral structure and decoration is preserved. Thus $\mathbb{Z}^n$ is a dovetail RAAG.
 \end{proof}
We now show that the class of dovetail RAAGs is closed under taking direct and free products.

 \begin{prop}\label{prop:derahmsystem}
 Suppose $A(\Gamma)$ has a de Rahm decomposition $A(\Gamma_1)\times \dots \times A(\Gamma_k)$ (see Section \ref{sec:raags}). Then $A(\Gamma)$ is a dovetail RAAG if and only if each $A(\Gamma_i)$ is a dovetail RAAG. 
 \end{prop}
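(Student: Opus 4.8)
The plan is to reduce everything to the behaviour of a quasi-isometry on a single de Rahm factor, exploiting that both the product structure and the $\mathcal{R}/\mathcal{F}$-dichotomy are detected factor-by-factor. Write $X(\Gamma)=X(\Gamma_1)\times\dots\times X(\Gamma_k)$ with $A(\Gamma_1)=\mathbb{Z}^n$ the clique factor. Since $\mathbb{Z}^n$ is a dovetail RAAG by Proposition \ref{prop:cliquesystem}, the factor $i=1$ never obstructs either implication, so it suffices to treat the irreducible factors $\Gamma_{i}$ with $i\geq 2$ (which do not split as joins); by Theorem \ref{thm:derahm} any RAAG quasi-isometric to such an irreducible factor is again irreducible, a fact I will use to keep de Rahm decompositions aligned.

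First I would establish the crucial transfer lemma: if $l\subseteq X(\Gamma)$ is a standard geodesic, then (by the proof of Proposition \ref{prop:prodrelqi}) $l$ is parallel to a standard geodesic $l_i$ inside exactly one factor $X(\Gamma_i)$, and for any based quasi-isometry $f\colon(X(\Gamma),l)\to(X(\Lambda),l')$ with induced factor quasi-isometries $f_i$ from Theorem \ref{thm:derahm}, one has $\mathrm{str}(f,l)=\mathrm{str}(f_i,l_i)$ up to enlarging the closeness constant. This holds because the commuting projection diagram of Theorem \ref{thm:derahm} forces $f|_{v(l)}$ to agree, coordinate by coordinate, with $f_i|_{v(l_i)}$ in the $i$-th factor and to be bounded in the others. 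Consequently $l$ is an $\mathcal{R}$-geodesic of $X(\Gamma)$ if and only if $l_i$ is an $\mathcal{R}$-geodesic of $X(\Gamma_i)$: for one direction one compares two based quasi-isometries and their factor maps, and for the converse one extends a flexible self-quasi-isometry of $(X(\Gamma_i),l_i)$ by the identity on the remaining factors (cf. Proposition \ref{prop:frgeodprop}).

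For the implication ``each $A(\Gamma_i)$ dovetail $\Rightarrow A(\Gamma)$ dovetail'', given input data $f\colon(X(\Gamma),\mathcal{P}_{B_\Gamma},\delta_\Gamma,l)\to(X(\Lambda),\mathcal{P}_{B_\Lambda},\delta_\Lambda,l')$ and a bi-bounded $\lambda$, I would use Theorem \ref{thm:derahm} together with the correspondence $\mathcal{P}_{B_\Gamma}\leftrightarrow\bigsqcup_i\mathcal{P}_{B_\Gamma\cap V\Gamma_i}$ to split $f$ into decoration-preserving relative factor quasi-isometries $f_i$, restrict $\lambda$ and $\delta_\Gamma$ to each factor, and apply the dovetail hypothesis of $A(\Gamma_i)$ to obtain $g_i$ realising the prescribed factor stretch factors. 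The basepoint is genuinely needed only in the single factor $X(\Gamma_{i_0})$ carrying (a parallel copy of) $l$, where the based condition is inherited from $[f(l)]=[l']$; on factors meeting $B_\Gamma$ nontrivially one picks a $B$-geodesic basepoint, whose image under $f_i$ is coarsely standard precisely because $f_i$ is a relative quasi-isometry, and on factors with $B_\Gamma\cap V\Gamma_i=\emptyset$ one takes $g_i=f_i$. The product $g:=g_1\times\dots\times g_k$ is again a quasi-isometry; it is a decoration-preserving based relative quasi-isometry factorwise, and by the transfer lemma its stretch factor along any $B_\Gamma$-geodesic equals that of the relevant $g_i$, giving conditions \ref{def:fsys} and \ref{def:rsys} with $C=\max_i C_i$.

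For the converse ``$A(\Gamma)$ dovetail $\Rightarrow$ each $A(\Gamma_{i_0})$ dovetail'', I would promote a factor problem to a product problem. Given data for $A(\Gamma_{i_0})$ with target $X(\Lambda_{i_0})$ (irreducible, as noted), set $\Gamma':=\Gamma_1\circ\dots\circ\Lambda_{i_0}\circ\dots\circ\Gamma_k$, whose de Rahm decomposition agrees with that of $\Gamma$ except in the $i_0$-slot, and let $F:=\mathrm{id}\times\dots\times f_{i_0}\times\dots\times\mathrm{id}$. Marking only $B_\Gamma:=B_{i_0}\subseteq V\Gamma_{i_0}$ and transporting $\delta$ and $\lambda$ across the bijection $\mathcal{P}_{B_\Gamma}\cong\mathcal{P}_{B_{i_0}}$ makes $F$ a valid input for the dovetail property of $A(\Gamma)$; applying it yields $G\colon X(\Gamma)\to X(\Gamma')$ with the prescribed stretch factors. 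Finally I would extract $g_{i_0}$ as the $i_0$-th factor quasi-isometry of $G$ from Theorem \ref{thm:derahm}, checking via the transfer lemma and the relative-quasi-isometry bookkeeping of the second step that $g_{i_0}$ is decoration-preserving, based, relative, and has the required stretch behaviour. The main obstacle throughout is exactly the transfer lemma: one must verify that stretch factors and the $\mathcal{R}/\mathcal{F}$-status are read off a single de Rahm factor, uniformly in the quasi-isometry constants, and that the relative/peripheral and decoration data restrict and reassemble compatibly across the product; everything else is an application of Theorem \ref{thm:derahm}, Proposition \ref{prop:prodrelqi} and Proposition \ref{prop:cliquesystem}.
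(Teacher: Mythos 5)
Your proposal is correct and takes essentially the same route as the paper: split the given map into de Rahm factor quasi-isometries via Theorem \ref{thm:derahm} and Proposition \ref{prop:prodrelqi}, apply the dovetail hypothesis factor-by-factor, and reassemble as a product, using that stretch factors and the $\mathcal{R}/\mathcal{F}$-status of a standard geodesic are read off the unique factor containing a parallel copy of it. The paper's converse is just the one-line remark that it ``follows readily from Theorem \ref{thm:derahm}''; your join-with-identity promotion and factor-extraction argument is precisely what that remark intends, so your additional work (the transfer lemma, the basepoint and decoration bookkeeping) only makes explicit what the paper leaves implicit.
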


 \begin{proof}
We first suppose that each $A(\Gamma_i)$ is a dovetail RAAG. Let $$f:(X(\Gamma),\mathcal{P}_{B_\Gamma},\delta,l)\rightarrow( X(\Lambda),\mathcal{P}_{B_\Lambda},\delta',l')$$ be a  quasi-isometry and $\lambda:\mathcal{P}_{B_\Gamma}\rightarrow \mathbb{Q}_{>0}$ be a bi-bounded function. 
By Proposition \ref{prop:prodrelqi}, there is a relative quasi-isometry $g:(X(\Gamma),\mathcal{P}_{B_\Gamma})\rightarrow( X(\Lambda),\mathcal{P}_{B_\Lambda})$ such that $g_*=f_*$ and $g$ splits as product $g_1\times \dots \times g_k$ of quasi-isometries.  Each $g_i:X(\Gamma_i)\rightarrow X(\Lambda_i)$ preserves the induced peripheral structure and decoration on each de Rahm factor. As each $A(\Gamma_i)$ is a dovetail RAAG, we can thus define a quasi-isometry $h_i:X(\Gamma_i)\rightarrow X(\Lambda_i)$ with stretch factors as prescribed by $\lambda$ along $\mathcal{F}$-geodesics. This can be done whilst preserving the peripheral structure and decoration. The required quasi-isometry $h:(X(\Gamma),\mathcal{P}_{B_\Gamma},\delta,l)\rightarrow( X(\Lambda),\mathcal{P}_{B_\Lambda},\delta',l')$ is defined to be the product $h_1\times \dots \times h_k$. The converse follows readily from Theorem \ref{thm:derahm}.
 \end{proof}
 
\begin{cor}\label{cor:dirprodsystem}
The direct product of dovetail RAAGs is itself a dovetail RAAG.
\end{cor}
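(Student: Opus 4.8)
The plan is to deduce the statement directly from Proposition \ref{prop:derahmsystem} together with Proposition \ref{prop:cliquesystem}, once the de Rahm decomposition of a product has been identified. By induction it suffices to treat the product of two dovetail RAAGs $A(\Gamma)$ and $A(\Lambda)$, since a product of finitely many factors is obtained by iterating this case. The first observation is that $A(\Gamma)\times A(\Lambda)=A(\Gamma\circ\Lambda)$, the RAAG on the join of the two defining graphs; thus the product is again a RAAG and the results of this section apply to it.

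First I would record the de Rahm decompositions $A(\Gamma)=\mathbb{Z}^n\times A(\Gamma_2)\times\dots\times A(\Gamma_k)$ and $A(\Lambda)=\mathbb{Z}^m\times A(\Lambda_2)\times\dots\times A(\Lambda_l)$, where $\Gamma_1$ and $\Lambda_1$ are the maximal clique factors (giving $\mathbb{Z}^n$ and $\mathbb{Z}^m$) and no $\Gamma_i$ or $\Lambda_j$ with $i,j\geq 2$ splits as a join. The key combinatorial step is to show that the maximal join decomposition of $\Gamma\circ\Lambda$ is $(\Gamma_1\circ\Lambda_1)\circ\Gamma_2\circ\dots\circ\Gamma_k\circ\Lambda_2\circ\dots\circ\Lambda_l$: the two clique factors amalgamate into a single clique $\Gamma_1\circ\Lambda_1$ on $n+m$ vertices (a join of cliques is a clique), while each remaining factor is unchanged and still does not split as a join. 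Consequently the de Rahm decomposition of $A(\Gamma)\times A(\Lambda)$ is $\mathbb{Z}^{n+m}\times A(\Gamma_2)\times\dots\times A(\Gamma_k)\times A(\Lambda_2)\times\dots\times A(\Lambda_l)$.

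Next I would verify that every factor appearing in this decomposition is a dovetail RAAG. The clique factor $\mathbb{Z}^{n+m}$ is dovetail by Proposition \ref{prop:cliquesystem}. Each non-clique factor $A(\Gamma_i)$ is a de Rahm factor of the dovetail RAAG $A(\Gamma)$, hence is dovetail by the forward direction of Proposition \ref{prop:derahmsystem}; the same argument applied to $A(\Lambda)$ shows each $A(\Lambda_j)$ is dovetail. With every de Rahm factor of $A(\Gamma)\times A(\Lambda)$ now known to be dovetail, the converse direction of Proposition \ref{prop:derahmsystem} immediately yields that $A(\Gamma)\times A(\Lambda)$ is a dovetail RAAG, which completes the induction.

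The only genuine content is the combinatorial claim about how maximal join decompositions behave under joins, namely that clique factors amalgamate while the irreducible factors persist unchanged. This is the main, though routine, point, and it follows from the uniqueness of the maximal join decomposition established in \cite{gandini16homstabraags}. Everything else is a formal application of the two quoted propositions.
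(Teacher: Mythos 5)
Your proof is correct and is essentially the paper's own argument: both take de Rahm decompositions of the two factors, observe that the product's de Rahm decomposition is $\mathbb{Z}^{n+m}\times A(\Gamma_2)\times\dots\times A(\Gamma_k)\times A(\Lambda_2)\times\dots\times A(\Lambda_l)$, and conclude via Propositions \ref{prop:cliquesystem} and \ref{prop:derahmsystem}. The only difference is that you make explicit the combinatorial step (clique factors amalgamate under join, irreducible factors persist, justified by uniqueness of the maximal join decomposition), which the paper leaves implicit.
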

\begin{proof}
It is sufficient to show that if $A(\Gamma)$ and $A(\Lambda)$ are dovetail RAAGs, then so is $A(\Gamma)\times A(\Lambda)$.
Let $A(\Gamma)=\mathbb{Z}^n\times A(\Gamma_2)\times \dots \times A(\Gamma_k)$ and $A(\Lambda)=\mathbb{Z}^m\times A(\Lambda_2)\times \dots \times A(\Lambda_l)$ be de Rahm decompositions. By Proposition \ref{prop:derahmsystem}, all the $A(\Gamma_i)$ and $A(\Lambda_i)$ must be dovetail RAAGs. Then $A(\Gamma)\times A(\Lambda)$ has a de Rahm decomposition of the form $$\mathbb{Z}^{n+m}\times A(\Gamma_2)\times \dots \times A(\Gamma_k)\times A(\Lambda_2)\times \dots \times A(\Lambda_l).$$ We are done by  Propositions \ref{prop:cliquesystem} and \ref{prop:derahmsystem}.
\end{proof}

\begin{prop}
Let $A(\Gamma)$ be an arbitrary RAAG of type II, possibly with non-trivial centre. Then $A(\Gamma)$ is a dovetail RAAG.
\end{prop}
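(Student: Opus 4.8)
The plan is to reduce to the trivial-centre case via the de Rham decomposition and then to observe that a type II RAAG with trivial centre is dovetail for essentially formal reasons. Write $\Gamma_1$ for the (possibly empty) clique of central vertices of $\Gamma$, so that $Z(A(\Gamma))=A(\Gamma_1)\cong\mathbb{Z}^n$, and let $\Gamma'$ be the induced subgraph spanned by the non-central vertices, giving $A(\Gamma)=\mathbb{Z}^n\times A(\Gamma')$. Since $\mathbb{Z}^n$ is a dovetail RAAG by Proposition \ref{prop:cliquesystem}, Corollary \ref{cor:dirprodsystem} reduces the statement to showing that $A(\Gamma')$ is dovetail. (If $\Gamma$ already has trivial centre, then $\Gamma'=\Gamma$ and there is nothing to reduce.)

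First I would check that $A(\Gamma')$ is again of type II, now with trivial centre. Trivial centre is immediate: a vertex of $\Gamma'$ adjacent to every other vertex of $\Gamma'$ would also be adjacent to every vertex of $\Gamma_1$ (central vertices are adjacent to everything), hence central in $\Gamma$, contradicting that it lies in $\Gamma'$. Connectedness and the separation condition both follow from the identity $\mathrm{lk}^\Gamma(v)=\mathrm{lk}^{\Gamma'}(v)\cup\Gamma_1$ for $v\in V\Gamma'$, together with $\Gamma\setminus\Gamma_1=\Gamma'$. Concretely, if $\Gamma'$ were disconnected, then for $v,w$ in distinct components one has $\mathrm{lk}^\Gamma(v)\cap\mathrm{lk}^\Gamma(w)=\Gamma_1$, and since $\Gamma\setminus\Gamma_1=\Gamma'$ is disconnected, $\Gamma_1$ separates $\Gamma$, contradicting type II of $\Gamma$. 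Similarly, if $S:=\mathrm{lk}^{\Gamma'}(v)\cap\mathrm{lk}^{\Gamma'}(w)$ separated $\Gamma'$ for distinct $v,w\in V\Gamma'$, then $\mathrm{lk}^\Gamma(v)\cap\mathrm{lk}^\Gamma(w)=S\cup\Gamma_1$ would separate $\Gamma$, because $\Gamma\setminus(S\cup\Gamma_1)=\Gamma'\setminus S$; this again contradicts type II of $\Gamma$. Hence $A(\Gamma')$ is type II with trivial centre. This combinatorial descent through the centre is the step I would take most care over.

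It then remains to prove that an arbitrary type II RAAG $A(\Gamma')$ with trivial centre is dovetail, and here I expect the argument to be short. By Lemma \ref{lem:stretcoarse} every standard geodesic of $X(\Gamma')$ is an $\mathcal{R}$-geodesic, so there are no $\mathcal{F}$-geodesics and condition \ref{def:fsys} of Definition \ref{defn:frsys} is vacuous. Given any quasi-isometry $f$ and bi-bounded $\lambda$ as in that definition, I would simply take $g=f$; the only thing to verify is condition \ref{def:rsys}, namely that $\mathrm{str}(f,l'',C)$ is well-defined, with a uniform $C$, for every $\mathcal{R}$-geodesic $l''$ labelled by $B_{\Gamma'}$. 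The key observation is that, because $f$ is a relative quasi-isometry for $\mathcal{P}_{B_{\Gamma'}}$, it carries the parallel class $\mathcal{S}_{l''}\in\mathcal{P}_{B_{\Gamma'}}$ to within bounded Hausdorff distance of an element of $\mathcal{P}_{B_\Lambda}$; in particular $f(l'')$ lies within bounded Hausdorff distance of some $B_\Lambda$-geodesic, so $f$ is a based quasi-isometry with respect to $l''$. Since $l''$ is an $\mathcal{R}$-geodesic, $\mathrm{str}(f,l'')$ is then well-defined by the definition of an $\mathcal{R}$-geodesic, and the ``more precisely'' part of Lemma \ref{lem:stretcoarse} supplies a single constant $C=C(K,A,\Gamma',\Lambda)$ valid simultaneously for all such $l''$ (there being only finitely many $A(\Gamma')$-orbits of standard geodesics). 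Thus $g=f$ witnesses the dovetail property for $A(\Gamma')$, and combined with the reduction of the first paragraph this completes the proof. The only genuinely delicate point, beyond the combinatorial descent, is this use of the relative quasi-isometry hypothesis to guarantee that $f$ is automatically based with respect to every $B_{\Gamma'}$-geodesic, which is what makes all the stretch factors well-defined.
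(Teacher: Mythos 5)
Your proof is correct and follows essentially the same route as the paper: split off the maximal clique (central) join factor, note the complementary factor is type II with trivial centre so that Lemma \ref{lem:stretcoarse} makes every standard geodesic an $\mathcal{R}$-geodesic, and combine Propositions \ref{prop:cliquesystem} and Corollary \ref{cor:dirprodsystem}. The only difference is that you spell out two steps the paper leaves implicit --- the combinatorial check that the non-central part is again type II with trivial centre, and the verification that ``all geodesics are $\mathcal{R}$-geodesics'' yields the dovetail property with $g=f$ and a uniform constant $C$ --- both of which you handle correctly.
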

\begin{proof}
We may write $\Gamma=\Gamma_1\circ \Gamma_2$, where $\Gamma_1$ is a maximal clique join factor. Thus $A(\Gamma_2)$ is a RAAG of type II with trivial centre. By Lemma \ref{lem:stretcoarse} and Proposition \ref{prop:cliquesystem}, both $A(\Gamma_1)$ and $A(\Gamma_2)$ are dovetail RAAGs, hence Corollary \ref{cor:dirprodsystem} tells us that $A(\Gamma)$ is a dovetail RAAG.
\end{proof}

\begin{prop}\label{prop:systemfreeprod}

Suppose $A(\Gamma)$ has a Gru\v{s}ko decomposition $A(\Gamma_1)* \dots *A(\Gamma_k)$ and that each $A(\Gamma_i)$ is a dovetail RAAG.  Then $A(\Gamma)$ is a dovetail RAAG. 
\end{prop}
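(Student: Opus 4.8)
The plan is to reduce the dovetail condition for $A(\Gamma)$ to the dovetail conditions for the Gru\v{s}ko factors $A(\Gamma_i)$, using the free-product-of-graphs technology of Section \ref{sec:freeproduct} to reassemble factorwise quasi-isometries into a global one. Writing $\Gamma=\Gamma_1\sqcup\dots\sqcup\Gamma_k$, I realise $X(\Gamma)$, and likewise $X(\Lambda)$ for any target $A(\Lambda)$, as a free product of graphs following Remark \ref{rem:freeproductqitreeofgraphs}: the special subgraphs are the cosets $gA(\Gamma_i)$, each quasi-isometric to the factor space $X(\Gamma_i)$. The first key observation is that every standard geodesic of $X(\Gamma)$ is parallel to one contained in a single special subgraph, and that parallelism classes correspond bijectively to parallelism classes inside the factors; this is the free-product analogue of the localisation used in the proof of Proposition \ref{prop:prodrelqi}. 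Consequently the peripheral structure $\mathcal{P}_{B_\Gamma}$, the decoration $\delta_\Gamma$, the bi-bounded function $\lambda$, and the partition of standard geodesics into $\mathcal{R}$- and $\mathcal{F}$-geodesics all restrict compatibly to each special subgraph. I also need that a standard geodesic contained in a factor is an $\mathcal{R}$-geodesic of $X(\Gamma)$ if and only if it is an $\mathcal{R}$-geodesic of $X(\Gamma_i)$; this follows by extending and restricting based quasi-isometries across the free-product structure, in the spirit of Proposition \ref{prop:frgeodprop}.

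Given the data of the dovetail definition, namely a decoration-preserving based relative quasi-isometry $f\colon(X(\Gamma),\mathcal{P}_{B_\Gamma},\delta_\Gamma,l)\to(X(\Lambda),\mathcal{P}_{B_\Lambda},\delta_\Lambda,l')$ and a bi-bounded $\lambda$, I first invoke the Papasoglu--Whyte theory (Lemma 3.2 of \cite{papasoglu2002quasi}, as in the proof of Proposition \ref{prop:relqicyl}) to record that $f$ carries one-ended special subgraphs of $X(\Gamma)$ to within uniformly bounded Hausdorff distance of special subgraphs of $X(\Lambda)$. After adjusting by type I and type III moves I may assume $f$ restricts, up to bounded error, to based quasi-isometries $f|_A\colon A\to A^\ast$ for each special subgraph $A=gA(\Gamma_i)$. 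Each such restriction is a decoration-preserving based relative quasi-isometry of factor spaces, with respect to the restricted peripheral structure and decoration and to $l$ when $A$ contains it, so the dovetail hypothesis on $A(\Gamma_i)$ applies and yields a quasi-isometry $h_A\colon A\to A^\ast$ realising the prescribed stretch factors $\lambda$ on the $\mathcal{F}$-geodesics of $A$ and leaving the rigid $\mathcal{R}$-stretch factors well-defined, while still preserving the restricted peripheral structure and decoration.

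I then reassemble the $h_A$ by applying Lemma \ref{lem:qilink} at every isolated vertex $v$ of the tree of graphs, with $\chi$ the correspondence of special subgraphs induced by $f$ and with the prescribed factor quasi-isometries $f_A:=h_A$. Composing these type II moves via Proposition \ref{prop:compmoves} produces a quasi-isometry $g\colon X(\Gamma)\to X(\Lambda)$ with $g|_A$ close to $h_A$ for every special subgraph $A$. To invoke Proposition \ref{prop:compmoves} I must check that the sequence of moves is uniformly locally finite: since there are only finitely many Gru\v{s}ko factors and the quasi-isometry constants of each $h_A$ depend only on $\Gamma_i$, the bounded constants of $f$, and the bi-bound $M$ of $\lambda$, these constants are uniformly bounded and only boundedly many moves occur at each isolated vertex. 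Finally I verify that $g$ has the required properties: because each standard geodesic labelled by $B_\Gamma$ lies in a single special subgraph $A$ on which $g$ agrees coarsely with $h_A$, and standard subcomplexes are convex hence isometrically embedded, the stretch factors of $g$ along $\mathcal{F}$- and $\mathcal{R}$-geodesics are exactly those arranged factorwise, and the factorwise preservation of peripheral structure and decoration assembles to show that $g$ is a decoration-preserving based relative quasi-isometry into $(X(\Lambda),\mathcal{P}_{B_\Lambda},\delta_\Lambda,l')$.

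The main obstacle will be the bookkeeping of the second and third steps: establishing that $f$ genuinely restricts to based relative quasi-isometries of the factors satisfying the full dovetail hypothesis, in particular that the ambient decoration and peripheral structure restrict to exactly the structures the factor definition requires, and then guaranteeing the uniform local finiteness that legitimises composing infinitely many type II moves into a single quasi-isometry.
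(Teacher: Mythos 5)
Your proposal is correct and follows essentially the same route as the paper's proof: pass to free products of graphs, use Lemma 3.2 of \cite{papasoglu2002quasi} together with the relative quasi-isometry hypothesis to match special subgraphs, apply the dovetail property of each Gru\v{s}ko factor to obtain decoration-preserving quasi-isometries of factors with the prescribed stretch factors, and reassemble these via Lemma \ref{lem:qilink} and Proposition \ref{prop:compmoves}, rebasing at the end to guarantee $[g(l)]=[l']$. The paper organises the bookkeeping slightly differently — it decorates the special subgraphs themselves with ornaments and selects finitely many model quasi-isometries $f_{J,K}$ (finiteness of isometry types giving uniform local finiteness), rather than following the correspondence induced by $f$ — but this is the same argument, and the localisation points you flag (e.g. that $\mathcal{R}$- and $\mathcal{F}$-geodesics of $X(\Gamma)$ restrict to those of the factors) are exactly the steps the paper treats implicitly.
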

\begin{proof}
This will be proved using the machinery of  Section \ref{sec:freeproduct}. Suppose $f:(X(\Gamma),\mathcal{P}_{B_\Gamma},\delta,l)\rightarrow( X(\Lambda),\mathcal{P}_{B_\Lambda},\delta',l')$ is a quasi-isometry and let $\lambda:\mathcal{P}_{B_\Gamma}\rightarrow \mathbb{Q}_{>0}$ be a bi-bounded function. 
We may think of $f$ as a map $\hat f:\Sigma_\Gamma\rightarrow \Sigma_\Lambda$  between free products of graphs. Let $S_\Gamma$ be the collection of special subgraphs of $\Sigma_\Gamma$ that are either one-ended or are two-ended and correspond to a standard geodesic labelled by an element of $B_\Gamma$. We define $S_\Lambda$ similarly. Every standard geodesic corresponds to a bi-infinite geodesic contained in a special subgraph of $\Sigma_\Gamma$ or $\Sigma_\Lambda$. We thus decorate special subgraphs in $S_\Gamma$ and $S_\Lambda$ as follows:  two special subgraphs have the same ornament if and only if there is a relative quasi-isometry between these special subgraphs that preserves the decoration assigned to standard geodesics labelled by $\delta$ and $\delta'$.

 Using Lemma 3.2 of \cite{papasoglu2002quasi} and the definition of relative quasi-isometry, we see that for each $J\in S_\Gamma$, $\hat f(J)$ is within finite Hausdorff from a unique special subgraph $K\in S_\Lambda$ of $\Sigma_\Lambda$. Since $f$ is decoration preserving, clearly $J$ and $K$ are both decorated with the same ornament. Note that the decoration assigned to each special subgraph is invariant under the group action. Thus the sets of  special subgraphs attached to any two isolated vertices (in either $\Sigma_\Gamma$ and $\Sigma_\Lambda$) have the same sets of ornaments.

We recall each special subgraph $J\in S_\Gamma$ naturally corresponds to a coset $g A(\Gamma_i)$ for some $i$ and $g\in A(\Gamma)$. Suppose $K\in S_\Lambda$ is any special subgraph decorated with same ornament as $J$. As  $A(\Gamma_i)$ is a dovetail RAAG, there is a constant $C_{J,K}$ and a  relative quasi-isometry $f_{J,K}:J\rightarrow K$ such that $\mathrm{str}(f_{J,K},l,C_{J,K})=\lambda(\mathcal{S}_l)$ for every $\mathcal{F}$-geodesic  $l\subseteq g X(\Gamma_i)$ labelled by an element of $B_\Gamma$. We may also assume that $f_{J,K}$ preserves the induced decoration on standard geodesics. Moreover, as there are only finitely many isometry types of special subgraphs in $\Sigma_\Gamma$ and $\Sigma_\Lambda$, we can assume there are only finitely many distinct quasi-isometries $f_{J,K}$. 

We now build a quasi-isometry $g:\Sigma_\Gamma\rightarrow \Sigma_\Lambda$ using moves of type I, II and III in a similar manner to Proposition \ref{prop:relqicyl}. This can be done so that $g$ preserves the decoration on special subgraphs.
Moreover, whenever we perform a move of type II, we use  Lemma \ref{lem:qilink} with the set of quasi-isometries $f_{J,K}$ as defined above. By construction, there is a sufficiently large $C$ such that $\mathrm{str}(g,l,C)=\lambda(V_l)$ for every $V_\mathcal{F}$-geodesic in $X(\Gamma)$. We can `rebase' the free product of graphs by choosing depth one isolated vertices, thus ensuring that  $[g(l)]=[l']$. The resulting $g$ will be the required  relative quasi-isometry.
\end{proof}

Another result along these lines is Proposition \ref{prop:frsysjsj}, which shows that the class of dovetail RAAGs is closed under amalgamating along infinite cyclic subgroups corresponding to standard geodesics.

\section{Constructing the quasi-isometry}\label{sec:constructqi}
We are now in a position to state and prove our main theorem.  Before doing this, we briefly recap what we have already shown.
Suppose that $G$ and $G'$ are one-ended RAAGs. The JSJ trees of cylinders of $G$ and $G'$ are  graph of group decompositions that can visually be read off from their defining graphs, as explained in Section \ref{sec:jsjtoc}.  It follows from Corollary \ref{cor:embeldecgeom} that if $G$ and $G'$ are quasi-isometric, then their JSJ trees of cylinders, decorated with the embellished decoration, are equivalent, i.e.  $(T,\delta_0)$ and $(T',\delta'_0)$ are equivalent. We now prove a partial converse.

\begin{defn}
Given a class $\mathcal{C}$ of RAAGs, let $\mathcal{J}(\mathcal{C})$ denote the class of one-ended RAAGs whose JSJ tree of cylinders have rigid vertex stabilizers in $\mathcal{C}$.
\end{defn}

\begin{thm}\label{thm:mainthm}
Let $\mathcal{D}$ denote the class of dovetail RAAGs. Suppose that $G$ and $G'$ are two  RAAGs in $\mathcal{J}(\mathcal{D})$, and that $(T,\delta_0)$ and $(T',\delta'_0)$ are the JSJ trees of cylinders of $G$ and $G'$ respectively, decorated with the embellished decoration. Then $G$ and $G'$ are quasi-isometric if and only if $(T,\delta_0)$ and $(T',\delta'_0)$ are equivalent.
\end{thm}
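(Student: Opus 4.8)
$\textbf{Proof plan.}$ The forward direction is exactly Corollary \ref{cor:embeldecgeom}: a quasi-isometry $G \to G'$ induces (via Theorem \ref{thm:jsjtocqi}) a tree isomorphism preserving vertex types, relative quasi-isometry classes of vertex spaces, and, by Lemmas \ref{lem:redgeqi} and \ref{lem:relstrqi}, the $\mathcal{R}$-edge structure and normalised relative stretch factors. Hence the embellished decorations agree and $(T,\delta_0)$ and $(T',\delta'_0)$ are equivalent. So the entire content is the converse: given equivalent embellished trees, I must $\emph{construct}$ a quasi-isometry $G \to G'$. The plan is to invoke Proposition \ref{prop:qitree}, gluing quasi-isometries of vertex spaces along common edge spaces, after first upgrading the abstract equivalence into a concrete tree isomorphism together with compatible relative quasi-isometries at every vertex.

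The first step is to feed the embellished decorations $\delta_0, \delta'_0$ into the refinement machinery: perform neighbour and vertex refinement until stable, obtaining $\delta, \delta'$, and apply Corollary \ref{cor:dectreeisom} to produce a decoration-preserving tree isomorphism $\chi : T \to T'$ such that for each vertex $v$ there is a relative quasi-isometry $\phi_v : (X_v, \mathcal{P}_v) \to (X'_{\chi(v)}, \mathcal{P}_{\chi(v)})$ realising $\chi$ on the link. The crux is that these $\phi_v$ are chosen independently and need not agree on shared edge spaces, so they cannot be glued directly. The key idea is to use the $\mathcal{J}(\mathcal{D})$ hypothesis to $\emph{adjust}$ each $\phi_v$ so that the square in Proposition \ref{prop:qitree} commutes up to uniform error. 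I would organise this by a global bookkeeping of stretch factors: since $\chi$ preserves the embellished decoration, for every edge $e$ the relative stretch factors match across $e$, which means one can choose a consistent target stretch factor $\lambda(\mathcal{S}_e) \in \mathbb{Q}_{>0}$ on each parallelism class of standard geodesics appearing in an edge space. The defining property of a dovetail RAAG (Definition \ref{defn:frsys}) is precisely what lets me rechoose $\phi_v$ on each rigid vertex so that it realises these prescribed stretch factors on $\mathcal{F}$-geodesics while preserving the forced stretch on $\mathcal{R}$-geodesics and respecting the peripheral structure and its decoration.

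The main obstacle, and the technically heaviest step, is matching the cylindrical vertex spaces: a cylinder vertex space has the product form $\mathbb{E}^1 \times \Sigma_v$, and I must build a single relative quasi-isometry that simultaneously (i) induces $\chi$ on the incident edges, (ii) has the correct stretch on the $\mathbb{E}^1$-factor forced by the relative stretch factors, and (iii) restricts on each peripheral edge space to a map that is close to the already-chosen rigid-vertex quasi-isometry on the other side of that edge. This is the encoding referred to in the outline as Lemma \ref{lem:constructcylqi}, and I would prove it by the free-product technology of Section \ref{sec:freeproduct}: viewing $\Sigma_v$ as a tree of graphs, I apply moves of type I, II and III (Lemma \ref{lem:qilink}, Proposition \ref{prop:compmoves}) to realise on each peripheral special subgraph a prescribed quasi-isometry of that edge space, while the $\mathbb{E}^1$-factor is stretched by the prescribed homothety. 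The fact that Proposition \ref{prop:systemfreeprod} and the closure results show dovetail cylinders can absorb arbitrary bi-bounded stretch data on $\mathcal{F}$-geodesics is exactly what makes the interlocking ("dovetail") possible. The ordering I would follow is: refine and extract $\chi$ and the $\{\phi_v\}$; fix a consistent family of edge stretch factors $\lambda$; use the dovetail property at rigid vertices and Lemma \ref{lem:constructcylqi} at cylinder vertices to replace each $\phi_v$ by one agreeing, up to uniform error, with its neighbours on common edge spaces; then apply Proposition \ref{prop:qitree} to glue, and Proposition \ref{prop:qitreebs} to transfer from the blown-up tree of spaces back to $G$ and $G'$, yielding the desired quasi-isometry.
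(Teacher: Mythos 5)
Your proposal is correct and follows essentially the same route as the paper's proof: Corollary \ref{cor:embeldecgeom} for the forward direction, then refinement plus Corollary \ref{cor:dectreeisom}, the cylinder stretch factors of Definition \ref{defn:cylstretch}, dovetail-adjusted admissible quasi-isometries at rigid vertices, Lemma \ref{lem:constructcylqi} at cylinders via the Section \ref{sec:freeproduct} machinery, and gluing via Proposition \ref{prop:qitree}. The only organizational difference is that the paper builds the tree isomorphism inductively outward from a base cylinder, setting $\chi|_{\mathrm{lk}(v)}:=(\phi_v)_*$ as each vertex map is constructed, rather than fixing a global $\chi$ in advance and adjusting the vertex maps to realise it --- a safer ordering, since the dovetail property and Lemma \ref{lem:constructcylqi} only let you prescribe the induced link map on one edge at a time, not on the whole link.
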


We give a brief overview of the proof of Theorem \ref{thm:mainthm}. As discussed above, Corollary \ref{cor:embeldecgeom} ensures that if two  RAAGs in $\mathcal{J}(\mathcal{D})$ are quasi-isometric, then their JSJ trees of cylinders are equivalent when endowed with the embellished decoration; this proves one direction of Theorem \ref{thm:mainthm}. The remainder of this section is devoted to a proof of the converse.

This is done by applying Proposition \ref{prop:qitree}, which says that we can construct a quasi-isometry from $G$ to $G'$ by choosing relative quasi-isometries of vertex spaces  that agree along common edge spaces. We first show that we can always pick ``nice'' quasi-isometries between  rigid vertex spaces, which is done Corollary \ref{cor:admissible}. In particular, we can explicitly see how such quasi-isometries behave when restricted to incident edge spaces. In Lemma \ref{lem:constructcylqi} we define quasi-isometries between cylindrical vertex spaces which agree with quasi-isometries of neighbouring vertex spaces on common edge spaces.  The proof of this Lemma  makes heavy use of the machinery developed in Section \ref{sec:freeproduct}. We then use Corollary \ref{cor:admissible} and Lemma \ref{lem:constructcylqi} to construct suitable quasi-isometries of vertex spaces that satisfy the hypotheses of Proposition \ref{prop:qitree}, and thus construct the required quasi-isometry from $G$ to $G'$.

Throughout this section, we fix RAAGs $G,G' \in \mathcal{J}(\mathcal{D})$ and let $(T,\delta_0)$ and $(T',\delta'_0)$ be as in Theorem \ref{thm:mainthm}. We  assume that $(T,\delta_0)$ and $(T',\delta'_0)$ are equivalent and will show that $G$ and $G'$ are quasi-isometric.
 We perform neighbour and vertex refinement on $(T,\delta_0)$ and $(T',\delta'_0)$, obtaining refinements $(T,\delta)$ and $(T,\delta')$. By Corollary \ref{cor:dectreeisom}, we may assume $\mathrm{im}(\delta)=\mathrm{im}(\delta')$ and so the conclusions of Corollary \ref{cor:dectreeisom} hold. We let $X$ and $X'$ be the trees of spaces associated to $T$ and $T'$ respectively.

\begin{defn}\label{defn:cylstretch}
Suppose $v\in T$ and $w\in T'$ are cylindrical vertices such that $\delta(v)=\delta'(w)$. 
If $v$ (and hence $w$) is an  $\mathcal{R}$-cylinder, we define $\lambda_{v,w}:=\frac{\mathrm{str}(e)}{\mathrm{str}(f)}$, where $e$ and $f$ are $\mathcal{R}$-edges  incident to $v$ and $w$ respectively such that $\delta(e)=\delta'(f)$.
We define $\lambda_{v,w}:=1$ for all remaining cylinders $v\in T$ and $w\in T'$.
\end{defn}

We now aim to construct a quasi-isometry $\phi:G\rightarrow G'$ such that if $v$ is a cylinder corresponding to the standard geodesic  $l$, then $\mathrm{str}(\phi,l)=\lambda_{v,\phi_*(v)}$.
We note that if $v$ is an $\mathcal{R}$-cylinder, $\lambda_{v,w}$ doesn't depend on the choice of edges $e$ and $f$. Indeed, suppose $e'$ and $f'$ are $\mathcal{R}$-edges incident to $v$ and $w$ respectively such that $\delta(e')=\delta'(f')$. Since the decoration incorporates relative stretch factors, we have $\frac{\mathrm{str}(e')}{\mathrm{str}(e)}=\frac{\mathrm{str}(f')}{\mathrm{str}(f)}$. Therefore $\frac{\mathrm{str}(e)}{\mathrm{str}(f)}=\frac{\mathrm{str}(e')}{\mathrm{str}(f')}$ as desired.

\subsection*{Relative quasi-isometries between rigid vertex spaces}
We first explain how to pick suitable relative quasi-isometries of rigid vertex spaces.
Recall every edge space $X_e$ has a cylindrical decomposition $\mathbb{E}^1\times  Y_e$ as in Definition \ref{defn:cyldecomp}. For each edge $e\in ET$, we  pick a standard geodesic $l_e\subseteq X_e$ corresponding to a subset of the form $\mathbb{E}^1\times \{y\}$.

\begin{defn}\label{defn:admissible}
We say a  $(K,A)$-quasi-isometry $$\phi_v:(X_v,\mathcal{P}_v,\delta_v,l_v)\rightarrow (X_w,\mathcal{P}_w,\delta_w,l_w)$$ of rigid vertices is  \emph{admissible} if there exists a $C\geq 0$ such that for every edge $e\in ET$ with $\iota e=v$, we have $\mathrm{str}(\phi_v,l_e,C)=\lambda_{\tau e,\tau((\phi_v)_* e)}$.
\end{defn}

\begin{lem}\label{lem:admisdefine}
Suppose  $e\in ET$ and $f\in ET'$ are chosen such that  $\delta(e)=\delta'(f)$, with both  $\iota e=v$ and $\iota e'=w$  rigid vertices. Then there exists an admissible  quasi-isometry $\phi:(X_v,\mathcal{P}_v,\delta_v,l_e)\rightarrow (X_w,\mathcal{P}_w,\delta_w,l_f)$.
\end{lem}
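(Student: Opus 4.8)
The goal is to produce a single admissible quasi-isometry between the rigid vertex spaces $X_v$ and $X_w$, where admissibility requires that the stretch factor along each incident edge-geodesic $l_e$ matches the prescribed value $\lambda_{\tau e, \tau((\phi_v)_* e)}$. The first step is to record that since $\delta(e)=\delta'(f)$ and $\iota e = v$, $\iota f = w$ are rigid, Corollary \ref{cor:dectreeisom}(2) supplies a decoration-preserving tree isomorphism $\chi$ with $\chi(e)=f$, together with a relative quasi-isometry $\psi:(X_v,\mathcal{P}_v)\rightarrow (X_w,\mathcal{P}_w)$ whose induced map $\psi_*$ on the peripheral structure agrees with $\chi|_{\mathrm{lk}(v)}$. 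Because $G,G'\in\mathcal{J}(\mathcal{D})$, both $X_v$ and $X_w$ are vertex spaces of dovetail RAAGs, so $\psi$ can be regarded (after choosing the standard geodesic $l_e$ representing $e$ and $l_f$ representing $f$) as a decoration-preserving based relative quasi-isometry $(X_v,\mathcal{P}_v,\delta_v,l_e)\rightarrow(X_w,\mathcal{P}_w,\delta_w,l_f)$. This is the correct input to feed to the dovetail property.

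The second and central step is to invoke the definition of a dovetail RAAG (Definition \ref{defn:frsys}) to correct the stretch factors. For this I must build the bi-bounded function $\lambda:\mathcal{P}_{B_\Gamma}\rightarrow\mathbb{Q}_{>0}$ that encodes the target stretches. For an $\mathcal{F}$-geodesic parallel class $\mathcal{S}_{l_e}$ arising from an edge $e'$ incident to $v$, I set $\lambda(\mathcal{S}_{l_{e'}}):=\lambda_{\tau e', \tau(\chi(e'))}$, using the isomorphism $\chi$ to identify the matching edge at $w$; I assign the (redundant) value $1$ on all $\mathcal{R}$-geodesic classes and on any parallel class not coming from an incident edge. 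The two points that need checking are that $\lambda$ is bi-bounded and that it is constant on each ornament class $\delta^{-1}(o)$. Bi-boundedness follows because there are only finitely many $G$-orbits of edges, hence finitely many values among the $\lambda_{\tau e',\tau(\chi(e'))}$, each a ratio of stretch factors drawn from a finite set (the relevant finiteness is exactly the one used to normalise stretch factors before Definition \ref{defn:initialdecoration}). Constancy on ornament classes follows because two edges with the same ornament, under a decoration that already incorporates normalised relative stretch factors, must have the same relative stretch factor, hence the same target $\lambda$ value; this is precisely the content of the normalisation that makes $\delta$ record $\mathrm{relstr}$.

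The third step is bookkeeping: the dovetail property then yields a quasi-isometry $g:(X_v,\mathcal{P}_v,\delta_v,l_e)\rightarrow(X_w,\mathcal{P}_w,\delta_w,l_f)$ with a uniform constant $C$ such that $\mathrm{str}(g,l'',C)=\lambda(\mathcal{S}_{l''})$ for every $\mathcal{F}$-geodesic $l''$ labelled in $B_\Gamma$, and $\mathrm{str}(g,l'',C)=\mathrm{str}(g,l'')$ for every $\mathcal{R}$-geodesic. I then verify that $g$ is admissible in the sense of Definition \ref{defn:admissible}: for an $\mathcal{F}$-edge $e'$ with $\iota e'=v$ the target $\lambda_{\tau e',\tau(g_* e')}$ equals $\lambda_{\tau e', \tau(\chi(e'))}=\lambda(\mathcal{S}_{l_{e'}})=\mathrm{str}(g,l_{e'},C)$ by construction (here one uses that $g_*$ agrees with $\chi$ on incident edges, since $g$ is decoration-preserving and the decoration after refinement determines the matching by Corollary \ref{cor:dectreeisom}); and for an $\mathcal{R}$-edge $e'$ the stretch factor $\mathrm{str}(g,l_{e'})$ is well-defined (Definition \ref{defn:stretch}) and equals $\lambda_{\tau e', \tau(g_* e')}$ by the rigidity of $\mathcal{R}$-geodesics together with the fact that $g$ realises the relative-quasi-isometry class matching the decoration — this is exactly the computation appearing in the proof of Lemma \ref{lem:relstrqi}, giving $\mathrm{str}(g,l_{e'})=\mathrm{str}(e')/\mathrm{str}(g_*e')=\lambda_{\tau e',\tau(g_*e')}$.

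\textbf{Main obstacle.} The delicate point is reconciling the two conditions of admissibility simultaneously at a vertex that carries both $\mathcal{R}$-edges and $\mathcal{F}$-edges: the $\mathcal{F}$-stretches are imposed by hand through $\lambda$, whereas the $\mathcal{R}$-stretches are forced, and I must confirm that the forced $\mathcal{R}$-values coincide with the prescribed $\lambda_{\tau e, \tau(g_* e)}$. The resolution is that $\lambda_{v,w}$ for an $\mathcal{R}$-cylinder was defined (Definition \ref{defn:cylstretch}) precisely as the ratio $\mathrm{str}(e)/\mathrm{str}(f)$ of matching $\mathcal{R}$-edge stretch factors, so the forced value emitted by the dovetail property's condition \ref{def:rsys} automatically agrees — this is the consistency that Definition \ref{defn:cylstretch} and the embellished decoration were engineered to guarantee. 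Verifying that $g_*$ genuinely sends $e$ to $f$ (and more generally respects $\chi$ on the link) is the one remaining technical check, and it follows from the decoration-preservation of $g$ once the refinement has been pushed to stability.
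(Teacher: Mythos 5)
Your proof follows the same route as the paper's: obtain an initial decoration-preserving based relative quasi-isometry from Corollary \ref{cor:dectreeisom}, then invoke the dovetail property of the rigid vertex group (available since $G\in\mathcal{J}(\mathcal{D})$) to correct the stretch factors to the prescribed values $\lambda_{\tau e',\tau((\phi_v)_*e')}$. The paper's proof is only four sentences long and leaves implicit exactly the checks you spell out — the construction of the bi-bounded prescription $\lambda$, its constancy on ornament classes, and the automatic agreement on $\mathcal{R}$-edges via the computation of Lemma \ref{lem:relstrqi} together with Definition \ref{defn:cylstretch} — so your write-up is a faithful, more detailed rendering of the paper's argument.
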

\begin{proof}
By Corollary \ref{cor:dectreeisom}, we know that there exists a  quasi-isometry $\phi_v:(X_v,\mathcal{P}_v,\delta_v,l_e)\rightarrow (X_w,\mathcal{P}_w,\delta_w,l_f)$. Since $G\in \mathcal{J}(\mathcal{D})$, we know the vertex stabilizer of $v$ is a dovetail RAAG. We can thus choose a  quasi-isometry $\hat \phi_v:(X_v,\mathcal{P}_v,\delta_v,l_e)\rightarrow (X_w,\mathcal{P}_w,\delta_w,l_f)$ and $C\geq 0$ such that for every  $e'\in ET$ with $\iota e'=v$, we have $\mathrm{str}(\hat\phi_v,l_{e'},C)=\lambda_{\tau {e'},\tau((\phi_v)_* e')}$.
Thus $\hat \phi_v$ is admissible. 
\end{proof}

\begin{lem}\label{lem:almostprod}
Suppose $\phi_v:(X_v,\mathcal{P}_v,\delta_v,l_v)\rightarrow (X_w,\mathcal{P}_w,\delta_w,l_w)$ is an admissible $(K,A)$-relative quasi-isometry of rigid vertices. There are constants $K'$, $A'$ and $C$, depending only $K$, $A$, $\Gamma$ and $\Lambda$, such that for each edge $e$ with $\iota e=v$, the map $\phi_v|_{X_e}$ is $C$-close to a map $\phi_e:\mathbb{E}^1\times Y_e\rightarrow \mathbb{E}^1\times Y_{(\phi_v)_*(e)}$  of the form $$(t,y)\mapsto (B_et+\gamma_e(y),\psi_e(y)),$$ where  $\lvert B_e\rvert=\lambda_{\tau e,\tau ((\phi_v)_*e)}$. Moreover,  $\psi_e$ is a  $(K',A')$-quasi-isometry  and $\gamma_e$ is a $(K',A')$-coarse Lipschitz map. 
\end{lem}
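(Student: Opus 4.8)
The plan is to analyse the restriction $\Phi:=\phi_v|_{X_e}$ directly. By Theorem \ref{thm:jsjtocqi} the map $\phi_v$ carries $X_e$ to within uniformly bounded Hausdorff distance of the edge space $X_{e'}$, where $e'=(\phi_v)_*(e)$, and $X_{e'}$ has its own cylindrical decomposition $\mathbb{E}^1\times Y_{e'}$ as in Definition \ref{defn:cyldecomp}. Hence $\Phi$ is a $(K,A)$-quasi-isometry onto a coarse copy of $\mathbb{E}^1\times Y_{e'}$, with constants uniform in $e$ because there are only finitely many orbit types of edges in $T$ and $T'$ (this is where the dependence on $\Gamma,\Lambda$ enters). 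Since $\phi_v$ is a \emph{relative} quasi-isometry, Definition \ref{defn:relqi} together with Remark \ref{rem:peripheralstrucutreraags} tells us that each line $\mathbb{E}^1\times\{y\}$ of $\mathcal{S}_e$ is sent into a uniform $A$-neighbourhood of a unique line $\mathbb{E}^1\times\{y'\}$ of $\mathcal{S}_{e'}$. I would define $\psi_e(y):=y'$. Because coarse equivalence classes are preserved and a quasi-isometry distorts Hausdorff distance only up to its own constants, and since $d_{\mathrm{Haus}}(\mathbb{E}^1\times\{y_1\},\mathbb{E}^1\times\{y_2\})=d_{Y_e}(y_1,y_2)$, the map $\psi_e$ is a $(K',A')$-quasi-isometry $Y_e\to Y_{e'}$. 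The same line-to-line statement also shows that $\pi_{Y_{e'}}(\Phi(t,y))$ lies within the uniform constant $A$ of $\psi_e(y)$ for all $t$, which disposes of the $Y$-coordinate of the desired formula.

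It remains to control the $\mathbb{E}^1$-coordinate $F(t,y):=\pi_{\mathbb{E}^1}(\Phi(t,y))$. Admissibility (Definition \ref{defn:admissible}) says precisely that $\Phi|_{l_e}$ is $C_0$-close to an affine map $t\mapsto \pm\lfloor \lambda t\rfloor + b$ with $\lambda=\lambda_{\tau e,\tau((\phi_v)_*e)}$; I would take $B_e$ to be this signed slope, so that $|B_e|=\lambda_{\tau e,\tau((\phi_v)_*e)}$ as required. For a general line $l_y=\mathbb{E}^1\times\{y\}$, note that $l_e$ and $l_y$ are parallel standard geodesics spanning a flat strip on which the two geodesics are \emph{synchronously} parametrised, so $d\big(\Phi(t,y_0),\Phi(t,y)\big)\le K\,d_{Y_e}(y_0,y)+A$ for every $t$; comparing $\mathbb{E}^1$-coordinates forces $F(\cdot,y)$ to have the same slope $B_e$ (a differing slope would make this distance unbounded in $t$). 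I would then set $\gamma_e(y):=\pi_{\mathbb{E}^1}(\Phi(0,y))$, which is $(K,A)$-coarse Lipschitz since $\pi_{\mathbb{E}^1}$ is $1$-Lipschitz and $d\big((0,y_1),(0,y_2)\big)=d_{Y_e}(y_1,y_2)$, and define $\phi_e(t,y):=(B_e t+\gamma_e(y),\psi_e(y))$.

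The genuine obstacle is \emph{uniformity} of the additive error $C$. The synchronisation estimate above only bounds the defect $|F(t,y)-B_e t-\gamma_e(y)|$ by a quantity growing like $d_{Y_e}(y_0,y)$, so the image of a wide flat strip could a priori wiggle within a band of unbounded width; a bare quasi-isometry of $\mathbb{R}$, or even of $\mathbb{E}^{n}$ preserving a linear foliation, need not be uniformly close to an affine map along its leaves. To kill the wiggle I would invoke the rigidity of standard flats in CAT(0) cube complexes, in the form already used in this paper (Lemma~2.10 of \cite{huang2017quasiflat}, cf. the arguments in \cite{huang2014quasi}): the $\Phi$-image of each flat strip is uniformly Hausdorff-close to an honest flat strip of $X_{e'}$, on which the boundary geodesics carry matching affine parametrisations up to a uniform additive error. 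This upgrades the slope-matching to uniform closeness $|F(t,y)-(B_et+\gamma_e(y))|\le C$ with $C=C(K,A,\Gamma,\Lambda)$, and combining with the $Y$-estimate from the first paragraph yields that $\phi_v|_{X_e}$ is $C$-close to $\phi_e$. Alternatively one could first apply Theorem \ref{thm:derahm} to split $\Phi$ along the de Rham decomposition of $X_e$ and isolate the Euclidean factor containing the $\langle v\rangle$-direction, but the leaf-affineness and its uniform constant still rest on the same flat-rigidity input, so I expect that to be the heart of the argument regardless of the route chosen.
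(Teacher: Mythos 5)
Your first two paragraphs essentially track the paper's argument: the relative quasi-isometry condition (Definition \ref{defn:relqi} with Remark \ref{rem:peripheralstrucutreraags}) gives the line-to-line map and hence $\psi_e$, admissibility pins the slope on $l_e$, parallel leaves must share that slope, and $\gamma_e(y):=\pi_{\mathbb{E}^1}(\Phi(0,y))$ is coarse Lipschitz; the estimates for $\psi_e$ and $\gamma_e$ are essentially those in the paper (the appeal to Theorem \ref{thm:jsjtocqi} is a harmless misattribution, since $\phi_v$ is given abstractly and everything you need follows from Definition \ref{defn:relqi}). You also correctly isolate the crux: uniformity of the additive error over \emph{all} leaves $\mathbb{E}^1\times\{y\}$, not just $l_e$.

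The gap is in your resolution of that crux. The statement you invoke is not available: Lemma 2.10 of \cite{huang2017quasiflat}, as used in this paper, says that standard geodesics are coarsely equivalent if and only if they are parallel; it is not a rigidity theorem for quasi-isometric images of flat strips, and no such theorem applies here, since a flat strip in $X_w$ is generally far from top-dimensional, so the quasiflat rigidity of \cite{huang2014quasi} does not see it. Worse, even if strips did map close to strips, this would not give what you need, because the required conclusion concerns the \emph{parametrisation} along leaves, not images as subsets. Concretely, the map $\Phi(t,y)=\bigl(2t+(d(y,y_0)-\lvert t\rvert)^{+},\,y\bigr)$ on $\mathbb{E}^1\times Y$ is a quasi-isometry with uniform constants, preserves the foliation by lines exactly (so every flat strip maps \emph{onto} a flat strip), is affine of slope $2$ on the leaf through $y_0$, and has asymptotic slope $2$ on every leaf; yet on the leaf at distance $M$ from $y_0$ it has a bump of height $M$, so it is not $C$-close to any map $(t,y)\mapsto(B_et+\gamma_e(y),\psi_e(y))$ for a uniform $C$. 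Thus ``strips go to strips'' plus slope matching genuinely does not imply the lemma, and your proof does not close the gap it correctly identifies. The paper's mechanism is different in kind: the uniform constant is not deduced from any geometric rigidity, but is read off from the admissibility hypothesis itself --- in the paper's proof, admissibility is what makes $t\mapsto\omega_e(t,y)$ uniformly $C'(K,A,\Gamma,\Lambda)$-close to an affine map of slope $\pm\lambda_{\tau e,\tau((\phi_v)_*e)}$ leaf by leaf --- and when admissible maps are actually constructed (Lemma \ref{lem:admisdefine}, Corollary \ref{cor:admissible}), this uniform stretch control over all peripheral geodesics is supplied by the dovetail property (Definition \ref{defn:frsys} demands a single constant $C$ for all $B_\Gamma$-geodesics), or by Lemma \ref{lem:stretcoarse} in the type II case. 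Indeed, the paper explicitly avoids assuming quasi-isometric rigidity of rigid vertex groups, so a rigidity theorem of the kind you want cannot be the engine of this lemma.
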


\begin{proof}
Recall Remark \ref{rem:peripheralstrucutreraags}, which tells us that if  $e$ is an edge incident to $v$, then the corresponding element $\mathcal{S}_e\in  \mathcal{P}_v$ consists precisely of subspaces of the form $\mathbb{E}^1\times \{y\}$, where $y$ is a vertex in $Y_e$. 
As $\phi_v$ is a $(K,A)$-relative quasi-isometry, it  sends subsets of the form $\mathbb{E}^1\times \{y\}$ to within Hausdorff distance $A$ of subsets of the form $\mathbb{E}^1\times \{y'\} \subseteq X_{(\phi_v)_*(e)}$. Therefore $\phi_v|_{X_e}$ is $A$-close to a map of the form $(t,y)\mapsto (\omega_e(t,y),\psi_e(y))$.

We now fix $y\in Y_e$. As $\phi_v$ is admissible, $t\mapsto \omega_e(t,y)$ is $C'=C'(K,A,\Gamma,\Lambda)$-close to a map of the form $t\mapsto (Bt+\gamma_e(y),\psi_e(y))$, where $\lvert B\rvert=\lambda_{\tau e,\tau ((\phi_v)_*e)}$. Note $\lvert B\rvert$ is independent of the choice of $y$.  Since $\lvert B\rvert>0$ and $\phi_v$ is a quasi-isometry, the sign of $B$ is also independent of $y$. Thus $\phi_v|_{X_e}$ is $C'$-close to a map $\phi_e$ of the form $$(t,y)\mapsto (Bt+\gamma_e(y),\psi_e(y)).$$

Finally, we claim that $\psi_e$ is a quasi-isometry and $\gamma_e$ is coarse Lipschitz, both of whose constants depend only on $K$ and $A$. Since $\phi_v$ is a $(K,A)$-quasi-isometry, the map $\phi_e$  is a $(K,A+2C')$ quasi-isometry. For all $y,y'\in Y_e$ and $t\in \mathbb{E}^1$, we note that $$d(\psi_e(y),\psi_e(y'))\leq d(\phi_e(t,y),\phi_e(t,y'))\leq K d(y,y')+A+2C'.$$ Similarly, we see $\lvert\gamma_e(y)-\gamma_e(y')\rvert\leq K d(y,y')+A+2C'$.
We now pick $t'\in \mathbb{E}^1$ such that $Bt+\gamma_e(y)=Bt'+\gamma_e(y')$. This means that \begin{align*}
d(\psi_e(y),\psi_e(y'))=d(\phi_e(t,y),\phi_e(t',y'))\geq \frac{1}{K}d(y,y')-A-2C'.
\end{align*}
\end{proof}

\begin{rem}\label{rem:flip}
For a RAAG $A(\Gamma)$, there is an automorphism defined by $w\mapsto w^{-1}$ for any $w\in V\Gamma$, whilst keeping all other generators fixed. By conjugating such an automorphism, we can perform an orientation-reversing `flip' along any standard geodesic in $X(\Gamma)$.
We can also perform translations along a standard geodesic in $X(\Lambda)$. Indeed, suppose $l$ is a standard geodesic whose vertex set is $g\langle w\rangle$. Then multiplication by $gw^dg^{-1}$ corresponds to  translating $l$ a distance $d$.

\end{rem}

\begin{cor}\label{cor:admissible}
Suppose $v\in VT$ and $w\in VT'$ are rigid vertices, and there are edges $e$ and $f$ such that  $\iota e=v$, $\iota f=w$ and $\delta(e)=\delta'(f)$. Let $y_0\in Y_e$ and $y'_0\in Y_f$.
Then for any $A\in \mathbb{\mathbb{E}}^1$ and $\varepsilon\in \{1,-1\}$  we may choose an admissible quasi-isometry $\phi_v:(X_v,\mathcal{P}_v,\delta_v,l_v)\rightarrow (X_w,\mathcal{P}_w,\delta_w,l_w)$ such that $(\phi_v)_*(e)=f$  and the map $\phi_e$, as defined in Lemma \ref{lem:almostprod}, is of the form $$(t,y)\mapsto (B_et+\gamma_e(y),\psi_e(y)),$$
 with  $B_e=\varepsilon \hspace{1pt}\lambda_{\tau e,\tau ((\phi_v)_*e)}$, $\psi_e(y_0)=y'_0$ and $d(\gamma_e(y_0),A)\leq 1$.
\end{cor}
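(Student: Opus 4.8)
The plan is to take the admissible quasi-isometry furnished by Lemma~\ref{lem:admisdefine} and correct it by post-composing with a short sequence of isometries of $X_w$, drawn from Remark~\ref{rem:flip} and from the vertex-group action, so as to prescribe independently the sign of $B_e$, the value $\psi_e(y_0)$, and the value $\gamma_e(y_0)$.

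First I would apply Lemma~\ref{lem:admisdefine} to obtain an admissible $(K,A)$-quasi-isometry $\phi^0\colon(X_v,\mathcal P_v,\delta_v,l_e)\to(X_w,\mathcal P_w,\delta_w,l_f)$. Since $l_e\in\mathcal S_e$ and $[\phi^0(l_e)]=[l_f]$ with $l_f\in\mathcal S_f$, the relative quasi-isometry $\phi^0$ satisfies $(\phi^0)_*(e)=f$. By Lemma~\ref{lem:almostprod}, $\phi^0|_{X_e}$ is close to a map $(t,y)\mapsto(B_et+\gamma_e(y),\psi_e(y))$ with $\lvert B_e\rvert=\lambda_{\tau e,\tau((\phi^0)_*e)}=\lambda_{\tau e,\tau f}$.

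I would then correct $\phi^0$ in three stages, each time post-composing with an isometry $\iota$ of $X_w$ that fixes the coarse class $[l_f]$ and preserves $(\mathcal P_w,\delta_w)$; such an $\iota$ also preserves admissibility, as it changes neither the magnitude $\lvert\mathrm{str}(-,l_{e'})\rvert$ nor the values of $\lambda$, and it leaves $(\phi^0)_*(e)=f$ unchanged because $\iota_*$ fixes $\mathcal S_f$. To fix the sign: if $\mathrm{sign}(B_e)\neq\varepsilon$, post-compose with the orientation-reversing flip along $l_f$ of Remark~\ref{rem:flip}, which reverses the $\mathbb E^1$ coordinate of $X_f$ and hence the sign of $B_e$. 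To fix $\psi_e(y_0)=y_0'$: the group $A(\mathrm{lk}^{\Gamma'}(w))$ acts on $X_w$ by isometries, transitively on the vertices of $Y_f$ and trivially on the $\mathbb E^1$ factor of $X_f$; choosing $g\in A(\mathrm{lk}^{\Gamma'}(w))$ carrying the vertex nearest $\psi_e(y_0)$ onto $y_0'$ and post-composing, I may assume $\psi_e(y_0)$ lies within bounded distance of $y_0'$, and then, since $\phi_e$ is only determined up to bounded error, choose the representative with $\psi_e(y_0)=y_0'$ exactly; this stage affects neither $B_e$ nor $\gamma_e$, since $g$ fixes the $\mathbb E^1$ coordinate. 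Finally, to fix $d(\gamma_e(y_0),A)\leq1$: post-compose with the translation of $l_f$ by a suitable integer $d$ (Remark~\ref{rem:flip}), which shifts $\gamma_e(y_0)$ by $d$ while fixing $\psi_e$ and the sign of $B_e$; picking $d$ so that $\gamma_e(y_0)+d$ lies within $\tfrac12$ of $A$ yields the claim. The resulting map $\phi_v$ is the required admissible quasi-isometry.

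The hard part will be the independence of the three corrections together with the bookkeeping that every correcting isometry simultaneously preserves admissibility, the decorated peripheral structure $(\mathcal P_w,\delta_w)$, the based class $[l_f]$, and the assignment $(\phi_v)_*(e)=f$. Independence rests on the product decomposition $X_f=\mathbb E^1\times Y_f$ of Definition~\ref{defn:cyldecomp}: flips and $l_f$-translations act trivially on $Y_f$, whereas $A(\mathrm{lk}^{\Gamma'}(w))$ acts trivially on the $\mathbb E^1$ factor, so the stages commute and none undoes another. The compatibility with $(\mathcal P_w,\delta_w)$ and $[l_f]$ follows by noting that all three families of isometries permute standard geodesics preserving labels and parallelism classes, and that parallel standard geodesics in $X_f$ are at finite Hausdorff distance, so they lie in the same coarse class.
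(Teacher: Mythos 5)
Your proposal is correct and follows essentially the same route as the paper's proof: obtain an admissible quasi-isometry from Lemma \ref{lem:admisdefine} with $(\phi_v)_*(e)=f$, then post-compose with isometries of $X_w$ --- an element of $\mathrm{stab}(f)$ to arrange $\psi_e(y_0)=y_0'$, and the flip/translation operations of Remark \ref{rem:flip} to fix the sign of $B_e$ and place $\gamma_e(y_0)$ near $A$ --- observing that such isometries preserve the decoration and hence admissibility. Your additional bookkeeping on the independence of the three corrections via the product decomposition $X_f=\mathbb{E}^1\times Y_f$ is a harmless elaboration of what the paper leaves implicit.
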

\begin{proof}
Using Lemma \ref{lem:admisdefine}, we construct an admissible quasi-isometry $$\phi_v:(X_v,\mathcal{P}_v,\delta_v,l_v)\rightarrow (X_w,\mathcal{P}_w,\delta_w,l_w)$$ such that $(\phi_v)_*(e)=f$.  
We define $\psi_e$ and $\gamma_e$ as in Lemma \ref{lem:almostprod}.
There exists $g\in \mathrm{stab}(f)\subseteq A(\Lambda)$ such that $g(\mathbb{E}^1\times \{\psi_e(y_0)\})=\mathbb{E}^1\times \{y'_0\}$. We therefore modify $\phi_v$ by postcomposing with an isometry corresponding to left multiplication by $g$.
Using the operations described in Remark \ref{rem:flip}, we can also postcompose by an isometry so that $B_e$ has the correct sign and $d(\gamma_e(y_0),A)\leq 1$. These isometries preserve $A(\Lambda)$-orbits of standard geodesics, so preserve decoration. Thus the modified quasi-isometry is still admissible.
\end{proof}

\subsection*{Relative quasi-isometries between cylindrical vertex spaces}
We make use of the following Lemma to construct suitable relative quasi-isometries of cylinders.

\begin{lem}\label{lem:constructqicyl}
Let $X$ and $Y$ be geodesic metric spaces and suppose $\gamma:X\rightarrow \mathbb{E}^1$ is $(K,A)$-coarse Lipschitz and $\psi:X\rightarrow Y$ is a $(K,A)$-quasi-isometry. Let $B\in \mathbb{R}\backslash\{0\}$. Then the map $\phi:\mathbb{E}^1\times X\rightarrow \mathbb{E}^1\times Y$ defined by $$(t,x)\mapsto (Bt+\gamma(x),\psi(x))$$ is a $(K',A')$-quasi-isometry for some $K'=K'(K,A,B)$ and $A'=A'(K,A,B)$.
\end{lem}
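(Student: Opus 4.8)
The plan is to verify the two quasi-isometry inequalities directly, using the product structure of the domain and target together with the fact that $B\neq 0$. The map $\phi$ is already a genuine map (not merely a coarse map), so I only need to establish the distance bounds; surjectivity-up-to-bounded-error will follow because $\psi$ is coarsely surjective and the first coordinate $t\mapsto Bt+\gamma(x)$ is onto $\mathbb{E}^1$ for each fixed $x$.

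First I would fix a convenient metric on the products, using the $\ell^1$ (or equivalently, up to a factor of $2$, the $\ell^\infty$ or $\ell^2$) metric: $d((t,x),(t',x'))=|t-t'|+d_X(x,x')$ and similarly on $\mathbb{E}^1\times Y$. This choice is harmless since all these product metrics are bi-Lipschitz equivalent with constants depending only on nothing, and it turns the estimates into simple triangle-inequality manipulations. For the upper bound, I compute
\begin{align*}
d(\phi(t,x),\phi(t',x'))&=|Bt+\gamma(x)-Bt'-\gamma(x')|+d_Y(\psi(x),\psi(x'))\\
&\leq |B|\,|t-t'|+|\gamma(x)-\gamma(x')|+d_Y(\psi(x),\psi(x')).
\end{align*}
Bounding $|\gamma(x)-\gamma(x')|\leq Kd_X(x,x')+A$ and $d_Y(\psi(x),\psi(x'))\leq Kd_X(x,x')+A$ gives an upper bound of the form $\max\{|B|,2K\}\,d((t,x),(t',x'))+2A$, which is coarse Lipschitz with constants depending only on $K,A,B$.

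The lower bound is where the genuine content lies, since here I must recover control of $|t-t'|$ even though the first coordinate of $\phi$ mixes $t$ with $\gamma(x)$. The key step is: from $d_Y(\psi(x),\psi(x'))\geq \tfrac1K d_X(x,x')-A$ I control $d_X(x,x')$, and then I recover $|t-t'|$ via the triangle inequality $|B|\,|t-t'|\leq |Bt+\gamma(x)-Bt'-\gamma(x')|+|\gamma(x)-\gamma(x')|$, so that
$$|t-t'|\leq \tfrac{1}{|B|}\bigl(|Bt+\gamma(x)-Bt'-\gamma(x')|+Kd_X(x,x')+A\bigr).$$
Adding a suitable multiple of the two controlled quantities $|Bt+\gamma(x)-Bt'-\gamma(x')|$ and $d_X(x,x')$ then dominates $|t-t'|+d_X(x,x')$ from below, yielding $d(\phi(t,x),\phi(t',x'))\geq \tfrac{1}{K'}d((t,x),(t',x'))-A'$. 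I would organize this as a single estimate with explicit coefficients; the factor $\tfrac{1}{|B|}$ is precisely why the constants must be allowed to depend on $B$ (and degenerate as $B\to 0$), which is harmless since $B$ is fixed and nonzero.

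The main obstacle, such as it is, is the coarse coupling between the $t$ and $x$ variables through $\gamma$: one cannot estimate the two coordinates of $\phi$ independently, so the lower bound requires the two-step argument above rather than a naive coordinatewise comparison. Once that is handled, coarse surjectivity is immediate: given $(s,z)\in\mathbb{E}^1\times Y$, choose $x$ with $d_Y(\psi(x),z)\leq A$ (possible since $\psi$ is a quasi-isometry) and then choose $t=\tfrac{1}{B}(s-\gamma(x))$, so that $\phi(t,x)=(s,\psi(x))$ lies within $A$ of $(s,z)$. Collecting the constants gives the desired $(K',A')$ with $K'=K'(K,A,B)$ and $A'=A'(K,A,B)$.
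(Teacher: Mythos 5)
Your proof is correct, and while both arguments are direct verifications of the two quasi-isometry inequalities, your handling of the lower bound takes a genuinely different route from the paper's. The paper works with the Euclidean product metric and splits into two cases according to whether $|\gamma(x)-\gamma(x')|\leq \frac{B}{2}|t-t'|$ or not: in the first case the first coordinate of $\phi$ retains a definite fraction of $B|t-t'|$, and in the second case $|t-t'|$ is absorbed into $d_X(x,x')$ via the coarse Lipschitz bound on $\gamma$. Your argument eliminates the case analysis entirely by a sequential bootstrap: first $d_X(x,x')\leq K\,d_Y(\psi(x),\psi(x'))+KA$ from the quasi-isometric embedding inequality for $\psi$, then
$$|t-t'|\;\leq\;\tfrac{1}{|B|}\Bigl(|Bt+\gamma(x)-Bt'-\gamma(x')|+K\,d_X(x,x')+A\Bigr)\;\leq\;\tfrac{1}{|B|}\,D_1+\tfrac{K^2}{|B|}\,D_2+\tfrac{(K^2+1)A}{|B|},$$
where $D_1,D_2$ denote the coordinate distances between the images; summing gives $|t-t'|+d_X(x,x')\leq K'(D_1+D_2)+A''$ with constants depending only on $K,A,|B|$, which is exactly the required lower bound in the $\ell^1$ product metric (bi-Lipschitz equivalent to the paper's metric by a universal factor). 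What your approach buys is a cleaner, case-free estimate with explicit constants; what the paper's buys is that each case isolates which coordinate of the image carries the distance, which some readers find more geometrically transparent. Two further points in your favor: you verify coarse surjectivity, which the paper's definition of quasi-isometry requires but whose proof the paper omits (it is indeed immediate, as you show by solving $Bt+\gamma(x)=s$ exactly), and you consistently write $|B|$, whereas the paper's computations implicitly take $B>0$ (harmless by composing with the flip $t\mapsto -t$, but worth noting).
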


\begin{proof}
Let $t,t'\in \mathbb{E}^1$ and $x,x'\in X$. Then \begin{align*}d(\phi(t,x),\phi(t',x'))&\leq d(\phi(t,x),\phi(t,x'))+d(\phi(t,x'),\phi(t',x'))\\
& \leq \sqrt{[\gamma(x)-\gamma(x')]^2+d_Y(\psi(x),\psi(x'))^2}+B\lvert t-t'\rvert \\
&\leq  \sqrt{2}(Kd_X(x,x')+A)+B\lvert t-t'\rvert\\
&\leq (\sqrt{2}K+B)d((t,x),(t',x'))+\sqrt{2} A.
\end{align*}
To give a lower bound for $d(\phi(t,x),\phi(t',x'))$, we first deal with the case when $\lvert \gamma(x)-\gamma(x')\rvert\leq \frac{B}{2}\lvert t-t'\rvert$. Then
\begin{align*}
d(\phi(t,x),\phi(t',x'))
&\geq\sqrt{\Bigg(\frac{B}{2}\lvert t-t'\rvert\Bigg)^2+d_Y(\psi(x),\psi(x'))^2}\\
&\geq \frac{B}{4}\lvert t-t'\rvert+\frac{1}{2}d_Y(\psi(x),\psi(x'))\\
& \geq \frac{B}{4}\lvert t-t'\rvert+\frac{1}{2K}d_X(x,x')-\frac{A}{2}\\
&\geq C d((t,x),(t',x'))-\frac{A}{2}
\end{align*}
where $C:=\min(\frac{B}{4},\frac{1}{2K})$ and the last line follows from the triangle inequality. We now conclude with the case where $\frac{B}{2}\lvert t-t'\rvert\leq \lvert \gamma(x)-\gamma(x')\rvert\leq Kd_X(x,x')+A$.
Then
\begin{align*}
d(\phi(t,x),\phi(t',x'))
&\geq d_Y(\psi(x),\psi(x'))\\
&\geq \frac{1}{2K}d_X(x,x')+\frac{1}{2K}d_X(x,x')-A\\
& \geq \frac{1}{2K}d_X(x,x')+\frac{B}{4K^2}\lvert t-t'\rvert-\frac{A}{2K^2}-A\\
& \geq C' d((t,x),(t',x'))-\frac{A}{2K^2}-A
\end{align*}
where $C':=\min(\frac{B}{4K^2},\frac{1}{2K})$.
\end{proof}

The following lemma is the main tool required in the proof of Theorem \ref{thm:mainthm}. The key idea is to apply the machinery of Section \ref{sec:freeproduct} to encode the required quasi-isometries of adjacent edge spaces into the quasi-isometry of cylinders.

\begin{lem}\label{lem:constructcylqi}
There are constants $K\geq 1$ and $A\geq 0$ such that for any cylinders  $v\in VT$ and $w\in VT'$ such that $\delta(v)=\delta'(w)$, the following hold.
\begin{enumerate}
\item \label{lem:constructcylqirelqicyl} There is a $(K,A)$ relative quasi-isometry $\phi_v:X_v\rightarrow X'_{w}$. 
\item \label{lem:constructcylqirelqiedge} For every edge $e\in ET$ with $\iota e=v$, there is a map $\phi_e:X_e\rightarrow X'_{(\phi_v)_*(e)}$ such that $\phi_e$ and $\phi_v|_{X_e}$ are $A$-close. 
\item \label{lem:constructcylqirelqirig} For every edge $e\in ET$ with $\iota e=v$, there is an admissible $(K,A)$-quasi-isometry $$\phi_{\tau e}:(X_{\tau e},\mathcal{P}_{\tau e},\delta_{\tau e},l_{\overline e})\rightarrow (X_{\tau((\phi_v)_* e)},\mathcal{P}_{\tau((\phi_v)_* e)},\delta_{\tau((\phi_v)_* e)},l_{\overline{(\phi_v)_* e}})$$ such that the maps $\alpha_{(\phi_v)_*e}\circ \phi_e$ and $\phi_{\tau e} \circ \alpha_e$ are $A$-close. 
\end{enumerate}
Moreover, suppose that there exist edges $e\in ET$ and $f\in ET'$ such that $\iota e=v$, $\iota f=w$ and $\delta(e)=\delta'(f)$. Suppose also that there is an admissible $(K,A)$ relative quasi-isometry $\rho:X_{\tau e}\rightarrow X'_{\tau f}$ such that $\rho_*(e)=f$. Then \ref{lem:constructcylqirelqicyl}-\ref{lem:constructcylqirelqirig} hold  with the restriction $(\phi_v)_*(e)=f$ and $\phi_{\tau e}=\rho$.   
\end{lem}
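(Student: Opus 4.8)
The plan is to build the cylinder quasi-isometry $\phi_v$ by first choosing, on each incident edge, the quasi-isometries of the edge spaces dictated by the dovetail structure, and then assembling these into a quasi-isometry of the free-product-of-graphs $\Sigma_v$ using the moves of Section \ref{sec:freeproduct}. Recall from Definition \ref{defn:cyldecomp} that the cylindrical vertex space has the form $X_v=\mathbb{E}^1\times\Sigma_v$, where $\Sigma_v$ is a free product of graphs whose \emph{peripheral} special subgraphs correspond to the incident edges, and whose elements $\mathcal{S}_e$ of the peripheral structure are the subspaces $\mathbb{E}^1\times\{y\}$ for $y$ a vertex of the peripheral special subgraph $A_e$ (Remark \ref{rem:peripheralstrucutreraags}). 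The key point is that a relative quasi-isometry of $X_v$ is governed by a quasi-isometry of $\Sigma_v$ preserving peripheral special subgraphs (as in the proof of Proposition \ref{prop:relqicyl}), together with a linear stretching of the $\mathbb{E}^1$-factor whose stretch factor on each edge records $\lambda_{\tau e,\tau((\phi_v)_*e)}$.

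First I would set up the combinatorial data. Since $\delta(v)=\delta'(w)$ and the decoration is stable under vertex refinement, Proposition \ref{prop:selfqi} (lifted to the two-tree setting via Corollary \ref{cor:dectreeisom}) gives a bijection between the incident edges of $v$ and $w$ that matches ornaments, and hence matches peripheral factors up to the appropriate relative quasi-isometry type; by Proposition \ref{prop:relqicyl} the non-peripheral factors also match in quasi-isometry type. For each incident edge $e$, with $\tau e$ rigid, I would use Corollary \ref{cor:admissible} to choose an admissible quasi-isometry $\phi_{\tau e}$ of the rigid neighbour, which by Lemma \ref{lem:almostprod} restricts on the common edge space to a map $\phi_e\colon(t,y)\mapsto(B_e t+\gamma_e(y),\psi_e(y))$ with $|B_e|=\lambda_{\tau e,(\phi_v)_*(\tau e)}$; this $\psi_e$ is the basepoint-preserving quasi-isometry of the peripheral special subgraph $A_e$ that I will feed into the type II moves. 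Then I would run the inductive move procedure of Proposition \ref{prop:relqicyl}: apply type III moves to match non-peripheral line factors, type I moves to match the remaining factors, and at each isolated vertex apply Lemma \ref{lem:qilink} with the prescribed quasi-isometries $\psi_e$ on peripheral subgraphs and arbitrary (but uniformly bounded) choices on non-peripheral subgraphs. Proposition \ref{prop:compmoves} guarantees the composite is a quasi-isometry $\Sigma_v\to\Sigma_w$ whose restriction to each peripheral $A_e$ is close to $\psi_e$; crossing with the linear map $t\mapsto B_e t + (\text{correction})$ on the $\mathbb{E}^1$-factor and invoking Lemma \ref{lem:constructqicyl} produces $\phi_v$ and establishes \ref{lem:constructcylqirelqicyl} and \ref{lem:constructcylqirelqiedge}. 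Statement \ref{lem:constructcylqirelqirig} then follows because, by construction, the edge-space map $\phi_e$ agrees on its $Y_e$-factor with $\psi_e=$ (restriction of $\phi_{\tau e}$), so $\alpha_{(\phi_v)_*e}\circ\phi_e$ and $\phi_{\tau e}\circ\alpha_e$ are close; here I use that the edge maps $\alpha_e$ split as products of cubical isomorphisms (the consequence of the CAT(0) convention noted after Definition \ref{defn:cyldecomp}).

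For the final ``moreover'' clause, where $\rho$ is prescribed on one edge $e$ with $\rho_*(e)=f$, I would simply begin the construction by designating $e\mapsto f$ as the base correspondence, take $\phi_{\tau e}:=\rho$ (so $\psi_e$ and the stretch $B_e$ come from $\rho$ via Lemma \ref{lem:almostprod} rather than from a free choice via Corollary \ref{cor:admissible}), and carry out the rest of the move procedure exactly as before. The ``rebasing'' freedom of the free product of graphs, together with Remark \ref{rem:flip}, lets me arrange the remaining edge maps and the $\mathbb{E}^1$-alignment so that everything is consistent with this fixed choice.

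The main obstacle I anticipate is \emph{uniformity} of the constants $K$ and $A$, which must be independent of the particular cylinders $v,w$ (there may be infinitely many such pairs across $T$, but only finitely many orbit types). I would handle this by observing that $\Sigma_v$ and $\Sigma_w$ involve only finitely many isometry types of special subgraphs and finitely many ornament classes, so the quasi-isometries $\psi_e$ and the bounded non-peripheral maps can be chosen from a finite list; the move sequence is then uniformly locally finite in the sense required by Proposition \ref{prop:compmoves}, giving uniform output constants. A secondary subtlety is that the stretch factor $B_e$ must be \emph{consistent across all incident $\mathcal{R}$-edges} so that the single linear $\mathbb{E}^1$-map $t\mapsto B t+\cdots$ is well-defined on the whole cylinder: this is exactly where Definition \ref{defn:cylstretch} and the normalization of relative stretch factors via the embellished decoration are used, guaranteeing $B_e$ depends only on the matched ornaments and hence is common to all incident $\mathcal{R}$-edges, while on $\mathcal{F}$-edges the dovetail hypothesis lets me impose the same value by fiat.
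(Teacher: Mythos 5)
Your proposal follows essentially the same route as the paper's proof: decompose $X_v=\mathbb{E}^1\times\Sigma_v$ as a blown-up tree of spaces, use Corollary \ref{cor:admissible} and Lemma \ref{lem:almostprod} to fix admissible maps on the rigid neighbours, feed the resulting $\psi_e,\gamma_e$ into the move machinery (Lemma \ref{lem:qilink}, Proposition \ref{prop:compmoves}), assemble the product quasi-isometry via Lemma \ref{lem:constructqicyl}, and handle the ``moreover'' clause by rebasing at $e\mapsto f$ with $\phi_{\tau e}=\rho$, with uniformity coming from finiteness of isometry types. The key consistency point --- that every incident edge must carry the same $\mathbb{E}^1$-stretch, namely $\lambda_{v,w}$ --- is correctly identified and resolved exactly as in the paper, modulo a small notational slip: $\lambda$ is indexed by the pair of cylinders, so $\lvert B_e\rvert=\lambda_{v,w}$ rather than $\lambda_{\tau e,(\phi_v)_*(\tau e)}$.
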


\begin{proof}
Every cylindrical vertex space $X_v$ admits a cylindrical decomposition $X_v=\mathbb{E}^1\times Y_v$ as in Definition \ref{defn:cyldecomp}. As in Section \ref{sec:cyl}, we replace the trees of spaces with blown up trees of spaces. We thus assume that each vertex space $X_v$ is of the form $\mathbb{E}^1\times \Sigma_v$, where $\Sigma_v$ is a free product of graphs. An edge space  $X_e\subseteq X_v$ is of the form $\mathbb{E}^1\times A_e$, where  $A_e$ is some peripheral special subgraph of $\Sigma_v$. 
The same is true for the cylinder $w$. Let $v_0$ and $w_0$ be depth one isolated vertices of $\Gamma_v$ and $\Gamma_w$ respectively. Let $B:=\lambda_{v,w}$. For ease of notation, we will freely identify the edge space $X_e\subseteq X_{\iota e}$ with $\alpha_e(X_e)\subseteq X_{\tau e}$; omitting the $\alpha_e$ makes the following argument easier to follow. 

We will define maps $\psi: \Sigma_v\rightarrow \Sigma_w$ and $\gamma: \Sigma_v\rightarrow \mathbb{R}$ so that the hypotheses of Lemma \ref{lem:constructqicyl} hold. It will be clear from the construction that the resulting quasi-isometry between vertex spaces does satisfy the necessary conditions.  The maps $\psi$ and $\gamma$ are defined inductively on special subgraphs, working out from depth one isolated vertices.

The construction of $\psi$ is the most involved part of the proof. It is a modification of the  construction of the quasi-isometry between free products of graphs in the proof of Proposition \ref{prop:relqicyl}. The reader is advised to understand this simpler construction first before proceeding with the current proof.

We  decorate special subgraphs of  $\Sigma_v$ and $\Sigma_w$ with the following data:
\begin{enumerate}
\renewcommand{\labelenumi}{(\roman{enumi})}
\item the type of each special subgraph: peripheral or non-peripheral;
\item the quasi-isometry class of each special subgraph;
\item for each peripheral special subgraph, the ornament assigned to the edge  corresponding to that special subgraph. 
\end{enumerate} 

Using Corollary \ref{cor:dectreeisom}, there is a relative quasi-isometry $\rho_v:(X_v,\mathcal{P}_v)\rightarrow (X'_w,\mathcal{P}_w)$ that preserves the decoration on incident edge spaces. We thus deduce, extending the proof of Proposition \ref{prop:relqicyl}, that special subgraphs in $\mathrm{lk}^+(v_0)$ and $\mathrm{lk}^+(w_0)$ that are either peripheral   or are one-ended and  non-peripheral, are decorated with the same sets of ornaments. This is in general stronger than just assuming they have the same quasi-isometry class.

 We set $\psi(v_0)=w_0$ and $\gamma(v_0):=t_0$, where $t_0\in \mathbb{E}^1$ is chosen arbitrarily.
As in the proof of Proposition \ref{prop:relqicyl}, we apply type I and III moves at $v_0$ to ensure  that there is a decoration-preserving bijection $\chi:\mathrm{lk}^+(v_0)\rightarrow \mathrm{lk}^+(w_0)$.
Let $A\in \mathrm{lk}^+(v_0)$ and $\chi(A)\in \mathrm{lk}^+(w_0)$ denote peripheral special subgraphs, and suppose $e_A$ and $e'_A$ denote the corresponding edges in  $\mathrm{lk}(v)$ and $\mathrm{lk}(w)$ respectively. We orient these edges so that $\iota e_A=v$ and $\iota e_A'=w$. Let $a_0\in A$ and $a'_0\in \chi(A)$ be basepoints. 

By  Corollary \ref{cor:admissible}, we may choose an admissible relative quasi-isometry $\phi_{\tau e_A}:X_{\tau e_A}\rightarrow X'_{\tau e'_A}$ such that $(\phi_{\tau e_A})_*(\overline{e_A})=\overline{e'_A}$, and such that the map $\phi_{\bar e_A}:X_{\overline{e_A}}\rightarrow X_{\overline{e'_A}}$, as in Lemma \ref{lem:almostprod}, is of the form $$(t,y)\mapsto (Bt+\gamma_{e_A}(y),\psi_{e_A}(y)).$$

We then use $\psi_{e_A}$ and $\gamma_{e_A}$ to extend $\psi$ and $\gamma$ over the special subgraph $A$. Using Remark \ref{rem:flip}, there is enough freedom in the application of Lemma \ref{lem:almostprod} that we can assume $\psi_{e_A}$ and $\gamma_{e_A}$ coarsely uniformly agree with $\psi$ and $\gamma$ at the basepoint. We continue in this way, extending the domain of $\psi$ and $\gamma$ over special subgraphs.

We can be more precise in how we `extend over a special subgraph' to define $\psi$; this requires a bit of care. What this actually involves is applying Lemma \ref{lem:qilink} using the quasi-isometry $\psi_{e_A}$ as above. The composite quasi-isometry obtained by applying several type I, II and III moves at all isolated vertices is therefore the desired quasi-isometry $\psi:\Sigma_v\rightarrow \Sigma_w$. This quasi-isometry has all the maps $\psi_{e_A}$ (for all special subgraphs $A$) built into it. Similarly we construct the coarse Lipschitz map $\gamma: \Sigma_v\rightarrow \mathbb{R}$, with all the maps $\gamma_{e_A}$ built in. It follows that the resulting quasi-isometry $X_v\rightarrow X_w$, constructed via Lemma \ref{lem:constructqicyl}, necessarily agrees with all the maps $\phi_{\tau e_A}$ on adjacent edge spaces. 

We now prove the final part of the proposition: how to modify this construction if one already has $e$, $f$ and $\rho$. By Lemma \ref{lem:almostprod}, the map $\rho|_{X_{\overline e}}$ is close to a map of the form $$(t,y)\mapsto (Bt+\gamma(y),\psi(y)),$$ where $B=\epsilon\mathrm{str}(v,w)$ for $\epsilon=\pm 1$.

 Suppose $a_0$ is the basepoint of $A_e$. We choose isolated vertices $x\in \Sigma_v$ and $y\in \Sigma_w$ so that $A_e$ and $A'_f$ are attached to $x$ and $y$ at $a_0$ and $\psi(a_0)$ respectively.  We now `rebase' $\Sigma_v$ and $\Sigma_w$ so that $x$ and $y$ are the depth one isolated vertices. We now proceed as above, setting $t:=\gamma(a_0)$, using $\epsilon$ as above when applying Corollary \ref{cor:admissible}, and ensuring that $\chi(A)=A'$ and $\phi_{\tau e}=\rho$.
\end{proof}

\subsection*{Combining relative quasi-isometries}
\begin{proof}[Proof of Theorem \ref{thm:mainthm}]
Theorem \ref{thm:jsjtocqi} and  Corollary \ref{cor:embeldecgeom}  ensure that if $G$ and $G'$ are quasi-isometric, then they have equivalent JSJ trees of cylinders when decorated with the embellished decoration. We prove the converse.

We suppose the trees  $T$ and $T'$ are equivalent. We construct an isomorphism $\chi:T\rightarrow T'$ and maps $\{\phi_v\}$ and $\{\phi_e\}$ such that Proposition \ref{prop:qitree} holds. We choose arbitrary cylindrical vertices  $v\in VT$ and $w\in VT'$ decorated with the same ornament. We construct a relative quasi-isometry $\phi_v:X_v\rightarrow X'_w$ as in Lemma \ref{lem:constructcylqi} and define $\chi|_{\mathrm{lk}(v)}:=(\phi_v)_*$. 	
For each vertex $v'$ adjacent to $v$, Lemma \ref{lem:constructcylqi} gives a quasi-isometry $\phi_{v'}:X_{v'}\rightarrow X_{\chi(v')}$. We extend $\chi$ to $\mathrm{lk}(v')$ by $(\phi_{v'})_*$.
 The quasi-isometries $\{\phi_e\}$ and  $\{\phi_v\}$, where defined thus far,  do indeed satisfy the conditions required  for Proposition \ref{prop:qitree} to be applied.

We work outwards from $v$, applying Lemma \ref{lem:constructcylqi} at each cylindrical vertex, extending the domain of $\chi$ and defining new relative quasi-isometries  as we move away from $v$. For instance, suppose $v_1$ is a rigid vertex adjacent to $v$ and $v_2\neq v$ is a cylindrical vertex adjacent to $v_1$. 
 Let $e=(v_2,v_1)$. We use Lemma \ref{lem:constructcylqi} to construct a relative quasi-isometry $\phi_{v_2}:X_{v_2}\rightarrow X'_{\chi(v_2)}$. 
 By the final part of  Lemma \ref{lem:constructcylqi}, $\phi_{v_2}$  can be chosen so that $(\phi_{v_2})_*(e)=\chi(e)$, and that $\phi_{v_2}$ and $\phi_{v_1}$ agree on the edge space $X_e$. We continue in this way, defining more maps $\phi_v$ and $\phi_e$ and extending the domain of $\chi$.
 
We conclude with a  uniformity argument: as there are only finitely many isometry classes of vertex and edge spaces in $T$ and $T'$,  all maps $\phi_v$ and $\phi_e$ can be chosen from a finite set of model maps.  We thus have uniform quasi-isometry constants for all maps $\{\phi_v\}$ and $\{\phi_e\}$.  Thus Proposition \ref{prop:qitree} can be applied, showing that $G$ and $G'$ are indeed quasi-isometric.
\end{proof}

Using the proof of Theorem \ref{thm:mainthm}, we can provide further evidence for Question \ref{ques:dovetail}. In particular, the following proposition shows that tree RAAGs are dovetail.
\begin{prop}\label{prop:frsysjsj}
Let $A(\Gamma_1)$ and $A(\Gamma_2)$ be one-ended dovetail RAAGs. Let $\Gamma$ be the simplicial graph obtained by attaching $\Gamma_1$ and $\Gamma_2$ along vertices $v\in V\Gamma_1$ and $w\in V\Gamma_2$. Then $A(\Gamma)=A(\Gamma_1)*_{\langle v\rangle=\langle w\rangle}A(\Gamma_2)$ is a dovetail RAAG.
\end{prop}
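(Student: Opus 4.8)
The plan is to realise $X(\Gamma)$ as the tree of spaces of the amalgam $A(\Gamma_1)*_{\langle v\rangle}A(\Gamma_2)$, whose vertex spaces are the translates of $X(\Gamma_1)$ and $X(\Gamma_2)$ and whose edge spaces are the cosets of $\langle v\rangle$, i.e.\ the $v$-geodesics. Given a decoration-preserving based relative quasi-isometry $f\colon(X(\Gamma),\mathcal{P}_{B_\Gamma},\delta,l)\to(X(\Lambda),\mathcal{P}_{B_\Lambda},\delta',l')$ and a bi-bounded $\lambda$ as in Definition \ref{defn:frsys}, I would first apply Theorem \ref{thm:jsjtocqi} to obtain the induced isomorphism $f_*$ of JSJ trees of cylinders. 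Since $v$ is a cut vertex of $\Gamma$, its cylinder separates the JSJ tree into the part coming from $\Gamma_1$ and the part coming from $\Gamma_2$; transporting this across $f_*$ shows that $A(\Lambda)$ splits compatibly as $A(\Lambda_1)*_{\langle v'\rangle}A(\Lambda_2)$ with each $A(\Lambda_i)$ (relatively) quasi-isometric to $A(\Gamma_i)$, that $f$ coarsely preserves the amalgam tree of spaces, and that $f$ restricts to decoration-preserving based relative quasi-isometries $f_i\colon X(\Gamma_i)\to X(\Lambda_i)$, taking the common $v$-geodesic as base geodesic.

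Next I would fix the stretch target on the edge spaces. A short restriction argument --- computing the stretch of a based quasi-isometry along the $v$-geodesic inside whichever factor contains it --- shows that the $v$-geodesic is an $\mathcal{R}$-geodesic of $X(\Gamma)$ as soon as it is an $\mathcal{R}$-geodesic of $X(\Gamma_1)$ or of $X(\Gamma_2)$; equivalently, if it is an $\mathcal{F}$-geodesic of $X(\Gamma)$ then it is an $\mathcal{F}$-geodesic of \emph{both} factors. I would then choose a single value $\mu\in\mathbb{Q}_{>0}$ for the $v$-geodesic: if $v\in B_\Gamma$ and the $v$-geodesic is flexible I take $\mu=\lambda(\mathcal{S}_v)$ (legitimate, since flexibility then holds in both factors), and otherwise I take $\mu$ to be the stretch factor of $f$ along the $v$-geodesic, which is either forced because the geodesic is rigid in that factor, or freely prescribable because it is flexible. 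Since every non-$v$ geodesic together with its parallel set lies inside a single factor, the analogous equivalence identifies its type with its type in that factor. Feeding into the dovetail property of each $A(\Gamma_i)$ the function $\lambda$, with the $v$-class set to $\mu$ (the redundant values on rigid classes being irrelevant by the remark following Definition \ref{defn:frsys}), produces quasi-isometries $g_i\colon X(\Gamma_i)\to X(\Lambda_i)$ realising the prescribed stretch factors on all $B_\Gamma$-labelled $\mathcal{F}$-geodesics and the forced ones on the $\mathcal{R}$-geodesics.

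Finally I would glue. By construction $g_1$ and $g_2$ stretch the common $v$-geodesic by the same factor $\mu$; using translations and flips along the target $v'$-geodesic (Remark \ref{rem:flip}), which preserve orbits of standard geodesics and hence both the decoration and all stretch factors, I would align $g_1$ and $g_2$ so that they agree up to bounded error on each edge space, exactly as in Corollary \ref{cor:admissible}. Applying $g_1$ on every $\Gamma_1$-vertex space and $g_2$ on every $\Gamma_2$-vertex space --- there being only finitely many model maps, so that the quasi-isometry constants are uniform --- the hypotheses of Proposition \ref{prop:qitree} hold for the tree isomorphism $\chi$ induced by $f_*$, and the resulting $g\colon X(\Gamma)\to X(\Lambda)$ is the required decoration-preserving based relative quasi-isometry with $\mathrm{str}(g,l'')=\lambda(\mathcal{S}_{l''})$ on $\mathcal{F}$-geodesics and well-defined stretch on $\mathcal{R}$-geodesics, so $A(\Gamma)$ is dovetail.

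The hard part will be the consistent treatment of the $v$-geodesic, which is shared by the two sides: one must simultaneously arrange that the two factor quasi-isometries carry the same stretch there (via the $\mathcal{R}/\mathcal{F}$ dichotomy above) and that they agree as maps up to bounded error on \emph{every} edge space, all without disturbing the prescribed stretches and the decoration on the interiors of the factors. The other point requiring care is verifying that the type (rigid versus flexible) of a standard geodesic of $X(\Gamma)$ is correctly detected inside the factors, so that the dovetail property of each $A(\Gamma_i)$ is invoked only on genuine $\mathcal{F}$-geodesics.
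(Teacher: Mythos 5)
Your construction is built on the claim, made in your first paragraph, that the isomorphism $f_*$ of JSJ trees of cylinders coming from Theorem \ref{thm:jsjtocqi} can be used to ``transport'' the amalgam $A(\Gamma)=A(\Gamma_1)*_{\langle v\rangle}A(\Gamma_2)$: that $A(\Lambda)$ splits compatibly, that $f$ coarsely preserves the amalgam tree of spaces, and that $f$ restricts to relative quasi-isometries $f_i\colon X(\Gamma_i)\to X(\Lambda_i)$. This is a genuine gap. Theorem \ref{thm:jsjtocqi} only says that $f$ coarsely preserves vertex spaces of the tree of cylinders; it says nothing about the amalgam tree, which is strictly finer and non-canonical data. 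Concretely, the cylinder of $v$ has stabilizer $\mathbb{Z}\times\bigl(A(\mathrm{lk}^{\Gamma_1}(v))*A(\mathrm{lk}^{\Gamma_2}(w))\bigr)$, and the amalgam structure amounts to a grouping of the edges incident to this cylinder (equivalently, of the peripheral special subgraphs of $\Sigma_v$ in the notation of Section \ref{sec:cyl}) into a ``$\Gamma_1$-side'' and a ``$\Gamma_2$-side''. What $f$ induces on the cylinder space is merely a decoration-preserving relative quasi-isometry, hence a decoration-preserving bijection of these peripheral pieces; when the two sides carry identical ornaments --- for instance $\Gamma_1=\Gamma_2$ a pentagon --- no cited result forces this bijection to respect the two-sided grouping, nor to carry it to the grouping determined by \emph{any} splitting of $A(\Lambda)$. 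Splittings over $\mathbb{Z}$ are well defined only up to deformation, and it is precisely the tree of cylinders, not any individual splitting, that is a quasi-isometry invariant. Note that the same unproved claim also underlies your second paragraph: to compute the stretch of a based quasi-isometry of $(X(\Gamma),l_v)$ ``inside whichever factor contains it'' one must already know that the quasi-isometry coarsely preserves that factor.

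The paper's proof avoids this issue entirely: it never transports the amalgam. It keeps the canonical JSJ trees of cylinders $T$ and $T'$ (for which a decoration-preserving isomorphism is supplied by the hypothesis together with Theorem \ref{thm:jsjtocqi}), re-runs the gluing argument of Theorem \ref{thm:mainthm} over them, and encodes $\lambda$ by redefining the cylinder stretch factors $\lambda_{v,w}$ of Definition \ref{defn:cylstretch}: whenever a cylinder is not an $\mathcal{R}$-cylinder and its core geodesic $l$ is labelled by an element of $B_\Gamma$, one sets $\lambda_{v,w}:=\lambda(\mathcal{S}_l)$. The dovetail hypotheses are then used, as in Lemma \ref{lem:admisdefine} and Lemma \ref{lem:constructcylqi}, to build vertex quasi-isometries realising these stretch factors, and Proposition \ref{prop:qitree} glues them into the required $g$. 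To salvage your outline you would have to either prove the amalgam-preservation statement (a new result, which the flexibility of relative quasi-isometries of cylinder spaces exhibited in Section \ref{sec:freeproduct} gives no reason to expect), or abandon the amalgam tree and glue over the tree of cylinders as the paper does.
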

\begin{proof}
We suppose $f:(X(\Gamma),\mathcal{P}_{B},\delta,l)\rightarrow( X(\Lambda),\mathcal{P}_{B_\Lambda},\delta',l')$ is a quasi-isometry so that $X(\Gamma)$ and $X(\Lambda)$ admit a decoration-preserving isomorphism between $T$ and $T'$, their JSJ tree of cylinders. Let $\lambda:\mathcal{P}_{B}\rightarrow \mathbb{Q}_{>0}$ be a function which is constant on $\delta^{-1}(o)$ for every $o\in \mathcal{O}$. 

We proceed as in the proof of Theorem \ref{thm:mainthm}, building the required quasi-isometry $g:(X(\Gamma),\mathcal{P}_{B},\delta,l)\rightarrow( X(\Lambda),\mathcal{P}_{B_\Lambda},\delta',l')$ by defining a tree of quasi-isometries. We can do this by using the dovetail property to encode the required stretch factors into the quasi-isometries of vertex spaces.  We modify the cylinder stretch factors $\lambda_{v,w}$, defined  in Definition \ref{defn:cylstretch}, as follows. Suppose there is some  $\mathcal{F}$-geodesic $l$, labelled by an element of $B_\Lambda$, whose parallelism class isn't an $\mathcal{R}$-cyclinder. Then for some $w\in VT'$ with $\delta(v)=\delta'(w)$, we define $\lambda_{v,w}:=\lambda(S_l)$. We may now proceed as in the proof of Theorem \ref{thm:mainthm}.
\end{proof}
\begin{rem}\label{rem:ifunivfrfails}
Should Question \ref{ques:dovetail} fail to hold, the methods of Sections \ref{sec:stretch} and \ref{sec:constructqi} can still be adapted to determine necessary and sufficient criteria for decorated JSJ tree of cylinders to determine the quasi-isometry type of a RAAG. To do this, one needs a sufficient understanding of which possible stretch factors can arise and to what extent one can construct quasi-isometries by assigning such stretch factors to geodesics independently. 
\end{rem}

\section{The quasi-isometry problem}\label{sec:algor}
We now wish to investigate whether one can algorithmically decide if two RAAGs are quasi-isometric. 
Let $\mathcal{D}$ be an arbitrary class of RAAGs.
\begin{defn}
 We say that $\mathcal{D}$ has \emph{solvable membership problem} if there is an algorithm that takes as input a finite simplicial graph $\Gamma$ and outputs whether $A(\Gamma)\in \mathcal{D}$.
\end{defn}

\begin{defn}
 We say that $\mathcal{D}$  has \emph{solvable quasi-isometry problem} if there is an algorithm that takes as input finite simplicial graphs $\Gamma$ and $\Lambda$ with $A(\Gamma),A(\Lambda)\in \mathcal{D}$, and outputs whether $A(\Gamma)$ and $A(\Lambda)$ are quasi-isometric.
\end{defn}

Suppose we are given the tuple $(\Gamma,B_\Gamma,\hat\delta_\Gamma,v)$, where $\Gamma$ is a finite simplicial graph, $B_\Gamma\subseteq V\Gamma$, $\hat\delta$ is a function $B_\Gamma\rightarrow \mathcal{O}$ and $v\in B_\Gamma$. We associate to  this data the tuple $(X(\Gamma),\mathcal{P}_{B_\Gamma},\delta_\Gamma,l_v)$, where $\delta_\Gamma(l)=\hat \delta_\Gamma(w)$ for every $w$-geodesic $l$, and $l_v$ is the standard geodesic corresponding to the subgroup $\langle v\rangle$.

\begin{defn}
We say that $\mathcal{D}$ has \emph{solvable relative quasi-isometry problem} if there is an algorithm that takes as input $(\Gamma,B_\Gamma)$ and $(\Lambda,B_\Lambda)$ such that $A(\Gamma),A(\Lambda)\in \mathcal{D}$,  and outputs whether there is a decoration preserving based relative quasi-isometry $(X(\Gamma),\mathcal{P}_{B_ \Gamma})\rightarrow (X(\Lambda),\mathcal{P}_{B_ \Lambda})$.

We say that $\mathcal{D}$ has \emph{solvable strong quasi-isometry problem} if there is an algorithm that takes as input $(\Gamma,B_\Gamma,\hat\delta_\Gamma,v)$ and $(\Lambda,B_\Lambda,\hat\delta_\Lambda,w)$ such that $A(\Gamma),A(\Lambda)\in \mathcal{D}$,  and outputs whether there is a decoration preserving based relative quasi-isometry $(X(\Gamma),\mathcal{P}_{B_ \Gamma},\delta_\Gamma,l_v)\rightarrow (X(\Lambda),\mathcal{P}_{B_ \Lambda},\delta_\Lambda,l_w)$.
\end{defn}

\begin{prop}
Abelian RAAGs have solvable strong relative quasi-isometry problem.
\end{prop}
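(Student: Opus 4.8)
The plan is to show that, for abelian RAAGs, the peripheral structure is as simple as possible, so that the existence of a decoration--preserving based relative quasi-isometry reduces to a finite combinatorial matching which is trivially checkable. First I would record the geometry: $A(\Gamma)\in\mathcal{D}$ is abelian precisely when $\Gamma$ is a complete graph on $n:=|V\Gamma|$ vertices, so $X(\Gamma)=\mathbb{E}^n$ and every standard geodesic is parallel to a coordinate line. By Corollary 3.2 of \cite{huang2014quasi} two standard geodesics are parallel if and only if they have the same support; since every $u$-geodesic has support $\{u\}$, all $u$-geodesics are mutually parallel. Hence $\mathcal{S}_{l_u}$ is the single parallel class of all $u$-geodesics, the assignment $u\mapsto\mathcal{S}_{l_u}$ is a bijection $B_\Gamma\to\mathcal{P}_{B_\Gamma}$, and under this bijection the induced decoration $\delta_\Gamma$ is identified with $\hat\delta_\Gamma$. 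The same holds for $\Lambda$.

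Next I would prove that a decoration-preserving based relative quasi-isometry $(X(\Gamma),\mathcal{P}_{B_\Gamma},\delta_\Gamma,l_v)\to(X(\Lambda),\mathcal{P}_{B_\Lambda},\delta_\Lambda,l_w)$ exists if and only if: (i) $|V\Gamma|=|V\Lambda|$; (ii) $\hat\delta_\Gamma(v)=\hat\delta_\Lambda(w)$; and (iii) $|\hat\delta_\Gamma^{-1}(o)|=|\hat\delta_\Lambda^{-1}(o)|$ for every ornament $o\in\mathcal{O}$. For necessity, any such map is in particular a quasi-isometry $\mathbb{E}^n\to\mathbb{E}^m$, and $\mathbb{E}^n$, $\mathbb{E}^m$ being quasi-isometric forces $n=m$; the accompanying bijection $f_*\colon\mathcal{P}_{B_\Gamma}\to\mathcal{P}_{B_\Lambda}$ is decoration-preserving, which by the first paragraph gives (iii); and the basepoint condition $[f(l_v)]=[l_w]$ forces $f_*(\mathcal{S}_{l_v})=\mathcal{S}_{l_w}$, whence $\hat\delta_\Gamma(v)=\delta_\Gamma(\mathcal{S}_{l_v})=\delta_\Lambda(\mathcal{S}_{l_w})=\hat\delta_\Lambda(w)$, which is (ii). For sufficiency, conditions (ii) and (iii) let me build an ornament-preserving bijection $\sigma\colon B_\Gamma\to B_\Lambda$ with $\sigma(v)=w$ (match $v$ to $w$, then match the remaining vertices of each ornament class using equal counts), and (i) lets me extend $\sigma$ to a bijection $\tau\colon V\Gamma\to V\Lambda$. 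The coordinate permutation induced by $\tau$ is an isometry $\mathbb{E}^n\to\mathbb{E}^m$ carrying each $u$-geodesic exactly to a $\tau(u)$-geodesic; it is therefore a relative quasi-isometry with $f_*=\sigma$, it preserves the decoration, and it sends $l_v$ to $l_w$, so it is the required map. Proposition \ref{prop:cliquesystem} already records the flexibility of $\mathbb{E}^n$ that underlies this construction.

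Finally, the algorithm simply computes $n$ and $m$, the values $\hat\delta_\Gamma(v)$ and $\hat\delta_\Lambda(w)$, and the per-ornament count vectors, and then checks conditions (i)--(iii); all of these are finite computations, so $\mathcal{D}$ has solvable strong (relative) quasi-isometry problem. I expect the only genuinely delicate point to be the necessity direction: verifying that the combinatorial data (the dimension $n$, the per-ornament counts on $B_\Gamma$, and the ornament of the based direction $v$) are genuine invariants of the decorated based peripheral structure. This rests precisely on the collapse of the peripheral structure to one class per coordinate direction, together with the fact that quasi-isometric Euclidean spaces have equal dimension. The unbased, undecorated relative version is then immediate by discarding condition (ii) and retaining (i) and (iii).
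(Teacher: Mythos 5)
Your proof is correct and follows essentially the same route as the paper: both arguments reduce existence of the decoration-preserving based relative quasi-isometry to a finite combinatorial matching on $B_\Gamma$ and $B_\Lambda$ (equal dimension, ornament of the based direction, and per-ornament counts, which together are equivalent to the paper's condition that $n=m$ and a decoration-preserving bijection $r_*\colon B_\Gamma\to B_\Lambda$ with $r_*(v)=w$ exists), and both realize such a matching by extending it to a bijection of vertex sets and taking the induced coordinate permutation. Your write-up merely makes explicit two points the paper leaves implicit, namely the collapse of $\mathcal{P}_{B_\Gamma}$ to one parallel class per generator and the verification that $[f(l_v)]=[l_w]$ forces $f_*(\mathcal{S}_{l_v})=\mathcal{S}_{l_w}$.
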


\begin{proof}
Suppose we are given the data $(\Gamma,B_\Gamma,\hat\delta_\Gamma,v)$ and $(\Lambda,B_\Lambda,\hat\delta_\Lambda,w)$, where $\Gamma$ and $\Lambda$ are complete graphs on $n$ and $m$ vertices respectively. Suppose that $f:(X(\Gamma),\mathcal{P}_{B_ \Gamma},\delta_\Gamma,l_v)\rightarrow (X(\Lambda),\mathcal{P}_{B_ \Lambda},\delta_\Lambda,l_w)$ is a decoration-preserving based relative quasi-isometry. Then $n=m$, and there exists a decoration-preserving bijection $r_*:B_\Gamma\rightarrow B_\Lambda$ such that $r_*(v)=w$.

Conversely, if such an $r_*$ exists, it can be arbitrarily extended to a decoration preserving graph isomorphism $\hat r_*:\Gamma\rightarrow \Lambda$, which then induces the required decoration-preserving based relative quasi-isometry. Thus there exists a suitable relative quasi-isometry if and only if there exists a bijection $r_*:B_\Gamma\rightarrow B_\Lambda$ as above. Since $B_\Gamma$ and $B_\Lambda$ are finite, it is easy to verify whether or not such an $r_*$ exists. 
\end{proof}

I would like to thank Jingyin Huang for explaining how to prove the following.
\begin{prop}\label{prop:wt1strongrelqi}
RAAGs with finite outer automorphism group have solvable strong relative quasi-isometry problem.
\end{prop}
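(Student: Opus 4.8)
The plan is to reduce the strong relative quasi-isometry problem for the class of RAAGs with finite outer automorphism group to a constrained graph-isomorphism problem, which is manifestly decidable because the defining graphs are finite. Throughout I use that a RAAG with finite outer automorphism group is of type II with trivial centre (as recorded in the discussion around Lemma \ref{lem:stretcoarse}), so that Proposition \ref{prop:inducedmapextcmplx} and the extension-complex rigidity of Huang \cite{huang2014quasi,huang2015quasi} are available. Class membership is part of the input hypothesis, so it need not be checked.

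The reduction is the following equivalence. Given inputs $(\Gamma,B_\Gamma,\hat\delta_\Gamma,v)$ and $(\Lambda,B_\Lambda,\hat\delta_\Lambda,w)$, there is a decoration-preserving based relative quasi-isometry $(X(\Gamma),\mathcal{P}_{B_\Gamma},\delta_\Gamma,l_v)\rightarrow(X(\Lambda),\mathcal{P}_{B_\Lambda},\delta_\Lambda,l_w)$ if and only if there is a graph isomorphism $\sigma\colon\Gamma\rightarrow\Lambda$ with $\sigma(v)=w$, $\sigma(B_\Gamma)=B_\Lambda$, and $\hat\delta_\Lambda(\sigma(b))=\hat\delta_\Gamma(b)$ for all $b\in B_\Gamma$. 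The algorithm then enumerates the finitely many bijections $V\Gamma\rightarrow V\Lambda$, tests each for being such a constrained isomorphism, and outputs \emph{yes} exactly when one exists.

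One direction is straightforward: a constrained graph isomorphism $\sigma$ induces a group isomorphism $A(\Gamma)\rightarrow A(\Lambda)$ and hence an isometry $X(\Gamma)\rightarrow X(\Lambda)$. This isometry carries every $b$-geodesic to a $\sigma(b)$-geodesic, so by $\sigma(B_\Gamma)=B_\Lambda$ it maps $\mathcal{P}_{B_\Gamma}$ bijectively to $\mathcal{P}_{B_\Lambda}$; by $\hat\delta_\Lambda\circ\sigma=\hat\delta_\Gamma$ on $B_\Gamma$ it is decoration-preserving; and by $\sigma(v)=w$ it satisfies $[f(l_v)]=[l_w]$. Thus it is the required relative quasi-isometry.

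The converse direction is the main obstacle: extracting such a $\sigma$ from an arbitrary decoration-preserving based relative quasi-isometry $f$. By Proposition \ref{prop:inducedmapextcmplx}, $f$ induces a simplicial isomorphism $f_*\colon\mathcal{R}(\Gamma)\rightarrow\mathcal{R}(\Lambda)$ with $f_*(\Delta(l))=\Delta(l')$ whenever $[f(l)]=[l']$. The key input is Huang's theorem that for type II RAAGs such a simplicial isomorphism descends to a graph isomorphism $\sigma\colon\Gamma\rightarrow\Lambda$ respecting the support labelling, so that $f_*$ sends each parallelism class labelled $b$ to a class labelled $\sigma(b)$. Granting this, the three constraints follow by tracking the extra data: the based condition gives $f_*(\Delta(l_v))=\Delta(l_w)$, whence $\sigma(v)=w$; the relative condition makes $f_*$ restrict to a bijection $\mathcal{P}_{B_\Gamma}\rightarrow\mathcal{P}_{B_\Lambda}$, giving $\sigma(B_\Gamma)=B_\Lambda$; and the identity $\delta_\Gamma=\delta_\Lambda\circ f_*$ unwinds to $\hat\delta_\Gamma(b)=\hat\delta_\Lambda(\sigma(b))$. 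The delicate point to verify carefully is exactly this compatibility: that the map $f_*$ produced by Proposition \ref{prop:inducedmapextcmplx} on $B$-labelled vertices agrees with the peripheral-structure bijection and with the decoration, and that Huang's label-respecting graph isomorphism is the one compatible with these identifications, so that no information is lost in passing from $f$ to $\sigma$.
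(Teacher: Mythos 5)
Your overall architecture matches the paper's proof exactly: reduce to the existence of a graph isomorphism $\sigma:\Gamma\rightarrow\Lambda$ with $\sigma(v)=w$, $\sigma(B_\Gamma)=B_\Lambda$ and $\hat\delta_\Lambda\circ\sigma=\hat\delta_\Gamma$, then enumerate the finitely many candidates; and your easy direction (such a $\sigma$ induces a group isomorphism, hence the required decoration-preserving based relative quasi-isometry) is the paper's as well. The gap is in the converse, precisely at the step you yourself flag as delicate. You justify it by appeal to ``Huang's theorem that for type II RAAGs such a simplicial isomorphism descends to a graph isomorphism $\sigma:\Gamma\rightarrow\Lambda$ respecting the support labelling.'' No such theorem exists in that generality, and the statement is in fact false for type II RAAGs: take the pentagon $P$ and its double $P^*$ along a closed vertex star, as in Example \ref{exmp:stretch}. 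Both are type II with trivial centre, and $A(P^*)$ is an index-two subgroup of $A(P)$, so the two groups are quasi-isometric and, by Proposition \ref{prop:inducedmapextcmplx}, have simplicially isomorphic extension complexes; yet $P$ and $P^*$ are not isomorphic graphs (they do not even have the same number of vertices). So a simplicial isomorphism of extension complexes does not descend to an isomorphism of defining graphs within type II --- the defining graph is not even a quasi-isometry invariant there --- and your argument, as written, proves too much.

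What makes the converse work is the finite outer automorphism hypothesis, and the statement actually needed is not a formal ``descent'' property of $\mathcal{R}(\Gamma)$ but a local geometric one. The paper invokes the proof of Corollary 4.11 of \cite{huang2014quasi}: for a quasi-isometry $f$ between such RAAGs and any vertex $x\in X(\Gamma)$, there is a vertex $y\in X(\Lambda)$ such that $f$ induces a graph isomorphism $f_*:\Gamma_x\rightarrow\Lambda_y$ between the graphs of standard geodesics passing through $x$ and through $y$, with $[f_*(l)]=[f(l)]$ for every $l\in\Gamma_x^{(0)}$. Identifying $\Gamma_x\cong\Gamma$ and $\Lambda_y\cong\Lambda$ via the tautological isomorphisms $\iota_x,\iota_y$, the constrained $\sigma:=\iota_y^{-1}\circ f_*\circ\iota_x$ then falls out: $\sigma(v)=w$ from the based condition (choosing $x$ on $l_v$), $\sigma(B_\Gamma)=B_\Lambda$ from the relative condition, and $\hat\delta_\Lambda\circ\sigma=\hat\delta_\Gamma$ from decoration preservation --- exactly the bookkeeping you carried out, but now resting on a true statement. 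Your proof is repairable by substituting this local result for the nonexistent type II descent theorem, but as submitted the crucial step is unjustified.
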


\begin{proof}
Suppose that $A(\Gamma)$ and $A(\Lambda)$ have finite outer automorphism group and we are given $(\Gamma,B_\Gamma,\hat\delta_\Gamma,v)$ and $(\Lambda,B_\Lambda,\hat\delta_\Lambda,w)$ as above.
Given a vertex $x\in X(\Gamma)$ one can consider the set $L_x$ of standard  geodesics passing through $x$, which we identify with vertices of $\Gamma$. We define a graph $\Gamma_x$ with vertex set $L_x$ and $l,l'$ are joined by an edge in $\Gamma_x$ if and only if they span a standard 2-flat in $X(\Gamma)$. By construction, there is a graph isomorphism $\iota_x:\Gamma\rightarrow \Gamma_x$ such that $\iota_x(v)$ is the unique $v$-geodesic in $L_x$.

Suppose  there is a  quasi-isometry $f:(X(\Gamma),\mathcal{P}_{B_ \Gamma},\delta_\Gamma,l_v)\rightarrow (X(\Lambda),\mathcal{P}_{B_ \Lambda},\delta_\Lambda,l_w)$. 
The proof of Corollary 4.11 in \cite{huang2014quasi} then  implies that for a vertex  $x\in X(\Gamma)$, there exists a vertex $y\in X(\Lambda)$ such that $f$ induces a graph isomorphism $f_*:\Gamma_x\rightarrow \Lambda_y$, where $[f_*(l)]=[f(l)]$ for all $l\in \Gamma_x^{(0)}$. Thus there is a graph  isomorphism $r_*:=\iota_y^{-1}\circ f_*\circ\iota_x:\Gamma\rightarrow \Lambda$ such that $r_*(B_ \Gamma)=B_\Lambda$, $r_*(v)=w$ and $r_*$ is decoration preserving.

Conversely, the existence of such an $r_*$ implies the existence of a suitable quasi-isometry (which is in this case a group automorphism).
Thus there exists a suitable relative quasi-isometry if and only if there exists a graph isomorphism $r_*:\Gamma\rightarrow \Lambda$ satisfying the above properties. Since $\Gamma$ and $\Lambda$ are finite graphs, it is easy to verify whether or not such an $r_*$ exists.
\end{proof}

We will use the concept of a generalised star extension (GSE) from Section 6.3 of \cite{huang2014quasi}.  We will not define it here, but note that doubling a pentagon along a closed vertex star as in Example \ref{exmp:stretch} is an example of a GSE. GSEs may be used to construct finite index subgroups of RAAGs. We remark that GSEs increase the number of vertices of a graph. 

\begin{lem}\label{lem:gses}
There exists an algorithm that, when given as input a finite simplicial graph $\Gamma$, determines  whether or not $A(\Gamma)$ is quasi-isometric to a RAAG with finite outer automorphism group. If it is, the algorithm also outputs a graph $\Gamma'$ such that $A(\Gamma')$ has finite outer automorphism group, and a sequence of finitely many GSEs, starting with $\Gamma'$ and ending with $\Gamma$. 
\end{lem}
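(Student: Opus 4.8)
The plan is to convert the structural classification of Huang \cite{huang2014quasi} into a bounded combinatorial search. The key input, which I would isolate from \cite{huang2014quasi}, is the characterization: \emph{$A(\Gamma)$ is quasi-isometric to some RAAG with finite outer automorphism group if and only if there is a finite simplicial graph $\Gamma'$ with $\mathrm{Out}(A(\Gamma'))$ finite together with a finite sequence of GSEs carrying $\Gamma'$ to $\Gamma$.} One direction is elementary: a GSE produces a RAAG containing the original as a finite index subgroup, so the two endpoints of a single GSE are commensurable and hence quasi-isometric; composing along a GSE sequence shows $A(\Gamma)$ is quasi-isometric to $A(\Gamma')$, which by hypothesis has finite outer automorphism group. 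The reverse implication is the substantive content and is precisely the quasi-isometric rigidity established by Huang for RAAGs with finite outer automorphism group, which forces any such $A(\Gamma)$ to arise from a finite-out graph by GSEs.

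Granting this characterization, the algorithm is a forward search. Since every GSE strictly increases the number of vertices (as recalled in the excerpt), any witnessing sequence has length at most $|V\Gamma|$ and uses a base graph with $|V\Gamma'|\le |V\Gamma|$. The algorithm therefore enumerates the finitely many isomorphism types of finite simplicial graphs $\Gamma'$ on at most $|V\Gamma|$ vertices; discards those for which $\mathrm{Out}(A(\Gamma'))$ is infinite; and for each survivor performs a depth-first search of depth at most $|V\Gamma|-|V\Gamma'|$, applying at each node all of the finitely many GSEs available to the current graph and testing the output for graph isomorphism with $\Gamma$. If some branch reaches a graph isomorphic to $\Gamma$, it returns the corresponding $\Gamma'$ together with the recorded sequence of GSEs; otherwise it reports that no such sequence exists. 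Deciding whether $\mathrm{Out}(A(\Gamma'))$ is finite is itself a finite check on $\Gamma'$: by the Laurence--Servatius description of $\mathrm{Aut}(A(\Gamma'))$, the outer automorphism group is infinite precisely when $\Gamma'$ admits a domination $\mathrm{lk}(u)\subseteq \mathrm{star}(w)$ between distinct vertices (yielding a transvection) or a vertex $u$ whose closed star $\mathrm{star}(u)$ separates $\Gamma'$ (yielding a partial conjugation), and both conditions are verified by inspecting finitely many pairs of vertices and finitely many induced subgraphs. Termination and correctness are then immediate: the candidate set is finite, the search depth is bounded, only finitely many GSEs apply to any fixed finite graph, and a witness is found exactly when the characterization guarantees one exists.

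The only genuine difficulty is extracting the characterization in the first paragraph from \cite{huang2014quasi}; once it is in hand, the remainder is a routine effective search and the auxiliary decidability facts are elementary. The one point I would verify carefully is that a single GSE, as defined in Section 6.3 of \cite{huang2014quasi}, admits only finitely many applications to a given graph and that each is explicitly constructible. This is clear from the definition, since a GSE is specified by a choice among finitely many vertices together with finitely many combinatorial data, so the set of graphs obtainable from a fixed finite graph by one GSE is finite and computable; this guarantees that the forward search is genuinely effective.
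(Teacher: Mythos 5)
Your proposal is correct and follows essentially the same route as the paper: both reduce to Huang's Theorem 1.5 (quasi-isometry to a finite-$\mathrm{Out}$ RAAG is witnessed by a finite GSE sequence from a finite-$\mathrm{Out}$ graph), and both exploit the fact that GSEs strictly increase the vertex count to bound the enumeration over graphs on at most $|V\Gamma|$ vertices. The only difference is that you make explicit two points the paper leaves implicit --- the Laurence--Servatius criterion for deciding finiteness of $\mathrm{Out}(A(\Gamma'))$ and the finiteness/constructibility of the set of GSEs applicable to a fixed graph --- which is a useful elaboration rather than a different argument.
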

\begin{proof}
Suppose $\Gamma$ has $n$ vertices. We list all graphs $\Gamma'$ that have at most $n$ vertices such that $A(\Gamma')$ has finite outer automorphism group, and then take all possible sequences of GSEs of these graphs that have at most $n$ vertices. If we do obtain $\Gamma$ by taking GSEs of a RAAG of such a $\Gamma'$, then we are done. If not, then Theorem 1.5 of \cite{huang2014quasi} ensures that $A(\Gamma)$ is not quasi-isometric to a RAAG with finite outer automorphism group.
\end{proof}

We recall that a RAAG with finite outer automorphism group is of type II with trivial centre. Thus every standard geodesic in such RAAG is an $\mathcal{R}$-geodesic. We also recall that in order to define stretch factors, we needed to choose a set of representatives of pairs $(X(\Gamma_Q),l_Q)$ for every based quasi-isometry class $Q$. If such a based quasi-isometry class $Q$ contains a RAAG with finite outer automorphism group, we choose a representative $(X(\Gamma_Q),l_Q)$ such that $A(\Gamma_Q)$ does indeed have finite outer automorphism group.

If $\Gamma$ and $\Gamma'$ are as in Lemma \ref{lem:gses}, then by knowing how $\Gamma$ is obtained from $\Gamma'$ by GSEs, we can obtain the stretch factor. This is exactly the same principle as Example \ref{exmp:stretch}, where we can work out the stretch factor using the fact  that $A(\Lambda')$ is an index two subgroup of $A(P)$. We thus deduce the following:
\begin{cor}\label{cor:algorstretch}
There is an algorithm that takes as input a graph $\Gamma$ and a vertex $v\in V\Gamma$, such that $A(\Gamma)$ is quasi-isometric to a RAAG of finite outer automorphism group, and outputs   the stretch factor associated to a $v$-geodesic.
\end{cor}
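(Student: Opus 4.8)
The plan is to assemble this corollary from the algorithm of Lemma \ref{lem:gses} together with the rigidity of stretch factors on $\mathcal{R}$-geodesics. First I would run the algorithm of Lemma \ref{lem:gses} on $\Gamma$; by hypothesis $A(\Gamma)$ is quasi-isometric to a RAAG with finite outer automorphism group, so the algorithm outputs a graph $\Gamma'$ with $A(\Gamma')$ of finite outer automorphism group, together with an explicit finite sequence of GSEs carrying $\Gamma'$ to $\Gamma$. Since a RAAG with finite outer automorphism group is of type II with trivial centre, every standard geodesic in $X(\Gamma')$ is an $\mathcal{R}$-geodesic by Lemma \ref{lem:stretcoarse}; combined with Proposition \ref{prop:frgeodprop}, the $v$-geodesic $l_v\subseteq X(\Gamma)$ is then also an $\mathcal{R}$-geodesic, so $\mathrm{str}(\Gamma,l_v)$ is well-defined by Definition \ref{defn:stretch}. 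By the choice of representatives described just before the corollary, the class $Q=[[(X(\Gamma),l_v)]]$ has a representative $(X(\Gamma_Q),l_Q)$ with $A(\Gamma_Q)$ of finite outer automorphism group.

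The key computational step is to read the stretch factor off the GSE data. Each GSE exhibits the RAAG it produces as a finite-index subgroup of the RAAG it is applied to, via an explicit homomorphism onto a finite group, exactly as the double of the pentagon $A(\Lambda')$ arises as the index-two kernel of $\psi\colon A(P)\to\mathbb{Z}_2$ in Example \ref{exmp:stretch}. Composing the sequence of GSEs therefore realises $A(\Gamma)$ as an explicit finite-index subgroup of $A(\Gamma')$, and the inclusion $\iota\colon A(\Gamma)\hookrightarrow A(\Gamma')$ is a based quasi-isometry taking $l_v$ into the coarse equivalence class of a standard geodesic of $X(\Gamma')$. I would track the image of the standard generator $v$ under each successive GSE embedding: at each doubling-type extension the relevant generator is replaced by a power (for the pentagon, $v\mapsto v^2$, giving a factor of two), and the total scaling of $l_v$ is the product of these per-step factors, which is computable directly from the combinatorics of the GSE sequence.

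Finally, I would argue that this computed number equals $\mathrm{str}(\Gamma,l_v)$. Because $A(\Gamma')$ has finite outer automorphism group and is therefore rigid, any two based quasi-isometries $(X(\Gamma),l_v)\to(X(\Gamma_Q),l_Q)$ induce the same stretch (this is precisely the $\mathcal{R}$-geodesic property, Lemma \ref{lem:stretcoarse}); hence the explicit inclusion $\iota$, composed with a based quasi-isometry $A(\Gamma')\to A(\Gamma_Q)$ of finite-out RAAGs, computes $\mathrm{str}(\Gamma,l_v)$ regardless of the choices made. The main obstacle I anticipate is bookkeeping rather than conceptual: one must verify that each GSE scales the geodesic through the distinguished vertex by a definite, computable rational factor that is independent of the intermediate choices, and that these factors compose correctly when the distinguished generator is itself altered by later extensions. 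This is a finite, explicit check patterned on the index computation of Example \ref{exmp:stretch}, and the finiteness of the graphs involved guarantees termination.
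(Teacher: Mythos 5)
Your proposal is correct and follows essentially the same route as the paper: run the algorithm of Lemma \ref{lem:gses} to produce $\Gamma'$ with finite outer automorphism group and the explicit GSE sequence, then read the stretch factor off the finite-index subgroup data exactly as in Example \ref{exmp:stretch}, with well-definedness guaranteed by the $\mathcal{R}$-geodesic rigidity of Lemma \ref{lem:stretcoarse} and the choice of representatives $(X(\Gamma_Q),l_Q)$ with finite outer automorphism group. The paper's own argument is a terser version of precisely this, and your additional bookkeeping remarks about composing the per-step scaling factors only make explicit what the paper leaves implicit.
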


\begin{prop}\label{prop:solvqiprob}
We fix classes of RAAGs $\mathcal{C}_1$ and $\mathcal{C}_2$ that have solvable strong quasi-isometry problem and solvable quasi-isometry problem respectively. We consider the subclass $\mathcal{C}_0$ of RAAGs  that satisfy the following. 
\begin{enumerate}
\item For each $A(\Gamma)\in \mathcal{C}_0$, all rigid vertex stabilizers of the JSJ tree of cylinders decomposition $C(\Gamma)$ lie in $\mathcal{C}_1$.
\item For each $A(\Gamma)\in \mathcal{C}_0$, all non-peripheral factors of cylinder stabilizers of $C(\Gamma)$ lie in $\mathcal{C}_2$.
\end{enumerate}
Then there exists an algorithm that takes as input $A(\Gamma), A(\Lambda)\in \mathcal{C}_0$ and outputs whether the trees of cylinders $(T(\Gamma),\delta)$ and $(T(\Lambda),\delta')$, equipped with the na{\"\i}ve decoration, are equivalent.
\end{prop}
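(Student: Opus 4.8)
The plan is to realise the equivalence criterion of Proposition \ref{prop:equivbijec} as a terminating algorithm whose only non-combinatorial steps are calls to the quasi-isometry oracles supplied by $\mathcal{C}_1$ and $\mathcal{C}_2$. First I would compute $C(\Gamma)$ and $C(\Lambda)$ directly from the defining graphs: extracting the cut vertices $C_0$, the maximal biconnected subgraphs, and the subset $C_1$ of Definition \ref{def:gogtoc} is routine, and by Proposition \ref{prop:jsjtoc} the associated Bass--Serre trees are the JSJ trees of cylinders. Every decoration in play is $G$-invariant, so it descends to the finite quotient graphs of groups; the neighbour and vertex refinements of Section \ref{subsec:decoratedtrees}, the structure invariants, and the three conditions of Proposition \ref{prop:equivbijec} thereby all become finite manipulations of labelled finite graphs. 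Termination of the refinement loop is immediate, since each refinement strictly increases the number of ornament classes, which is bounded by the number of orbits of vertices and edges.

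The decorations refer to vertex and edge spaces only through the existence of decoration-preserving relative quasi-isometries, so the procedure is effective once such queries are decidable. Setting up the na{\"\i}ve decoration and performing vertex refinement both amount to deciding, for pairs of vertices of $T(\Gamma)$ and $T(\Lambda)$, whether a suitable relative quasi-isometry exists. For a pair of rigid vertices this is exactly a strong quasi-isometry query for groups in $\mathcal{C}_1$: one takes the peripheral structure $\mathcal{P}_B$ with $B$ the incident cut vertices, the ornaments prescribed by the current decoration, and the based geodesic of the relevant incident edge. For a pair of cylindrical vertices I would invoke Proposition \ref{prop:relqicyl}, which reduces the comparison to matching the quasi-isometry types of the peripheral factors and the quasi-isometry types of the one-ended non-peripheral factors. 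The one-ended non-peripheral factors lie in $\mathcal{C}_2$ by the definition of $\mathcal{C}_0$, so they are comparable using the solvable quasi-isometry problem for $\mathcal{C}_2$, and the matching, once the combinatorial data is in place, is realised by the type I, II and III moves of Section \ref{sec:freeproduct} exactly as in the proof of Proposition \ref{prop:relqicyl}.

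The step I expect to be the main obstacle is the comparison of the peripheral factors $A(A_i)=A(\mathrm{lk}^{\Gamma_i}(v))$, since these are not assumed to lie in $\mathcal{C}_1$ or $\mathcal{C}_2$ and so their quasi-isometry types cannot be compared directly. The approach I would take is to route this comparison through the adjacent rigid vertices: each peripheral factor is the transverse factor of the shared edge group $\mathbb{Z}\times A(A_i)$, and the corresponding edge space carries the cylindrical decomposition $\mathbb{E}^1\times X(A_i)$ of Definition \ref{defn:cyldecomp}, which also appears as the parallel set of the cut-vertex geodesic inside the rigid vertex space $X(\Gamma_i)$. Hence a decoration-preserving based relative quasi-isometry of the rigid endpoints matching the two edges carries one such parallel set to the other and, by the de Rham rigidity of Theorem \ref{thm:derahm}, restricts to a quasi-isometry of the peripheral factors. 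This shows that the $\mathcal{C}_1$ oracle detects quasi-isometric peripheral factors \emph{through} their rigid neighbours; the technical heart of the argument is to upgrade this observation into a complete (necessary and sufficient) test for the peripheral-factor condition of Proposition \ref{prop:relqicyl}, by recording on each edge the strong relative quasi-isometry class of its rigid endpoint and verifying, via Lemma \ref{lem:constructcylqi} and the move machinery, that matching peripheral factors by these edge labels together with matching the one-ended non-peripheral factors suffices to build the required relative quasi-isometry of cylinders.

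Once this reduction is in hand, every primitive used by the refinement procedure is a finite number of $\mathcal{C}_1$ and $\mathcal{C}_2$ queries: neighbour refinement is pure adjacency counting, vertex refinement is the cylinder/rigid comparison above, the structure invariants $S(T,\delta,\mathcal{O})$ and $S(T',\delta',\mathcal{O}')$ are then read off from the finite labelled quotients, and condition (3) of Proposition \ref{prop:equivbijec} is the same pair of queries applied across the two trees. I would therefore run neighbour and vertex refinement to stabilisation, search over the finitely many candidate bijections $\beta$ of ornaments, and output that $(T(\Gamma),\delta)$ and $(T(\Lambda),\delta')$ are equivalent precisely when some $\beta$ satisfies conditions (1)--(3). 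The correctness of the output is exactly the content of Proposition \ref{prop:equivbijec}, and the algorithm halts by the termination remark above.
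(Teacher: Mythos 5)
Your proposal follows the paper's own proof essentially step for step: reduce the comparison of cylindrical vertices to their peripheral and one-ended non-peripheral factors via Proposition \ref{prop:relqicyl}, decide the peripheral-factor comparison through strong quasi-isometry queries to the adjacent rigid vertex stabilizers in $\mathcal{C}_1$ and the non-peripheral one through the $\mathcal{C}_2$ oracle, then compute the na{\"\i}ve decoration on the finite quotient graphs of groups, run neighbour and vertex refinement to stabilisation, and test the conditions of Proposition \ref{prop:equivbijec} over the finitely many candidate ornament bijections. The step you single out as the technical heart --- that the rigid-neighbour test for peripheral factors is a priori only sound rather than complete --- is precisely the point the paper compresses into a single sentence, and your proposed resolution (labelling each edge by the strong relative quasi-isometry class of its rigid endpoint, with sufficiency supplied by the move machinery behind Proposition \ref{prop:relqicyl}) is the same mechanism the paper relies on.
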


\begin{proof}
We first show that cylinder stabilizers of RAAGs in $\mathcal{C}_0$ have solvable quasi-isometry problem. Indeed, by applying the  solvable strong quasi-isometry problem to adjacent rigid vertex stabilizers, we are able to solve the quasi-isometry problem for peripheral factors of cylinder stabilizers of $C(\Gamma)$.  By assumption, we are also able to solve the quasi-isometry problem for non-peripheral factors of cylinder stabilizers. In light of Proposition \ref{prop:relqicyl}, we thus have an algorithm to decide whether cylinder stabilizers are relatively quasi-isometric. In fact, a slight modification of the proof of Proposition \ref{prop:relqicyl} allows us to solve the strong quasi-isometry problem for cylinder stabilizers of RAAGs.

We now decorate vertex and edges in $C(\Gamma)$ with the na{\"\i}ve decoration. More precisely, we decorate each vertex and edge with a symbol from the set $\{o_1,o_2,\dots\}$ such that two vertices share the symbol if and only if they have the same ornament when assigned the na{\"\i}ve decoration. This can be done using Corollary \ref{cor:algorstretch} and the fact that all cylindrical (resp. rigid) vertex stabilizers have solvable relative quasi-isometry problem.

We now perform neighbour refinement, which can be done by simply counting adjacency data  of ornaments when lifted to the Bass-Serre tree. We pick unused letters from the set $\{o_1,o_2,\dots\}$ when refinement occurs. We may also perform vertex refinement, using the fact that the class of all cylindrical and rigid vertex stabilizers have solvable strong relative quasi-isometry problem. We apply this procedure till it stabilizes, giving a decoration $\delta$.

We apply the same procedure to $C(\Lambda)$, using a different set of ornaments $\{o'_1,\dots\}$, and obtaining a decoration $\delta'$. By examining all possible bijections $\mathrm{im}(\delta)\rightarrow \mathrm{im}(\delta')$ and seeing if the conditions of Proposition \ref{prop:equivbijec} hold, we are able to determine whether the decorated trees of cylinders are equivalent.
\end{proof}

\algor
\begin{proof}
Given a finite simplicial graph $\Lambda$, we first check if it is connected; if it is not, then $A(\Lambda)$ cannot be quasi-isometric to $A(\Gamma)$. If it is connected, we can compute the graph of groups $C(\Gamma)$ and by Lemma \ref{lem:gses}, we can  verify whether or not every rigid vertex group is either abelian or  a GSE of a RAAG with finite outer automorphism group. If this is not the case, then $A(\Lambda)$ cannot be quasi-isometric to $A(\Gamma)$. If it is, then both $\Lambda$ and $\Gamma$ satisfy the hypotheses of Proposition \ref{prop:solvqiprob}. 

We follow the proof of Proposition \ref{prop:solvqiprob}, except we can use Corollary \ref{cor:algorstretch} to decorate edges in $C(\Lambda)$ and $C(\Gamma)$ with stretch factors. We use the same argument as in the proof of Proposition \ref{prop:solvqiprob}, and are thus able to decide if $A(\Gamma)$ and $A(\Lambda)$ have equivalent JSJ trees of cylinders when decorated with the embellished decoration. By Theorem \ref{thm:mainthm}, this then determines whether or not $A(\Gamma)$ and $A(\Lambda)$  are quasi-isometric.
\end{proof}

\section{\texorpdfstring{$n$}{n}-clique tree-graded RAAGs}\label{sec:ncliquetreeraags}
We give a sample application of Theorem \ref{thm:mainthm}. We recall the definition of $n$-clique tree-graded RAAGs from Section \ref{sec:intro}.
\ncliquetreeraags

\begin{proof}
It is a well known application of Gromov's polynomial growth theorem that for any $k$, groups that are virtually $\mathbb{Z}^k$ are quasi-isometrically rigid \cite{gromov1981goups}. We thus deduce that any RAAG quasi-isometric to $\mathbb{Z}^k$ has defining graph a complete graph on $k$ vertices.  

Let $\Gamma$ be an $n$-clique tree-graded graph. We analyse cylindrical and rigid vertex groups in $C(\Gamma)$. Cylindrical vertex groups are of the form $$\mathbb{Z}\times (\mathbb{Z}^{n-1}*\dots * \mathbb{Z}^{n-1}*\mathbb{Z}*\dots * \mathbb{Z})$$ where the $\mathbb{Z}^{n-1}$ are peripheral factors and the $\mathbb{Z}$ are non-peripheral factors. In particular, there are no one-ended non-peripheral factors. Using Proposition \ref{prop:relqicyl}, we deduce that $C(\Gamma)$ has a single relative quasi-isometry class of cylindrical vertices.

The rigid vertex groups are all of the form $A(\Gamma')\cong \mathbb{Z}^n$. The peripheral structure consists of $n$ elements, each containing all the standard geodesics in $X(\Gamma')$ labelled by some fixed $v\in V\Gamma'$. Thus there is a single relative quasi-isometry class of rigid vertices. Since all standard geodesics in rigid vertices are $\mathcal{F}$-geodesics (Proposition \ref{prop:cliquesystem}), there are no relative stretch factors. Thus the na{\"\i}ve decoration and embellished decoration are equal.

The na{\"\i}ve decoration consists of two ornaments, one given to all rigid vertices and one given to all cylinder vertices. Each cylinder (resp. rigid) vertex is adjacent to countably  infinitely many rigid (resp. cylinder) vertices, so this decoration is stable under neighbour refinement. All vertices with the same ornament are relatively quasi-isometric via a relative quasi-isometry that preserves the decoration on incident edges. Thus this decoration is stable under vertex refinement.

It follows from Proposition \ref{prop:equivbijec} and the above discussion, that for some fixed $n\geq 2$, any two $n$-clique tree-graded RAAGs have equivalent JSJ trees of cylinders  when decorated with the na{\"\i}ve decoration. Thus by Theorem \ref{thm:mainthm}, any two $n$-clique tree-graded RAAGs are quasi-isometric.

Now suppose $A(\Gamma)$ is a RAAG that is quasi-isometric to an $n$-clique tree-graded RAAG. All cylindrical (resp. rigid) vertex groups of $C(\Gamma)$ are  relatively quasi-isometric to cylindrical (resp. rigid) vertex groups   of an $n$-clique tree-graded RAAG. Using Proposition \ref{prop:relqicyl} and the quasi-isometric rigidity of $\mathbb{Z}^n$, cylindrical and rigid vertices are of the above form.

In particular, the subgraph associated to each rigid vertex is an $n$-clique, all of whose vertices are cut vertices of $\Gamma$. Every cylindrical vertex has peripheral $\mathbb{Z}^{n-1}$ factors corresponding to adjacent rigid vertices, and non-peripheral $\mathbb{Z}$ factors corresponding to adjacent leaves. Thus $\Gamma$ is  an $n$-clique tree-graded graph.
\end{proof}

\bibliographystyle{alpha}
\bibliography{../bibliography/bibtex}
\end{document}